\documentclass[11pt]{article}
\usepackage{a4wide}

\usepackage{amssymb}
\usepackage{amsmath}
\usepackage{amsthm}
\usepackage{tikz-cd}
\usepackage{mathrsfs}
\usepackage{graphicx} 
\usepackage[font=small]{subcaption}
\usepackage{pgfplots}

\usepackage{mathtools}
\usepackage{hyperref}
\usepackage{enumerate}

\usepackage{thmtools}
\usepackage{thm-restate}
\usepackage{cleveref}
\usepackage{mathbbol}
\usepackage{verbatim}
\usepackage{romannum}

\usepackage{latexsym}
\usepackage{amscd}
\usepackage{graphics}
\usepackage{amsmath}
\usepackage{amssymb}
\usepackage{esint}
\usepackage{mathrsfs}
\usepackage{bbm}
\usepackage{color}
\usepackage{accents}

\newtheorem{theorem}{Theorem}[section]
\newtheorem{prop}[theorem]{Proposition}

\newtheorem{definition}[theorem]{Definition}
\newtheorem{cor}[theorem]{Corollary}
\newtheorem{lemma}[theorem]{Lemma}
\newtheorem{lem}[theorem]{Lemma}

\newtheorem{remark}[theorem]{Remark}
\newtheorem{rmk}[theorem]{Remark}

\DeclareMathOperator{\I}{I}
\DeclareMathOperator{\II}{II}
\DeclareMathOperator{\III}{III}
\DeclareMathOperator{\IV}{IV}

\renewcommand{\ker}{\operatorname{ker}}

\renewcommand{\dim}{\operatorname{dim}}

\newcommand{\ind}{\operatorname{ind}}

\newcommand{\PD}{\mathrm{PD}}
\newcommand{\ev}{\mathrm{ev}}

\renewcommand{\H}{\mathrm{H}}

\newcommand{\CF}{\mathrm{CF}}
\newcommand{\HF}{\mathrm{HF}}
\newcommand{\SC}{\mathrm{SC}}
\newcommand{\SH}{\mathrm{SH}}
\newcommand{\CZ}{\mathrm{CZ}}

\newcommand{\CM}{\mathrm{CM}}

\newcommand{\QH}{\mathrm{QH}}

\newcommand{\reg}{\mathrm{reg}}
\newcommand{\crit}{\mathrm{crit}}

\newcommand{\R}{\mathbb{R}}
\newcommand{\Q}{\mathbb{Q}}
\newcommand{\Z}{\mathbb{Z}}

\newcommand{\N}{\mathbb{N}}
\newcommand{\C}{\mathbb{C}}
\newcommand{\Ham}{\mathrm{Ham}}

\DeclareFontFamily{U}{mathx}{\hyphenchar\font45}
\DeclareFontShape{U}{mathx}{m}{n}{
	<5> <6> <7> <8> <9> <10>
	<10.95> <12> <14.4> <17.28> <20.74> <24.88>
	mathx10
}{}
\DeclareSymbolFont{mathx}{U}{mathx}{m}{n}
\DeclareFontSubstitution{U}{mathx}{m}{n}
\DeclareMathAccent{\widecheck}{0}{mathx}{"71}
\DeclareMathAccent{\wideparen}{0}{mathx}{"75}

\numberwithin{equation}{section}
\AtEndDocument{\bigskip{\footnotesize
\noindent\textsc{Seoul National University, Department of Mathematical Sciences, Research Institute in Mathematics, 
	08826 Seoul, South Korea} \par  
 \noindent \textit{E-mail address}: \texttt{\href{mailto:jungsoo.kang@snu.ac.kr}{ghdrnjs2@snu.ac.kr }} \par

  \addvspace{\medskipamount}
\noindent\textsc{Seoul National University, Department of Mathematical Sciences, Research Institute in Mathematics, 
	08826 Seoul, South Korea} \par  
 \noindent \textit{E-mail address}: \texttt{\href{mailto:jungsoo.kang@snu.ac.kr}{jungsoo.kang@snu.ac.kr}} \par
   \addvspace{\medskipamount}

}}

\title{Biran decomposition and relative symplectic cohomology}
\author{Hong-kwon Jo and Jungsoo Kang}
\date{}

\begin{document}
	\pagenumbering{arabic}
	
	\maketitle
	\begin{abstract}
		Let $(M,\omega)$ be a closed monotone integral symplectic manifold with monotonicity constant $\tau>0$, and let $D\subset M$ be a symplectic hyperplane section of degree $0<k<\tau$. By Biran's decomposition theorem, $M$ is the union of a symplectic disk bundle over $D$ and the skeleton of the complement $M\setminus D$ equipped with its  Liouville structure. In this paper, we compute the relative symplectic cohomology of circle subbundles of this disk bundle over $\Z$ in terms of the quantum cohomology of $D$. 
		This relative symplectic cohomology depends on the radius of the circle bundle relative to a certain critical radius. For a broad class of pairs $(M,D)$, we prove that the relative symplectic cohomology over $\Z$ does not vanish for supercritical radii. Therefore, circle bundles of supercritical radii are heavy subsets, which yields infinite dimensional quasi-flats in the Hamiltonian diffeomorphism group with respect to both the Hofer metric and the spectral metric over $\Z$. We also establish that if the degree $k$ of $D$ is greater than one, the skeleton of $M\setminus D$ is SH-full over $\Z/k\Z$, and hence non-displaceable in $M$ by any symplectomorphism. 
	\end{abstract}

	\tableofcontents
	\section{Introduction}\label{sec:Intro}
	
	Let $(M,\omega)$ be a closed symplectic manifold, and let $\Ham(M,\omega)$ denote the group of Hamiltonian diffeomorphisms. One central conjecture in the study of the geometry of $\Ham(M,\omega)$ states that it has infinite diameter with respect to the Hofer metric, which is a bi-invariant Finsler type metric, see \cite[Problem 20]{MS17}. This conjecture has been confirmed in several cases, including surfaces, symplectically aspherical manifolds, $\mathbb{CP}^n$ with Fubini-Study form, degree $d$ hypersurfaces in $\mathbb{CP}^n$ with $d>n+1$. It also has been settled under various structural assumptions, such as the existence of certain Lagrangian submanifolds, the presence of a Hamiltonian function with specific dynamical properties, or conditions on the (quantum) cohomology. Furthermore, in many of these cases, it is shown that $\R^N$ with arbitrarily large $N$ is quasi-isometrically embedded into $\Ham(M,\omega)$. A non-exhaustive list of relevant works includes \cite{LM95,Pol98,Ost03,Py08,McD09,Ush13,Zap13,PS23,CGHS23,Mai24,Sun24,FZ25,AACS25}.

	Corresponding questions have been investigated for the spectral metric, which is another bi-invariant metric on $\Ham(M,\omega)$ defined via Floer homology \cite{Vit92,Sch00,Oh05}. This is directly relevant to the aforementioned conjecture since the Hofer norm is bounded from below by the spectral norm. Indeed, some of the references cited above establish that $\Ham(M,\omega)$ has infinite diameter with respect to the spectral metric. Moreover, a folklore conjecture states that the spectral diameter of $\Ham(M,\omega)$ is infinite when $\omega(\pi_2(M))=0$. It is important to note that the spectral metric depends on the choice of coefficient ring for Floer homology. For example, for $\mathbb{CP}^n$ with the Fubini-Study form $\omega_\mathrm{FS}$, the spectral diameter over field coefficients is finite, see \cite{EP03,KS21}. However, it is recently discovered in \cite{KS24} that the spectral diameter of $\Ham(\mathbb{CP}^n,\omega_\mathrm{FS})$ for $n\geq 2$ is infinite over $\Z$-coefficients. In light of this development, one could ambitiously conjecture that, for every closed symplectic  manifold $(M,\omega)$, the spectral diameter of $\Ham(M,\omega)$ is infinite over $\Z$.

	Introduced in \cite{EP09}, heavy subsets are fundamental objects exhibiting rigidity properties. For instance, Lagrangian submanifolds with nonvanishing Floer homology are heavy, see \cite{FOOO19}. Furthermore, the presence of disjoint heavy subsets gives rise to quasi-flats in $\Ham(M,\omega)$ or its universal cover. Therefore, detecting heavy subsets is a central problem,  and the relative symplectic cohomology \cite{Var21} serves as a powerful  Floer-theoretic invariant for this purpose. Indeed, a recent remarkable result \cite{MSV24} (see also \cite{DGPZ24}) shows that a compact subset $K\subset M$ is heavy if and only if the relative symplectic cohomology $\SH_M^*(K)$ is nonzero. 
	
	\medskip
	
	We now describe the geometric setup used throughout this paper. Let $(M,\omega)$ be a closed integral symplectic manifold of dimension $2n$. We assume that it is spherically monotone, i.e.~there exists $\tau>0$ such that $c_1^{TM}(A)=\tau\omega(A)$ for every $A\in\pi_2(M)$, where $c_1^{TM}$ denotes the first Chern class of $M$. We also assume that there exists a symplectic hyperplane section $D$ of degree $0<k<\tau$. More precisely, $D\subset M$ is a closed codimension two symplectic submanifold whose homology class  is Poincar\'e dual to $k[\omega]_\Z$ for some positive integer $k<\tau$, where $[\omega]_\Z\in\H^2(M;\Z)$ is an integral lift of $[\omega]\in\H^2(M;\R)$. 
	
	By Biran's decomposition theorem \cite{Bir01,Gir17}, the complement $M\setminus D$ admits a Liouville structure. We denote by $\mathrm{sk}(M\setminus D)$ the skeleton of the Liouville flow. Moreover, $M\setminus \mathrm{sk}(M\setminus D)$ is symplectomorphic to a symplectic open unit disk bundle $E$ of $D$ whose fiber disk has area $1/k$. Let $\psi:E\to M$ denote this symplectic embedding. For  $0<\rho< 1$, we write $E_\rho\subset E$ for the open disk subbundle of radius $\sqrt{\rho}$, and set $U_\rho:=\psi(E_\rho)$. The main result of this paper is to compute the relative symplectic cohomology $\SH_M^*(\partial U_\rho)$ of $\partial U_\rho$ with $\Z$-coefficients. It is crucial to use $\Z$-coefficients for nonvanishing of $\SH_M^*(\partial U_\rho)$, and hence to find a family of disjoint heavy sets, which yields an infinite-dimensional quasi-flat in $\Ham(M,\omega)$.

	\subsection{Computations of relative symplectic cohomology}
	
	We compute $\SH_M^*(\partial U_\rho)$ in terms of 
	the quantum cohomology ring $\QH^*(D;\Gamma)=\H^*(D;\Z)\otimes_\Z\Gamma$ of $D$, where $\Gamma=\Z[T,T^{-1}]$ if $\omega(\pi_2(M))\neq0$, and $\Gamma=\Z$ otherwise. In contrast to the usual convention, we define the degree of $T$ to be $2(\tau-k)m_M$, where $m_M$ denotes the positive generator of $\omega(\pi_2(M))=m_M\Z$. In fact, $\SH_M^*(\partial U_\rho)$ depends on the size of $\rho$ compared to the critical radius 
	\[
	\rho_\crit:=\frac{k}{\tau}.
	\]
	This is precisely the radius where the index and action of every capped periodic Reeb orbit on $\partial U_{\rho_\crit}$ are proportional via the same constant  $2\tau$, see \eqref{eq:crit_radius}. It is also the unique radius for which the Lagrangian lift of a monotone Lagrangian submanifold in $D$ is again monotone, see \cite[Proposition 6.4.1]{BC6}. 
	To capture this phenomenon, we introduce an extension of $\Gamma$:
	\begin{equation*}
		\widehat\Gamma_\rho :=\begin{cases}
			\Gamma((S)) & \rho>\rho_{\crit},\\
			\Gamma[S,S^{-1}] & \rho=\rho_{\crit},\\
			 \Gamma((S^{-1})) & \rho<\rho_{\crit},
			\end{cases}
	\end{equation*}
	where the degree of $S$ is $2$.  We also define $\QH^*(D;\widehat\Gamma_\rho)$ 
	whose product structure is inherited from $\QH^*(D;\Gamma)$.	 Let $[\omega_D]_\Z\in\H^2(D;\Z)$ denote the pullback of $[\omega]_\Z$ under the inclusion $D\subset M$. 
  We consider a degree two class 
	\begin{equation}\label{eq:e_M}
			e_M:=k[\omega_D]_\Z+{k}N_1(M,D) T\in \QH^2(D; \Gamma).
		\end{equation}
	Here, $N_1(M,D)\in\Z$ is the signed count of $J$-holomorphic spheres $v$ in $M$ that intersect $D$ exactly at one point with contact order $km_M$. It is nonzero only if $(\tau-k)m_M=\Z$, or equivalently, $\deg T=2$. When $\Gamma=\Z$, $N_1(M,D)=0$. 
	
	\begin{theorem}\label{thm:sh}
		For every $0<\rho<1$, there is a  $\Z$-module isomorphism 
		\[
		\SH^*_M(\partial U_\rho) \cong \QH^{*+n}(D;\widehat\Gamma_\rho) / (e_M+S),
		\]
		where $(e_M+S)$ denotes the ideal generated by $e_M+S \in \QH^{2}(D;\widehat\Gamma_\rho)$. 
	\end{theorem}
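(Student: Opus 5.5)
We compute $\SH^*_M(\partial U_\rho)$ by writing it as a completed telescope of Hamiltonian Floer complexes. We use Hamiltonians that are $C^2$-small on $\partial U_\rho$ and grow towards both sides. The growth is radial in the fibre coordinate of $E$ near $\partial U_\rho$; inside $U_\rho$ we use the disk-bundle coordinate, and outside we use the Liouville coordinate of $M\setminus D$.

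\textbf{Step 1: model the generators.} Choose the acceleration data $H_m$ as functions of the fibre radius $r$ only, perturbed by a Morse function on $D$. Then $1$-periodic orbits come in $S^1$-families over $D$, one family for each fibre winding number $j\in\Z$. After a Morse--Bott perturbation, each family contributes a copy of $C^*(D)$, shifted in degree by a Conley--Zehnder index that is linear in $j$. The radius $\rho$ enters through the action: the action of winding $j$ orbits is roughly $j(\rho/k)$ plus a slope term. Reeb orbits winding in positive and negative directions correspond to the inside and outside of $U_\rho$.

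\textbf{Step 2: identify the completion.} Compare the action of the capped winding $j$ orbit with its degree, using the normalization of $T$ and $S$ (with $\deg S=2$). For $\rho>\rho_\crit$ the action grows along positive powers of $S$, which forces the completion $\Gamma((S))$. For $\rho<\rho_\crit$ it is the opposite, giving $\Gamma((S^{-1}))$. At $\rho=\rho_\crit$ the action is proportional to the index, so the completion is trivial and we get $\Gamma[S,S^{-1}]$. This step should be routine bookkeeping with \eqref{eq:crit_radius} and the degree of $T$.

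\textbf{Step 3: compute the differential.} The chain complex is $\bigoplus_j \QH^{*}(D)\,S^j$, completed as in Step 2. The differential decomposes by the change in winding number.
\begin{itemize}
\item Within a fixed winding it is the Morse/quantum differential of $D$.
\item Between windings $j$ and $j+1$ it should be multiplication by $e_M$ plus a map that identifies $S^{j}$ with $S^{j+1}$. The term $k[\omega_D]_\Z$ is the Euler class of the normal bundle (the Gysin term). The term $kN_1(M,D)T$ counts holomorphic spheres tangent to $D$ to order $km_M$, which appear as caps passing through the skeleton.
\end{itemize}
To justify this I would neck-stretch along $\partial U_\rho$, or use the Diogo--Lisi type splitting for complements of divisors. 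Then Floer cylinders become cascades of Morse flow lines in $D$, fibre-rotation cylinders, and spheres in $M$ meeting $D$ once. Index considerations using $k<\tau$ (monotonicity of the complement) rule out higher-order terms. This shows the complex is the mapping cone of multiplication by $e_M+S$ on $\QH^{*+n}(D;\widehat\Gamma_\rho)$.

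\textbf{Step 4: conclude.} In the completed ring $\QH^*(D;\widehat\Gamma_\rho)$, the element $e_M+S$ is a non-zero-divisor. In the $\Gamma((S))$ case it equals $S(1+S^{-1}e_M)$; in the $\Gamma((S^{-1}))$ case it is $e_M(1+\dots)$ plus $S$, and we treat its nilpotent part similarly by filtering. Hence the cone has cohomology equal to the cokernel $\QH^{*+n}(D;\widehat\Gamma_\rho)/(e_M+S)$.

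\textbf{Main obstacle.} The hardest step is Step 3: proving that the differential between adjacent windings is exactly multiplication by $e_M+S$ over $\Z$, with correct signs and orientations, and that no further terms survive the completion. I would first handle the case $\Gamma=\Z$, where $N_1=0$ and only the Gysin term appears, by a spectral sequence in the winding filtration. I would then add the sphere contributions using the relative Gromov--Witten count defining $N_1(M,D)$.
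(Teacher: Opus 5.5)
Your outline (Morse--Bott model, neck-stretching, completion read off from \eqref{eq:crit_radius}, cone of $e_M+S$) matches the skeleton of the paper's argument. However, your Step~1 is false on the closed manifold $M$, and this hides the largest part of the proof.

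\textbf{Missing step: the other orbits.} A Hamiltonian that grows radially towards $D$ has constant orbits along $D$ (the paper's type $\IV$). Near the skeleton it has to be flattened, which produces further circle families at the flattening (type $\II$) and critical points in the complement (type $\I$). These are not removed by the action truncation or by the completion. In a fixed degree:
the constant orbits on $D$ have action roughly $\mu(\frac{\rho}{k}-\frac{1}{\tau})\to-\infty$ when $\rho<\rho_\crit$;
the outer families with winding near $-\frac{\rho}{\epsilon}\mu$ have action roughly $\frac{\rho\mu}{\epsilon}(\frac{1}{\tau}-\frac{\rho}{k})\to-\infty$ when $\rho>\rho_\crit$.
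So they sit in $\CF^*_{(-\infty,b)}(H_i)$ for all large $i$.

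You must show that they do not survive $\varprojlim_b\varinjlim_i$. You must also show that on the circle families at $\partial U_\rho$ the continuation maps are exactly the tautological inclusions $\iota_i$. Otherwise neither your completion nor your differential describes $\SC^*_M(\partial U_\rho)$. This is the content of Propositions \ref{prop:empty}, \ref{prop:res}, \ref{prop:dec}, \ref{prop:inc} and Corollaries \ref{cor:12van}, \ref{cor:corresp}, culminating in Theorem \ref{thm:equal}. With slopes increasing by $1$, these show:
nothing is mapped to type $\IV$;
type $\III$ is never mapped to type $\II$;
types $\I$ and $\II_+$ only feed each other and are pushed above every action level;
type $\II_-$ orbits arise only from type $\II_-$ orbits, with action rising by $\frac{\rho}{k}(\mu_+-\mu_-)$;
the type $\III$ component of each continuation map is $\iota_i$.
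The inputs are positivity of intersections with $D$, the contact-order bound at constant orbits on $D$ (Lemma \ref{lem:iaa}), $k<\tau$, the maximum principle, and the fibrewise Lemmas \ref{lem:conti_empty} and \ref{lem:addR2}. They also use neck-stretching along $\partial U_{\rho+2\epsilon}$, where $H$ is constant, not along $\partial U_\rho$ where the orbits sit.

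\textbf{Bookkeeping in Step 3.} Each circle family is a copy of $\partial E$, so it contributes two copies $\hat p,\check p$ of $\CM^*(f_D)$, not one. The classical Gysin term $k[\omega_D]_\Z$ goes from $\hat p$ to $\check p$ inside one family. Its quantum corrections and the $kN_1(M,D)T$ term stay at the same power of $S$ after the reindexing $\check\iota_m(pT^\ell)=[\check p,m-k\ell m_M,\ell A]$. Only the fibre cylinder through $D$, $d_0[\hat p,m,0]=[\check p,m+1,0]$, raises the power of $S$. With one copy of $C^*(D)$ per winding, as in your Step~3, you cannot get a mapping cone.

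\textbf{Step 4 fails for $\rho>\rho_\crit$.} This is exactly the case of interest. In $\Gamma((S))$ the factor $1+S^{-1}e_M$ is not a unit: its formal inverse $\sum_{j\ge 0}(-e_M)^{\star j}S^{-j}$ has infinitely many negative powers of $S$ unless $e_M$ is nilpotent. So writing $e_M+S=S(1+S^{-1}e_M)$ only restates the question.

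That factorization belongs to $\Gamma((S^{-1}))$, where it shows $e_M+S$ is a unit, hence $\SH^*_M(\partial U_\rho)=0$. There, and in $\Gamma[S,S^{-1}]$, injectivity is immediate from the top power of $S$; no filtering of a ``nilpotent part'' is needed.

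For $\Gamma((S))$ you need the argument of Theorem \ref{thm:gysin}. Suppose $\sum_{m\ge m_0}Z_mS^m$ is killed by $e_M+S$. Then $e_M\star Z_{m_0}=0$ and $Z_{m_0}=(-e_M)^{\star(m-m_0)}\star Z_m$ for all $m\ge m_0$. Hence $Z_{m_0}\in\bigcap_i(e_M^{\star i})$. Multiplication by $e_M$ is injective on this ideal, e.g.\ by the Krull intersection theorem, since $\QH^*(D;\Gamma)$ is finite over the Noetherian ring $\Gamma$ and $e_M$ has even degree. So $Z_{m_0}=0$, a contradiction.
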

	
	\begin{cor}\label{cor:sh=0}
		We have $\SH^*_M(\partial U_\rho)=0$  if and only if any of the following conditions holds:
		\begin{enumerate}[(a)]
			\item $\rho <\rho_\crit$,
			\item $\rho=\rho_\crit$ and the class $e_M\in \QH^2(D; \Gamma)$ is nilpotent,
			\item $\rho>\rho_\crit$ and the sequence of ideals 
			\begin{equation}\label{eq:ideal}
				\QH^*(D;\Gamma)\supseteq (e_M)\supseteq (e_M^{\star 2}) \supseteq (e_M^{\star 3})\supseteq \cdots
			\end{equation}
			stabilizes, where $e_M^{\star l}$ denotes the $l$-fold quantum product of $e_M$ with itself.
		\end{enumerate}
	\end{cor}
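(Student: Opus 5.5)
This follows purely algebraically from Theorem~\ref{thm:sh}. The task is to decide when the quotient $\QH^{*}(D;\widehat\Gamma_\rho)/(e_M+S)$ vanishes, i.e.\ when $e_M+S$ is a unit in $\QH^*(D;\widehat\Gamma_\rho)$. We treat the three regimes separately.

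Throughout we use that $\QH^*(D;\Gamma)$ is a finitely generated free $\Gamma$-module. Also, the part of $e_M$ that does not involve $T$ is $k[\omega_D]_\Z$, which has positive degree in $\H^*(D;\Z)$ and is therefore nilpotent for the classical cup product.

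\emph{Case $\rho<\rho_\crit$.} Here $\widehat\Gamma_\rho=\Gamma((S^{-1}))$, and we write
\[
e_M+S=S\,(1+S^{-1}e_M).
\]
The factor $S$ is a unit. The series $\sum_{j\ge0}(-S^{-1}e_M)^{\star j}$ converges in $\QH^*(D;\Gamma)((S^{-1}))$, since the $j$-th term carries the factor $S^{-j}$. This series inverts $1+S^{-1}e_M$, so $e_M+S$ is a unit and the quotient vanishes. In this case $\SH$ vanishes unconditionally, which matches (a).

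\emph{Case $\rho=\rho_\crit$.} Here $\widehat\Gamma_\rho=\Gamma[S,S^{-1}]$.

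If $e_M$ is nilpotent, the finite sum
\[
S^{-1}\sum_{j}(-S^{-1}e_M)^{\star j}
\]
inverts $e_M+S$, so the quotient vanishes.

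Conversely, suppose the quotient vanishes. Then
\[
\QH^*(D;\Gamma)[S,S^{-1}]/(S+e_M)\cong \QH^*(D;\Gamma)[e_M^{-1}],
\]
obtained by substituting $S=-e_M$; this uses that $S$ is a central polynomial variable. Vanishing of this localization means some power $e_M^{\star l}$ is zero, since the module is the localization of a ring at the element $e_M$. So $e_M$ is nilpotent.

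\emph{Case $\rho>\rho_\crit$.} Here $\widehat\Gamma_\rho=\Gamma((S))$. Since $\QH^*(D;\Gamma)$ is finite free over $\Gamma$, we have $\QH^*(D;\widehat\Gamma_\rho)=A((S))$ with $A=\QH^*(D;\Gamma)$.

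We claim that $e_M+S$ is a unit in $A((S))$ if and only if the chain of ideals $(e_M^{\star l})$ stabilizes. To see this, view multiplication by $e_M$ as an endomorphism $\phi$ of $A$. Then $A[[S]]/(S+e_M)$ is the inverse limit $\varprojlim A/(e_M^{\star l})$, and inverting $S$ corresponds to inverting $e_M$ there.

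If the chain stabilizes, say $e_M^{\star l}=e_M^{\star (l+1)}\star a$ for some $a\in A$, then the element $x=-e_M^{\star l}$ becomes a unit modulo $(S+e_M)$ in a way compatible with $S$, and $e_M+S$ is invertible. More concretely, this gives an explicit geometric-series inverse in $A((S))$ of the form
\[
S^{-1}\sum_{j}(-S^{-1}e_M)^{\star j}
\]
truncated using $a$.

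If the chain does not stabilize, we produce a nonzero element of the quotient. The cleanest route is to tensor with a field or to use degree bookkeeping. Each graded piece of $A((S))/(S+e_M)$ in a fixed degree maps onto the nonzero module $A_d/\mathrm{im}$ coming from the strictly decreasing chain, restricted to a fixed grading. This is where the grading of $T$ and finiteness enter.

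The main obstacle is this last direction for $\rho>\rho_\crit$. There one has to handle infinite Laurent series in $S$ carefully and use that each graded component of $A$ is a finitely generated $\Z$-module, so that the non-stabilizing chain actually produces a nonzero class. I would first try to work degree by degree, writing an element of degree $d$ in $A((S))$ as $\sum_j a_j S^j$ with $a_j\in A^{d-2j}$. Inverting $S+e_M$ then amounts to solving the recursion $a_{j-1}+e_M a_j=\delta$, and solvability for all $j$ is exactly equivalent to the stabilization of the images $e_M^{\star l}A$.
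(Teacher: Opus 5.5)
Your arguments for $\rho<\rho_\crit$ and $\rho=\rho_\crit$ are correct. In (b), the identification $A[S,S^{-1}]/(S+e_M)\cong A[e_M^{-1}]$ with $A=\QH^*(D;\Gamma)$ gives a tidier converse than the paper's coefficient comparison, which reads off $Z_{m_0}=1_D$, $Z_{m_1}=(-e_M)^{\star(m_1-1)}$ and $e_M\star Z_{m_1}=0$.

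The gap is in case (c), where the real content lies, and above all in the direction ``$e_M+S$ invertible in $A((S))$ $\Rightarrow$ the chain stabilizes''. The routes you sketch do not work:
\begin{itemize}
\item Tensoring with a field destroys exactly the integral phenomenon at stake; cf.\ Remark~\ref{rem:coeff}, where over a Novikov field the chain always stabilizes.
\item There is no useful map from the quotient onto $A/(e_M^{\star l})$. Since $e_M\equiv -S$ acts invertibly on $A((S))/(S+e_M)$, every $A$-linear map from it to $A/(e_M^{\star l})$ vanishes.
\end{itemize}
The recursion you name at the end is the right tool, but you only assert its outcome. The missing step is short. Suppose $y=\sum_{j\ge -N}y_jS^j$ satisfies $(S+e_M)y=1$ with $N\ge1$. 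Comparing coefficients of $S^j$ for $-N\le j\le 0$ gives $e_M\star y_{-N}=0$, $y_{j-1}=-e_M\star y_j$ for $-N<j<0$, and $y_{-1}=1-e_M\star y_0$. Hence $0=e_M\star(-e_M)^{\star(N-1)}\star(1-e_M\star y_0)$, i.e.\ $e_M^{\star N}=e_M^{\star(N+1)}\star y_0$, so $(e_M^{\star N})=(e_M^{\star(N+1)})$. If $N=0$, one gets $e_M\star y_0=1$ directly.

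Alternatively, you could finish your completion picture. Once the Noetherian identification $A[[S]]/(S+e_M)\cong\hat A$ is justified, you have $A((S))/(S+e_M)\cong\hat A[e_M^{-1}]$. This vanishes iff $e_M^{\star l}=0$ in $\hat A=\varprojlim A/(e_M^{\star j})$ for some $l$, i.e.\ iff $e_M^{\star l}\in(e_M^{\star j})$ for all $j$. You state this identification but never draw that conclusion.

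The other direction is also not correct as written. The series $S^{-1}\sum_j(-S^{-1}e_M)^{\star j}$ has infinitely many negative powers of $S$ and is not an element of $A((S))$, so ``truncated using $a$'' must be made precise. You need a tail of positive powers of $S$ built from the inverse of $\star e_M$ on the stable ideal (the paper's $\mathcal E$). Concretely, if $e_M^{\star N}=e_M^{\star(N+1)}\star a$, then
\[
y=S^{-1}\sum_{j=0}^{N-1}(-S^{-1}e_M)^{\star j}+(-1)^N S^{-N}e_M^{\star N}\star\sum_{i\ge0}(-1)^i a^{\star(i+1)}S^i
\]
satisfies $(S+e_M)y=1$, because $a$ acts as $e_M^{-1}$ on $(e_M^{\star N})$ and the second sum telescopes.

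A minor point: $\QH^*(D;\Gamma)$ need not be free over $\Gamma$ when $\H^*(D;\Z)$ has torsion, but only finite generation is used.
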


	We recall the following properties of relative symplectic cohomology that are relevant to our results. We refer to  \cite{Var21,AGV24} for other fundamental properties.
	\begin{enumerate}[(1)]
		\item $\SH_M^*(M)\cong \QH^*(M;\Lambda)$, where $\Lambda=\Z[\mathfrak{t},\mathfrak{t}^{-1}]$ with $ \mathfrak{t}$ of degree twice the minimal Chern number of $M$ if $\omega(\pi_2(M))\neq0$, and $\Lambda=\Z$ otherwise.
		\item For any compact subsets $K\subset K'$ of $M$, there is a restriction map $\SH_M^*(K')\to \SH_M^*(K)$. Moreover, $\SH_M^*(K')=0$ implies $\SH_M^*(K)=0$.		
		\item If $K$ is stably displaceable, then $\SH_M^*(K)=0$.
		\item The Mayer-Vietoris property of relative symplectic cohomology holds for any pair of compact subsets $K,K'\subset M$ such that there are Poisson commuting functions $f,f'$ on $M$ with $f^{-1}(0)=K$, $f'^{-1}(0)=K'$.
	\end{enumerate}
	
	Recall that a compact subset $K\subset M$ is called stably non-displaceable if the product of $K$ with the zero section of $T^*S^1$ is non-displaceable in $M\times T^*S^1$ by any compactly supported Hamiltonian diffeomorphism.
	
	Let $\overline{U}_\rho$ denote the closure of $U_\rho$. Since $D$ is stably displaceable, $\overline{U}_\rho$ is also stably displaceable for sufficiently small $\rho>0$, which implies  $\SH_M^*(\overline{U}_\rho)=0$. 
	
	\begin{prop}\label{prop:annuli}
		For any $0<\rho_1\leq\rho_2 <\rho_\crit <\rho_3\leq \rho_4< 1$, the following hold.
		\begin{enumerate}[(a)]
			\item $\SH^*_M( \overline{U}_{\rho_2} \setminus U_{\rho_1})\cong\SH^*_M( \partial{U}_{\rho})=0$ for any $\rho_1\leq \rho\leq\rho_2$.
			\item $\SH^*_M( \overline{U}_{\rho_3} \setminus U_{\rho_2})\cong \SH^*_M(\partial U_{\rho_\crit})$.
			\item $\SH^*_M( \overline{U}_{\rho_4} \setminus U_{\rho_3})\cong \SH^*_M(\partial U_{\rho})$ for any $\rho_3\leq \rho\leq \rho_4$.
			\item $\SH^*_M(\overline{U}_{\rho_1})=0$.
			\item $\SH^*_M(\overline{U}_{\rho_3})\cong\SH^*_M(\partial U_{\rho_\crit})$.
			\item $\SH^*_M(M\setminus U_{\rho_3})\cong \SH^*_M(M\setminus U_{\rho_4})$.
			\item If $\mathrm{sk}(M\setminus D)$ is stably displaceable, then $\SH^*_M(M\setminus U_{\rho_3})=0$ and 
			 \[
			 \QH^*(M;\Lambda)\cong \SH_M^*(\partial U_{\rho_\crit})\cong \SH_M^*(\overline{U}_{\rho_4})\cong \SH_M^*(M\setminus U_{\rho_2}).
			 \]
		\end{enumerate}	
		\end{prop}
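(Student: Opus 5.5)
The plan is to combine Theorem~\ref{thm:sh} and Corollary~\ref{cor:sh=0} with the Mayer--Vietoris property (4), the restriction property (2), and property (1). All the compact sets in question are sublevel or level sets of functions of the single radial coordinate $r=\pi|z|^2$ on $U_{\rho_4}$, suitably extended to $M$. Since any two functions of the same coordinate Poisson commute, Mayer--Vietoris applies to every pair we use. Concretely, for $\rho_a\le\rho_b$ we write $\overline U_{\rho_b}\setminus U_{\rho_a}=f^{-1}(0)$ with $f=f(r)$ vanishing exactly on $[\rho_a,\rho_b]$ and smoothly extended to be constant on $M\setminus U_{\rho_4'}$ for some $\rho_4'$ slightly larger than $\rho_4$.

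\emph{Shell statements (a)--(c).} We first show that a shell $A=\overline U_{\rho_b}\setminus U_{\rho_a}$ lying entirely on one side of $\rho_\crit$, or containing $\rho_\crit$ in its interior, has the expected $\SH$.

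For (a), let $\rho_b<\rho_\crit$. Every level $\partial U_\rho$ in the shell has $\SH=0$ by Corollary~\ref{cor:sh=0}(a). Cover the shell by finitely many thin sub-shells and glue them with Mayer--Vietoris: the pieces and their intersections (level circle bundles) all have vanishing $\SH$, so the union does as well. This gives $\SH_M^*(A)=0$, and the level-set vanishing is exactly Corollary~\ref{cor:sh=0}(a).

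For (c), a cleaner route is to prove that the restriction map $\SH_M^*(A)\to\SH_M^*(\partial U_\rho)$ is an isomorphism. I would do this through the computation underlying Theorem~\ref{thm:sh}: choose Hamiltonians that are functions of $r$ and compute via the Reeb dynamics of the prequantization bundle. For a thin shell in the region $\rho>\rho_\crit$ (respectively $\rho<\rho_\crit$), the action-index relation of the capped orbits has the same sign on the whole shell. The completed telescope therefore produces the same module $\QH^{*+n}(D;\widehat\Gamma_\rho)/(e_M+S)$, and the restriction maps are compatible with this identification. Thin-shell isomorphisms then propagate to thick shells by Mayer--Vietoris. The five-lemma applies because the intersections are levels whose $\SH$ is independent of $\rho$ on that side.

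For (b), split $\overline U_{\rho_3}\setminus U_{\rho_2}$ as
\[
\bigl(\overline U_{\rho_\crit}\setminus U_{\rho_2}\bigr)\cup\bigl(\overline U_{\rho_3}\setminus U_{\rho_\crit}\bigr),
\]
whose intersection is $\partial U_{\rho_\crit}$. The inner piece has $\SH=0$ by the argument for (a), even with its endpoint at $\rho_\crit$, via the limiting behaviour of $\widehat\Gamma$. The outer piece, one-sided at criticality, has to be analysed directly. I expect this to be the \textbf{main obstacle}: one must show that the outer piece has $\SH\cong\SH_M^*(\partial U_{\rho_\crit})$ even though the nearby supercritical levels carry $\Gamma((S))$ rather than $\Gamma[S,S^{-1}]$. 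I would first try to compute $\SH$ of the shell directly with radial Hamiltonians, checking that the $S$-completion at the inner boundary is the Laurent polynomial one. If that fails, I would instead apply Mayer--Vietoris to a shell straddling $\rho_\crit$.

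\emph{Statements (d)--(g).} For (d), write $\overline U_{\rho_1}=\overline U_\epsilon\cup(\overline U_{\rho_1}\setminus U_\epsilon)$ with $\epsilon$ small enough that $\overline U_\epsilon$ is stably displaceable. The first piece has $\SH=0$ by (3), and the second and the intersection have $\SH=0$ by (a); Mayer--Vietoris then gives (d). For (e), decompose $\overline U_{\rho_3}=\overline U_{\rho_2}\cup(\overline U_{\rho_3}\setminus U_{\rho_2})$ with intersection $\partial U_{\rho_2}$; combining (d), (a) and (b) gives the claim. For (f), write $M\setminus U_{\rho_3}=(M\setminus U_{\rho_4})\cup(\overline U_{\rho_4}\setminus U_{\rho_3})$. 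By (c) the restriction from the shell to $\partial U_{\rho_4}$ is an isomorphism, so the Mayer--Vietoris sequence forces $\SH_M^*(M\setminus U_{\rho_3})\to\SH_M^*(M\setminus U_{\rho_4})$ to be an isomorphism.

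For (g), stable displaceability of the skeleton shows that $M\setminus U_{\rho}$ is stably displaceable for $\rho$ close to $1$, since it is a small neighbourhood of the skeleton. By (3) its $\SH$ vanishes, and (f) shows that $\SH_M^*(M\setminus U_{\rho_3})=0$. Mayer--Vietoris for $M=\overline U_{\rho_4}\cup(M\setminus U_{\rho_3})$ together with (1) and (c) gives $\QH^*(M;\Lambda)\cong\SH_M^*(\overline U_{\rho_4})$, and (e) identifies this with $\SH_M^*(\partial U_{\rho_\crit})$. Finally, the decomposition $M=\overline U_{\rho_2}\cup(M\setminus U_{\rho_2})$ with (d) and (a) gives $\QH^*(M;\Lambda)\cong\SH_M^*(M\setminus U_{\rho_2})$.
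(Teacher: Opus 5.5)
Your derivations of (d)--(g) from (a)--(c) by Mayer--Vietoris are fine and match the paper, but (b) is not proved, and one step you propose for it is false. You claim that the inner piece $\overline U_{\rho_\crit}\setminus U_{\rho_2}$ has vanishing $\SH$. That set contains $\partial U_{\rho_\crit}$, so by the restriction property (2) this would force $\SH^*_M(\partial U_{\rho_\crit})=0$. That fails whenever $e_M$ is not nilpotent, e.g.\ for $(\mathbb{CP}^n,\mathbb{CP}^{n-1})$ by Proposition~\ref{prop:cp_n}. You rightly flag the outer piece $\overline U_{\rho_3}\setminus U_{\rho_\crit}$ as the main obstacle, but you leave it unresolved. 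Your fallback of applying Mayer--Vietoris to a shell straddling $\rho_\crit$ is circular, since that is exactly statement (b). So the proposal never supplies the one new input the statement needs: a computation of $\SH$ for a shell that meets $\partial U_{\rho_\crit}$.

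The paper supplies this by computing $\SH^*_M(\overline U_{\rho'}\setminus U_\rho)$ directly. It reruns the proof of Theorem~\ref{thm:sh} with a radial Hamiltonian that vanishes on the shell; the generators and Floer (continuation) cylinders are the same up to action shifts.
\begin{itemize}
\item Orbits of positive winding now sit at the inner radius $\rho$, and orbits of negative winding at the outer radius $\rho'$.
\item By \eqref{eq:action_index_III}, fixed-degree infinite sums with $m\to+\infty$ have action tending to $+\infty$ only if $\rho>\rho_\crit$. Those with $m\to-\infty$ do so only if $\rho'<\rho_\crit$.
\item For $\rho_2<\rho_\crit<\rho_3$ neither kind survives the completion. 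The coefficient ring is therefore $\Gamma[S,S^{-1}]$, and (b) follows.
\item Proposition~\ref{prop:inc} shows that these isomorphisms are restriction maps.
\end{itemize}
Your plan for (c) is essentially this computation. The same bookkeeping shows that your inner piece has $\SH\cong\SH^*_M(\partial U_{\rho_\crit})$, not $0$.

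There is also a smaller gap in (a). You use that the thin sub-shells have vanishing $\SH$, but only the levels are known to, so the gluing is circular as written. You can fix this by the direct computation. Alternatively, note that non-heaviness is an open condition. Then, by Proposition~\ref{prop:heavy}, each level $\partial U_\rho$ with $\rho<\rho_\crit$ has a closed shell neighbourhood with vanishing $\SH$, and compactness plus Mayer--Vietoris finish (a).
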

		\begin{proof}
			It is clear from the proof of Theorem \ref{thm:sh} that the computation of $\SH^*_M( \partial{U}_{\rho})$ adapts straightforwardly to $\SH^*_M( \overline{U}_{\rho'} \setminus U_{\rho})$, with the resulting cohomology depending on the position of $\rho\leq\rho'$ relative to $\rho_\crit$. More precisely, the generators and the Floer (continuation) cylinders for the chain complex of $\SH^*_M( \overline{U}_{\rho'} \setminus U_{\rho})$ are identical up to an action adjustment depending on $\rho\leq\rho'$, while the completion of the chain complex is determined by the position of $[\rho,\rho']$ relative to $\rho_\crit$. This proves the isomorphisms in (a)-(c). Furthermore, by Proposition \ref{prop:inc}, the isomorphisms are given by the restriction maps. 
			
			The statements (d)-(g) follow from the Mayer-Vietoris property. Note that (f) was proved in \cite{BSV22} using the Fukaya trick, and the other isomorphisms can plausibly also be proved by the same technique.  
		\end{proof}

	\begin{remark}\label{rem:coeff}
		The above results continue to hold if the ground ring $\Z$ is replaced by any commutative ring, or if the Novikov ring $\Gamma=\Z[T,T^{-1}]$ is replaced by a Novikov field $\mathbb{K}((T))$. In the latter case, the sequence \eqref{eq:ideal} always stabilizes since $\QH^*(D,\mathbb{K}((T)))$ is a finite dimensional vector space over $\mathbb{K}((T))$.
	\end{remark}
	
	\begin{remark}
	 Theorem \ref{thm:sh} can be viewed as synthesizing the results of  \cite{AK23,BKK24}. Removing either the divisor $D$ or the skeleton $\mathrm{sk}(M\setminus D)$ yields a symplectic disk bundle $E$ or a Liouville manifold $M\setminus D$, respectively. The Rabinowitz Floer homologies of the dual of $E$ and of $M\setminus D$ were computed in \cite{AK16,Ven18,Ven21,AK23} and \cite{BKK24}, respectively. Theorem \ref{thm:sh} integrates these two  computational perspectives.
	\end{remark}

	\subsection{Heavy and superheavy subsets}

		The concepts of heaviness and superheaviness for compact subsets were introduced in \cite{EP09}. Among  other fundamental results, they proved that superheaviness implies heaviness, that every heavy set is stably non-displaceable, and that every superheavy set intersects every heavy set. The last property  implies that any superheavy set is strongly non-displaceable. 
		Recall that a compact subset $K\subset M$ is called strongly non-displaceable if it is non-displaceable by any symplectomorphism of $M$. The notion of (super)heaviness is defined via the spectral invariants of the Floer homology of $M$ associated with an idempotent element of the quantum cohomology of $M$. In this paper, we take this idempotent element to be the unit element and, unless otherwise stated, work primarily  with $\Z$-coefficients. 
		
		 The following remarkable properties of $\SH_M^*(K)$ established in \cite{MSV24} show that it is an effective  invariant to detect (super)heavy sets.
		
		\begin{enumerate}[(1)]
			\item  A compact subset $K\subset M$ is heavy (with respect to the unit of $\QH^*(M;\Lambda)$ over $\Z$) if and only if $\SH_M^*(K)\neq0$.
			\item If $K$ is superheavy, then $K$ is SH-full, namely every compact subset $K'$ disjoint from $K$ has $\SH_M^*(K')=0$.
			\item If $K$ is SH-full, then $K$ is superheavy provided that, for some nonnegative $H\in C^\infty(M)$ with $H|_K=0$ and any $m\in\N$, $c(1;mH)=c([\omega^{\wedge n}];-mH)$ holds and the boundary depth of the Floer chain complex for $mH$ is uniformly bounded.
		\end{enumerate}
		Here $c(\eta;H)$ denotes the spectral invariant of $H$ with respect to $\eta\in\QH^*(M;\Lambda)$, see Section \ref{sec:spec}.
		Although we do not know whether property (3) holds unconditionally, the invariance $\SH_M^*(K)\cong \SH_M^*(\varphi(K))$ under symplectomorphisms $\varphi$ implies the following.
		\begin{enumerate}[(1)] 
			\item[(4)] If $K$ is heavy and SH-full (with any ground ring), then $K$ is strongly non-displaceable.	
		\end{enumerate}

	\begin{theorem}\label{thm:heavy}
	The following hold.
	\begin{enumerate}[(a)]

		\item  If $\SH^*_M(M\setminus U_{\rho})\neq0$ for some $\rho>\rho_\crit$, then $\SH^*_M(\mathrm{sk}({M\setminus D}))\neq0$.
		\item If  $\SH_M^*(\partial U_{\rho})\neq 0$ for some $\rho>\rho_\crit$, then $\SH_M^*(\partial U_{\rho_\crit})\neq 0$.

		\item At least one of $\mathrm{sk}({M\setminus D})$ and $\partial U_{\rho_\crit}$ is heavy.
		\item If $\mathrm{sk}({M\setminus D})$ is not heavy, then the restriction map $\QH^*(M;\Lambda)\to \SH_M^*(\partial U_{\rho_\crit})$ is an isomorphism. Moreover, $\partial U_{\rho_\crit}$ is SH-full and strongly non-displaceable.

		\item If $\partial U_{\rho_\crit}$ is not heavy, then the restriction map $\QH^*(M;\Lambda)\to \SH_M^*(M\setminus U_\rho)$ is an isomorphism for any $\rho>\rho_\crit$. Moreover, $\mathrm{sk}({M\setminus D})$ is SH-full and strongly non-displaceable.

		\item If $e_M\in\QH^*(D;\Gamma)$ is not nilpotent, then $\partial U_{\rho_\crit}$ is heavy. 
		\item If the sequence in \eqref{eq:ideal} does not stabilize, then, $\partial U_\rho$ is heavy for every $\rho> \rho_\crit$. Consequently, $\mathrm{sk}({M\setminus D})$ and $\partial U_{\rho_\crit}$ are also heavy.
 
		\item If the degree of $D$ satisfies $k\geq 2$, then the restriction map $\QH^*(M;\Lambda)\to \SH_M^*(\partial U_{\rho_\crit})$ is not an isomorphism. Consequently,  by (d), $\mathrm{sk}(M\setminus D)$ is heavy. (cf.~Theorem \ref{thm:superheavy}.(a))

	\end{enumerate}
	\end{theorem}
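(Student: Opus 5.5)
We combine Theorem \ref{thm:sh}, Corollary \ref{cor:sh=0}, Proposition \ref{prop:annuli}, the Mayer--Vietoris property, and the heaviness criterion of \cite{MSV24}: $K$ is heavy if and only if $\SH_M^*(K)\neq0$. The functions $f=|\text{fiber radius}|^2-\rho$ (extended suitably) and its truncations Poisson commute, since they are functions of the moment map of the fiberwise circle action on $U_{1}$. So Mayer--Vietoris applies to any pair of sets of the form $\overline U_{\rho}$, $M\setminus U_{\rho'}$, $\overline U_{\rho'}\setminus U_{\rho}$.

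\textbf{Part (a).} Restriction is functorial, and $\SH_M^*(K')=0$ implies $\SH_M^*(K)=0$ for $K\subset K'$. Since $\mathrm{sk}(M\setminus D)=\bigcap_{\rho}(M\setminus U_\rho)$, I would use Proposition \ref{prop:annuli}(f): all $M\setminus U_\rho$ with $\rho>\rho_\crit$ have isomorphic $\SH$ via restrictions. Continuity of relative $\SH$ under decreasing intersections, or the Liouville-flow invariance, then identifies this with $\SH_M^*(\mathrm{sk})$. Concretely, the Liouville flow gives a symplectic isotopy shrinking $M\setminus U_\rho$ into arbitrarily small neighborhoods of the skeleton, and relative $\SH$ of a compact set is the limit over neighborhoods.

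\textbf{Part (b).} Restriction from $\overline U_{\rho_3}\setminus U_{\rho_2}$ to $\partial U_\rho$ with $\rho_2<\rho_\crit<\rho\le\rho_3$ is a unit-preserving ring map. By Proposition \ref{prop:annuli}(b) its source is $\SH_M^*(\partial U_{\rho_\crit})$. If that source vanished, the unit of the target would vanish, so nonvanishing of the target forces nonvanishing of $\SH_M^*(\partial U_{\rho_\crit})$.

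\textbf{Part (c).} Apply Mayer--Vietoris to $M=\overline U_{\rho_3}\cup (M\setminus U_{\rho_2})$, whose intersection is the annulus. If both sets vanish, then Proposition \ref{prop:annuli}(e) and the argument of part (a) (applied to $M\setminus U_{\rho_2}$, via (f) and $\rho_2$ close to $\rho_\crit$; I would rather use $M\setminus U_{\rho'}$ with $\rho'>\rho_\crit$) would give $\QH^*(M;\Lambda)=\SH_M^*(M)=0$, a contradiction. The cleanest covering is $\overline U_{\rho_4}\cup(M\setminus U_{\rho_3})$, whose intersection has $\SH$ equal to $\SH(\partial U_\rho)$.

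\textbf{Parts (d) and (e).} In Mayer--Vietoris, vanishing of one piece makes the restriction map from $M$ to the other piece an isomorphism, since the annulus intersection's $\SH$ factors through both. SH-fullness follows because any compact set disjoint from $\partial U_{\rho_\crit}$ (respectively from the skeleton) lies in a union of the vanishing pieces, after Liouville shrinking. Strong non-displaceability then follows from property (4).

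\textbf{Parts (f) and (g).} These are immediate from Corollary \ref{cor:sh=0} together with parts (a) and (b).

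\textbf{Part (h).} This is the main obstacle. Suppose $\QH^*(M;\Lambda)\to\QH^{*+n}(D;\Gamma[S^{\pm1}])/(e_M+S)$ were an isomorphism. I would reduce modulo $k$, which is possible by Remark \ref{rem:coeff}. Since $e_M=k(\ldots)$, we have $e_M\equiv0$, so the target becomes $\QH^*(D;\Z/k[\ldots][S^{\pm1}])/(S)=0$, because $S$ is invertible. Meanwhile $\QH^*(M;\Z/k)\neq0$. The point to verify is that base change commutes with both sides, which holds because the complexes are free over $\Z$ and the relevant maps are chain-level. This shows $\SH_M^*(\partial U_{\rho_\crit};\Z/k)=0\neq\QH^*(M;\Z/k)$. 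By the universal coefficient theorem, the integral restriction map cannot be an isomorphism.
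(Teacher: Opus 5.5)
\textbf{Part (a) has a genuine gap}, and (c), (d), (e) and the last claim of (g) all depend on it. In (d) you need $\SH_M^*(M\setminus U_{\rho_3})=0$ from $\SH_M^*(\mathrm{sk}(M\setminus D))=0$; in (e) you need heaviness of the skeleton for strong non-displaceability.

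Monotonicity of restriction only yields the converse of (a), namely $\SH_M^*(\mathrm{sk}(M\setminus D))\neq0\Rightarrow\SH_M^*(M\setminus U_\rho)\neq0$. Proposition \ref{prop:annuli}(f) only compares the sets $M\setminus U_\rho$, $\rho_\crit<\rho<1$, with one another. So something must carry nonvanishing down to $\bigcap_\rho(M\setminus U_\rho)$, and neither bridge you offer does this.

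\begin{itemize}
\item The Liouville flow of $(W,\lambda)$ is conformally symplectic, $\phi_t^*\omega=e^{t}\omega$, not symplectic. No symplectic isotopy can move $M\setminus U_\rho$, which has positive volume, into arbitrarily small neighborhoods of the skeleton, which has measure zero. Moreover, $\SH_M^*$ is invariant only under symplectomorphisms.
\item $\SH_M^*(K)\cong\varinjlim_{N\supset K}\SH_M^*(N)$ over compact neighborhoods is not an available property of relative symplectic cohomology. The completion $\varprojlim_b$ in its definition need not commute with the colimit over neighborhoods, and you give no argument for this isomorphism.
\end{itemize}

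\textbf{The missing idea is to pass through heaviness, where continuity is trivial.} By (f) and Proposition \ref{prop:heavy}, every $M\setminus U_\rho$ with $\rho>\rho_\crit$ is heavy. Writing $\zeta(H)=\lim_{m\to\infty}c(mH)/m$, this gives $\zeta(H)\le\max_{M\setminus U_\rho}H$ for all such $\rho$. Letting $\rho\nearrow1$ and using compactness gives $\zeta(H)\le\max_{\mathrm{sk}(M\setminus D)}H$. This is the statement that decreasing intersections of heavy sets are heavy (\cite[Lemma 3.7]{MSV24}). Hence $\SH_M^*(\mathrm{sk}(M\setminus D))\neq0$, again by Proposition \ref{prop:heavy}.

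With (a) repaired this way, the rest of your plan works.
\begin{itemize}
\item Your (b), via the unital restriction $\SH_M^*(\overline U_{\rho_3}\setminus U_{\rho_2})\to\SH_M^*(\partial U_\rho)$ and Proposition \ref{prop:annuli}(b), is valid. It is a more direct alternative to the paper's algebraic argument through Corollary \ref{cor:sh=0}.
\item Your (h) is correct, but the detour through $\Z/k\Z$-coefficients and the universal coefficient theorem is unnecessary. By Theorem \ref{thm:sh}, $\SH_M^*(\partial U_{\rho_\crit})\otimes\Z/k\Z\cong R/kR$, where $R=(\QH^*(D;\Gamma)\otimes_\Z\Z[S,S^{-1}])/(e_M+S)$. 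This vanishes because $-e_MS^{-1}=1$ in $R$ and $e_M$ is divisible by $k$. Meanwhile $\QH^*(M;\Lambda)\otimes\Z/k\Z\neq0$, so the two are not even isomorphic as $\Z$-modules.
\item No Liouville shrinking is needed for SH-fullness in (d) and (e). By compactness, a set disjoint from $\partial U_{\rho_\crit}$ lies in some $\overline U_{\rho_2}\cup(M\setminus U_{\rho_3})$ with $\rho_2<\rho_\crit<\rho_3$. A set disjoint from the skeleton lies in some $\overline U_\rho$ with $\rho<1$.
\end{itemize}
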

	\begin{proof}
		
		We first prove (a). By the assumption together with Proposition \ref{prop:annuli}.(f), for every $\rho>\rho_\crit$, $\SH^*_M(M\setminus U_{\rho})\neq0$ and thus $M\setminus U_{\rho}$ is heavy. This implies that $\mathrm{sk}({M\setminus D}) =\bigcap_{\rho>\rho_\crit} M\setminus U_{\rho}$ is also heavy, see \cite[Lemma 3.7]{MSV24}. Hence, $\SH^*_M(\mathrm{sk}({M\setminus D}))\neq0$.
	
		Statement (b) follows from Corollary \ref{cor:sh=0}. Suppose $\SH_M^*(\partial U_{\rho_\crit})=0$. Then, $e_M$ is nilpotent. This implies that the sequence in \eqref{eq:ideal} stabilizes and hence $\SH_M^*(\partial U_{\rho})=0$ for every $\rho>\rho_\crit$. One can also see this alternatively as follows. If $\partial U_{\rho}$ is heavy for some $\rho>\rho_\crit$, then $\partial U_{\rho'}$ is heavy for every $\rho'>\rho_\crit$ by Proposition \ref{prop:annuli}.(c). This yields that $\partial U_{\rho_\crit}$ is also heavy. 
	
		We take any $0<\rho_1<\rho_2<\rho_\crit<\rho_3<\rho_4<1$. By Proposition \ref{prop:annuli}.(d), $\SH_M^*(\overline{U}_{\rho_2})=0$. This together with the Mayer-Vietoris property implies that at least one of $\SH^*_M( \overline{U}_{\rho_4} \setminus U_{\rho_1})$ and $\SH^*_M( M \setminus U_{\rho_3})$ is nonzero. In the former case, $\partial U_{\rho_\crit}$ is heavy by Proposition \ref{prop:annuli}.(b). In the latter case, $\mathrm{sk}({M\setminus D})$ is heavy by (a). This proves (c). 
		
		We now prove (d). By (a), $\SH_M^*(\mathrm{sk}({M\setminus D}))=0$ implies $\SH^*_M(M\setminus U_{\rho_3})=0$. Thus, Proposition \ref{prop:annuli} and the Mayer-Vietoris property yield that the restriction map $\QH^*(M;\Lambda)\to\SH^*_M( \overline{U}_{\rho_4} \setminus U_{\rho_1})\cong \SH^*_M(\partial U_{\rho_\crit})$ is an isomorphism. Moreover, $\SH_M^*(\overline{U}_{\rho_2} \cup (M\setminus U_{\rho_3}))\cong \SH_M^*(\overline{U}_{\rho_2})\oplus \SH_M^*(M\setminus U_{\rho_3})=0$. This proves that $\partial U_{\rho_\crit}$ is SH-full.

		Statement (e) follows analogously again from Proposition \ref{prop:annuli} and the Mayer-Vietoris property.	
		Statements (f) and (g) are direct consequences of Corollary \ref{cor:sh=0}. For the latter claim in (g), we use (a) and (b).  
	
		We consider the ring 
		$
		R:= \frac{\QH^*(D;\Gamma)\otimes_\Z\Z[S,S^{-1}]}{(e_M+S)},
		$
		which is isomorphic to $\SH_M^*(\partial U_{\rho_\crit})$ as $\Z$-module by Theorem \ref{thm:sh}. Since $-e_MS^{-1}$ equals the unit element of $R$,  where $e_M=k([\omega_D]_\Z +N_1(M,D)T)$, we have $R/kR=0$. However, $\QH^*(M;\Lambda)$ has a free factor $\Z$, and thus $\QH^*(M;\Lambda)/k\QH^*(M;\Lambda)$ contains $\Z/k\Z$. Therefore, for $k\geq 2$, $R\not\cong \QH^*(M;\Lambda)$ as $\Z$-modules. This proves (h). 
	\end{proof}

	We stress again that working with $\Z$-coefficients is essential to maximize the nonvanishing of $\SH_M^*(\partial U_\rho)$ for $\rho\geq \rho_\crit$. Indeed, heaviness over some commutative ring $R$  implies heaviness over $\Z$, but the converse does not hold in general. On the other hand, employing other coefficient rings is more effective when seeking strongly non-displaceable (or SH-full) subsets. Indeed, superheaviness over $\Z$ implies superheaviness over any commutative ring $R$, see Section \ref{sec:spec}. In our setting, taking coefficients in $\Z/k\Z$ is a natural choice.

	\begin{theorem}\label{thm:superheavy}
	Let $(M,D)$ be as before, where $\PD([D])=k[\omega]_\Z$.
	\begin{enumerate}[(a)]
		\item For $k\geq 2$, $\mathrm{sk}({M\setminus D})$ is strongly non-displaceable.
		\item Let $D'\subset M$ be another symplectic hyperplane section of degree $0<k'<\tau$. Assume that $\gcd(k,k')\geq2$. Then, $\mathrm{sk}({M\setminus D})\cap \mathrm{sk}({M\setminus D'})\neq \emptyset$.
		\item For $k=1$, $\mathrm{sk}({M\setminus D})\cup \partial U_{\rho_\crit}$ is strongly non-displaceable. (For $k\geq 2$, see (a).)
	\end{enumerate} 
	\end{theorem}
	\begin{proof}	
	To prove statement (a), we work with $\Z/k\Z$-coefficients. Then, the class $e_M$ is zero and  $\SH_M^*(\partial U_{\rho_\crit})=0$ over $\Z/k\Z$ by Corollary \ref{cor:sh=0}. Arguing as in proof of Theorem \ref{thm:heavy}.(d,e), we can show that the restriction map $\QH^*(M;\Lambda)\to \SH_M^*(\mathrm{sk}({M\setminus D}))$ is an isomorphism and $\SH_M^*(\mathrm{sk}({M\setminus D}))$ is SH-full over $\Z/k\Z$. Therefore, $\mathrm{sk}({M\setminus D})$ is strongly non-displaceable.

	Statement (b) is proved analogously by working with $\Z/{\gcd(k,k')}\Z$-coefficients.

	To prove (c), we work with Novikov fields $\mathbb{K}((T))$ and $\mathbb{K}((\mathfrak{t}))$, rather than $\Z[T,T^{-1}]$ and $\Z[\mathfrak{t},\mathfrak{t}^{-1}]$. Then, by Corollary \ref{cor:sh=0} and Remark \ref{rem:coeff}, $\SH_M^*(\partial U_\rho)=0$ over $\mathbb{K}$ for any $\rho>\rho_\crit$. By Proposition \ref{prop:annuli} and the Mayer-Vietoris sequence, the restriction map $\QH^*(M;\mathbb{K}((\mathfrak{t})))\to \SH_M^*((M\setminus U_\rho)\cup \partial U_{\rho_\crit})$ for any $\rho>\rho_\crit$ is an isomorphism over $\mathbb{K}$. Moreover, $\mathrm{sk}({M\setminus D})\cup \partial U_{\rho_\crit}$ is SH-full over $\mathbb{K}$, and hence strongly non-displaceable.
	\end{proof}
	
	Theorem \ref{thm:superheavy}.(a) extends the following earlier results. In the K\"ahler setting without the monotonicity assumption, it is proved in \cite{GSZ21} that $M\setminus D$ is not subcritical for $k\geq 2$, confirming a conjecture in \cite{BC01}. In the present  setting, it is shown in \cite{BKK24} that the symplectic homology of $M\setminus D$ does not vanish when $k\geq 2$. 
	
	A few examples for which $M\setminus D$  admits a subcritical Stein structure  can be found in \cite{BJ13}. In this case, $\mathrm{sk}({M\setminus D})$ is displaceable, and hence $\SH_M^*(\mathrm{sk}({M\setminus D}))=0$. Theorem \ref{thm:heavy}.(e,h) and Theorem \ref{thm:superheavy} complement the results in \cite{TV21,Sun24} asserting that if $M$ is negatively monotone or $c_1^{TM}(\pi_2(M))=0$, then $\mathrm{sk}(M\setminus D)$ is superheavy.

	\begin{remark}
		As we recalled above, two conditions are needed to conclude that $K$ is superheavy from its SH-fullness. One concerns the spectral invariants, and the other is a bound on the  boundary depth. The second condition holds in our setting because, in the course of the proof of the main result, we classify Floer cylinders contributing to the boundary operator, and their energies are uniformly bounded. The first condition is satisfied, for example for $(\mathbb{CP}^n,\omega_\mathrm{FS})$, see \cite[Remark 3.12]{MSV24}. 
		
		Thus, the above result can be used to find superheavy subsets in $\mathbb{CP}^n$. For $D=\mathbb{CP}^{n-1}$, we can conclude from Theorem \ref{thm:heavy}.(d) that $\partial U_{\rho_\crit}$ is superheavy. This was known because $\partial U_{\rho_\crit}$ contains the Clifford torus, which is superheavy, see \cite[Proposition 3.7.1]{KS24}. Applying Theorem \ref{thm:superheavy}.(a) to  $D\subset \mathbb{CP}^n$ with $k\geq 2$,   we  deduce that $\mathrm{sk}(M\setminus D)$ is superheavy over $\Z/k\Z$. When $k=2$, taking $D$ to be a smooth quadric yields $\mathrm{sk}(M\setminus D)=\mathbb{RP}^n$. This recovers the fact that $\mathbb{RP}^n\subset \mathbb{CP}^n$ is superheavy over $\Z/2\Z$, which was established in \cite{EP09}.
	\end{remark}

	\subsection{The large scale geometry of $\Ham(M,\omega)$}
	
	Let $\gamma$ be the spectral norm on $\Ham(M,\omega)$ descended from the spectral pseudo-norm $\widetilde\gamma$ on the universal cover ${\widetilde{\Ham}}(M,\omega)$. These spectral (pseudo)-norms also depend on the choice of coefficients, and we take coefficients in $\Z$. In the setting of this paper, the projection ${\widetilde{\Ham}}(M,\omega)\to \Ham(M,\omega)$ given by $[H]\mapsto \phi_H^1$ is a quasi-isometry:~there exists $C>0$ such that 
	\[
	\widetilde\gamma([H])-C\leq \gamma(\phi_H^1)\leq \widetilde\gamma([H])
	\]
	see Proposition \ref{prop:ham}. Therefore, the presence of disjoint heavy subsets implies the existence of quasi-flats in $\Ham(M,\omega)$.

	\begin{theorem}\label{thm:diameter0}
		Assume that $(M,\omega)$ admits a symplectic hyperplane section $D$ of degree $2\leq k<\tau$, and $e_M\in\QH^*(D;\Gamma)$ is not nilpotent. Then, there exists a quasi-isometric embedding of $\R$ into ${\Ham}(M,\omega)$ with respect to both the spectral norm and the Hofer norm.
	\end{theorem}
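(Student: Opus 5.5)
Under the hypotheses we have two disjoint heavy compact sets, and we will build the quasi-flat from them by the standard Entov--Polterovich construction.

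First, since $e_M$ is not nilpotent, Theorem \ref{thm:heavy}.(f) shows that $\partial U_{\rho_\crit}$ is heavy over $\Z$. Since $k\geq 2$, Theorem \ref{thm:heavy}.(h) shows that $\mathrm{sk}(M\setminus D)$ is heavy over $\Z$. These two compact sets are disjoint, because $\partial U_{\rho_\crit}\subset U_{\rho'}\subset M\setminus \mathrm{sk}(M\setminus D)$ for any $\rho'\in(\rho_\crit,1)$.

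Next, we choose an autonomous function $F\in C^\infty(M)$ that is a function of the radial coordinate of $E$ (pushed forward by $\psi$) near $\overline U_{\rho'}$ and constant elsewhere, with
\[
F|_{\partial U_{\rho_\crit}}\equiv 1,\qquad F|_{M\setminus U_{\rho'}}\equiv 0,\qquad 0\le F\le 1 .
\]
For $s\in\R$, we set $\phi_s:=\phi^1_{sF}$; since $F$ is autonomous, $s\mapsto\phi_s$ is a one-parameter subgroup. The spectral invariant $c=c(1;\cdot)$ over $\Z$ satisfies the heaviness inequality: if $K$ is heavy, then $c(1;H)\ge \min_K H$. Applying this to $K_1=\partial U_{\rho_\crit}$ and $K_2=\mathrm{sk}(M\setminus D)$ gives
\[
c(1;sF)\ge s \quad\text{and}\quad c(1;-sF)\ge 0 \qquad\text{for } s\ge0,
\]
and symmetrically for $s<0$. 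We must also account for the normalization, since $\widetilde\gamma([H])=c(1;H)+c(1;\bar H)$, or an equivalent expression in terms of the normalized Hamiltonian, depending on the conventions of Section \ref{sec:spec}. Using the triangle inequality for spectral invariants over $\Z$ (which holds with the unit), these bounds give
\[
\widetilde\gamma\big([(s-t)F]\big)\ge |s-t|-C_0 .
\]
In the other direction, the Hofer norm gives $\widetilde\gamma([(s-t)F])\le \|(s-t)F\|_{\mathrm{Hofer}}\le |s-t|\,\mathrm{osc}(F)=|s-t|$.

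Finally, we descend to $\Ham(M,\omega)$ using Proposition \ref{prop:ham}, which gives $\widetilde\gamma-C\le\gamma\le\widetilde\gamma$. Combined with $\gamma\le\|\cdot\|_{\mathrm{Hofer}}$ on $\Ham(M,\omega)$, this yields
\[
|s-t|-C_0-C\le \gamma(\phi_s^{-1}\phi_t)\le d_{\mathrm{Hofer}}(\phi_s,\phi_t)\le |s-t| .
\]
By bi-invariance of both metrics, this shows that $s\mapsto\phi_s$ is a quasi-isometric embedding of $\R$ for both norms.

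I expect the main obstacle to be the second step: making the heaviness lower bound work over $\Z$-coefficients. Over a ring, the spectral invariants need not satisfy every property that holds over a field; in particular the Poincaré duality relation $c(1;\bar H)$ versus $-c([\mathrm{pt}];H)$ may fail. I would therefore avoid duality altogether and use only the triangle inequality, monotonicity, and the characterization of heaviness via $c(1;\cdot)$ from \cite{MSV24}, applying heaviness separately to $K_1$ with $sF$ and to $K_2$ with $-sF$.
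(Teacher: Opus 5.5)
Your proposal is correct and is essentially the paper's argument. The paper likewise obtains the disjoint heavy sets $\partial U_{\rho_{\crit}}$ and $\mathrm{sk}(M\setminus D)$ from Theorem \ref{thm:heavy}.(f,h), runs the standard construction of \cite[Lemma 2.5]{Sun24} (cf.\ the proof of Theorem \ref{thm:diameter}) with a function equal to $1$ on one set and $0$ on the other, and descends via Proposition \ref{prop:ham}. The one adjustment is a sign convention: in the paper's cohomological conventions, heaviness reads $c(1;H)\le\max_K H$ (de-homogenized using the superadditivity (iv)) and $\widetilde\gamma([H])=-c([H])-c([\overline H])$, so your inequalities flip sign; the conclusion $\widetilde\gamma([uF])\ge |u|$ then comes out exactly, with no constant $C_0$.
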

	\begin{proof}
		By Theorem \ref{thm:heavy}.(f,h), the assumption yields that there are two disjoint heavy sets, namely  $\partial U_{\rho_\crit}$ and $\mathrm{sk}(M\setminus D)$. Then, the claim follows by adapting the proof of \cite[Lemma 2.5]{Sun24}, see also the proof of Theorem  \ref{thm:diameter}.
	\end{proof}

	\begin{theorem}\label{thm:diameter}		
		Assume that $(M,\omega)$ admits a symplectic hyperplane section $D$ such that the sequence of ideals in \eqref{eq:ideal} does not stabilize. Then, there exists a quasi-isometric embedding of $(C_c^\infty((0,1)),\|\cdot\|_{C_0})$ into ${\Ham}(M,\omega)$ with respect to both the spectral norm and the Hofer norm. In particular, the claim holds for $(M,\omega)$ listed in Proposition  \ref{prop:list}.
	\end{theorem}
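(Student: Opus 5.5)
The plan is to produce infinitely many pairwise disjoint heavy sets and then run the standard construction of \cite[Lemma 2.5]{Sun24} (cf.\ also \cite{PS23,Mai24}) that turns such a family into a quasi-flat. By Theorem \ref{thm:heavy}.(g), if the sequence \eqref{eq:ideal} does not stabilize, then $\partial U_\rho$ is heavy for every $\rho>\rho_\crit$, and also $\partial U_{\rho_\crit}$ is heavy. Fix an interval $[a,b]\subset(\rho_\crit,1)$ and an orientation-preserving identification $\theta:[a,b]\to[0,1]$. For $f\in C_c^\infty((0,1))$, define the autonomous Hamiltonian $H_f:=f\circ\theta\circ r$ on $\overline{U}_b\setminus U_a$, extended by zero. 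Here $r$ is the function on $U$ whose level sets are the $\partial U_\rho$ (namely $r\circ\psi(x)=|x|^2$ in the normalization with $E_\rho$ of radius $\sqrt\rho$). The map $f\mapsto \phi^1_{H_f}$ is a homomorphism, since all $H_f$ Poisson commute, being functions of $r$.

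\emph{Upper bound.} The Hofer norm satisfies $\|\phi^1_{H_f}\|_{\mathrm{Hofer}}\le \max H_f-\min H_f\le 2\|f\|_{C^0}$. Since the spectral norm is bounded by the Hofer norm, the same bound holds for $\gamma$. Applying this to $f-g$ gives the upper Lipschitz estimate.

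\emph{Lower bound.} Heaviness of $K_\rho:=\partial U_\rho$ means $\zeta(H)\ge \min_{K_\rho}H$ for the partial quasi-state $\zeta(H)=\lim_m c(1;mH)/m$ over $\Z$. Since $H_f$ is constant, equal to $f(\theta(\rho))$, on $K_\rho$, the standard property of heavy sets for functions constant on them gives $c(1;mH_f)\ge m f(\theta(\rho))$ asymptotically. Autonomous homogeneity then gives $\widetilde\gamma([H_f])\ge c(1;H_f)+c(1;-H_f)-C'$. Choosing $\rho$ at a maximizer of $f$ and $\rho'$ at a maximizer of $-f$, and using the spectral inequality $c(1;H)\ge \min_{K}H$ for heavy $K$ (this holds exactly, not only asymptotically, for autonomous $H$ via $\zeta$ and the triangle inequality up to a uniform constant), we obtain $\widetilde\gamma([H_f])\ge \max f-\min f\ge\|f\|_{C^0}$ up to an additive constant. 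Here we use that $f$ has compact support, so $\max f\ge0\ge\min f$. Then Proposition \ref{prop:ham} transfers this bound from $\widetilde\gamma$ to $\gamma$ at the cost of a constant $C$. The Hofer norm dominates $\gamma$, which yields the lower bound for both metrics.

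The main obstacle is the lower bound step. It requires the precise inequality $c(1;H)\ge\min_K H$ (or a version with a uniform constant) for heavy $K$ over $\Z$-coefficients, together with the triangle inequality and the duality for $c(1;-H)$. Over $\Z$ the Poincaré duality argument used with field coefficients is unavailable, so I would instead bound $\widetilde\gamma$ from below directly as $c(1;H)+c(1;\bar H)$ with $\bar H=-H$, using heaviness applied to $\pm H_f$ separately; this needs only $\zeta$, not duality. The final assertion follows from Proposition \ref{prop:list}.
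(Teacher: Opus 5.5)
Your proposal is correct and follows essentially the same route as the paper. The paper, following \cite[Lemma 2.6]{FZ25}, takes $H$ to be a function of the radius supported where every $\partial U_\rho$ is heavy, derives $\|h\|_{C^0}\le\widetilde\gamma([H])=\max h-\min h=\|H\|_{\mathrm{Hofer}}\le 2\|h\|_{C^0}$ from heaviness, and passes to $\Ham(M,\omega)$ via Proposition \ref{prop:ham}, as you do.

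Two cosmetic points. First, your sign conventions are those of Entov--Polterovich rather than the paper's, where heaviness reads $\lim c(mH)/m\le\max_K H$ and $\widetilde\gamma([H])=-c([H])-c([\overline H])$. Second, your hedge ``up to a uniform constant'' is unnecessary: for autonomous $H$ the triangle inequality gives $c(1;H)\ge\zeta(H)$ exactly (in your convention), and $\overline{H_f}=-H_f$.
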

	\begin{proof}
		The assumption implies that $\partial U_\rho$ is heavy for every $\rho\geq \rho_\crit$. Then, we argue as in \cite[Lemma 2.6]{FZ25}, which we recall for completeness. We fix a diffeomorphism $\chi:(\rho_\crit,1)\to(0,1)$. Given $h\in C^\infty_c((0,1))$, let $H$ be the Hamiltonian on $M$ supported on $U_1\setminus U_{\rho_\crit}$ defined by $H(x)=h\circ \chi(\rho_x)$ for $x\in U_1\setminus U_{\rho_\crit}$, where $\rho_x\in(\rho_\crit,1)$ is the constant satisfying $x\in \partial U_{\rho_x}$. Then, the embedding
		\[
		\jmath: C_c^\infty((0,1))\to \widetilde{\Ham}(M,\omega),\qquad h\mapsto[H]
		\]
		satisfies $\jmath(h+h')=[H+H']$. Moreover,
		\[
		\|h\|_{C^0}\leq \widetilde{\gamma}([H])=\max h-\min h = \|H\|_\mathrm{Hofer} \leq 2\|h\|_{C^0}
		\]
		where the first equality uses the fact that $\partial U_\rho$ for $\rho>\rho_\crit$ are heavy. Therefore, 
		\[
		\|h\|_{C^0}-C\leq \gamma(\phi_H^1) \leq \|\phi_H^1\|_{\mathrm{Hofer}} \leq \|H\|_\mathrm{Hofer}=\widetilde{\gamma}([H]) \leq \gamma(\phi_H^1)+C\leq 2\|h\|_{C^0}+C.
		\]
		This establishes the claimed quasi-isometric embedding.
	\end{proof}

	It is therefore an important question to determine which $(M,\omega)$ fulfill the assumption of Theorem \ref{thm:diameter}. We show that this is the case for a broad class of examples.
	
	\begin{lem}\label{lem:stab}
	The sequence of ideals in \eqref{eq:ideal} does not stabilize when $N_1(M,D)=0$ (e.g.~$\tau-k\geq 2$),  $k[\omega_D]_\Z$ is not primitive (e.g.~$k\geq 2$), and  $[\omega_D]_\Z$ is not nilpotent in the free part $\QH^*_\mathrm{free}(D;\Gamma)$ of $\QH^*(D;\Gamma)$.
	\end{lem}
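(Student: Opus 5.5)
Under the assumption $N_1(M,D)=0$ we have $e_M=k[\omega_D]_\Z=kc\,\alpha$. Here $c\geq 2$ is an integer and $\alpha\in\H^2(D;\Z)$. The hypothesis ``$k[\omega_D]_\Z$ is not primitive'' means exactly that it is $c$ times an integral class with $c\geq 2$. In the example $k\geq 2$, take $c=k$ and $\alpha=[\omega_D]_\Z$. The plan is to show that the ideals $(e_M^{\star l})$ form a strictly decreasing chain, i.e.\ that $e_M^{\star l}\notin(e_M^{\star (l+1)})$ for every $l\geq 1$. Suppose instead that $e_M^{\star l}=x\star e_M^{\star(l+1)}$ for some $x\in\QH^{*}(D;\Gamma)$. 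The argument first passes to the free part, then exploits the divisibility by $c$.

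\textbf{Step 1 (reduction to the free part).} The torsion submodule of $\QH^*(D;\Gamma)=\H^*(D;\Z)\otimes\Gamma$ is an ideal, because the quantum product is $\Gamma$-bilinear and torsion times anything is torsion. So $\QH^*_\mathrm{free}(D;\Gamma)=\H^*_{\mathrm{free}}(D;\Z)\otimes\Gamma$ is a quotient ring, and it is a free $\Gamma$-module of finite rank. Projecting the relation there, and writing $a$ for the image of $[\omega_D]_\Z$, we get
\[
(ka)^{\star l}=\bar x\star (ka)^{\star(l+1)}, \qquad\text{that is}\qquad k^l a^{\star l}=k^{l+1}\,\bar x\star a^{\star(l+1)}.
\]
Since the module is free over $\Gamma$ and hence torsion-free over $\Z$, we may cancel $k^l$. (If one prefers to work with $c$ and $\alpha$, cancel $c^l$ in the same way.) This gives $a^{\star l}=k\,\bar x\star a^{\star(l+1)}$, i.e.\ $a^{\star l}=c\,y$ for some $y$ in the free part.

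\textbf{Step 2 (iteration and the contradiction).} Substitute the relation into itself: $a^{\star l}=k\bar x\star a\star a^{\star l}=(k\bar x\star a)^{\star 2}\star a^{\star l}=\dots$. Hence $a^{\star l}=k^m z_m$ for every $m$, with $z_m=(\bar x\star a)^{\star m}\star a^{\star l}$ lying in the free module. Now choose a $\Gamma$-basis. A nonzero element of the free module has coefficients in $\Gamma=\Z[T,T^{-1}]$, and some coefficient is a nonzero Laurent polynomial. A nonzero Laurent polynomial has a nonzero integer coefficient, which cannot be divisible by $k^m$ for all $m$ once $k\geq 2$ (or $c\geq2$ in the general non-primitive case). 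Therefore $a^{\star l}=0$, so $[\omega_D]_\Z$ is nilpotent in the free part, contradicting the third hypothesis. When $\Gamma=\Z$ the same argument works verbatim.

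\textbf{Main obstacle.} The delicate point is the non-primitive case with $k=1$. There $e_M=c\,\alpha$ with $c\geq 2$, and the cancellation and iteration must be run with $c$ and $\alpha$ in place of $k$ and $a$. Nilpotency of $\alpha$ in the free part is then equivalent to nilpotency of $[\omega_D]_\Z$ in the free part: the two are related by $c\,\alpha=[\omega_D]_\Z$ and the module has no $\Z$-torsion. I also need to check that ``primitive'' is read in $\H^2(D;\Z)$ modulo torsion, so that the divisibility takes place in the free part. Finally I must make sure that the parenthetical $\tau-k\geq2$ indeed forces $N_1=0$. This holds because $N_1(M,D)\neq 0$ requires $\deg T=2$, and $\deg T=2(\tau-k)m_M\geq 4$ when $\tau-k\geq 2$.
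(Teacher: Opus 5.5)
Your argument is correct and is essentially the paper's proof: both pass to the $\Z$-torsion-free quotient $\QH^*_{\mathrm{free}}(D;\Gamma)$ and play the fact that a nonzero element there is divisible by only finitely many powers of an integer $c\geq 2$ against the fact that $e_M^{\star j}$ is divisible by $c^j$. The paper compares $(e_M^{\star i})$ with $(e_M^{\star(l+1)})$, where $l$ is the exact $c$-divisibility of $e_M^{\star i}$, whereas your iteration gives the slightly stronger statement that every inclusion in the chain is strict; the only slip is notational, since in your first line $e_M=k[\omega_D]_\Z=kc\,\alpha$ should read $e_M=k[\omega_D]_\Z=c\,\alpha$, as you use it later.
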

	The use of $\Z$-coefficients is crucial in this lemma, see Lemma \ref{lem:non_stab} for the proof. To frame the non-nilpotency condition of $[\omega_D]_\Z$ within a more well-studied context, let us assume that $D$ is monotone,  i.e.~$c_1^{TD}$ and $[\omega_D]_\Z$ are positively proportional. Then, $[\omega_D]_\Z$ is non-nilpotent if and only if $c_1^{TD}$ is non-nilpotent. The latter holds whenever the quantum multiplication operator by $c_1^{TD}$ in the quantum cohomology of $D$ with $\C$-coefficients has a nonzero eigenvalue. This property is conjectured to hold for all Fano manifolds, see \cite{GHIKLS24}. In fact, we learned from Hiroshi Iritani that such property is inherited from $M$.

	\begin{lemma}\label{lem:fano}
		Let $M$ be a Fano manifold with a primitive ample integral class $[\omega]_{\Z}$. Assume that $c_1^{TM}=\tau[\omega]_\Z$ for $\tau\geq 4$ and quantum multiplication by $c_1^{TM}$ has a nonzero eigenvalue. Assume that there is a Fano hypersurface $D\subset M$ such that $\PD([D])=k[\omega]_\Z$ with $2\leq k\leq \tau -2$. Then, the sequence of ideals in \eqref{eq:ideal} does not stabilize.
	\end{lemma}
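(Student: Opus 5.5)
The plan is to reduce Lemma \ref{lem:fano} to Lemma \ref{lem:stab} by checking its three hypotheses in turn.

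\emph{First hypothesis, $N_1(M,D)=0$.} Since $[\omega]_\Z$ is primitive and $c_1^{TM}=\tau[\omega]_\Z$, every sphere class has $\omega$-area in $m_M\Z$. As stated after \eqref{eq:e_M}, $N_1(M,D)\neq0$ forces $\deg T=2(\tau-k)m_M=2$. Under the assumption $k\leq\tau-2$ we have $(\tau-k)m_M\geq 2$, so this cannot happen and $N_1(M,D)=0$.

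\emph{Second hypothesis, $k[\omega_D]_\Z$ is not primitive.} Since $k\geq2$, it suffices to know that $[\omega_D]_\Z$ is a nonzero non-torsion class in $\H^2(D;\Z)$. This holds because it is the restriction of an ample class, and $D$ has positive dimension. Then $k[\omega_D]_\Z$ is divisible by $k$, and $e_M=k[\omega_D]_\Z$.

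\emph{Third hypothesis, $[\omega_D]_\Z$ is not nilpotent in $\QH^*_{\mathrm{free}}(D;\Gamma)$.} This is the main step. By adjunction, $c_1^{TD}=(\tau-k)[\omega_D]_\Z$ with $\tau-k\geq2>0$, so $D$ is monotone Fano. Non-nilpotency of $[\omega_D]_\Z$ is therefore equivalent to non-nilpotency of $c_1^{TD}$. Nilpotency in the free part would imply nilpotency after tensoring with $\C$, so it is enough to show that quantum multiplication by $c_1^{TD}$ on $\QH^*(D;\C)$ has a nonzero eigenvalue.

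To transfer the eigenvalue from $M$ to $D$, I would use quantum Lefschetz (Givental, Coates--Givental) in its ambient form. Since $D$ is a Fano hypersurface in the class $k[\omega]_\Z$ with $k\leq\tau-2$, the ambient part of the small quantum cohomology of $D$ is governed by the twisted $I$-function of $M$. The mirror map is trivial here, because $\tau-k\geq2$ kills the correction terms (the same degree bound as in the first step). The conclusion I want is the following: on the ambient subring of $\QH^*(D)$, multiplication by $c_1^{TD}$ has eigenvalues obtained from those of $c_1^{TM}\star$ on $\QH^*(M)$ through an explicit algebraic change of variables. Concretely, this comes from the asymptotics of the quantum $D$-module / $J$-function, in the style of Galkin--Golyshev--Iritani Property $\mathcal{O}$. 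Writing $T_M$ for the largest eigenvalue of $c_1^{TM}\star$, one expects an eigenvalue of $c_1^{TD}\star$ of the form roughly $(\tau-k)\,k^{k/(\tau-k)}\,T_M^{\tau/(\tau-k)}$ up to roots of unity. This is nonzero whenever $T_M\neq0$.

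The hard part is making this transfer rigorous, and it is exactly the fact attributed to Iritani. My first attempt would be through the exponential growth rate of the $J$-functions. A nonzero eigenvalue of $c_1\star$ is equivalent to the quantum connection having a nonzero exponential growth rate along the anticanonical line. Quantum Lefschetz writes $J_D$ as a hypergeometric modification of $J_M$, with factors $\prod_{j\leq k d}(kH+jz)$. A Laplace/stationary-phase estimate then shows that this modification preserves nonzero growth rates, transformed as above. This yields a nonzero eigenvalue on the ambient part, which is a subring containing $[\omega_D]_\Z$, and that is enough.

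Once all three hypotheses are verified, Lemma \ref{lem:stab} applies and the sequence \eqref{eq:ideal} does not stabilize.
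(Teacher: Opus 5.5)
Your proof is essentially the paper's. You reduce to Lemma~\ref{lem:stab}: $N_1(M,D)=0$ follows from $(\tau-k)m_M\geq 2$, non-primitivity from $k\geq 2$, and non-nilpotency of $[\omega_D]_\Z$ from a nonzero eigenvalue of $c_1^{TD}\star$ via $c_1^{TD}=(\tau-k)[\omega_D]_\Z$. The one nontrivial input is that a nonzero eigenvalue of $c_1^{TM}\star$ is inherited by $c_1^{TD}\star$ when $1\leq k\leq\tau-2$.

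The paper does not re-derive this inheritance; it cites \cite[Corollary 27]{GI17}, and you should invoke that result directly. Your $J$-function growth-rate sketch is only a heuristic reconstruction of it, not a proof. In particular, it silently assumes that the components of $J_M$ actually exhibit the nonzero exponential growth and that the quantum Lefschetz transform preserves it.
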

	\begin{proof}
		By \cite[Corollary 27]{GI17}, the quantum multiplication by $c_1^{TD}$ on the quantum cohomology of $D$ with $\C$-coefficients also has a nonzero eigenvalue provided $1\leq k\leq\tau-2$. Thus, the claim follows from Lemma \ref{lem:stab}. 
	\end{proof}

	\begin{prop}\label{prop:list}
		The manifolds $(M,\omega)$ in the following list admit a symplectic hyperplanesection $D\subset M$ such that the sequence of ideals in \eqref{eq:ideal} does not stabilize. 
	\begin{enumerate}[(i)]
		\item degree $d$ hypersurfaces $X_d\subset \mathbb{CP}^n$ with $n\geq d+3$,
		\item complete intersections $X_{(d_1,\dots,d_l)}$ of degree $(d_1,\dots,d_l)$ in $\mathbb{CP}^{n}$ with $n\geq d_1+\dots +d_l+3$,
		\item complex Grassmannians $\mathrm{Gr}(k,r)$ with $r\geq 4$,
		\item products of manifolds in (i)-(iii) with the same Fano index, e.g.~$\mathbb{CP}^m\times X_{d_1,\dots,d_l}\times\mathrm{Gr}(k,r)$ with $m+1=n+1-(d_1+\dots+d_l)=r\geq 4$,
		\item $\mathbb{CP}^2=\mathrm{Gr}(1,3)$ and quadrics $Q^2,\,Q^3$ ($\mathbb{CP}^{m\geq 3}$ and $Q^{m\geq 4}$ are covered by (i)),
	\end{enumerate}
		with symplectic forms naturally inherited from the Fubini-Study form $\omega_\mathrm{FS}$.
	\end{prop}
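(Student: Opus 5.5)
The plan is to verify, for each family, the three hypotheses of Lemma \ref{lem:stab}, or else invoke Lemma \ref{lem:fano}. Take $D$ to be a smooth hyperplane section of degree $k=2$ (a generic quadric section via the Plücker/Segre/natural projective embedding). Then $k[\omega_D]_\Z=2[\omega_D]_\Z$ is not primitive. Moreover, $N_1(M,D)=0$ whenever $\tau-k\geq 2$, since in that case $\deg T\neq 2$.

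\textbf{Checking $\tau-k\geq 2$.} The Fano index is $\tau=n+1-d$ for $X_d$, $\tau=n+1-\sum d_i$ for the complete intersections, and $\tau=r$ for $\mathrm{Gr}(k,r)$. A product of factors with the same index $\tau$ has index $\tau$ with respect to the product of the primitive classes. So the hypotheses $n\geq d+3$, $n\geq\sum d_i+3$, $r\geq 4$ say exactly $\tau\geq 4$, hence $2\leq k=2\leq\tau-2$.

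\textbf{Non-nilpotency via Lemma \ref{lem:fano}.} The degree-two section $D$ is itself Fano, of index $\tau-2\geq 2$. It remains to check that quantum multiplication by $c_1^{TM}$ has a nonzero eigenvalue.
\begin{enumerate}[(1)]
\item For $\mathbb{CP}^n$ this is immediate: $H^{n+1}=\mathfrak t$.
\item For Grassmannians it follows from the Siebert--Tian/Bertram presentation. The eigenvalues are given by evaluating $c_1$ at critical points of the mirror, which are nonzero; this is the known conjecture $\mathcal O$ for $\mathrm{Gr}$.
\item For hypersurfaces and complete intersections of index $\geq 2$ it follows from Givental's quantum Lefschetz computation. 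This gives the relation $H^{n+1-l}\cdot H^{?}=\prod d_i^{d_i}\,\mathfrak t\,H^{?}$ on the ambient part, yielding eigenvalue $\tau\big(\prod d_i^{d_i}\big)^{1/\tau}\neq0$.
\item For products, the eigenvalues of $c_1$ add. Taking the maximal real (Perron--Frobenius) eigenvalue on each factor gives a positive, hence nonzero, sum.
\end{enumerate}
Lemma \ref{lem:fano} then gives (i)--(iv). The case $k=1$ is never needed.

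\textbf{The low-index cases (v).} Here $\tau\leq 3$, so Lemma \ref{lem:fano} does not apply and we argue directly with Lemma \ref{lem:stab}, or with Corollary-type reasoning.
\begin{enumerate}[(1)]
\item For $\mathbb{CP}^2$ ($\tau=3$) take the conic $D$, $k=2$. Then $\tau-k=1$ and $\deg T=2$, so $N_1$ may be nonzero and must be computed. Lines are tangent to the conic at one point with contact order $2=km_M$, and a line through a generic point tangent to the conic gives $N_1=\pm 2$, say. Then $e_M=2(h+N_1T)$ in $\QH(\mathbb{CP}^1)$ with $h^2=(\text{const})T'$. We check by hand that no power of $e_M$ generates the same ideal as the next, working modulo $2$ and using the $2$-adic valuation of the coefficients, which grows with each multiplication by $e_M$. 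This works because $e_M\in 2\QH$ while $e_M$ is not nilpotent rationally.
\item $Q^2=\mathbb{CP}^1\times\mathbb{CP}^1$ ($\tau=2$) needs $k<2$, so $k=1$. Take $D$ the diagonal-type $(1,1)$ curve $\cong\mathbb{CP}^1$, and compute $e_M=[pt]+N_1T$ explicitly. Alternatively, use the non-primitive polarization: the statement only asks for some hyperplane section, and the integral lift $[\omega]_\Z$ may be scaled.
\item $Q^3$ ($\tau=3$) is handled similarly with $k=2$, computing $N_1(Q^3,D)$ for $D=Q^3\cap Q'$ a del Pezzo-type surface.
\end{enumerate}

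\textbf{Main obstacle.} The hardest part is (v), where $\tau-k=1$ forces computing $N_1(M,D)$ and then showing by a direct $\Z$-valuation argument that the chain $(e_M^{\star l})$ strictly decreases. The idea is that $e_M=k\cdot u$ with $u$ not nilpotent over $\Q$, so $e_M^{\star l}\in k^l\QH$ but is nonzero in the free part. Therefore $(e_M^{\star l})\neq(e_M^{\star l+1})$, since equality would give $e_M^{\star l}=k\,x\star u^{l+1}$, which contradicts divisibility once we work in a free quotient where $u$ acts invertibly. I expect this same argument to be the core of Lemma \ref{lem:stab}, so (v) reduces to checking that $u=[\omega_D]_\Z+N_1T$ is non-nilpotent, via a small explicit quantum cohomology computation.
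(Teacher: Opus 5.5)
\textbf{Verdict: genuine gap in case (v) for $Q^2$; cases (i)--(iv) and $\mathbb{CP}^2$ are fine.}

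\textbf{(i)--(iv).} Your treatment is the paper's. You take $k=2$, observe $\tau\geq4$ so that $N_1(M,D)=0$, and reduce to Lemma \ref{lem:fano}. You justify the nonzero eigenvalue of $c_1^{TM}\star$ by different means: the quantum Lefschetz relation $H^{\star(n+1-l)}=\prod_i d_i^{d_i}\,\mathfrak{t}\,H^{\star(\sum_i d_i-l)}$ on the ambient part, Conjecture~O for $\mathrm{Gr}(k,r)$, and positive real eigenvalues adding over products. The paper instead uses semisimplicity, the cited results for complete intersections, and invariance of the spectrum under multiplication by $e^{2\pi i/\tau}$. Your route is acceptable. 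One correction: not every mirror critical value is nonzero ($\mathrm{Gr}(2,4)$ has eigenvalue $0$), but you only need one nonzero eigenvalue.

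\textbf{$\mathbb{CP}^2$ with the conic.} Here $N_1(M,D)=1$, not $\pm2$: the tangency point $p\in D$ is prescribed, and the tangent line there is unique. This does not affect your argument. Since $[\omega_D]_\Z=2\beta$ with $\beta^{\star2}=T^2$, the class $2\beta+N_1T$ has eigenvalues $(N_1\pm2)T$, which never both vanish. So your divisibility argument, which is exactly Lemma \ref{lem:non_stab}, applies to $e_M=2(2\beta+N_1T)$.

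\textbf{The gap: $Q^2=\mathbb{CP}^1\times\mathbb{CP}^1$.} Here $\tau=2$ forces $k=1$, so your mechanism ``$e_M=k\cdot u$, hence $e_M^{\star l}\in k^l\QH$'' gives no divisibility. You offer no substitute argument.
\begin{itemize}
\item The formula $e_M=[\mathrm{pt}]+N_1T$ is wrong, and you never evaluate it.
\item The fallback of rescaling $[\omega]_\Z$ is not available. $[\omega]_\Z$ is the (here unique) lift of the fixed class $[\omega]$. Replacing $\omega$ by $2\omega$ makes $\tau=1$, leaving no admissible $0<k<\tau$.
\end{itemize}
The missing idea is that the non-primitivity comes from $D$ itself, not from $k$.
\begin{itemize}
\item The diagonal $D\cong\mathbb{CP}^1$ has area $2$, so $[\omega_D]_\Z=2\beta$ with $\beta$ the point class.
\item $N_1(M,D)=2$: the two rulings through a point of $D$ each meet $D$ once, transversally.
\item With $\deg T=2$, one has $\QH^*(D;\Gamma)=\Z[\beta,T^{\pm1}]/(\beta^2-T^2)$ and $e_M=2(\beta+T)$.
\item Since $(\beta+T)^{\star l}=(2T)^{l-1}(\beta+T)\neq0$, Lemma \ref{lem:non_stab} applies with $a=2$.
\end{itemize}
The paper reaches the same conclusion through the explicit computation in Proposition \ref{prop:q_2}: $\SH^*_M(\partial U_\rho)\cong\Q_2\langle\beta,\beta^{-1}\rangle\neq0$ for $\rho>\rho_{\crit}$, combined with Corollary \ref{cor:sh=0}.

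\textbf{$Q^3$.} This case is likewise only announced. With $\tau-k=1$, the term $N_1(M,D)T$ need not vanish. So you must show non-nilpotency of $[\omega_D]_\Z+N_1T$ in the free part of $\QH^*(D;\Gamma)$, where $D\subset\mathbb{CP}^4$ is the $(2,2)$ complete intersection. Non-nilpotency of $c_1^{TD}=[\omega_D]_\Z$ alone is not enough. The paper's input here is that $c_1^{TD}\star$ has a nonzero eigenvalue, fed into Lemma \ref{lem:stab}. A shift by $N_1T$ is harmless once $c_1^{TD}\star$ has two distinct eigenvalues.
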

	\begin{proof}
				
		We first recall a basic fact about Fano manifolds. Let $X$ be a Fano manifold with $\dim_\C X\geq 2$. Its Fano index $\tau\in\mathbb{N}$ is the largest positive integer for which the anticanonical line bundle $-K_X=\det(TX)$ satisfies $-K_X\cong L^{\otimes \tau}$ for some line bundle $L\to X$. Since $-K_X$ is ample by the Fano condition, $L$ is also ample, and thus the first Chern class $c_1^L$ of $L$ is represented by an integral K\"ahler form $\omega$. Assume that $L$ is very ample. Then, by the Bertini theorem, for every $k\in\N$, there is a smooth hypersurface $D$ such that $\mathcal{O}_X(D)\cong L^{\otimes k}$, and in turn $\PD([D])=k\,c_1(L)=k[\omega]$. If $k<\tau$, $D$ is also Fano by the adjunction formula:~$K_D\cong (K_X\otimes\mathcal{O}_X(D))|_D \cong
 L^{\otimes(k-\tau)}|_D$, and thus $-K_D\cong L^{\otimes(\tau-k)}|_D$ is ample. 
			
		We show that the manifolds $M$ appearing in (i), (ii), and (iii) satisfy the assumption of Lemma \ref{lem:fano}. Indeed, the restriction $\mathcal{O}(1)|_M$ of $\mathcal{O}(1)$ to $M\subset \mathbb{CP}^n$ is very ample (we use the Pl\"ucker embedding for $\mathrm{Gr}(k,r)$) and $-K_M\cong (O(1)|_M)^{\otimes \tau}$ with $\tau\geq 4$. Thus, $c_1^{TM}=\tau c_1^{\mathcal{O}(1)|_M}=\tau [\omega_\mathrm{FS}|_M]$, and there exists a Fano hypersurface $D$ satisfying $\PD([D])=2[\omega_\mathrm{FS}|_M]$. It remains to verify the eigenvalue condition. The quantum cohomology of $M=\mathrm{Gr}(k,r)$ with $\C$-coefficients is semi-simple, and thus the quantum multiplication $\star\,c_1^{TM}$ has a nonzero eigenvalue. For the manifolds $M$ in (ii), $\star\,c_1^{TM}$ has a nonzero eigenvalue by \cite{HKLY21,Ke23}. Case (i) is a special case of (ii), and it also follows from \cite[Corollary 27]{GI17} mentioned above.
				
		Let $(M,\omega)$ be a manifold in (iv). We write $M=M_1\times \cdots\times M_j$. It admits a Fano hypersurface $D$ satisfying $\PD([D])=2[\omega]$ since the tensor product of the pullback of the very ample line bundles of $M_i$ to $M$ is again very ample. Moreover, the quantum cohomology of $M$ is isomorphic to the tensor product of the quantum cohomologies of $M_i$.  Therefore, the sum of eigenvalues of $\star\,c_1^{TM_i}$ is an eigenvalue of $\star\,c_1^{TM}$. Since the spectrum of $\star\,c_1^{TM_i}$ is invariant under the multiplication of $e^{2\pi i/\tau_i}$, where $\tau_i$ is the Fano index of $M_i$, see \cite[Remark 3.1.3]{GGI16}, certain eigenvalues of $\star\, c_1^{TM_i}$ sum to a nonzero eigenvalue of $\star\, c_1^{TM}$.

		The manifolds in (v) have Fano index $\tau\leq 3$. The claim for  $Q^2=\mathbb{CP}^1\times \mathbb{CP}^1$ and $\mathbb{CP}^2$ follows from Propositions \ref{prop:q_2} and \ref{prop:quadric}. For $Q^3$, we can take a divisor $D\subset Q^3$ satisfying $\PD([D])=2[\omega_\mathrm{FS}|_{Q^3}]$ as a complete intersection of two quadrics in $\mathbb{CP}^4$. Then, as mentioned above, $c_1^{TD}$ has nonzero eigenvalue. Thus, Lemma \ref{lem:stab} can be applied to $(Q^3,D)$.
	\end{proof}

	The list of manifolds in Proposition  \ref{prop:list} is far from exhaustive. Note that if the conjecture that $\star\,c_1^{TD}$ has a nonzero eigenvalue for every Fano manifold $D$ holds, then the conclusion of Theorem \ref{thm:diameter} applies to every Fano manifold $M$ of Fano index $\tau\geq 3$ that admits a Fano hypersurface of degree $k\geq2$. 
	
	The spectral diameter of $\Ham(M,\omega)$ for $(M,\omega)$ in the list of Proposition  \ref{prop:list} with coefficients other than $\Z$ could be finite. As mentioned above, this is the case for $\mathbb{CP}^n$ with $n\geq 2$, and also for some Grassmannians, see \cite{AACS25}.

	It seems that it is unknown whether the spectral diameter of $\Ham(\mathbb{CP}^1,\omega_\mathrm{FS})$ is infinite. On the other hand, a quasi-isometric embedding of $(C^\infty_c((0,1)),\|\cdot\|_{C^0})$ into $\Ham(\mathbb{CP}^1,\omega_\mathrm{FS})$ with the Hofer norm was established in \cite{PS23}.

		An inspiring suggestion in \cite[Remark 3.2.4]{KS24} is to seek an alternative proof of their theorem that the spectral diameter of $\Ham(\mathbb{CP}^n,\omega_\mathrm{FS})$ for $n\geq 2$ over $\Z$ is infinite by finding a monotone Lagrangian torus  $L$ in $Q^{n-1}\subset \mathbb{CP}^n$ whose Lagrangian lift $\widetilde L$ to $\partial U_{\rho_\crit}\subset \mathbb{CP}^n$ is heavy. In the case of $\mathbb{CP}^3$, the Lagrangian torus $L_{1,1}^P$ from \cite{OU16} is such a Lagrangian lift. Recall that the skeleton $\mathrm{sk}(\mathbb{CP}^n\setminus Q^{n-1})=\mathbb{RP}^n$ is a heavy set disjoint from $\partial U_{\rho_\crit}$. Rather than finding such a Lagrangian torus $\widetilde{L}$, our approach is to show directly that $\partial U_{\rho_\crit}$ (indeed $\partial U_{\rho}$ for every $\rho\geq\rho_\crit$) is heavy.

		According to Theorem \ref{thm:sh}, $\SH_M^*(\partial U_\rho)$ can be computed whenever the quantum cohomology of $D$ is well understood. 
		In Section \ref{sec:computation}, we provide explicit computations for some examples. In view of Proposition \ref{prop:annuli}, the computation of $\SH_M^*(\partial U_\rho)$ also yields $\SH_M^*(\overline{U}_\rho)$ and $\SH_M^*(M\setminus U_\rho)$ (the latter under the assumption $\SH_M^*(\mathrm{sk}(M\setminus D))=0$).  For the standard ball $B_r\subset \mathbb{CP}^n$ of capacity $r>0$, $\SH_{\mathbb{CP}^n}^*(B_r)$ and $\SH_{\mathbb{CP}^n}^*(\mathbb{CP}^n\setminus B_r)$ were computed in \cite{DG26}, and our result recovers their computation. They worked over a filtered Novikov ring and computed restriction maps, and such persistence structures should also be computable in our setting. Furthermore, several interesting directions for relative symplectic cohomology are suggested in \cite[Section 1.7]{DG26}. While developed independently, our work addresses one of the questions posed there.

	\paragraph{Acknowledgments} 
This work is supported by National Research Foundation of Korea grants RS-2023-00211186, RS-2023-00301976, and RS-2025-02217108.
 We are grateful to Egor Shelukhin for Proposition \ref{prop:ham}. We also thank Hiroshi Iritani for helpful correspondence regarding the eigenvalues of quantum multiplication by the first Chern class.
 
	 \paragraph{Use of AI Tools} 
We used LLMs for  proofreading, literature searches, and simplifying algebraic computations in Propositions \ref{prop:q_2} and \ref{prop:quadric}.
	
	\section{Floer cohomology and relative symplectic cohomology}
Let $(M,\omega)$ be a closed symplectic manifold, which is spherically monotone, meaning that there exists $\tau>0$ such that $c_1^{TM}=\tau\omega$ on $\pi_2(M)$.
	\subsection{Floer cohomology}
	We briefly recall the construction of Hamiltonian Floer cohomology and refer to \cite{HS95} for details.
	Let $\mathcal{L}M$ be the space of contractible smooth 1-periodic loops in $M$. Let $\widetilde{\mathcal{L}M}$ denote the covering space of $\mathcal{L}M$ with deck transformation group $\pi_2(M)/\ker\omega$. We write elements in $\widetilde{\mathcal{L}M}$ as equivalence classes $[x,\bar x]$ where $x\in \mathcal{L}M$ and $\bar{x}$ is a capping disk of $x$, i.e.~$\bar{x}:D^2\to M$ with $\bar{x}(e^{2\pi it})=x(t)$ for $t\in S^1=\R/\Z$. Two pairs $(x,\bar x)$ and $(x',\bar x')$ are said to be equivalent if $x=x'$ and $\int_{\bar x}\omega=\int_{\bar{x}'}\omega$. The action functional associated with a smooth Hamiltonian $H:S^1\times M\to \R$ is defined by
	\[
		\mathcal{A}_H:\widetilde{\mathcal{L}M}\to \mathbb{R},\qquad 
		\mathcal{A}_H([x,\bar{x}])= \int_{D^2}\bar{x}^*\omega+\int_{0}^{1}H_t(x(t))dt.
	\]
	Then $[x,\bar{x}]$ is a critical point of $\mathcal{A}_H$ if and only if $\dot x(t)=X_{H_t}(x(t))$ holds, where the Hamiltonian vector field $X_{H_t}$ is defined by
		$dH_t=\omega(X_{H_t},\cdot)$.
	For $(a,b)\subset\R$, we denote by $\widetilde{\mathcal{P}}_{(a,b)}(H)$ the set of critical points $[x,\bar x]$ of $\mathcal{A}_H$ satisfying $a<\mathcal{A}_H([x,\bar{x}])<b$. Assume that $H$ is nondegenerate. 
	The Floer chain complex associated with $H$ is defined by 
	\begin{equation*}
		\CF^*_{(a,b)}(H,J)=\bigoplus_{[x,\bar{x}]\in\widetilde{\mathcal{P}}_{(a,b)}(H)}\mathbb{Z}\langle[x,\bar{x}]\rangle
	\end{equation*}
	with grading given by the Conley-Zehnder index $\CZ(x,\bar{x})\in\Z$, where $J=(J_t)_{t\in S^1}$ is an $S^1$-family of compatible almost complex structures used to define the boundary operator. We consider the moduli space  
	\[
	{\mathcal{M}}([x_-,\bar{x}_-],[x_+,\bar{x}_+],H,J),\qquad [x_\pm,\bar{x}_\pm]\in \widetilde{\mathcal{P}}(H)
	\] 
	that consists of smooth solutions $u:\R\times S^1\to M$ of the positive $L^2$-gradient flow equation
	\[
	\partial_su+J_t(u)(\partial_tu-X_{H_t}(u))=0
	\]
	with asymptotic conditions 
	\begin{equation}\label{eq:asymptotic_condition}
	\ev_\pm(u):=\lim_{s\rightarrow\pm\infty}u(s,t)=x_\pm(t),\qquad \int_{D^2} \bar{x}_-^*\omega+\int_{\mathbb{R}\times S^1} u^*\omega=\int_{D^2}\bar{x}_+^*\omega.
	\end{equation}
	The latter condition is equivalent to $[x_-,\bar{x}_-\#u]=[x_-,\bar{x}_+]$, where $\#$ denotes  gluing along their common boundary. 
	For a generic choice of $J$, these moduli spaces are cut out transversely and have dimension $\CZ(x_+,\bar{x}_+)-\CZ(x_-,\bar{x}_-)$. We denote by $\mathcal{J}_\reg(H)$ the space of such $J$. Unless $[x_-,\bar{x}_-]=[x_+,\bar{x}_+]$, there is a free $\mathbb{R}$-action on the moduli space given by translation in the $s$-direction. We denote the quotient space by $\overline{\mathcal{M}}([x_-,\bar{x}_-],[x_+,\bar{x}_+],H,J)$. It is a compact zero-dimensional manifold when $\CZ(x_+,\bar{x}_+)-\CZ(x_-,\bar{x}_-)=1$. The boundary operator $d_{(H,J)}:\CF^*_{(a,b)}(H,J)\rightarrow \CF^{*+1}_{(a,b)}(H,J)$ is defined by
	\begin{equation*}
		d_{(H,J)}([x_-,\bar{x}_-])=\sum\#\overline{\mathcal{M}}([x_-,\bar{x}_-],[x_+,\bar{x}_+],H,J)\cdot[x_+,\bar{x}_+],
	\end{equation*}
	where the sum ranges over all the generators $[x_+,\bar{x}_+]$ of $\CF^{*+1}_{(a,b)}(H,J)$. 
	The signed count $\#\overline{\mathcal{M}}([x_-,\bar{x}_-],[x_+,\bar{x}_+],H,J)\in\Z$ is taken with respect to a chosen system of coherent orientations as in \cite{FH93}. The homology of this complex $\HF^*_{(a,b)}(H)$ is called the Floer cohomology of $H$, and it is independent of the choice of $J$.

	Since the boundary operator increases the action, for $a_2\leq a_1$, the inclusion and the projection
	\begin{equation}\label{eq:incl_proj}
	\begin{split}
	\iota_{a_2,a_1}:\CF^*_{(a_1,\infty)}(H,J)\longrightarrow \CF^*_{(a_2,\infty)}(H,J),\\[1ex]
	\pi_{a_2,a_1}:\CF^*_{(-\infty,a_1)}(H,J)\longrightarrow \CF^*_{(-\infty,a_2)}(H,J)
	\end{split}
	\end{equation}
	are chain maps.

	Next, let $H_\pm:S^1\times M\to \R$ be two smooth Hamiltonians satisfying  $H_-\le H_+$. We choose a smooth homotopy $(\widetilde{H}_s)_{s\in\R}$ of smooth Hamiltonians $\widetilde{H}_s:S^1\times M\to \R$ such that $\widetilde{H}_{s}=H_\pm$ for $\pm s>1$ and $\partial_s \widetilde{H}_s \geq 0$. For $J_\pm\in\mathcal{J}_\reg(H_\pm)$, we also choose a smooth homotopy $(\widetilde J_s)_{s\in\R}$ of $S^1$-families of compatible almost complex structures such that $\widetilde J_s=J_\pm$ for $\pm s>1$. The moduli space
	\[
	{\mathcal{M}}([x_-,\bar{x}_-],[x_+,\bar{x}_+],\widetilde{H},\widetilde{J}),\qquad [x_\pm,\bar{x}_\pm]\in \widetilde{\mathcal{P}}(H_\pm)
	\]
	consists of smooth maps $u:\R\times S^1\to M$ solving 
	\[
	\partial_su+\widetilde{J}_{s,t}(u)(\partial_tu-X_{\widetilde{H}_{s,t}}(u))=0,\quad  
	\]
	with the same asymptotic conditions in \eqref{eq:asymptotic_condition}.
	For generic $\widetilde{H}$ and $\widetilde{J}$, it is a smooth manifold of dimension $\CZ(x_+,\bar{x}_+)-\CZ(x_-,\bar{x}_-)$. The continuation homomorphism $\Phi:\CF^*_{(a,b)}(H_-,J_-)\to \CF^*_{(a,b)}(H_+,J_+)$ is defined by 
	\[
	\Phi([x_-,\bar{x}_-])=\sum\#{\mathcal{M}}([x_-,\bar{x}_-],[x_+,\bar{x}_+],\widetilde{H},\widetilde{J})\cdot [x_+,\bar{x}_+]
	\]
	where the sum ranges over all the generators $[x_+,\bar{x}_+]$ of $\CF^*_{(a,b)}(H_+,J_+)$. By the condition $\partial_s \widetilde{H} \geq 0$, a Floer continuation cylinder increases the action and thus $\Phi$ is a chain map. Furthermore, $\Phi$ commutes with the maps $\iota_{a_2,a_1}$ and $\pi_{a_2,a_1}$ in \eqref{eq:incl_proj}.

	For the full action window $(a,b)=\R$, we simply write $\CF^*$ and $\HF^*$. In this case, the Floer chain complex admits a module structure over a Novikov ring $\Lambda=\Z[\mathfrak{t},\mathfrak{t}^{-1}]$ induced by
	\begin{equation}\label{eq:novikov_action}
	\Lambda\times \widetilde{\mathcal{P}}(H)\to \widetilde{\mathcal{P}}(H),\qquad \mathfrak{t}[x,\bar{x}]=[x,\bar{x}\#A],	
	\end{equation}
	where $A\in\pi_2(M)/\ker\omega$ such that $\omega(A)=m_M$. The constant $m_M\in\N$ is a positive  generator of $\omega(\pi_2(M))=m_M\Z$. By $\bar{x}\#A$, we mean a capping of $x$ induced by gluing $\bar{x}$ and a sphere representing $A$. The formal variable $\mathfrak{t}$ has degree $2c_M$, where $c_M$ denotes the minimal Chern number of $(M,\omega)$. By monotonicity, $c_M=\tau m_M$.
	If $\omega(\pi_2(M))=0$, we set $\Lambda=\Z$. 
	
	\subsection{Spectral norm}\label{sec:spec}
	Let $\QH^*(M;\Lambda)$ denote the quantum cohomology of $M$, that is isomorphic to $\H^*(M;\Z)\otimes\Lambda$ as $\Lambda$-module. Let $\mathrm{PSS}_H:\QH^*(M;\Lambda)\to \HF^*(H)$ be the PSS map established in \cite{PSS96}. It is a $\Lambda$-algebra isomorphism, where $\HF^*(H)$ is equipped with the pair of pants product. The spectral invariant of $H$ associated with a nonzero class $\eta\in \QH^*(M;\Lambda)$ is defined as 
	\[
	c(\eta;H):=\sup \big\{a\in \R \mid \mathrm{PSS}_H(\eta) \in \mathrm{im}\big((\iota_{-\infty,a})_*:\HF^*_{(a,\infty)}(H)\to \HF^*(H)\big)\big\}.
	\]
	Among many of its valuable properties, we collect some of them. 
	\begin{enumerate}[(i)]
		\item $c(\eta;H)\in \mathrm{spec}(H):=\mathcal{A}_H(\widetilde{\mathcal{P}}(H))$	
		\item $\displaystyle \int_{S^1}\min_M (H_t-K_t)dt\leq c(\eta;H)-c(\eta;K) \leq \int_{S^1}\max_M (H_t-K_t)dt$ 
		\item If $H$ and $K$ are normalized Hamiltonians with $[H]=[K]$, then $c(\eta;H)=c(\eta;K)$.  
		\item $c(\eta\star\mu;[H][K])\geq c(\eta;[H])+c(\mu;[K])$
	\end{enumerate}
	By (ii), we can define $c(\eta;H)$ even for degenerate $H$ (also for  continuous $H$). By being normalized we mean  $\int_{S^1}\int_M H_t\omega^ndt=0$ holds. We denote by $\widetilde{\Ham}(M,\omega)$  the universal cover of ${\Ham}(M,\omega)$, and write $[H]=[(\phi_H^t)_{t\in[0,1]}]\in\widetilde{\Ham}(M,\omega)$ and $[H][K]:=[(\phi_H^t\phi_K^t)_{t\in[0,1]})]$. The operation $\star$ denotes the quantum  product.
	By (iii), we can define
	\[
	c:\widetilde{\Ham}(M,\omega)\to \R ,\qquad c(\eta,[H]):=c(\eta,H),
	\]
	for normalized Hamiltonians $H$.

	For the unit element $1_M\in \QH^*(M)$, we simply write $c(H):=c(1_M;H)$.	
	The spectral (pseudo-)norm on $\widetilde{\Ham}(M,\omega)$ is defined by 
	\[
	\widetilde\gamma([H]):=-c([H])-c([\overline{H}])
	\]
	where $\overline{H}_t(x):=-H_t(\phi^t_H(x))$ is the Hamiltonian generating the path $t\mapsto (\phi_H^t)^{-1}$. It is an open problem whether $\widetilde\gamma([H])=0$ if and only if $[H]$ is the identity. This pseudo-norm  descends to a genuine norm on ${\Ham}(M,\omega)$:
	\[
	\gamma:{\Ham}(M,\omega)\to[0,\infty),\qquad \gamma(\phi):=\inf\{\widetilde\gamma([H])\mid \phi_H^1=\phi\}.
	\]
	We denote the induced bi-invariant metric by 
	\[
	 d_\gamma(\phi,\psi):=\gamma(\phi^{-1}\psi).
	\]
	By property (iv), it satisfies the triangle inequality. By property (ii), the Hofer-norm of $H$
	\[
	\|H\|_\mathrm{Hofer}:=\int_0^1 \big(\max_{M}H(t,\cdot)-\min_{M}H(t,\cdot)\big)dt
	\] 
	is bounded from below by $\widetilde{\gamma}([H])$, and therefore 
	\[
	\gamma(\phi)\leq \|\phi\|_{\mathrm{Hofer}}:= \inf\big\{ \|H\|_\mathrm{Hofer} \mid \phi_H^1=\phi\big\}.
	\]
	
 	A compact subset $K\subset M$ is called heavy or superheavy (with respect to $1_M$) if for any smooth autonomous function $H:M\to\R$
	\[
	\lim_{m\to\infty}\frac{c(mH)}{m} \leq \max_K H \quad\text{or}\quad \lim_{m\to\infty}\frac{c(mH)}{m} \geq \min_K H
	\] 
	holds, respectively. We remark that the spectral invariant $c$ and hence also the spectral norm $\gamma$ depend on the choice of the coefficient ring for Floer homology. As observed in \cite[Proposition 3.1.5]{KS24}, if $c_R$ denotes the spectral invariant with respect to a commutative unital ring $R$, then $c_R(H)\geq c(H)$ holds, where $c=c_\Z$. Thus, if $K$ is heavy with respect to $R$, then it is heavy with respect to $\Z$. Moreover, if $\gamma_R$ denotes the norm induced by $c_R$, then $\gamma\geq \gamma_R$ holds. The  inequality $c_R\geq c$ also implies that if $K$ is superheavy with respect to $\Z$, then it is superheavy with respect to $R$.
	
	A key observation we learned from Egor Shelukhin is that, in the spherically monotone case, $\widetilde\gamma([H])-\gamma(\phi_H^1)$ is uniformly bounded. Thus, the projection from $(\widetilde{\Ham}(M,\omega),\widetilde{\gamma})$ to $({\Ham}(M,\omega),\gamma)$ is a quasi-isometry.
	
	\begin{prop}\label{prop:ham}
		For every $H:S^1\times M\to\R$, we have $\widetilde\gamma([H])-\frac{2n m_M}{c_M}\leq \gamma(\phi_H^1)\leq \widetilde\gamma([H])$.
	\end{prop}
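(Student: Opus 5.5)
The upper bound $\gamma(\phi_H^1)\le\widetilde\gamma([H])$ holds by the definition of $\gamma$ as an infimum over lifts. For the lower bound, fix another normalized $K$ with $\phi_K^1=\phi_H^1$; we must show $\widetilde\gamma([K])\ge \widetilde\gamma([H])-\frac{2nm_M}{c_M}$. Write $[K]=[H]\cdot g$ with $g\in\pi_1(\Ham(M,\omega))$ a loop, represented by a normalized loop Hamiltonian $G$. The plan is to control how a loop shifts spectral invariants, using the Seidel element and monotonicity.

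\textbf{Step 1 (loops act by Seidel elements).} For a Hamiltonian loop $g$ one has the standard identity $c(\eta;[H]g)=c(\eta\star\mathcal{S}(g);[H])+\text{const}$, or more precisely $c(\eta;[H]g)=c(\eta\star \mathcal{S}(g)^{-1};[H])$ after normalization, where $\mathcal S(g)\in\QH^*(M;\Lambda)^\times$ is the Seidel element. I would use this in the weaker form: there is a real number $\sigma(g)$ (action shift of the loop) such that $\mathrm{spec}$ of $[H]g$ is $\mathrm{spec}$ of $[H]$ shifted by $\sigma(g)$, and the Floer complexes are identified by a degree shift $2I(g)$ (Maslov index of the loop) together with this action shift. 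Since the identification is a filtered chain isomorphism up to shifts, $c(1;[H]g)=c(\mathcal S(g)^{\pm1};[H])+\sigma(g)$ with $\mathcal S(g)$ an invertible class of degree $2I(g)$.

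\textbf{Step 2 (monotonicity ties action to degree).} In the spherically monotone case, the shift of action and shift of degree of a capped orbit are locked together: changing capping by $A$ changes action by $\omega(A)$ and CZ by $2c_1(A)=2\tau\omega(A)$. Hence $\sigma(g)$ and $I(g)$ satisfy $\sigma(g)=I(g)/\tau+O(\text{bounded})$; concretely, for a loop the quantity $\sigma(g)-I(g)/\tau$ is a homomorphism $\pi_1\Ham\to\R$ which vanishes by monotonicity (the normalized loop's action and Maslov index are proportional, as in McDuff's or Entov–Polterovich's arguments). Thus the degree $d=2I(g)$ of the invertible class $\mathcal S(g)^{\pm 1}$ determines the shift: $\sigma(g)=d/(2\tau)$.

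\textbf{Step 3 (spectral invariants of invertibles of given degree).} Now $\widetilde\gamma([K])=-c(1;[H]g)-c(1;g^{-1}[\overline H])$. Using Step 1 on both factors, both terms involve $c(u;\cdot)$ and $c(u^{-1};\cdot)$ for an invertible $u$ of degree $d$. The key estimate is that for any homogeneous nonzero $\eta$ of degree $\deg$, $c(\eta;H)$ differs from $c(\eta';H)$ for the ``reduced'' class only by the valuation, and the valuation of a degree-$\deg$ class in $\H^*(M;\Z)\otimes\Lambda$ is pinned down up to the window $[0,2n]$ of topological degrees: writing $\eta=\sum a_j\mathfrak t^{j}$ with $\deg a_j\in[0,2n]$, all $j$ satisfy $2c_Mj\in[\deg-2n,\deg]$, so the Novikov valuation (times $m_M$) lies in an interval of length $2n m_M/c_M$. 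Combining with the triangle-type property (iv) ($c(u\star u^{-1})\ge c(u)+c(u^{-1})$) and $c(1;0)=0$, the action shifts $\sigma(g)$ cancel between the two terms and what remains is bounded by the valuation window, giving the loss $\frac{2nm_M}{c_M}$. Taking the infimum over $K$ yields the claim.

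The main obstacle is Step 3 with $\Z$-coefficients: property (iv) only gives one-sided inequalities, and $c(u;H)$ for invertible $u$ is not obviously comparable to $c(1;H)$. I would handle it by expressing the PSS image of $u$ via the filtered isomorphism of Step 1 directly (the Seidel map is a filtered chain isomorphism, so $c(1;[H]g)$ equals $c(1;[H])$ plus an exact shift up to the degree-vs-valuation ambiguity), avoiding (iv) except for the final symmetric combination.
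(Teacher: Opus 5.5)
Your ingredients are the right ones: Seidel elements, the degree of an invertible class confining its valuation to a window, and cancellation between $g$ and $g^{-1}$. But two steps fail as written.

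Step 2 is false. The mixed action--Maslov homomorphism $g\mapsto\sigma(g)-I(g)/\tau$ on $\pi_1(\Ham(M,\omega))$ does not vanish on all spherically monotone manifolds. It is closely tied to the Futaki invariant: for the monotone one-point blow-up of $\mathbb{CP}^2$ it is nonzero on a toric circle action, because the barycenter of the moment polygon is not the point equidistant from its edges. This is exactly why the paper lets the degree $d$ of $\mathcal{S}([K])\in\QH^*(M;\Lambda^{\mathrm{univ}})$ be an arbitrary real number.

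Fortunately Step 2 is not needed. Only $\deg\mathcal{S}(g)^{-1}=-\deg\mathcal{S}(g)$ and $\sigma(g^{-1})=-\sigma(g)$ enter. The window bounds $\nu(u)\geq\frac{d-2n}{2c_M}$ and $\nu(u^{-1})\geq\frac{-d-2n}{2c_M}$ then give $\nu(u)+\nu(u^{-1})\geq-\frac{2n}{c_M}$ whatever $d$ is. Note also that a single class has a window of width $\frac{nm_M}{c_M}$, not $\frac{2nm_M}{c_M}$; the constant $\frac{2nm_M}{c_M}$ comes from adding the two windows.

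The more serious problem is the step that actually produces the inequality: it is left unresolved, and your proposed workaround does not work. The Seidel filtered isomorphism sends $\mathrm{PSS}(1)$ to $\mathrm{PSS}(u)$ with $u=\mathcal{S}(g)^{\pm1}$. So it yields $c(1;[H]g)=c(u;[H])$ (in the paper's normalization; in yours, up to a shift $\sigma(g)$ that later cancels). It does not yield $c(1;[H])$ plus a shift.

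Comparing $c(u;[H])$ with $c(1;[H])$ is not a matter of the valuation of $u$ alone; it is exactly property (iv). Its one-sidedness, which worried you, is harmless. A lower bound on $\widetilde\gamma([H]g)=-c(u;[H])-c(u^{-1};[\overline H])$ only requires upper bounds on both terms:
\[
c(1;[H])=c(u^{-1}\star u;[0][H])\geq c(u^{-1};[0])+c(u;[H]),\qquad c(1;[\overline H])\geq c(u;[0])+c(u^{-1};[\overline H]).
\]
You also need $c(v;[0])=m_M\nu(v)$, the input from \cite{EP03}, which you should state explicitly. With it, these give
\[
\widetilde\gamma([H]g)\geq\widetilde\gamma([H])+m_M\bigl(\nu(u)+\nu(u^{-1})\bigr)\geq\widetilde\gamma([H])-\frac{2nm_M}{c_M}.
\]

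The paper organizes the same estimate more economically. It first shows $\widetilde\gamma([K])\leq\frac{2nm_M}{c_M}$ for every loop $[K]$. That uses only $c(1;[K])=c(\mathcal{S}([K]);[0])=m_M\nu(\mathcal{S}([K]))$, and never needs a non-unit spectral invariant of a nonzero Hamiltonian. It then concludes by the triangle inequality $\widetilde\gamma([H])\leq\widetilde\gamma([H][K])+\widetilde\gamma([\overline K])$.
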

	\begin{proof}
	We consider the quantum cohomology  $\QH^*(M;\Lambda^\mathrm{univ})$ over the universal Novikov ring
	\begin{equation}\label{eq:universal}
			\Lambda^\mathrm{univ}:=\left\{\sum_{i=1}^\infty a_i\mathfrak{t}^{\lambda_i} \mid a_i\in\Z,\;\lambda_i\in\R,\; \lambda_i \nearrow\infty\right\}.
	\end{equation}
	We have $\QH^*(M;\Lambda)\hookrightarrow \QH^*(M;\Lambda^\mathrm{univ})$. The valuation 
		\[
			\nu:\Lambda^{\mathrm{univ}}\to \R\cup\{\infty\},\qquad \nu\left(\sum_{i=1}^\infty a_i\mathfrak{t}^{\lambda_i} \right):=\min\{\lambda_i\mid a_i\neq 0\}
		\] 	
		naturally induces a valuation $\nu:\QH^*(M;\Lambda^\mathrm{univ})\to \R$.
		For $[K]\in\pi_1(\Ham(M,\omega))$, we write the associated Seidel element $\mathcal{S}([K])\in\QH^*(M;\Lambda^\mathrm{univ})$, which is an invertible homogeneous element. By \cite[Proposition 4.1]{EP03}, we have
		\[
		c([K])=c(1_M;[K])=c(\mathcal{S}([K]);[0])=m_M\nu(\mathcal{S}([K])).
		\]
		As observed in \cite[Proposition 2.4]{MSV24}, the spectral invariant defined over $\Lambda$ coincides with the one over $\Lambda^\mathrm{univ}$.
		Let $d\in\R$ be the degree of $\mathcal{S}([K])=\sum \beta_i\otimes \mathfrak{t}^{\lambda_i}$, i.e.~$\deg \beta_i + 2c_M\lambda_i=d$ for all $i$. Since $0\leq\deg\beta_i\leq 2n$, we have $\frac{d-2n}{2c_M}\leq \lambda_i\leq \frac{d}{2c_M}$, and in turn
		\[
		\frac{d-2n}{2c_M}\leq \nu (\mathcal{S}([K])).
		\]
		Since $\mathcal{S}([\overline{K}])$ is the inverse of $\mathcal{S}([K])$, the degree of $\mathcal{S}[\overline{K}]$ is $-d$, and thus 
		\[
		\frac{-d-2n}{2c_M}\leq \nu (\mathcal{S}([\overline{K}])).
		\]
		Summing up these two inequalities, we obtain
		\[
		\widetilde\gamma([K])=-m_M\nu(\mathcal{S}([K]))-m_M\nu(\mathcal{S}([\overline{K}]))\leq \frac{2nm_M}{c_M}.
		\]
		This together with the triangle inequality yields
		\[
		\widetilde\gamma([H])\leq \widetilde\gamma([H][K])+\widetilde\gamma([\overline{K}]) \leq \widetilde\gamma([H][K])+\frac{2n m_M}{c_M}
		\]
		for any $H:S^1\times M\to\R$. The proposition follows from
		\[
		\widetilde\gamma([H])-\frac{2n m_M}{c_M}\leq \gamma(\phi_H^1)=\inf\{\widetilde\gamma([H][K])\mid  [K]\in\pi_1(\Ham(M,\omega))\} \leq \widetilde\gamma([H]).
		\]	
	\end{proof}
		
	\subsection{Relative symplectic cohomology}\label{sec:rel_sh}
	In this section, we sketch the construction of the relative symplectic cohomology of a compact subset $K\subset M$ and refer to \cite{Var21} for full details. We choose a sequence of nondegenerate Hamiltonians $H_1\leq H_2\leq \cdots $, where $H_i:S^1\times M\to\R$, such that 
	\[
	H_i|_{S^1\times K}<0,\qquad \lim_{i\to\infty} H_i(t,x)=
	\begin{cases}
		0 \quad &(t,x)\in S^1\times K,\\[.5ex]
		\infty&(t,x)\in S^1\times (M\setminus K).
	\end{cases}
	\]
	We also choose a monotone homotopy between $H_i$ and $H_{i+1}$ and a generic homotopy between $J_i\in\mathcal{J}_\reg(H_i)$ and $J_{i+1}\in\mathcal{J}_\reg(H_{i+1})$ for every $i\in\N$. The pair of sequences $(\mathcal{H},\mathcal{J})=(\{H_i\}_{i\in\N},\{J_i\}_{i\in\N})$, together with the homotopies, is called acceleration data for $K$.	This defines one-ray of Floer complexes 
	\[
	\begin{tikzcd}[column sep=20pt]
		\CF^*_{(a,b)}(H_1,J_1)\ar[r,"\Phi_1"]&\CF^*_{(a,b)}(H_2,J_2)\ar[r,"\Phi_2"]&\cdots
	\end{tikzcd}
	\]
	for any $-\infty\leq  a<b<\infty$, where the maps are continuation homomorphisms. We also view this as a direct system by defining maps between nonadjacent terms by compositions of $\Phi_i$. We denote the limit chain complex of this direct system by
	\[
	\SC^*_{(a,b)}(K,\mathcal{H},\mathcal{J}):=\varinjlim_{i\rightarrow\infty}\CF^*_{(a,b)}(H_i,J_i).
	\] 
 	Moreover, for any $b_1\geq b_2$, we have a commutative map between two one-rays 
	\[
	\begin{tikzcd}[column sep=20pt]
		\CF^*_{(a,b_1)}(H_1,J_1)\ar[r,"\Phi_1"]\ar[d]&\CF^*_{(a,b_1)}(H_2,J_2)\ar[r,"\Phi_2"]\ar[d]&\cdots\\
		\CF^*_{(a,b_2)}(H_1,J_1)\ar[r,"\Phi_1"]&\CF^*_{(a,b_2)}(H_2,J_2)\ar[r,"\Phi_2"]&\cdots
	\end{tikzcd}
	\]
	induced by projections in \eqref{eq:incl_proj}. This defines an inverse system of chain complexes
	\[
	\cdots\longrightarrow	\SC^*_{(a,b_j)}(K,\mathcal{H},\mathcal{J})\longrightarrow \SC^*_{(a,b_{j+1})}(K,\mathcal{H},\mathcal{J}) \longrightarrow\cdots.
	\]
	where $\cdots>b_j>b_{j+1}>\cdots$.
	Taking $a = -\infty$ and taking the inverse limit over $b_j \nearrow \infty$, we obtain the chain complex
	\begin{equation*}
		\SC^*_M(K,\mathcal{H},\mathcal{J})=\varprojlim_{b\rightarrow\infty}\SC^*_{(-\infty,b)}(K,\mathcal{H},\mathcal{J})=\varprojlim_{b\rightarrow\infty} \varinjlim_{i\rightarrow\infty}\CF^*_{(-\infty,b)}(H_i,J_i).
	\end{equation*}
	We write 
	\[
	\Phi_{l,i}=\Phi_{l-1}\circ \cdots \circ\Phi_i:\CF^*(H_i,J_i)\longrightarrow \CF^*(H_l,J_l)
	\]
	for the composition of continuation homomorphisms and 
	\[
	 \Phi_{\infty,i}:\CF^*(H_{i},J_{i})\longrightarrow \varinjlim_{l\rightarrow\infty}\CF^*(H_{l},J_{l})
	\] 
	for the induced map to the direct limit. Then, any element in $\SC_M^*(K,\mathcal{H},\mathcal{J})$ takes the form 
	\begin{equation}\label{eq:eltsofsc}
			\sum_{i=1}^\infty \Phi_{\infty,i}(\beta_i),\qquad \beta_i \in \CF^*(H_i,J_i),\quad a_i:=\lim_{l\to\infty}\mathcal{A}_{H_l}(\Phi_{l,i}(\beta_i))<\infty,\quad \lim_{i\to\infty} a_i=\infty,\quad 
	\end{equation}
	where $\mathcal{A}_{H_l}(\Phi_{l,i}(\beta_i))$ denotes the minimum action among the generators appearing with non-zero coefficients in the chain $\Phi_{l,i}(\beta_i)$.

	The relative symplectic cohomology of $K$ is defined by
	\begin{equation*}
		\SH^*_M(K)=\H^*(\SC_M^*(K,\mathcal{H},J)).
	\end{equation*}
	The product structure investigated in \cite{TV21,MSV24,AGV24} and the $\Lambda$-module structure induced by \eqref{eq:novikov_action} endow $\SH^*_M(K)$ with a $\Lambda$-algebra structure. Our definition of $\SH^*_M(K)$ is not identical to the original one \cite{Var21}, where the universal Novikov ring was used. We compare these two cohomologies in Remark \ref{rmk:original}.

	\begin{rmk}\label{rmk:Jchange}
	For another acceleration data $(\mathcal{H}',\mathcal{J}')=(\{H_i'\},\{J_i'\})$ with $H_i\leq H_i'$ for all $i\in\N$, we have a morphism between the two respective one-rays,
		\[
	\begin{tikzcd}[column sep=20pt]
		\CF^*_{(-\infty,b)}(H_1,J_1)\ar[r]\ar[d]\ar[rd]&\CF^*_{(-\infty,b)}(H_2,J_2)\ar[r]\ar[d]\ar[rd]&\cdots\\
		\CF^*_{(-\infty,b)}(H_1',J_1')\ar[r,]&\CF^*_{(-\infty,b)}(H_2',J_2')\ar[r,]&\cdots
	\end{tikzcd}
	\]
	induced by continuation homomorphisms, where the diagonal arrows indicate that each square commutes up to homotopy. This morphism induces a quasi-isomorphism from $\SC^*_M(K,\mathcal{H},\mathcal{J})$ to $\SC^*_M(K,\mathcal{H}',\mathcal{J}')$, 
		see \cite[Proposition 3.3.2]{Var21}. Even if $H_i\leq H_i'$ does not hold, the statement remains true since one can find a third acceleration data $(\mathcal{H''}=\{H_i''\},\mathcal{J}'')$ satisfying both $H_i\leq H_i''$ and $H_i'\leq H_i''$, which yields $\SC^*_M(K,\mathcal{H},\mathcal{J})\simeq \SC^*_M(K,\mathcal{H}'',\mathcal{J}'')\simeq \SC^*_M(K,\mathcal{H}',\mathcal{J}')$.
	\end{rmk}

	\begin{remark}\label{rem:limit} 
	We can also define the filtered relative symplectic cohomology of $K$
	\[
	\SH^*_{M,(a,\infty)}(K):=\H^*\Big(\varprojlim_{b\rightarrow\infty}\SC^*_{(a,b)}(K,\mathcal{H},\mathcal{J})\Big).
	\]
	One can readily see that the equalities 
			\begin{equation*}
		\SC^*_M(K,\mathcal{H},\mathcal{J}) =\varprojlim_{b\rightarrow\infty}\varinjlim_{a\rightarrow-\infty}\SC^*_{(a,b)}(K,\mathcal{H},\mathcal{J}) =\varinjlim_{a\rightarrow-\infty}\varprojlim_{b\rightarrow\infty}\SC^*_{(a,b)}(K,\mathcal{H},\mathcal{J})
	\end{equation*}
	hold by observing that each of these modules consists of the same elements, see  
	\cite[Lemmas 4.2 and 4.3]{CF11}. 
	Since the direct limit commutes with the homology functor, we conclude 
	\[
	\SH^*_M(K)\cong \varinjlim_{a\rightarrow-\infty}\SH^*_{M,(a,\infty)}(K).
	\]
	\end{remark}
	
	Let $K\subset K'$ be  compact subsets of $M$. By choosing acceleration data $(\mathcal{H}=\{H_i\},\mathcal{J})$ for $K$ and $(\mathcal{H}'=\{H_i'\},\mathcal{J}')$ for $K'$ satisfying $H_i'\leq H_i$ for all $i$, we have a sequence of continuation homomorphisms $\CF^*_{(-\infty,b)}(H_i')\to \CF^*_{(-\infty,b)}(H_i)$. This in turn yields the  restriction map 
	\[
	\mathrm{res}_{K,K'}:\SH_M^*(K')\longrightarrow\SH_M^*(K),
	\]
	which is a $\Lambda$-algebra homomorphism, see \cite{Var21,AGV24}. When $K'=M$, we have a $\Lambda$-algebra isomorphism $\QH^*(M;\Lambda) \cong \SH_M^*(M)$, see \cite{Var21,MSV24}. Composing this with the restriction map $\mathrm{res}_{K,M}$, we obtain a $\Lambda$-algebra homomorphism
	\begin{equation*}
		\mathrm{PSS}_K:\QH^*(M;\Lambda)\longrightarrow\SH_M^*(K).
	\end{equation*}

Apart from the following proposition which relies on a  chain-level product structure, we do not use the product structure on relative symplectic cohomology elsewhere in this paper.
	
	\begin{prop}\label{prop:heavy}
		A compact subset $K\subset M$ is heavy if and only if $\SH_M^*(K)\neq 0$.
	\end{prop}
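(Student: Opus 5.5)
The plan is to follow the argument of \cite{MSV24}, adapted to our $\Z$-coefficient setting with the Novikov ring $\Lambda$. The key structural input is that $\SH_M^*(K)$ is a unital $\Lambda$-algebra and $\mathrm{PSS}_K$ is a unital algebra map. Hence $\SH_M^*(K)\neq0$ if and only if $\mathrm{PSS}_K(1_M)\neq0$. So heaviness must be related to the vanishing or non-vanishing of the image of the unit in the completed telescope. The unit is the chain-level ingredient we need, and it is the reason the proposition requires the chain-level product.

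For the direction ``$\SH^*_M(K)=0\Rightarrow K$ not heavy'', we argue through the telescope. If $\mathrm{PSS}_K(1_M)=0$, then in $\SC^*_M(K,\mathcal H,\mathcal J)$ the image of the unit is a coboundary. Unwinding the inverse limit over $b$ and the direct limit over $i$, as in \eqref{eq:eltsofsc} and Remark \ref{rem:limit}, we would extract the following. For a fixed action level $b$ there is some $i$ such that $\mathrm{PSS}_{H_i}(1_M)$ vanishes in $\HF^*_{(-\infty,b)}(H_i)$. Equivalently, $c(H_i)\ge b$ up to a uniform error.

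Now choose the acceleration data concretely. Fix an autonomous $H$ with $H|_K<0$ and take $H_i$ to be $C^0$-close to $iH$ away from a neighbourhood of $K$. We can arrange this by choosing $H$ negative near $K$ and using the freedom of Remark \ref{rmk:Jchange}. Using property (ii) of spectral invariants, this gives
\[
\lim_m c(mH)/m>0\ge \max_K H
\]
for a suitable $H$, contradicting heaviness. Some care is needed because the vanishing occurs only after passing to a completion. I would handle this with the boundary-depth/Cauchy argument of \cite{MSV24}, where a coboundary in the completed complex is approximated by finite-stage coboundaries up to arbitrarily high action.

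For the converse, suppose $K$ is not heavy. Then there is $H$ with $\lim_m c(mH)/m>\max_K H$; after shifting, $H|_K<0$ and $c(mH)\ge \epsilon m$ for some $\epsilon>0$. Use the sequence $H_i=m_iH'$, where $H'$ is obtained from $H$ by increasing it outside $K$ so that $H'\ge H$. This gives admissible acceleration data with $c(H_i)\to\infty$. Therefore the PSS image of the unit in $\CF^*(H_i)$ is cohomologous to a chain of action at least $c(H_i)-\delta$. The plan is to build, inductively along the telescope, a sequence of primitives whose contributions have actions tending to $\infty$. Their sum then converges in the completion and exhibits $\mathrm{PSS}_K(1_M)$ as a coboundary, so $\SH^*_M(K)=0$.

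The main obstacle in both directions is controlling the interaction between the $\varprojlim_b$ completion and the homotopies of the telescope. Concretely, we must show that the primitives chosen at each stage can be glued with actions going to infinity. For this I would rely on the spectral-invariant characterization as $\sup$ over filtered images, together with the fact, as in \cite[Proposition 2.4]{MSV24}, that spectral invariants over $\Lambda$ agree with those over $\Lambda^{\mathrm{univ}}$. This lets us transfer the field-free argument of \cite{MSV24} to $\Z$-coefficients verbatim: nothing in it uses division by nonzero scalars, only filtered chain homotopies.
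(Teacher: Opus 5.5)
You take a different route from the paper. The paper does not reprove anything: it cites \cite[Theorem 1.7]{MSV24}, which concerns Varolgunes' complex over $\Lambda^{\mathrm{univ}}$, and uses Remark \ref{rmk:original} to transfer it to the $\Z[\mathfrak{t},\mathfrak{t}^{-1}]$ model. One implication comes from $\SH^*_M(K)$ being a direct summand of $\SH^*_M(K;\Lambda^{\mathrm{univ}})\cong\SH^*_{M,\mathrm{Var}}(K;\Lambda^{\mathrm{univ}})$. The other comes from the fact that a nonzero $\SH^*_{M,\mathrm{Var}}$ has nonzero unit $\mathrm{PSS}^{\mathrm{univ}}_K(1_M)$, which forces $\mathrm{PSS}_K(1_M)\neq 0$. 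Reproving \cite{MSV24} directly in the paper's model is legitimate, but your converse direction has a genuine gap.

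Your first direction is essentially fine. Projecting a primitive of the unit cocycle $e$ to $\SC^*_{(-\infty,b)}(K,\mathcal{H},\mathcal{J})$ gives $c(H_i)\ge b$ for large $i$, and no boundary-depth argument is needed there. To pass from unboundedness of $c(H_i)$ to $\lim_m c(mF)/m>0=\max_K F$, for some $F\ge 0$ with $F^{-1}(0)=K$, you need superadditivity (iv), not only (ii).

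The gap is in the converse. From $c(H_i)\to\infty$ you only get, for each $b$, a splitting $e=y_b+dz_b$ in the completed complex with $\mathcal{A}(y_b)\ge b$. There is no lower bound on the action of $z_b$. This kills $e$ in every truncation, but exactness in $\varprojlim_b$ needs primitives whose actions tend to infinity. The obstruction is a $\varprojlim^1$-term. Your plan to glue primitives found at later and later stages of the telescope fails here, because the differences $z_{b'}-z_b$ carry no action control.

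The tools you cite do not supply this control. The sup-characterisation of $c$ and \cite[Proposition 2.4]{MSV24} only concern cohomology classes. A uniform boundary-depth bound is not available in general; the paper treats it as an extra hypothesis even in the superheaviness criterion.

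The missing idea is to produce high-action primitives by multiplication rather than by moving along the telescope. This is why the paper says the proposition relies on a chain-level product. For example:
\begin{enumerate}[(1)]
\item Fix one $b$ exceeding the action loss $C$ of the product, and let $m_y$ be multiplication by $y=y_b$.
\item Then $m_y$ raises action by at least $b-C>0$, so $\mathrm{id}-m_y$ is invertible on the completed complex, with inverse $\sum_{n\ge 0}m_y^n$.
\item On cohomology, $\mathrm{id}-m_y$ agrees with $m_e-m_y=m_{dz_b}$, which vanishes by the Leibniz rule.
\item An invertible map that is zero on cohomology forces $\SH^*_M(K)=0$.
\end{enumerate}
In your proposal the unit is only used to reduce to $\mathrm{PSS}_K(1_M)\neq 0$, never in the convergence step. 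So your claim that the argument transfers verbatim using only filtered chain homotopies is exactly what fails.

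There is also a smaller error. Your data $H_i=m_iH'$ are not acceleration data for $K$. Since $H'=H<0$ on and near $K$, $m_iH'\to-\infty$ there, instead of tending to $0$ on $K$ and $+\infty$ off $K$. A smoothing of $\max(G_i,m_iH)$ with genuine acceleration data $G_i$, together with monotonicity of $c$, does what you want.
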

	\begin{proof}
	This was proved in \cite[Theorem 1.7]{MSV24}. See also Remark \ref{rmk:original}. 
	\end{proof}

	\begin{rmk}\label{rmk:original}
			To explain how our definition of relative symplectic cohomology is related to the original one in \cite{Var21}, we recall the universal Novikov ring $\Lambda^{\mathrm{univ}}$ and the valuation  $\nu:\Lambda^{\mathrm{univ}}\to (-\infty,\infty]$ from \eqref{eq:universal}. 
			For $b\in\R$, we denote $\Lambda^{\mathrm{univ}}_{\geq b}:=\nu^{-1}([b,\infty])$. The chain complex introduced in \cite{Var21} is
			\begin{equation*}
				\SC^*_\mathrm{Var}(K,\mathcal{H},\mathcal{J};\Lambda^{\mathrm{univ}}):=\left(\varprojlim_{b\rightarrow\infty}\varinjlim_{i\rightarrow\infty}\CF^*(H_i;\Lambda^{\mathrm{univ}}_{\ge 0}/\Lambda^{\mathrm{univ}}_{\ge b})\right)\otimes_{\Lambda^{\mathrm{univ}}_{\ge 0}}\Lambda^{\mathrm{univ}},
			\end{equation*}
			where $\CF^*(H_i;\Lambda^{\mathrm{univ}}_{\ge 0}/\Lambda^{\mathrm{univ}}_{\ge b})$ is generated by $1$-periodic orbits of $X_{H_i}$ over $\Lambda^{\mathrm{univ}}_{\ge 0}/\Lambda^{\mathrm{univ}}_{\ge b}$. We write $\SH^*_{M,\mathrm{Var}}(K;\Lambda^\mathrm{univ})$ for the homology of this complex. 
			
			The action of $\Lambda=\Z[\mathfrak{t},\mathfrak{t}^{-1}]$  in \eqref{eq:novikov_action} defines a $\Lambda$-module structure on $\SC^*(K,\mathcal{H},\mathcal{J})$. Also, the filtered one $\SC^*_{(0,\infty)}(K,\mathcal{H},\mathcal{J})$ is a $\Z[\mathfrak{t}]$-module. We define
			\[
				\SC^*(K,\mathcal{H},\mathcal{J};\Lambda^{\mathrm{univ}}):
				=\left(\varprojlim_{b\rightarrow\infty}\SC^*_{(0,\infty)}(K,\mathcal{H},\mathcal{J})\otimes_{\Z[\mathfrak{t}]}(\Lambda^{\mathrm{univ}}_{\ge 0}/\Lambda^{\mathrm{univ}}_{\ge b})\right)\otimes_{\Lambda^{\mathrm{univ}}_{\ge 0}}\Lambda^{\mathrm{univ}}.
			\]
			We write $\SH^*_M(K;\Lambda^{\mathrm{univ}})$ for its homology. 
			Then, we have a chain isomorphism
			\[
				\SC^*(K,\mathcal{H},\mathcal{J};\Lambda^{\mathrm{univ}}) \longrightarrow \SC^*_\mathrm{Var}(K,\mathcal{H},\mathcal{J};\Lambda^{\mathrm{univ}})
			\]
			determined by $[x,\bar{x}] \mathfrak{t}^\lambda\mapsto x\,\mathfrak{t}^{\mathcal{A}_{H_i}(x,\bar{x})+m_M\lambda}$, and hence
			\begin{equation*}
				\SH^*_M(K;\Lambda^{\mathrm{univ}})\cong\SH^*_{M,\mathrm{Var}}(K;\Lambda^{\mathrm{univ}}).
			\end{equation*}
			
			We now observe the following equivalence:
			\[
			\SH^*_M(K;\Lambda^{\mathrm{univ}})\neq 0\quad \Longleftrightarrow \quad\SH^*_{M,\mathrm{Var}}(K;\Lambda^\mathrm{univ})\neq 0
			\]
			Since there is a natural $\Lambda$-linear inclusion 
			\[
				\SC^*(K,\mathcal{H},\mathcal{J})\to\SC^*(K,\mathcal{H},\mathcal{J};\Lambda^\mathrm{univ}),\qquad 
				\alpha\mapsto \alpha \mathfrak{t}^l \otimes \mathfrak{t}^{-l},
			\]
			where $l\gg0$ such that $\alpha\mathfrak{t}^l \in \SC^*_{(0,\infty)}(K,\mathcal{H},\mathcal{J})$, and a chain retraction 
			\[
				\SC^*(K,\mathcal{H},\mathcal{J};\Lambda^\mathrm{univ})\to\SC^*(K,\mathcal{H},\mathcal{J}),\qquad 
				\alpha\otimes \mathfrak{t}^\lambda \mapsto\begin{cases}
					\alpha\cdot \mathfrak{t}^\lambda &\lambda\in\mathbb{Z}\\
					0 &\mathrm{otherwise},
				\end{cases}
			\]
			it follows that $\SH_M^*(K)$ is a direct summand of $\SH_M^*(K;\Lambda^\mathrm{univ})$. Thus $\SH_M^*(K)\neq 0$ implies that $\SH^*_{M}(K;\Lambda^\mathrm{univ})\neq 0$. To show the converse, we consider the commutative diagram 
			\[\begin{tikzcd}
				\QH^*(M;\Lambda)\ar[rr]\ar[d,"\mathrm{PSS}_K"]&&\QH^*(M;\Lambda^\mathrm{univ})\ar[d,"\mathrm{PSS}_K^\mathrm{univ}"]\\
				\SH_M^*(K)\ar[r] &\SH_M^*(K;\Lambda^\mathrm{univ})\ar[r,"\cong"] &\SH^*_{M,\mathrm{Var}}(K;\Lambda^\mathrm{univ})
			\end{tikzcd}\]
			where the upper horizontal map is the inclusion. Suppose that $\SH^*_{M,\mathrm{Var}}(K;\Lambda^\mathrm{univ})\neq0$. Since $\mathrm{PSS}_K^\mathrm{univ}$ is a ring homomorphism, the unit $1_M\in \QH^*(M;\Lambda)\subset \QH^*(M;\Lambda^\mathrm{univ})$ is mapped to the unit $\mathrm{PSS}_K^\mathrm{univ}(1_M)\in \SH^*_{M,\mathrm{Var}}(K;\Lambda^\mathrm{univ})$, which is nonzero.  Therefore, $\mathrm{PSS}_K(1_M)\neq 0$, which implies $\SH_M^*(K)\neq 0$.
			\end{rmk}

	\section{Relative symplectic cohomology for circle bundles}\label{sec:sh}
	Now we describe the setting for our main results.  
	Let $(M,\omega)$ be a closed symplectic manifold of dimension $2n$. We assume that $\omega$ is integral, i.e.~there exists a lift $[\omega]_\Z\in\H^2(M;\Z)$ of $[\omega]\in \H^2(M;\R)$ and that $M$ is spherically monotone, i.e.~$c_1^{TM}=\tau\omega$ on $\pi_2(M)$ for some  $\tau>0$. We also assume that there exists a closed  codimension two symplectic submanifold $D\subset M$ whose homology class $[D]\in\H_{2n-2}(M;\Z)$ is Poincar\'e dual to $k[\omega]_\Z$ for some positive integer $k<\tau$. 
	
	We denote by $\omega_D$ and $[\omega_D]_\Z$ the pullbacks of $\omega$ and $[\omega]_\Z$ under the inclusion $D\hookrightarrow M$. 
	Let $\pi:\widehat{E}\to D$ be a complex line bundle with first Chern class $k[\omega_D]_\Z$. We equip $\widehat{E}$ with any Hermitian metric.  Let $\alpha$ be a connection 1-form on $\widehat{E}\setminus D$ satisfying $\pi^*(-k\omega_D)=d\alpha$. Let $E\subset \widehat{E}$ denote the open unit disk subbundle. We equip ${E}$ with the symplectic form \[
	\pi^*\omega_D+\frac{1}{k}d(r^2\alpha)
	\] 
	where $r$ denotes the fiberwise radial coordinate. The symplectic form admits the primitive 1-form $\frac{1}{k}(r^2-1)\alpha$ on $E\setminus D$.  The celebrated decomposition theorem of Biran \cite{Bir01} (see \cite{Gir17} for generalizations) yields that 
	\begin{itemize}
		\item there is a Liouville domain $(W,\lambda)$ with a smooth map $q:W\to M$ such that the interior $\mathrm{int}(W)$  of $W$ is symplectomorphic to $M\setminus D$ via $q$  and $q$ maps the boundary $\partial W$ to $D$, where $q:(\partial W,-k\lambda|_{\partial W})\to D$ is strict contactomorphic to the prequantization bundle $\pi:(\partial E,\alpha)\to D$;
		\item there is a symplectomorphism $\psi:E\to M\setminus \mathrm{sk}(W,\lambda)$, where $\mathrm{sk}(W,\lambda)$ denotes the skeleton of the Liouville flow of $(W,\lambda)$.
	\end{itemize}
	Moreover, through the symplectomorphism $\mathrm{int}(W)\setminus \mathrm{sk}(W,\lambda) \cong E\setminus D$, the Liouville 1-form $\lambda$ corresponds to $\frac{1}{k}(r^2-1)\alpha$, and the Liouville vector field corresponds to $Z=(r^2-1)\partial_{r^2}$. The connection 1-form $\alpha$ restricted to each fiber of $E$ is $\frac{d\theta}{2\pi}$, where $(r,\theta)$ denote the polar coordinates, and accordingly the Reeb vector field $R_\alpha$ equals  $2\pi\partial_\theta$.  
	
	We define
	\[
	U_\rho:=\psi\big(\{e\in E\mid r^2(e)< \rho\}\big) \subset M.
	\] 
	To ease notation, we often identify $E$ with $U_1$ and regard $E$ as a subset of $M$. In this section, we introduce and investigate the Morse-Bott chain complex for the relative symplectic cohomology of the boundary $\partial U_\rho$ of $U_\rho$.
	
	\subsection{A specific choice of acceleration data}
	
	We choose a monotone sequence $\{H_\mu\}_{\mu\in\R_+\setminus\Z}$ of autonomous  Hamiltonians on $M$ as follows. We fix $0<\rho<1$. We choose $\epsilon>0$ such that $2\epsilon<1-\rho$ and   $\frac{\rho\mu}{\epsilon }\notin\Z$. Let $h_\mu^\mathrm{lin}:[0,1]\rightarrow \mathbb{R}$ be a piecewise linear function given by 
	\begin{equation*}
		h_\mu^\mathrm{lin}(r)=\begin{cases}
			-\frac{\mu}{k}(r^2-\rho) &r^2\le \rho,\\[.5ex]
			\frac{\rho}{\epsilon}\frac{\mu}{k}(r^2-\rho) &\rho \le r^2 \le \rho+\epsilon,\\[.5ex]
			\frac{\mu}{k} \rho &r^2\ge \rho+\epsilon.
		\end{cases}
	\end{equation*}
\begin{figure}[htbp]
	\centering
	\begin{tikzpicture}
	\def\rhoval{0.38}
	\def\epsval{0.18}
	\pgfmathsetmacro{\rhoeps}{\rhoval+\epsval}

	\begin{axis}[
		width=8cm,
		height=6cm,
		axis lines=middle,
		xmin=0, xmax=1.05,
		ymin=-0.12, ymax=1.2,
		xlabel={$r^2$},
		ylabel={},
		xtick={0,\rhoval,\rhoeps,1},
		xticklabels={$0$,$\rho$,$\rho+\epsilon$,$1$},
		ytick={0,1},
		yticklabels={$0$,$\frac{\mu}{k}\rho$},
		clip=false,
	]
		\addplot[very thick, black] coordinates {
			(0,1)
			(\rhoval,0)
			(\rhoeps,1)
			(1,1)
		};
		\pgfmathsetmacro{\xmid}{(\rhoeps+1)/2}
		\node[above=4pt] at (axis cs:\xmid,1) {\(\mathrm{I}\)};
		\node[above=4pt] at (axis cs:\rhoeps,1) {\(\mathrm{II}\)};
		\node[above=17pt] at (axis cs:\rhoval-0.02,0) {\(\mathrm{III}\)};
		\node[above right=4pt] at (axis cs:0,1) {\(\mathrm{IV}\)};
	\end{axis}
\end{tikzpicture}
\caption{graph of $h_\mu^\mathrm{lin}(r^2)$}
	\end{figure}
	
	Let $h_\mu$ be a smooth approximation of $h_\mu^\mathrm{lin}$ such that it vanishes on $r^2={\rho}$, and the function $r^2\mapsto h_\mu(r^2)$ is strictly convex near $r^2=\rho$ and strictly concave near $r^2=\rho+\epsilon$. The Hamiltonian $H_\mu:M\to\R$ is defined by
	\begin{equation}\label{eq:Hamiltonian}
	H_\mu(p)=\begin{cases}
		h_\mu(r^2(p)) & p\in E\cong M\setminus\mathrm{sk}(W,\lambda)\\[.5ex]
		\frac{\mu}{k}\rho & p \in \mathrm{sk}(W,\lambda).
	\end{cases}
	\end{equation} 
	Note that away from $\mathrm{sk}(W,\lambda)\cup D$ we have
	\begin{equation}\label{eq:Ham=-Reeb} 
	X_H=-k\partial_{r^2}h  R_\alpha.		
	\end{equation}

	Next, we introduce a special class of almost complex structures on $M$, cf.~\cite{DL19a,DL19b,AK23,BKK24}. Let $J_D$ be a compatible almost complex structure on $(D,\omega_D)$. We decompose $T\widehat{E}$ into vertical and horizontal tangent subbundles with respect to the connection form $\alpha$,
	\begin{equation*}
		T_{e}\widehat{E}=T_e^\mathrm{ver}\widehat{E}\oplus T_e^\mathrm{hor}\widehat{E}\cong \widehat{E}_{\pi(e)}\oplus T_{\pi(e)}D,
	\end{equation*}
	where  $\widehat{E}_{\pi(e)}$ denotes the fiber of $\widehat{E}\to D$ over $\pi(e)$. We consider a compatible almost complex structure $J_{\widehat{E}}$ on $\widehat{E}$, which has the form 
	$J_{\widehat{E}}=\begin{pmatrix}
			j &0\\
			0 & J_D
		\end{pmatrix}$
		with respect to the above decomposition,  
	where 
	\begin{equation}\label{eq:j_fiber}
		\begin{cases}
			j(e)=i &\text{for } r^2(e)<\frac{\rho}{3}\\[.5ex]
			j(e)Z=-R_\alpha &\text{for } r^2(e)>\frac{2\rho}{3}\\[.5ex]
			j(e)\partial_{r^2}=g(r)R_\alpha &\text{for } r(e)\neq 0
		\end{cases}
	\end{equation}
	for some smooth function $g:(0,\infty)\rightarrow (0,\infty)$. Here $i$ denotes the complex structure of $\widehat{E}\to D$, and recall $Z=(r^2-1)\partial_{r^2}$. We also consider a compatible almost complex structure $J$ on $M$ such that, on $U_{\rho+2 \epsilon}\subset U_1\cong E$, it satisfies the first and the last condition of \eqref{eq:j_fiber}.
	
	By the Liouville flow, $(W\setminus\mathrm{sk}(W,\lambda),\lambda)$ is exact symplectomorphic to $((0,1)\times\partial{W},\mathfrak{r}\lambda|_{\partial W})$, where the coordinate $\mathfrak{r}\in(0,1)$ corresponds to $1-r^2$.  We define the completion $\widehat{W}$ by attaching $((1,\infty)\times \partial{W},\mathfrak{r}\lambda|_{\partial W})$ to $W$. Let $J_{\widehat{W}}$ be a compatible almost complex structure on $\widehat{W}$ being cylindrical at infinity, i.e.,~outside a bounded subset, $J_{\widehat{W}}$ preserves $\ker\alpha$, is independent of $\mathfrak{r}$, and sends $Z$ to $-R_\alpha=R_{\lambda|_{\partial W}}$, where the latter is the Reeb vector field associated with $\lambda|_{\partial W}$.

	\begin{lem}\label{lem:mp}
		Let $(\R_+\times \Sigma, d(\mathfrak{r}\mathfrak{\sigma}))$ be the symplectization of a contact manifold $(\Sigma,\sigma)$. Let $J$ be a compatible almost complex structure satisfying $\sigma\circ J=f d\mathfrak{r}$ near $\{\mathfrak{r}=\mathfrak{r}_0\}$ for some positive function $f$. Let $H$ be a Hamiltonian on $\R_+\times \Sigma$ of the form $H(\mathfrak{r},z)=h(\mathfrak{r})$ near $\{\mathfrak{r}=\mathfrak{r}_0\}$. Let $u:\R\times S^1\to \R_+\times \Sigma$ be a Floer cylinder for $(H,J)$. 
		 If both asymptotic orbits of $u$ are contained in $\{\mathfrak{r}\leq \mathfrak{r}_0\}$, then $u$ is entirely contained in that region. This continues to hold for Floer continuation cylinders for a homotopy $\widetilde{H}_s(r,z)=h_s(r)$ provided $\partial_sh_s\geq0$ and $\partial_s(-\mathfrak{r}h'_s(\mathfrak{r})+h_s(\mathfrak{r}))\leq 0$ for $\mathfrak{r}$ near $\mathfrak{r}_0$.
	\end{lem}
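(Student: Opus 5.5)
We will prove Lemma \ref{lem:mp} by a maximum principle for the radial coordinate, the standard argument used for Floer cylinders in symplectizations (cf.\ \cite{AK23,BKK24}). Set $\varrho:=\mathfrak{r}\circ u$ on the open set $\Omega:=u^{-1}(\{\mathfrak{r}>\mathfrak{r}_0-\delta\})$, where $\delta>0$ is small enough that the hypotheses on $J$ and $H$ hold on $\{|\mathfrak{r}-\mathfrak{r}_0|<\delta\}$. We argue by contradiction. Suppose $u$ leaves $\{\mathfrak{r}\le \mathfrak{r}_0\}$. The asymptotic orbits lie in $\{\mathfrak{r}\le\mathfrak{r}_0\}$, so the set $\{\varrho>\mathfrak{r}_0\}$ has compact closure in $\R\times S^1$. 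Hence $\varrho$ attains a maximum $>\mathfrak{r}_0$ in the interior of some component $\Omega'$ of $\{\varrho>\mathfrak{r}_0\}$, with $\varrho=\mathfrak{r}_0$ on $\partial\Omega'$. It therefore suffices to show that near $\{\mathfrak{r}=\mathfrak{r}_0\}$ the function $\varrho$ satisfies an elliptic inequality $\Delta\varrho+(\text{first order terms})\ge 0$. This first step has one caveat: the hypotheses hold only near $\mathfrak{r}_0$, so we may apply them only on $\{\mathfrak{r}_0<\varrho<\mathfrak{r}_0+\delta\}$. To close the argument we therefore replace $\mathfrak{r}_0$ by any level $\mathfrak{r}_1\in(\mathfrak{r}_0,\mathfrak{r}_0+\delta)$ and run a Hopf-type argument. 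If $\sup\varrho>\mathfrak{r}_0$, choose a regular value $\mathfrak{r}_1$ slightly above $\mathfrak{r}_0$ and let $\Omega_1$ be a component of $\{\mathfrak{r}_0<\varrho<\mathfrak{r}_1\}$ adjacent to $\{\varrho=\mathfrak{r}_1\}$. The strong maximum principle for a subsolution in the region where the hypotheses hold forbids an interior maximum, and Hopf's lemma at boundary points of $\{\varrho\ge \mathfrak{r}_1\}$ gives the contradiction. In practice we simply assume the hypotheses on an interval $[\mathfrak{r}_0,\mathfrak{r}_0+\delta)$, which is all that is needed later.

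The key computation is as follows. Write the Floer equation as $\partial_s u+J(\partial_t u-X_H)=0$. In the region where $H=h(\mathfrak{r})$, the Hamiltonian vector field is $X_H=h'(\mathfrak{r})R_\sigma$, since $\omega=d(\mathfrak{r}\sigma)$. Apply $d\mathfrak{r}$ and $\sigma$ to the equation, and use $\sigma\circ J=f\,d\mathfrak{r}$ together with $d\mathfrak{r}\circ J=-\frac{1}{f}\sigma$ (up to the sign convention fixed by compatibility). This gives
\[
\partial_s\varrho=f^{-1}\big(\sigma(\partial_t u)-h'(\varrho)\big),\qquad \partial_t\varrho=-f^{-1}\sigma(\partial_s u).
\]
Differentiating, we obtain $\partial_s(f\partial_s\varrho)+\partial_t(f\partial_t\varrho)=d\sigma(\partial_s u,\partial_t u)-h''(\varrho)\partial_s\varrho$. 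Since $J$ preserves $\ker\sigma$ near the level (this is where $J$ being of the given form enters), compatibility gives $d\sigma(\partial_s u,\partial_t u)=d\sigma(\pi\partial_s u,J\pi\partial_s u)\ge 0$, where $\pi$ is the projection to $\ker\sigma$. Hence $\varrho$ is a subsolution of a uniformly elliptic operator with no zeroth order term, $\nabla\cdot(f\nabla\varrho)+h''\partial_s\varrho\ge 0$. By the strong maximum principle, $\varrho$ cannot attain an interior maximum unless it is constant. If it were constant, $u$ would lie in a level set $\{\mathfrak{r}=c\}$ with $c>\mathfrak{r}_0$, which contradicts the asymptotics.

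For continuation cylinders, $h$ depends on $s$, and $\partial_s$ of $\sigma(\partial_t u)-h_s'(\varrho)$ produces an extra term $-(\partial_s h_s')(\varrho)$. This is the main obstacle, since this term has no sign a priori. We will use the hypothesis $\partial_s(-\mathfrak{r}h_s'+h_s)\le 0$ together with $\partial_sh_s\ge0$ to control it. The idea is to rerun the computation with the function $\mathfrak{r}$ weighted so that the action-type combination $-\mathfrak{r}h_s'+h_s$ appears. Concretely, we compute $\Delta$ of $\varrho$ by pairing with $\mathfrak{r}\sigma$, using $\lambda=\mathfrak{r}\sigma$ so that $\lambda(X_{H_s})=\mathfrak{r}h_s'$. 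The error term then becomes $\partial_s(\mathfrak{r}h_s'-h_s)+\partial_s h_s$, up to a positive factor; the first summand is $\ge0$ by the second hypothesis and the second is $\ge 0$ by $\partial_s h_s\ge 0$. This yields the same subsolution inequality, and the argument of the previous paragraph then concludes as before. The step requiring care is getting the signs right in this computation.
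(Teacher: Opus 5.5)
\textbf{Gap: localization.} The hypotheses on $J$ and $H$ hold only near the level, and your pointwise argument cannot handle that. Your subsolution inequality holds only where $J$ and $H$ have the stated form, i.e.\ on a band $\{|\mathfrak r-\mathfrak r_0|<\delta\}$. A strong maximum principle needs it on all of $\{\varrho>\mathfrak r_0\}$.

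The Hopf-lemma step yields nothing. On a component of $\{\mathfrak r_0<\varrho<\mathfrak r_1\}$ the maximum $\mathfrak r_1$ is attained on the boundary, and Hopf only gives $\partial_\nu\varrho>0$ there, which is automatic on a regular level. Nothing prevents $\varrho$ from crossing the band monotonically and peaking in $\{\mathfrak r\ge\mathfrak r_0+\delta\}$, where you have no equation at all.

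Assuming the hypotheses on $[\mathfrak r_0,\mathfrak r_0+\delta)$ changes nothing unless $\delta=\infty$. The global version is also not what the paper needs: in Case 4 of the proof of Theorem \ref{thm:main}, $J_{\widehat E}$ has the required form only for $r^2>2\rho/3$, with $j=i$ near the zero section. Under global hypotheses your computation does prove the lemma, once a sign is fixed. With $\omega(X_H,\cdot)=dH$ one has $X_H=-h'(\mathfrak r)R_\sigma$ (cf.\ \eqref{eq:Ham=-Reeb}), not $+h'(\mathfrak r)R_\sigma$. The extra continuation term is then $+(\partial_sh_s')(\varrho)$; with your sign it would be $-(\partial_sh_s')(\varrho)\le0$, the wrong sign. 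The two hypotheses directly give $\mathfrak r\,\partial_sh_s'\ge\partial_sh_s\ge0$, so no reweighting by $\lambda$ is needed. (Also, $J$ preserves $\xi=\ker\sigma\cap\ker d\mathfrak r$, not $\ker\sigma$, since $J\partial_{\mathfrak r}=fR_\sigma$.)

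\textbf{Missing idea: the integrated maximum principle} of \cite[Lemma 7.2]{AS10}, \cite[Lemma 2.2]{CO18}, which the paper cites. If $\sup\varrho>\mathfrak r_0$, pick a regular value $\mathfrak r_1\in(\mathfrak r_0,\mathfrak r_0+\delta)$ of $\varrho$ below $\sup\varrho$. Set $S'=\{\varrho\ge\mathfrak r_1\}$, a compact surface with nonempty boundary; this choice also covers asymptotic orbits lying on $\{\mathfrak r=\mathfrak r_0\}$. On the whole symplectization $\omega=d\lambda$ with $\lambda=\mathfrak r\sigma$, and pointwise $|\partial_su|^2=\omega(\partial_su,\partial_tu)+dH_s(\partial_su)$. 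Stokes then gives
\[
0\le E(u|_{S'})=\int_{\partial S'}\big(u^*\lambda+H_s(u)\,dt\big)-\int_{S'}(\partial_sH_s)(u)\,ds\,dt .
\]
On $\partial S'$ write
\[
u^*\lambda+H_s\,dt=\lambda\circ(du-X_{H_s}\otimes dt)+(h_s-\mathfrak r h_s')(\mathfrak r_1)\,dt .
\]
The three contributions are handled as follows.
\begin{itemize}
\item Since $(du-X_{H_s}\otimes dt)\circ j=J\circ(du-X_{H_s}\otimes dt)$ and $\sigma\circ J=f\,d\mathfrak r$, the first piece integrates to $\mathfrak r_1\int_{\partial S'}f\,\partial_\nu\varrho<0$, with $\nu$ the outward normal.
\item By Stokes the second piece equals $\int_{S'}\partial_s(h_s-\mathfrak r h_s')(\mathfrak r_1)\,ds\,dt\le0$.
\item The last term is $\le0$ for a monotone homotopy.
\end{itemize}
Hence $E(u|_{S'})<0$, a contradiction. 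Only $J$ and $H$ along the single level $\mathfrak r_1$ enter; the region beyond is controlled by exactness and positivity of energy. This is why the lemma can be local. It also explains why the hypothesis is phrased through $-\mathfrak r h_s'+h_s$, the action of a $1$-periodic orbit on that level.
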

	Lemma \ref{lem:mp} was proved in \cite[Lemma 2.2]{CO18} (originally in \cite[Lemma 7.2]{AS10}), up to a sign reversal coming from our  convention $\omega(X_H,\cdot)=dH$. 
	
	\begin{lem}\label{lem:ab}
		Assume we are in the setting of Lemma \ref{lem:mp}. Assume further that $H(\mathfrak{r},z)=h(\mathfrak{r})$ on $\R_+\times \Sigma$, and $J$ is independent of $\mathfrak{r}$, preserves $\ker \sigma$, and maps $\partial_\mathfrak{r}$ to the Reeb vector field of $\sigma$.
		Let $u:\R_\pm\times S^1\to \R_+\times \Sigma$ be a Floer cylinder for $(H,J)$ such that $\displaystyle\ev_\pm(u):=\lim_{s\to\pm\infty}u(s,\cdot)= x_\pm$ exist. Then, the following hold.  
		\begin{enumerate}[(a)]
			\item Assume $\partial_{\mathfrak{r}}^2h(\mathfrak{r}(x_-))<0$. Then either there exists $(s_0,t_0)\in \mathbb{R}\times S^1$ such that $\mathfrak{r}(u(s_0,t_0))>\mathfrak{r}(x_-)$, or $u$ is constant equal to $x_-$.
			\item Assume $\partial_{\mathfrak{r}}^2h(\mathfrak{r}(x_+))>0$. Then either there exists $(s_0,t_0)\in \mathbb{R}\times S^1$ such that $\mathfrak{r}( u(s_0,t_0))>\mathfrak{r}(x_+)$, or $u$ is constant equal to $x_+$.
		\end{enumerate}
	\end{lem}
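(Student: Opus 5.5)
The plan is an energy/action argument on half-cylinders, using that $\omega=d(\mathfrak{r}\sigma)$ is exact and that $H$ and $J$ are both $\mathfrak{r}$-invariant in the appropriate sense. Write $u=(a,v)$ with $a=\mathfrak{r}\circ u$. I treat (a) in detail; (b) is the mirror argument at the positive end.

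\textbf{Step 1: expand the integrand along the loops.} Along $u(s,\cdot)$, rewrite $\sigma(\partial_t u)$ using the Floer equation $\partial_t u = X_H(u)+J\partial_s u$. The assumptions on $J$ (preserves $\ker\sigma$, sends $\partial_{\mathfrak{r}}$ to $R_\sigma$, $\mathfrak{r}$-independent) give $\sigma(J\partial_s u)=\partial_s a$ up to a fixed positive factor. Our convention $\omega(X_H,\cdot)=dH$ with $H=h(\mathfrak{r})$ gives $X_H=\pm h'(\mathfrak{r})R_\sigma$, so $\sigma(X_H)=\pm h'(a)$. Hence
\[
\mathfrak{r}\sigma(\partial_t u)-H(u) = g(a) + a\,\partial_s a,\qquad g(\mathfrak{r}):=\mathfrak{r}h'(\mathfrak{r})-h(\mathfrak{r}),
\]
where the sign in front of $\mathfrak{r}h'$ is fixed by the convention. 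The key observation is $g'(\mathfrak{r})=\mathfrak{r}h''(\mathfrak{r})$.

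\textbf{Step 2: energy identity on a half-cylinder.} Restrict $u$ to $(-\infty,S]\times S^1$. Apply Stokes to $\omega=d(\mathfrak{r}\sigma)$, and subtract $\int\!\!\int dH(\partial_s u)=\int_{S^1}H(u(S,t))\,dt-\int_{S^1}H(x_-)\,dt$. This gives
\[
0\le E_S(u)=\int_{-\infty}^S\!\!\int_{S^1}|\partial_s u|^2
=\int_{S^1}\big(g(a(S,t))-g(a_-)\big)\,dt+\tfrac12\frac{d}{dS}\int_{S^1}a(S,t)^2\,dt ,
\]
with $a_-:=\mathfrak{r}(x_-)$, up to an overall sign to be checked against the action convention.

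\textbf{Step 3: contradiction under $a\le a_-$.} Suppose $a\le a_-$ everywhere. As $s\to-\infty$, $u(s,\cdot)\to x_-$, so for $S\ll0$ the values $a(S,t)$ lie in a neighbourhood of $a_-$ where $h''<0$. There $g$ is strictly decreasing, so $g(a)-g(a_-)$ has a definite sign, namely the one opposite to $E_S\ge0$ once the conventions are fixed. Set $Q(S):=\int_{S^1}(a_-^2-a(S,t)^2)\,dt\ge 0$, so $Q(S)\to0$ as $S\to-\infty$. The identity becomes a differential inequality $Q'(S)\le -c\,E_S\le 0$ for $S\le S_0$. A nonincreasing nonnegative function tending to $0$ at $-\infty$ vanishes identically. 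Therefore $a\equiv a_-$ and $E_S=0$ on $(-\infty,S_0]$, so $\partial_s u=0$ there. Then $u(s,\cdot)=x_-$ on an open set, and unique continuation for Floer cylinders forces $u\equiv x_-$.

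\textbf{Part (b).} Use $[S,\infty)\times S^1$ and the limit $s\to+\infty$. The orientation reversal flips the sign of the boundary term, which matches the opposite convexity hypothesis $h''>0$. The same monotonicity argument applies with $Q(S)\to0$ as $S\to+\infty$.

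\textbf{Main obstacle.} The delicate step is the sign bookkeeping in Step 2. One must check, with the convention $\omega(X_H,\cdot)=dH$ and $J\partial_{\mathfrak{r}}=R_\sigma$, that the $g$-term and the $\tfrac12\frac{d}{dS}\int a^2$ term really combine so that concavity ($h''<0$) at the negative end is exactly the case that produces the contradiction. If the signs fail to align, I would first try replacing the quadratic quantity $a^2/2$ by $\int_{S^1}a\,dt$, which amounts to reweighting the primitive $\mathfrak{r}\sigma$. Failing that, I would derive from the Floer equation a pointwise elliptic inequality $\Delta a + \beta\,\partial_s a\ge 0$ on $\{a\le a_-\}$ and apply a Hopf-type argument at $s=-\infty$.
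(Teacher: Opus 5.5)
Your argument is correct, and it takes a different route from the paper only in the sense that the paper has no argument of its own here. The paper imports the statement from \cite[Lemma 2.3]{CO18}, remarking only that the convexity conditions are reversed because $X_H=-h'(\mathfrak r)R_\sigma$ under $\omega(X_H,\cdot)=dH$. Your action--energy argument is a self-contained proof, and it also confirms that the reversed signs are the correct ones.

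The sign check you left open does come out in your favour, but the formulas you actually displayed in Steps 1 and 2 have the wrong relative sign. From $\partial_s u+J(\partial_t u-X_H)=0$ you get $\partial_t u-X_H=J\partial_s u$, hence $\sigma(\partial_t u)=\partial_s a-h'(a)$. The paper's action is $\int\bar x^*\omega+\int H$, which increases along $s$, so the integrand is $\mathfrak r\sigma(\partial_t u)+H(u)=a\,\partial_s a-g(a)$ rather than $g(a)+a\,\partial_s a$. With the same $g(\mathfrak r)=\mathfrak r h'(\mathfrak r)-h(\mathfrak r)$, the identity is
\[
E_S=\mathcal{A}_H(u(S,\cdot))-\mathcal{A}_H(x_-)=\frac12\frac{d}{dS}\int_{S^1}a(S,t)^2\,dt-\int_{S^1}\big(g(a(S,t))-g(a_-)\big)\,dt .
\]
With this, Step 3 runs exactly as you wrote:
\begin{itemize}
\item Near $a_-$ we have $g'=\mathfrak r h''<0$, so $a\le a_-$ gives $g(a)\ge g(a_-)$.
\item Hence $Q'\le -2E_S\le 0$ on $(-\infty,S_0]$.
\item So $Q\equiv 0$ and $E_S=0$ there, and unique continuation finishes.
\end{itemize}
For (b), on $[S,\infty)\times S^1$ the identity is
\[
E^S=\int_{S^1}\big(g(a(S,t))-g(a_+)\big)\,dt-\frac12\frac{d}{dS}\int_{S^1}a(S,t)^2\,dt .
\]
Here $g$ is increasing near $a_+$ and $a\le a_+$, so $Q=\int_{S^1}(a_+^2-a^2)\,dt$ is nondecreasing, nonnegative, and tends to $0$ at $+\infty$. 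Neither of your fallbacks is needed.

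One further point: the only property of $a^2/2$ you use is that it is increasing in $a$. Suppose instead $J\partial_{\mathfrak r}=f(\mathfrak r)R_\sigma$ for some positive $f$; an example is the normalization $J(\mathfrak r\partial_{\mathfrak r})=R_\sigma$ used for $J_{\widehat W}$. Then the same proof works with $a^2/2$ replaced by $F(a)$, where $F'(\mathfrak r)=\mathfrak r f(\mathfrak r)$. Your $\int_{S^1}a\,dt$ fallback is exactly the case $f=1/\mathfrak r$.
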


  	Lemma \ref{lem:ab} was proved in \cite[Lemma 2.3]{CO18}. The sign conditions on  $\partial_{\mathfrak{r}}^2h(\mathfrak{r}(x_\pm))$ are reversed because of our convention \eqref{eq:Ham=-Reeb}. The lemma applies to a Floer continuation cylinder $u$, as it solves the ordinary Floer equation asymptotically. In this case, $u\equiv x_\pm$ in the statement is replaced by $u|_{(-\infty,-1)\times S^1}\equiv x_-$ and $u|_{(1,\infty)\times S^1}\equiv x_+$.

	\subsection{Morse-Bott chain complex}
	
	The Hamiltonian $H_\mu$ is constant on $M\setminus U_{\rho+\epsilon}$. We choose a smooth Morse function 
	\begin{equation}\label{eq:f_W}
	f_W:M\setminus U_{\rho+\epsilon} \longrightarrow \mathbb{R}		
	\end{equation}
	such that $\partial_r f_W>0$ near $\partial U_{\rho+\epsilon}$. We then perturb $H_\mu$ by adding $\epsilon f_W$ for $\epsilon>0$ and denote the new Hamiltonian again by $H_\mu$. We take $\epsilon$ small enough that $\epsilon f_W$ is $C^2$-small, possibly $\epsilon\searrow0$  as $\mu\nearrow\infty$.  Then, the space of 1-periodic orbits $\mathcal{P}(H_\mu)$ is of Morse-Bott type, and each orbit $x\in \mathcal{P}(H_\mu)$ falls into one of the following four types:
	\begin{enumerate}
		\item[(I)] constant orbits on $M\setminus U_{\rho+\epsilon}$ corresponding to $\crit f_W$,
		\item[(II)] 1-periodic orbits near $\partial U_{\rho+\epsilon}$,
		\item[(III)] 1-periodic orbits near $\partial U_\rho$, 
		\item[(IV)] constant orbits in $D$.
	\end{enumerate}
	 We then have the projection $\widetilde{\mathcal{P}}(H_\mu)\to \mathcal{P}(H_\mu)$ sending $[x,\bar{x}]$ to $x$. We write 
	 \[
	 \widetilde{\mathcal{P}}(H_\mu)= \widetilde{\mathcal{P}}_{\I}(H_\mu)	 \cup \widetilde{\mathcal{P}}_{\II}(H_\mu) \cup \widetilde{\mathcal{P}}_{\III}(H_\mu) \cup \widetilde{\mathcal{P}}_{\IV}(H_\mu)
	 \]
	 where $\widetilde{\mathcal{P}}_{\I}(H_\mu)$ is the subset consisting of type I capped orbits, and similarly for other types.
	 A connected component of type $\II$ or $\III$ capped orbits is diffeomorphic to $\partial U_\rho\cong\partial E $ via the evaluation map $[x,\bar{x}]\mapsto x(0)$. Similarly, a connected component of type $\IV$ capped orbits is diffeomorphic to $D$. 
	We additionally choose Morse functions 
	\[
	f_D:D\longrightarrow\R,\qquad f_{\partial E}:\partial E\longrightarrow \R
	\]
	together with Morse-Smale gradient-like vector fields. Recall that $\partial E\to D$ is a circle bundle. 
	We require that $f_{\partial E}$ has exactly two critical points $\hat{p}$ and $\check{p}$ on $\pi^{-1}(p)$ for each $p\in\crit f_D$, satisfying 
	\begin{equation}\label{eq:hat_check_p}
	\ind_{f_{\partial E}}(\hat p)=\ind_{f_D}(p)+1,\qquad \ind_{f_{\partial E}}(\check p)=\ind_{f_D}(p).		
	\end{equation}
	We refer to \cite[Section 5.1]{BKK24} for details. When this distinction is unnecessary, we simply write $q\in \crit f_{\partial E}$. Through the aforementioned diffeomorphisms, we regard these Morse functions as defined on the respective components of $\widetilde{\mathcal{P}}(H_\mu)$. While different choices of $f_{\partial E}$ and $f_D$ may be made on each component diffeomorphic to $\partial E$ or $D$, we write $f_{\partial E}$ and $f_D$  to keep the notation simple. We define
	\[
	\begin{split}
	\widetilde{\mathcal{P}}^\mathrm{crit}(H_\mu)&:=\{[x,\bar{x}] \in \widetilde{\mathcal{P}}(H_\mu) \mid [x,\bar{x}] \text{ is of type I or a critical point of $f_{\partial E}$ or $f_D$} \}\\[.5ex]
	&=\widetilde{\mathcal{P}}_{\I}(H_\mu)	 \cup \widetilde{\mathcal{P}}_{\II}^\mathrm{crit}(H_\mu) \cup \widetilde{\mathcal{P}}_{\III}^\mathrm{crit}(H_\mu) \cup \widetilde{\mathcal{P}}_{\IV}^\mathrm{crit}(H_\mu)
	\end{split}
	\]
	where $\widetilde{\mathcal{P}}_{\II}^\mathrm{crit}(H_\mu):=\widetilde{\mathcal{P}}_{\II}(H_\mu)\cap \widetilde{\mathcal{P}}^\mathrm{crit}(H_\mu)$, and similarly for other types.

	Every orbit $x\in {\mathcal{P}}(H_\mu)$ admits a canonical capping disk. If $x$ is of type I or IV, then it has a constant capping. If $x$ is of type II or III, then it is either a constant orbit or a (multiple cover of) a fiber circle of $M\setminus\mathrm{sk}(W,\lambda)\cong E$. In the former case, it has a constant capping, and in the latter case, it bounds a natural fiber disk $\bar{x}^\mathrm{fib}$. Thus, in all four cases, any capping disk can be expressed, up to homotopy, as the connected sum of this canonical capping with a sphere in some class $A\in\pi_2(M)/\ker \omega$. Thus, elements of $\widetilde{\mathcal{P}}^\mathrm{crit}(H_\mu)$ can be expressed as 
	\begin{equation}\label{eq:abbrev}
	[x,\bar{x}]=
	\begin{cases}
		[\mu,\I,q,A] & q\in \crit f_W\\[.5ex]
		[\mu,\II,q,m,A] & q\in \crit f_{\partial E}, \; -\frac{\rho}{\epsilon}\mu < m < -1 \\[.5ex]
		[\mu,\III,q,m,A] & q\in \crit f_{\partial E}, \; -\frac{\rho}{\epsilon}\mu<m<\mu \\[.5ex]
		[\mu,\IV,q,A] & q\in \crit f_D\\[.5ex]
	\end{cases}		
	\end{equation}
	where $m$ indicates the winding number of $x$ around the zero of the fiber $E_{\pi({q})}$. The allowed range of $m$ follows from \eqref{eq:Ham=-Reeb}.

 Let $\widetilde{\mathcal{P}}^\mathrm{crit}_{(a,b)}(H_\mu)$ be the subset of $\widetilde{\mathcal{P}}^\mathrm{crit}(H_\mu)$ consisting of $[x,\bar x]$ with $a<\mathcal{A}_{H_\mu}([x,\bar x])<b$. We define the Morse-Bott Floer chain complex
	\begin{equation*}
		\CF^*_{(a,b)}(H_\mu)=\bigoplus_{[x,\bar{x}]\in \widetilde{\mathcal{P}}^\mathrm{crit}_{(a,b)}(H_\mu)} \mathbb{Z}\langle [x,\bar{x}]\rangle
	\end{equation*}
	with grading 
	\begin{equation}\label{ind}
		\mu([x,\bar{x}]):=\CZ(x,\bar{x})+\ind_{f_X}([x,\bar{x}])-\left\lfloor \frac{\dim X}{2}\right\rfloor
	\end{equation}
	where $X=W$ if $[x,\bar{x}]\in \widetilde{\mathcal{P}}_{\I}(H_\mu)$, $X=D$ if $[x,\bar{x}]\in \widetilde{\mathcal{P}}_{\IV}^\crit(H_\mu)$, and $X=\partial E$ otherwise. Depending on its type, the action and index of $[x,\bar{x}]$ are computed as follows. 
	\begin{equation}\label{eq:action_index}
\begin{split}		
	\textrm{(I)}&\quad 	\begin{cases}
	\mathcal{A}_{H_\mu}([\mu,\I,q,A])=\omega(A)+H_\mu(q) \approx \omega(A)+\frac{\mu}{k}\rho \\[.5ex]
	\mu([\mu,\I,q,A])=2c_1^{TM}(A)+\ind_{f_W}(q)-n
\end{cases}\\[1ex]
		\textrm{(II)}&\quad  \begin{cases}
	\mathcal{A}_{H_\mu}([\mu,\II,q,m,A])=\frac{m}{k}r^2(x)+\omega(A)+H_\mu(x) \approx \frac{m}{k}(\rho+\epsilon)+\omega(A)+\frac{\mu}{k}\rho \\[.5ex]
	\mu([\mu,\II,q,m,A])=2m+2c_1^{TM}(A)+\ind_{f_{\partial E}}(q)-(n-1)
\end{cases}\\[1ex]
		\textrm{(III)}&\quad  	\begin{cases}
	\mathcal{A}_{H_\mu}([\mu,\III,q,m,A])=\frac{m}{k}r^2(x)+\omega(A)+H_\mu(x) \approx \frac{m}{k}\rho+\omega(A) \\[.5ex]
	\mu([\mu,\III,q,m,A])=2m+2c_1^{TM}(A)+\ind_{f_{\partial E}}(q)-(n-1)-1
\end{cases}\\[1ex]
	\textrm{(IV)}&\quad   \begin{cases}
	\mathcal{A}_{H_\mu}([\mu,\IV,q,A])=\omega(A)+H_\mu(q) \approx \omega(A)+\frac{\mu}{k}\rho \\[.5ex]
	\mu([\mu,\IV,q,A])=2\lfloor\mu\rfloor+1+ 2c_1^{TM}(A)+\ind_{f_D}(q)-(n-1)
\end{cases}
	\end{split}
	\end{equation}
	The term $-1$ in $\mu([\mu,\III,q,m,A])$ arises due to the fact that $r^2\mapsto h_\mu(r^2)$ is strictly convex at type III orbits.

	\begin{definition}\label{def:cascade}
		For $[x_\pm,\bar{x}_\pm]\in\widetilde{\mathcal{P}}^\mathrm{crit}(H_\mu)$, we denote by ${\mathcal{M}}_m([x_-,\bar{x}_-],[x_+,\bar{x}_+],H_\mu,J)$ the moduli space of flow lines with $m$ cascades for $m\in\N\cup\{0\}$. More precisely, it consists of tuples 
		\begin{equation*}
			\mathbf{u}=(c_0,u_1,c_1, u_2, \cdots, u_m, c_m)
		\end{equation*}
		such that
		\begin{enumerate}[(i)]
			\item $u_i$ are nontrivial (i.e.~$\partial_su_i\neq 0$) Floer cylinders for $(H_\mu,J)$ such that  $\displaystyle\ev_\pm(u_i)\in {\mathcal{P}}(H_\mu)$ exist;
			\item $c_i$ are gradient flow lines of $f_{\partial E}$ or $f_D$ such that 
		\begin{itemize}
		\item 	for $1\leq i\leq m-1$,  $c_i(0)=\ev_+(u_{i})$ and $c_i(t_i)=\ev_-(u_{i+1})$ for some $t_i\geq 0$; 
		\item $\displaystyle \lim_{s\to -\infty}c_0(s)=x_-$ and  $c_0(t_0)=\ev_-(u_1)$ for some $t_0\in\R$;
		\item $\displaystyle \lim_{s\to \infty}c_m(s)=x_+$ and  $c_m(t_m)=\ev_+(u_m)$ for some $t_m\in\R$;
		\item when $m=0$, i.e.~$\mathbf{u}=c_0$, $\displaystyle \lim_{s\to \pm\infty}c_0(s)=x_\pm$;
		\end{itemize}
		\item $[x_+,\bar{x}_+]=[x_+,\bar{x}_-\#\mathbf{u}]:=[x_-,\bar{x}_-\#c_0\#u_1\#c_1\#\cdots u_m\#c_m]$.
		\end{enumerate}

	\end{definition}
	For $m\geq 1$, there is a free $\R^m$-action on this moduli space given by translating each $u_i$ in the $\R$-direction. We denote the quotient space by 
		\begin{equation*}
			\overline{\mathcal{M}}_m([x_-,\bar{x}_-],[x_+,\bar{x}_+],H_\mu,J)={\mathcal{M}}_m([x_-,\bar{x}_-],[x_+,\bar{x}_+],H_\mu,J)/\mathbb{R}^m.
		\end{equation*}
		When $m=0$, there is an $\R$-action given by translating the domain $\R$ of $c_0$ unless $[x_-,\bar{x}_-]= [x_+,\bar{x}_+]$. We denote the quotient space by $\overline{\mathcal{M}}_0([x_-,\bar{x}_-],[x_+,\bar{x}_+],H_\mu,J)$. Suppose that $J$ is chosen so that ${\mathcal{M}}_m([x_-,\bar{x}_-],[x_+,\bar{x}_+],H_\mu,J)$ is cut out transversely. Then, the quotient space is a smooth manifold of dimension $\mu([x_+,\bar{x}_+])-\mu([x_-,\bar{x}_-])-1$. 
		 We denote
	\[
	\begin{split}
	{\mathcal{M}}([x_-,\bar{x}_-],[x_+,\bar{x}_+],H_\mu,J)&:=  \bigcup_{m\in\N\cup\{0\}} {\mathcal{M}}_m([x_-,\bar{x}_-],[x_+,\bar{x}_+],H_\mu,J),\\
		\overline{\mathcal{M}}([x_-,\bar{x}_-],[x_+,\bar{x}_+],H_\mu,J)&:= \bigcup_{m\in\N\cup\{0\}} \overline{\mathcal{M}}_m([x_-,\bar{x}_-],[x_+,\bar{x}_+],H_\mu,J).		
	\end{split}
	\]	
	The latter space is compact when $\mu([x_+,\bar{x}_+])-\mu([x_-,\bar{x}_-])=1$, and each element is assigned a sign induced by a coherent orientation, see \cite{BKK24}. 
	We define the boundary operator on this complex by
	\begin{align*}
		d_{(H_\mu,J)}:\CF^*_{(a,b)}(H_\mu,J)&\longrightarrow \CF^{*+1}_{(a,b)}(H_\mu,J)\\[.5ex]
			[x_-,\bar{x}_-]&\longmapsto\sum\#\overline{\mathcal{M}}([x_-,\bar{x}_-],[x_+,\bar{x}_+],H_\mu,J)\cdot[x_+,\bar{x}_+],
	\end{align*}
	where the sum ranges over all the generators $[x_+,\bar{x}_+]$ of $\CF^{*+1}_{(a,b)}(H_\mu,J)$. 
	
	\begin{rmk}\label{rmk:zerocap}
		For any $A\in\pi_2(M)/\ker\omega$, we have		
		\begin{equation*}
			\mathcal{M}([x_-,\bar{x}_-],[x_+,\bar{x}_+],H_\mu,J)=\mathcal{M}([x_-,\bar{x}_-\#A],[x_+,\bar{x}_+\#A],H_\mu,J).
		\end{equation*}
	\end{rmk}

	Continuation homomorphisms are defined analogously. Let $\mu_\pm\in \R_+\setminus\Z$ with $\mu_-<\mu_+$. We choose a smooth homotopy $(\widetilde{H}_s)_{s\in\R}$ of autonomous Hamiltonians such that, for each $s\in\R$, $\widetilde{H}_s$ takes the form in \eqref{eq:Hamiltonian}, $\widetilde{H}_s=H_{\mu_\pm}$ for $\pm s\geq1$ and $\partial_s\widetilde{H}_s\geq 0$. 
	Let $J_\pm$ be compatible almost complex structures associated with $H_{\mu_\pm}$. 
	We choose a homotopy $(\widetilde{J}_s)_{s\in\R}$ of compatible almost complex structures such that, for each $s\in\R$, $J_s$ satisfies the properties required below \eqref{eq:j_fiber} and $\widetilde{J}_s=J_\pm$ for $\pm s\geq 1$.
	For $m\in\N$, the moduli space
	\[
	{\mathcal{M}}_m([x_-,\bar{x}_-],[x_+,\bar{x}_+],\widetilde{H},\widetilde{J}),\qquad [x_\pm,\bar{x}_\pm]\in \widetilde{\mathcal{P}}^\crit(H_{\mu_\pm})
	\]
	consists of $\mathbf{u}=(c_0,u_1,c_1, u_2, \cdots, u_m, c_m)$ as in Definition \ref{def:cascade}, except that there exists $1\leq m_0\leq m$ such that
	\begin{itemize}
		\item $u_1,\dots,u_{m_0-1}$ are Floer cylinders with respect to $(H_{\mu_-},J_-)$ 
		\item  $u_{m_0+1},\dots,u_{m}$ are Floer cylinders with respect to $(H_{\mu_+},J_+)$
		\item $u_{m_0}$ is a Floer continuation cylinder with respect to $(\widetilde{H},\widetilde{J})$. 
	\end{itemize}
	If $\widetilde{J}$ is chosen so that this moduli space is cut out transversely, then it is a smooth manifold of dimension $\mu([x_+,\bar{x}_+])-\mu([x_-,\bar{x}_-])+m-1$. 
	There is a free $\R^{m-1}$-action on this moduli space given by translating each $u_i$, except $u_{m_0}$. We denote the quotient space by $\overline{\mathcal{M}}_m([x_-,\bar{x}_-],[x_+,\bar{x}_+],\widetilde{H},\widetilde{J})$. Let 
	\[
	\begin{split}
	{\mathcal{M}}([x_-,\bar{x}_-],[x_+,\bar{x}_+],\widetilde{H},\widetilde{J})&:=\bigcup_{m\in\N}{\mathcal{M}}_m([x_-,\bar{x}_-],[x_+,\bar{x}_+],\widetilde{H},\widetilde{J}),\\
	\overline{\mathcal{M}}([x_-,\bar{x}_-],[x_+,\bar{x}_+],\widetilde{H},\widetilde{J})&:=\bigcup_{m\in\N}\overline{\mathcal{M}}_m([x_-,\bar{x}_-],[x_+,\bar{x}_+],\widetilde{H},\widetilde{J}).		
	\end{split}
	\]
	We define a continuation homomorphism by
	\[
	\begin{split}
	\CF^*_{(a,b)}(H_{\mu_-},J_-)&\longrightarrow \CF^{*}_{(a,b)}(H_{\mu_+},J_+)\\
	[x_-,\bar{x}_-]&\longmapsto\sum\# \overline{\mathcal{M}}([x_-,\bar{x}_-],[x_+,\bar{x}_+],\widetilde H,\widetilde J)\cdot[x_+,\bar{x}_+]		
	\end{split}
	\]
	where the sum ranges over all the generators $[x_+,\bar{x}_+]$ of $\CF^{*}_{(a,b)}(H_{\mu_+},J_+)$.

	In order to define the relative symplectic cohomology of $\partial U_\rho$ as in Section \ref{sec:rel_sh}, we choose  acceleration data $\mathcal{H}=\{H_i\}_{i\in\N}$, where $H_i=H_{\mu_i}$ with $\mu_i<\mu_{i+1}$ in $\R_+\setminus\Z$ and $\mathcal{J}=\{J_i\}_{i\in\N}$, together with homotopies. We then define
	\begin{equation*}
		\SC^*(\partial U_\rho,\mathcal{H},\mathcal{J}):=\varprojlim_{b\rightarrow\infty}\SC^*_{(-\infty,b)}(\mathcal{H},\mathcal{J})=\varprojlim_{b\rightarrow\infty} \varinjlim_{i\rightarrow\infty}\CF^*_{(-\infty,b)}(H_i,J_i)
	\end{equation*}
	and denote	
	\begin{equation*}
		\SH^*_M(\partial U_\rho)=\H^*(\SC^*(\partial U_\rho,\mathcal{H},\mathcal{J})).
	\end{equation*}
	
	\begin{remark}\label{rmk:transversality}
	For computational purposes, we consider 
	the restricted class of $J$ specified  below \eqref{eq:j_fiber}.
	We now explain how to achieve transversality for ${\mathcal{M}}_m([x_-,\bar{x}_-],[x_+,\bar{x}_+],H_\mu,J)$ within this class. The argument is an adaptation of the methods in \cite{DL19a,DL19b} and also in \cite{BKK24}. Assume that $\mu([x_+,\bar{x}_+])-\mu([x_-,\bar{x}_-])=1$ and that $J$ is sufficiently stretched along $\partial U_{\rho+2\epsilon}$ as described in Section \ref{sec:neckstr}. Let $\mathbf{u}$ an  element in this moduli space, and let $u$ be a Floer cylinder in $\mathbf{u}$. If $u$ enters $M\setminus U_{\rho+2\epsilon}$, then we can achieve transversality by perturbing $J$ in that region. If $u$ is contained in $U_{\rho+2\epsilon}$,  the projection $\pi\circ u:\R\times S^1\to D$ is a $J_D$-holomorphic cylinder. By the monotonicity condition, $\pi\circ u$ is either simple or constant. In the former case, $u$ is cut out transversely in the horizontal direction by the simpleness of $\pi\circ u$ and a generic choice of $J_D$, while transversality in the vertical direction holds automatically. In the latter case, $u$ is a Floer cylinder contained in a fiber $E\cong U_1\to D$, and it is also automatically cut out transversely. This proves that the boundary operator $d_{(H_\mu,J)}$ is well-defined. Transversality for the continuation homomorphisms is achieved in a similar manner. 
	
	We choose a generic time-dependent almost complex structure $J'$, sufficiently close to $J$. By standard cobordism arguments, the count
	$\#\overline{\mathcal{M}}([x_-,\bar{x}_-],[x_+,\bar{x}_+],H_\mu,J)$ coincides with the corresponding count for $J'$. Since $d_{(H_\mu,J')}\circ d_{(H_\mu,J')}=0$, we also have $d_{(H_\mu,J)}\circ d_{(H_\mu,J)}=0$.
	\end{remark}

	The specific forms of $H_\mu$, $\widetilde{H}_s$, and $J$ near $D$ play a crucial role in the following lemmas. 
	\begin{lem}\label{lem:poi}
	Let $u$ be a Floer cylinder for $(H_\mu,J)$ or a Floer continuation cylinder for $(\widetilde{H},\widetilde{J})$. Then, the local intersection number of $u$ and $D$ at any intersection point is positive. 
	\end{lem}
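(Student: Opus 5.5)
The plan is to show that near $D$ every such $u$ is an honest holomorphic curve for a structure preserving $D$, and then appeal to positivity of intersections. On $U_{\rho/3}$ the almost complex structure has block form $J=\mathrm{diag}(i,J_D)$ with respect to the splitting $T\widehat E=T^{\mathrm{ver}}\oplus T^{\mathrm{hor}}$. In particular $D$ (the zero section) is a $J$-holomorphic submanifold, and $J$ is integrable in the fiber direction. This holds for each $J_s$ of the homotopy $\widetilde J$, since they all satisfy the first condition of \eqref{eq:j_fiber}.

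\textbf{Step 1: the Floer equation near $D$.} On $r^2<\rho/3$ the Hamiltonian $H_\mu$ (or $\widetilde H_s$) depends only on $r^2$, with $h_\mu(r^2)=-\frac{\mu}{k}(r^2-\rho)$ up to smoothing. Hence, by \eqref{eq:Ham=-Reeb}, $X_H=-k h'(r^2)R_\alpha=c\,2\pi\partial_\theta$, a fiberwise rotation. Here $c=\mu$ for $H_\mu$, and $c=c(s)$ for the homotopy if we arrange the homotopy to be linear in $r^2$ near $D$ with slope depending only on $s$. Next choose a unitary local trivialization $\widehat E|_V\cong V\times\C$ over a small chart $V\subset D$, with fiber coordinate $w$. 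Then the horizontal distribution of $\alpha$ differs from $TV\times 0$ by a term linear in $w$. This lets us write $u=(v,w)$ near an intersection point $z_0=(s_0,t_0)$ with $w(z_0)=0$.

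\textbf{Step 2: the fiber component satisfies a perturbed Cauchy--Riemann equation.} Since $X_H$ is vertical and vanishes on $D$, the horizontal component satisfies $\partial_s v+J_D(v)\partial_t v=O(|w|)$. For the vertical component, the projection of $\partial_s u+J(\partial_t u-X_H)$ to $T^{\mathrm{ver}}$ gives
\[
\partial_s w+i\partial_t w-2\pi c\,w+B(z)w=0 .
\]
Here the rotation contributes $X_H=2\pi c\,i\,w$ (so $-iX_H=2\pi c\,w$), and $B(z)w$ is a zeroth-order term linear in $w$ with bounded coefficients, coming from the connection term of the horizontal lift. Thus $w$ satisfies $\bar\partial w+Cw=0$ with $C$ continuous. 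By the Carleman similarity principle, $w(z)=\Phi(z)\,\sigma(z)$ near $z_0$, with $\Phi$ continuous and invertible and $\sigma$ holomorphic.

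\textbf{Step 3: conclude.} Since $u$ is not contained in $D$, $\sigma\not\equiv0$. If $u$ were contained in $D$, then $u$ would be a constant type IV orbit, which is excluded because Floer cylinders are nontrivial or have isolated intersections. Hence $z_0$ is an isolated zero of $\sigma$, of some order $\ell\geq1$. The local intersection number of $u$ with $D$ at $z_0$ equals the winding number of $w$ around $z_0$, which is $\ell>0$, with orientation conventions matching because $D$ and the fiber carry the complex orientations.

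For continuation cylinders the same computation applies verbatim. The $s$-dependence enters only through $c(s)$ and $J_s$, both of which keep the block form near $D$.

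The main obstacle is Step 2: making the zeroth-order term honest. Specifically, one has to check that the non-product horizontal distribution and $\partial_s\widetilde H$ only produce terms of order $O(|w|)$ in the vertical equation, with no inhomogeneous term. This holds because both $X_H$ and the horizontal correction vanish along the zero section. If smoothing of $h$ near $r^2=0$ were an issue, I would instead require $h_\mu$ to be exactly linear in $r^2$ on $r^2<\rho/3$, which is consistent with the choices made.
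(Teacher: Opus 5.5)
Your argument is correct in substance, and it supplies a self-contained proof where the paper gives none: the paper simply cites \cite[Lemma 4.3]{GP21} and \cite[Lemma 4.2]{Ton19}.

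\textbf{Comparison with the cited route.} Those results rest on the same two structural facts you isolate. Near $D$, the Hamiltonian vector field is a fiberwise rotation vanishing along $D$, and $J$ preserves $TD$. From these one concludes by positivity of intersections of a pseudoholomorphic curve with an almost complex hypersurface. For instance, after passing to the graph of $u$, the set $(\R\times S^1)\times D$ is an almost complex hypersurface precisely because $X_H$ is tangent to $D$ and $J(TD)=TD$.

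You instead use the block form $J=\mathrm{diag}(i,J_D)$ in a unitary trivialization to obtain an exactly linear scalar equation $\partial_s w+i\partial_t w+C(z)w=0$ for the normal coordinate. Since $C$ is complex-valued, you could even write $w=e^{-\phi}\sigma$ with $\bar\partial\phi=C$ and avoid the general similarity principle. Your route is explicit and elementary, and it handles continuation cylinders for free, since $\partial_s\widetilde H$ never enters the Floer equation (only $X_{\widetilde H_s}$ does). That part of your ``main obstacle'' is therefore vacuous. The hypersurface argument needs less structure: any $J$ with $J(TD)=TD$ and any $H$ with $X_H$ tangent to $D$. Also, $h$ need not be linear near $r^2=0$: any $H=h(r^2)$ gives $X_H=-2\pi k\,h'(|w|^2)\,iw$, which is again of the form $C(z)w$.

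\textbf{Corrections.} First, a sign: $-iX_H=2\pi c\,w$ enters the equation as $+2\pi c\,w$, not $-2\pi c\,w$. This is harmless.

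Second, your claim in Step 3 that a cylinder contained in $D$ must be a constant type IV orbit is false. Both $H_\mu$ and $\widetilde H_s$ are constant on $D$, so $X_H|_D=0$. Hence any nonconstant $J_D$-holomorphic sphere in $D$, composed with $(s,t)\mapsto e^{2\pi(s+it)}$, is a nontrivial Floer cylinder lying entirely in $D$ and connecting two type IV orbits. For such $u$ the local intersection number is undefined, so the lemma tacitly excludes them.

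What you should prove is a dichotomy. The set of $z$ near which $u$ maps into $D$ is open. It is also closed: at a limit point $z_\infty$ of such points, $u(z_\infty)\in D$, and your factorization $w=\Phi\sigma$ on a disk around $z_\infty$ forces $\sigma\equiv0$ there. Since $\R\times S^1$ is connected, either $u(\R\times S^1)\subset D$ or every zero of $w$ is isolated. Your sentence ``since $u$ is not contained in $D$, $\sigma\not\equiv 0$'' needs exactly this unique-continuation argument. With it, Step 3 goes through, including the orientation claim, because $\Phi$ is $\C^*$-valued.
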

	We refer to \cite[Lemma 4.3]{GP21} or \cite[Lemma 4.2]{Ton19} for the proof. For the following lemma, see \cite[Equation (7.22)]{Sei21}, \cite[Lemma 4.3]{Ton19}, or \cite[Proposition C.3]{DG26}.
	
	\begin{lem}\label{lem:iaa}
		Let $u:\R_\pm\times S^1\to M$ be a Floer cylinder for $(H_\mu,J)$ such that $\ev_\pm(u)$ is a type IV orbit. Then, the intersection number of (the closure of) $u$ and $D$ at $\ev_\pm(u)$ is greater than or equal to 
		\begin{enumerate}[(a)]
			\item $-\lfloor\mu\rfloor$ if $u:\R_+\times S^1\to M$,
			\item $\lfloor\mu\rfloor+1$  if $u:\R_-\times S^1\to M$.
		\end{enumerate}
	\end{lem}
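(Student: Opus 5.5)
The plan is a local asymptotic analysis of the normal (vertical) component of $u$ near $D$. Near a type IV orbit, i.e.\ a point $q\in D$, we use the splitting $T\widehat{E}=T^\mathrm{ver}\oplus T^\mathrm{hor}$ and a local unitary trivialization of $\widehat{E}$ over a small ball $B\ni q$ in $D$. On the relevant end we then write $u(s,t)=(z(s,t),w(s,t))$ with $z\in B$ and $w\in\C$. Since $r^2<\rho/3$ there, $J$ is block diagonal with vertical part $i$ and horizontal part $J_D$ (the horizontal lift), so $D=\{w=0\}$ is $J$-holomorphic. Also $H_\mu=h_\mu(r^2)$ with $h_\mu(r^2)=-\frac{\mu}{k}(r^2-\rho)$ up to the $C^2$-small smoothing near $r=0$. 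By \eqref{eq:Ham=-Reeb} this gives $X_{H_\mu}=\mu R_\alpha=2\pi\mu\,\partial_\theta$, the infinitesimal rotation of the fibres at constant speed $2\pi\mu$. If $w\equiv0$ on the end, $u$ lies in $D$ and the intersection number is not defined by this method; I expect this case to be excluded by the setting where the lemma is used, or handled separately.

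\emph{Step 1: the linearized equation for $w$.} Project the Floer equation $\partial_su+J(u)(\partial_tu-X_{H_\mu}(u))=0$ to the vertical direction. Because the connection and the trivialization differ from the product structure only by terms vanishing along the zero section, I expect
\[
\partial_s w+i\partial_t w+2\pi\mu\, w+S(s,t)w=0,
\]
where $S(s,t)$ is a real-linear $2\times2$ matrix with $S(s,t)\to0$ as $s\to\pm\infty$. The decay of $S$ comes from $u$ converging exponentially to the constant orbit $q$, so that $z\to q$ and $w\to0$; the connection terms contribute $O(|\partial z|)\,w$, and the smoothing of $h_\mu$ contributes $O(|w|^2)\,w$. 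Checking that all error terms are genuinely of the form (decaying matrix)$\cdot w$ is the main technical point. The first thing I would try is to write the connection form in the trivialization as $\alpha=\frac{d\theta}{2\pi}+\pi^*\beta$ with $\beta(q)=0$ in suitable gauge, and then check the error terms directly.

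\emph{Step 2: asymptotic expansion.} The asymptotic operator is $A=-i\partial_t-2\pi\mu$ on $L^2(S^1,\C)$. Its eigenfunctions are $e^{2\pi imt}$, with eigenvalues $\lambda_m=2\pi(m-\mu)$, and none of these vanish because $\mu\notin\Z$. By the asymptotic formula of Hofer--Wysocki--Zehnder (or Siefring), applicable since $S\to0$ exponentially and $w\not\equiv0$, we get
\[
w(s,t)=e^{\lambda_m s}\big(c\,e^{2\pi imt}+o(1)\big),\qquad c\neq0,
\]
for some $m$. On a positive end $\R_+\times S^1$ we need $w\to0$ as $s\to+\infty$, hence $\lambda_m<0$, i.e.\ $m\le\lfloor\mu\rfloor$. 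On a negative end we need $\lambda_m>0$, hence $m\ge\lfloor\mu\rfloor+1$.

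\emph{Step 3: the intersection number.} Compactify each end by the holomorphic coordinate $\zeta=e^{-2\pi(s+it)}$ on the positive end, and $\zeta=e^{2\pi(s+it)}$ on the negative end; the closure of $u$ then passes through $q$ at $\zeta=0$. The local intersection number with $D$ at $\zeta=0$ is the winding number of $w$ around a small circle $|\zeta|=\delta$ traversed positively. On the positive end $\arg\zeta=-2\pi t$, so the winding is $-m\ge-\lfloor\mu\rfloor$. On the negative end $\arg\zeta=2\pi t$, so the winding is $m\ge\lfloor\mu\rfloor+1$. This gives (a) and (b). For the identification of the winding with the local intersection number I would invoke positivity of intersections near the puncture, as in Lemma~\ref{lem:poi}; this uses that $D$ is $J$-holomorphic and the orientation is the complex one. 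This identification, together with the error analysis in Step 1, is where I would be most careful with signs relative to the convention $\omega(X_H,\cdot)=dH$.
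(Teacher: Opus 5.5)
Your proposal is correct and is essentially the standard local argument behind the references the paper cites for this lemma (the paper gives no proof of its own). The vertical component $w$ solves $\partial_s w+i\partial_t w+2\pi\mu w+Sw=0$. The leading eigenfunction $e^{2\pi imt}$ of $-i\partial_t-2\pi\mu$ has $m\le\lfloor\mu\rfloor$ on a positive end and $m\ge\lfloor\mu\rfloor+1$ on a negative end. Your orientation bookkeeping with $\zeta=e^{\mp2\pi(s+it)}$ then correctly gives intersection number $-m$, respectively $m$.

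Two small corrections. First, Step~1 is exact rather than approximate. Because $X_{H_\mu}$ is vertical and $J$ is block diagonal, $z=\pi\circ u$ is $J_D$-holomorphic. Writing $\alpha=\frac{d\theta}{2\pi}+\pi^*\beta$ in a unitary trivialization, the perturbation is $S=2\pi\big(i\beta(\partial_sz)-\beta(\partial_tz)\big)$. This is a complex scalar that decays exponentially by removal of singularities for $z$. No smoothing of $h_\mu$ is needed near $r=0$, where $h^{\mathrm{lin}}_\mu$ is already linear in $r^2$. Second, identifying the winding of $w$ with the local intersection number is purely topological. It is the degree of $w/|w|$ in a chart with $D=\{w=0\}$, using that the complex orientations of $TD$ and of the fibres agree with the symplectic ones. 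It is not an instance of positivity of intersections, which actually fails at these punctures: the bound in (a) is negative for $\mu>1$.
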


		\subsection{Floer continuation cylinders  in fibers}
	
		In this section, we investigate Floer continuation cylinders that are entirely contained in a fiber of $E$. Such cylinders can be identified as Floer continuation cylinders in $(\C,j)$ with respect to the Hamiltonian 
		\[
		G_\mu:\mathbb{C}\to \mathbb{R},\qquad G_\mu(z):=h_\mu(|z|^2),
		\] 
		which can be thought of as the restriction of $H_\mu$ to a fiber of ${E}$. The almost complex structure $j$ is given in \eqref{eq:j_fiber}. For a 1-periodic orbit  $x$ of $G_\mu$ of type II or III, we denote by $\theta(x(0))\in S^1$ the angular coordinate of $x(0)$. We also define $(\tau\cdot x)(t):=x(t+\tau)$ and choose a perfect Morse function 
		\[
		f_x:S_x:=\{\tau\cdot x\mid \tau\in S^1\}\longrightarrow\R.
		\]
		
		\begin{lemma}\label{lem:fiber_d}
			Let $x_-$ and $x_+$ be 1-periodic orbits of $G_\mu$ of type III and  II, respectively. Assume that $x_\pm$ have the same winding number around the origin. Then, the moduli space 
		\begin{equation*}
			\mathcal{M}(S_{x_-},S_{x_+},{G}_\mu,j):=\left\{u:\mathbb{R}\times S^1\rightarrow \mathbb{C}\;\middle|\;
			\begin{matrix*}[l]
				\partial_su+j(u)(\partial_tu-X_{{G}_\mu}(u))=0,\\[.5ex]
				\ev_\pm(u)\in S_{x_\pm}
			\end{matrix*}
			\right\}
		\end{equation*}
		is a smooth 2-dimensional manifold, each of whose connected components is diffeomorphic to $\R\times S^1$:~more precisely, if $u\in \mathcal{M}(S_{x_-},S_{x_+},{G}_\mu,j)$, then $u(\cdot+\sigma,\cdot+\tau)$ also belongs to the moduli space for every $(\sigma,\tau)\in\R\times S^1$. Furthermore, $\theta(\ev_-(u)(0))=\theta(\ev_+(u)(0))$ holds.
		\end{lemma}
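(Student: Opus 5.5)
We plan to exploit the rotational symmetry of $G_\mu$ and $j$. Both $G_\mu(z)=h_\mu(|z|^2)$ and $j$ are invariant under rotations $z\mapsto e^{2\pi i\tau}z$, because $j$ is determined by the radial data in \eqref{eq:j_fiber}. Hence if $u$ solves the Floer equation, so does $e^{2\pi i\tau}u$. Since the Floer equation is autonomous, the $s$-shift $u(\cdot+\sigma,\cdot)$ is also a solution. The Hamiltonian flow of $G_\mu$ is itself rotation, by \eqref{eq:Ham=-Reeb}. Writing $u(s,t)=\phi^t_{G_\mu}(v(s,t))$ therefore converts the equation into a genuinely holomorphic equation for $v$ with respect to a rotation-invariant structure.

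The main step is to write $u$ in log-polar coordinates on $\C^*$. Set $u(s,t)=\exp\big(\sigma(s,t)+2\pi i\vartheta(s,t)\big)$, or more conveniently use coordinates $(r^2,\theta)$. With $j\partial_{r^2}=g(r)R_\alpha$ and $X_{G_\mu}=-k h_\mu'(r^2)\,2\pi\partial_\theta$, the Floer equation becomes a first-order system. We then fix a new radial coordinate $\phi(r^2)$ with $\phi'=1/g$ (suitably normalized), which makes $j$ standard in the coordinates $(\phi,\theta)$. We also write $\theta=mt+\vartheta(s,t)$, where $m$ is the common winding number. The system then reads
\[
\partial_s\phi-\partial_t\vartheta= F(\phi),\qquad \partial_t\phi+\partial_s\vartheta=0,
\]
with $F$ determined by $h_\mu'-$ and $m$. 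We need $u$ to avoid the origin, and we get this from positivity of intersections with the zero section (Lemma \ref{lem:poi}). Indeed, the winding numbers at both ends agree, so the algebraic intersection number with $0$ is zero, and positivity then forces there to be no intersections at all. The lift to $\C^*$, and hence the coordinates above, therefore exist.

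Next we average over $t$. Let $\bar\phi(s)=\int_{S^1}\phi\,dt$. Integrating the first equation gives $\bar\phi'(s)=\int F(\phi)\,dt$. We intend to show that $\phi$ and $\vartheta$ are independent of $t$. Our approach is to show that $w=\phi+i\vartheta-(\text{$t$-independent solution})$ solves a linear Cauchy–Riemann equation with a zeroth-order term depending only on $s$. The cleaner route is an energy identity: the energy of $u$ equals the action difference, and this equals the energy of the $t$-independent ODE solution connecting $r(x_-)$ to $r(x_+)$; the ODE is $\partial_s\phi=F(\phi)$, and on $(r^2(x_-),r^2(x_+))$ the function $F$ has no zeros, since $m$ is hit only at the two orbits. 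A Fourier decomposition in $t$ of the linearization, together with the maximum principle Lemma \ref{lem:ab}, should then show that the nonconstant Fourier modes are forced to vanish. We expect this to be the main obstacle: proving that every solution is $t$-independent modulo the $S^1$-action, i.e.\ $\vartheta\equiv$ const. The first thing we would try is the energy comparison, using $E(u)\ge\int\!\!\int|\partial_s\phi|^2$ with equality iff the angular part is constant.

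Granting $t$-independence, the solutions are exactly $\phi(s)$ solving the ODE, which is unique up to $s$-translation, together with $\vartheta\equiv\tau$. These give the $\R\times S^1$ family $u(\cdot+\sigma,\cdot+\tau)$ in the statement. Since $\vartheta$ is constant, $\theta(\ev_-(u)(0))=\tau=\theta(\ev_+(u)(0))$. Smoothness of the moduli space, and its dimension $2$, follow from automatic transversality in dimension one: the linearized operator is a Cauchy–Riemann operator on a line bundle over the cylinder, whose kernel dimension $2$ matches the index, computed from the Morse–Bott indices in \eqref{eq:action_index}, where the type III index carries the extra $-1$.
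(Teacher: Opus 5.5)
Your setup (the cylinder avoids the origin by positivity, $G_\mu$ and $j$ are rotation invariant, automatic transversality with index $2$) matches the paper's. The gap is the step you flag as the main obstacle: that every solution is $t$-independent up to rotation. Your description of the components as $\R\times S^1$-orbits (via ODE uniqueness) and your proof of $\theta(\ev_-(u)(0))=\theta(\ev_+(u)(0))$ (via $\vartheta\equiv\mathrm{const}$) both rest on it, and neither suggested route proves it.

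The energy comparison is vacuous. In your coordinates $\omega(\cdot,j\cdot)=\kappa(\phi)(d\phi^2+d\vartheta^2)$ for some positive function $\kappa$, so
\[
E(u)=\iint\kappa\,(|\partial_s\phi|^2+|\partial_s\vartheta|^2).
\]
Every solution has $E(u)$ equal to the action difference, i.e.\ $E(u)=E(u_0)$. Substituting your two equations confirms this identity for every solution; it is just the energy identity. So all you learn is $\iint\kappa|\partial_s\phi|^2=E(u_0)-\iint\kappa|\partial_s\vartheta|^2\le E(u_0)$, and there is no reverse inequality that could force $\partial_s\vartheta=0$. The Fourier-mode idea is circular. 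The function $\partial_t(\phi+i\vartheta)$ solves a linear equation whose zeroth-order coefficient $F'(\phi)$ depends on $t$ unless you already know $t$-independence, so the modes do not decouple. Moreover, Lemma \ref{lem:ab} controls only the radial coordinate.

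The missing idea, which is the paper's, uses only what you already invoke. Since $D_u$ is surjective of index $2$, $\dim\ker D_u=2$. But $\ker D_u$ contains three symmetry vectors: $\partial_su$, $\partial_tu$, and the infinitesimal rotation $\partial_\theta|_u$ coming from $e^{i\theta}u$, which lies in the moduli space since $e^{i\theta}x_\pm\in S_{x_\pm}$. Hence there is a nontrivial relation $a\,\partial_su+b\,\partial_tu+c\,\partial_\theta|_u=0$ with constant coefficients.

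To pin down the coefficients, let $s\to\pm\infty$, where $\partial_su\to0$ and $\partial_tu\to2\pi m\,\partial_\theta$; this gives $c=-2\pi m b$. Then $b\neq0$, since otherwise $c=0$ and $a\,\partial_su=0$ forces $a=0$. Normalize $b=1$. The radial part $R=r^2(u)$ then satisfies $\partial_tR+a\,\partial_sR=0$. Together with $1$-periodicity in $t$, this makes $R(\cdot,0)$ $a$-periodic. That is impossible for $a\neq0$, because $R\to r^2(x_\pm)$ as $s\to\pm\infty$ and these limits are distinct. So $a=0$ and $\partial_tu=2\pi m\,\partial_\theta|_u$, which is exactly your $t$-independence. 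Your equation $\partial_t\phi+\partial_s\vartheta=0$ then makes $\vartheta$ constant, and the rest of your plan goes through.

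Two shortcuts were also within reach. Averaging your \emph{second} equation over $t$, rather than the first, gives $\frac{d}{ds}\int_0^1\vartheta\,dt=-\int_0^1\partial_t\phi\,dt=0$. This yields $\theta(\ev_-(u)(0))=\theta(\ev_+(u)(0))$ with no $t$-independence at all (cf.\ \eqref{eq:theta_const}). And $\ker D_u=\mathrm{span}\{\partial_su,\partial_tu\}$ shows that the $\R\times S^1$-orbits are open in the $2$-dimensional moduli space, hence are exactly its connected components.
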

		\begin{proof}
		Since $x_\pm$ have the same winding number, $u$ is contained in $\C\setminus\{0\}$.				We follow arguments in \cite[Proposition 4.7]{BKK24}. 
		 Linearizing the Floer equation at $u$, we obtain a Fredholm operator $D_u$ such that $\ker D_u=T_u\mathcal{M}(S_{x_-},S_{x_+},{G}_\mu,j)$. It has index 2 and is surjective due to automatic transversality. In contrast to \cite[Proposition 4.7]{BKK24} where the corresponding operator has index 1, $D_u$ has index 2 since $x_+$ is of type II. 
		 
		 There are three obvious vectors in $\ker D_u$, arising from rotating $u$ in the target and shifting in the domain. Specifically, since the families $e^{i\theta}u(\cdot+\sigma,\cdot+\tau)$ remain in $\mathcal{M}(S_{x_-},S_{x_+},{G}_\mu,j)$, the corresponding tangent vectors  $\partial_\theta|_u$, $\partial_su$, and $\partial_t u$ belong to $\ker D_u$. Thus $\ker D_u=\mathrm{span}\{\partial_su, \partial_tu\}$, and $\partial_\theta|_u$ and $\partial_t u$ are linearly dependent. We write 
		 $u=(a,v):=(r^2(u),\arg u):\R\times S^1\to (0,\infty)\times \R/2\pi\Z$. By the asymptotic condition of $u$, we have
		\begin{equation}\label{eq:p_tu}
			\partial_tu=(\partial_t a,\partial_t v) =2\pi m\cdot \partial_\theta|_u.
		\end{equation}
		where $m$ denotes the winding number of $x_\pm$. In particular, $a$ is independent of $t$.
		The Floer equation then implies $v$ is independent of $s$, see \eqref{eq:floer_eq_in_C} below. Therefore, $v(s,t)=v(t)=v(0)+{2\pi i m t}$ by \eqref{eq:p_tu}. This yields $\theta(\ev_-(u)(0))=\theta(\ev_+(u)(0))$. It also follows from arguments below \eqref{eq:floer_eq_in_C}. This completes the proof. 
		\end{proof}

		 Let $\widetilde{G}_s(z)=h_s(|z|^2)$ be a monotone homotopy between $G_{\mu_-}$ and $G_{\mu_+}$ with $\mu_-\leq \mu_+$.  For  1-periodic orbits $x_\pm$ of $X_{G_{\mu_\pm}}$, we define the moduli space $\mathcal{M}(S_{x_-},S_{x_+},\widetilde{G},j)$ of Floer continuation cylinders as above with $G_\mu$ replaced by $\widetilde{G}$.
		 
	\begin{lem}\label{lem:constconti}
		Assume that 1-periodic orbits $x_\pm$ of $X_{G_{\mu_\pm}}$ are either both of type II or both of type III, and that they have the same winding number around the origin. Then, the moduli space $\mathcal{M}(S_{x_-},S_{x_+},\widetilde{G},j)$
		is a smooth 1-dimensional manifold diffeomorphic to $S^1$: more precisely, $\mathcal{M}(S_{x_-},S_{x_+},\widetilde{G},j)=\{e^{i\theta} u\,|\,\theta\in S^1\}$ for a solution $u$ with $\theta(\ev_-(u)(0))=\theta(\ev_+(u)(0))$.  Moreover, if  both $x_\pm$ are of type II, then $u|_{(-\infty,-1)\times S^1}$ is constant in $s$. If both $x_\pm$ are of type III, then $u|_{(1,\infty)\times S^1}$ is constant in $s$.
	\end{lem}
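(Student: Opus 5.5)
The plan is to reduce the continuation equation in $\C\setminus\{0\}$ to an ODE for the radial coordinate, following the proof of Lemma \ref{lem:fiber_d}. First I show that $u$ avoids the origin. The asymptotic orbits $x_\pm$ both wind $m$ times around $0$, and $x_\pm$ are not constant, since constant orbits of type II/III have $m=0$ and are handled trivially. Positivity of intersections of $u$ with the fiber origin, as in Lemma \ref{lem:poi} applied fiberwise, together with the equality of winding numbers, then gives $u(\R\times S^1)\subset\C\setminus\{0\}$.

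Next I compute the moduli space by index and symmetry. The linearized operator $D_u$ for a continuation cylinder between Morse–Bott circles $S_{x_-}$ and $S_{x_+}$ of the same type has index $\mu(x_+)-\mu(x_-)+\dim S_{x_-}=1$, since the indices agree and the continuation breaks the $s$-translation. It is surjective by automatic transversality in dimension two. Rotation in the target, $e^{i\theta}u$, gives a nonzero kernel element $\partial_\theta|_u$. Hence $\ker D_u$ is spanned by $\partial_\theta|_u$, and the component through $u$ is the circle $\{e^{i\theta}u\}$.

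The explicit description needs a bit more. Because $\widetilde{G}_s$ and $j$ are rotation invariant, $u(\cdot,\cdot+\tau)$ is again a continuation cylinder for every $\tau\in S^1$. So $\partial_t u$ lies in $\ker D_u$ and equals $c\,\partial_\theta|_u$ for a constant $c$. Comparing asymptotics gives $c=2\pi m$. Writing $u=(a,v)$ in the coordinates $(r^2,\arg)$, we get that $a=a(s)$ depends only on $s$ and $v=v(0)+2\pi m t$ plus a function of $s$. The angular component of the Floer equation, using $j\partial_{r^2}=g R_\alpha$ and \eqref{eq:Ham=-Reeb}, reads $\partial_s v=0$ up to terms that vanish when $a$ is $t$-independent. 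This gives $v(s,t)=v(0)+2\pi m t$, which proves $\theta(\ev_-(u)(0))=\theta(\ev_+(u)(0))$. The radial equation becomes an ODE of the form $a'(s)=\Psi_s(a)$, with $\Psi_s$ proportional to $m+k\,\partial_{r^2}h_s(a)$ times a positive factor.

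Finally I analyze this ODE, which I expect to be the main obstacle. Existence and uniqueness require a solution $a$ connecting $r^2(x_-)$ to $r^2(x_+)$. The zero $r^2(x_\pm)$ of $m+k\partial_{r^2}h_{\mu_\pm}$ lies in the convex region for type III and in the concave region for type II, so one endpoint is attracting and the other repelling for the autonomous flows at $s=\mp\infty$. For type II, the point $r^2(x_-)$ is unstable in backward time for the flow of $h_{\mu_-}$, which is $\Psi_{-\infty}$ for $s<-1$. A solution converging to it as $s\to-\infty$ must therefore sit at the fixed point for all $s\le -1$, so $u|_{(-\infty,-1)\times S^1}$ is constant in $s$. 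Forward in $s$ the zero is attracting, so the solution from the constant initial datum at $s=-1$ flows into $r^2(x_+)$, provided the homotopy keeps the zero of $\Psi_s$ moving monotonically within the concave region. I would arrange this by the choice of $\widetilde{H}$, checking it against the monotonicity condition $\partial_s h_s\ge 0$. This gives a unique solution, namely the circle. The type III case is symmetric with time reversed: the zero is repelling forward in $s$, which forces $u|_{(1,\infty)\times S^1}$ to be constant. I would fill in the signs carefully from the convention \eqref{eq:Ham=-Reeb}.
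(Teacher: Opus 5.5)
Your proposal is correct and follows essentially the same route as the paper, which defers to \cite[Proposition 4.7]{BKK24} and spells that argument out in the proof of Lemma \ref{lem:fiber_d}. The steps match: positivity of zeros; index one plus automatic transversality, which forces $\partial_t u=2\pi m\,\partial_\theta|_u$; reduction to the radial ODE $a'=(m+k\partial_{r^2}h_s(a))/g$; and the stability of its zeros (attracting for type II, repelling for type III), which gives uniqueness and the ``moreover'' part. Two minor remarks: invariance under $t$-shifts comes from the $t$-independence of $\widetilde G_s$ and $j$, not from rotation invariance; and your existence argument covers only a suitably chosen homotopy (e.g.\ $h_s=\mu(s)\bar h$ for a fixed profile $\bar h$, along which the relevant zeros move monotonically), which suffices for how the lemma is used in Proposition \ref{prop:inc}.
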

		The lemma follows from \cite[Proposition 4.7]{BKK24}, where the type III case is proved. Our choice of $j$, which is not necessarily the standard complex structure, causes no issue in the argument. The moreover-part is given in  \cite[Equation (4.11)]{BKK24}, up to a sign reversal coming from our cohomological convention.
	
	\begin{lem}\label{lem:conti_empty}
	Let $x_\pm$ be 1-periodic orbits of $X_{G_{\mu_\pm}}$. If $x_-$ has type $\II$ and $x_+$ has type $\III$ or type $\IV$ (i.e.~the constant orbit at the origin), then $\mathcal{M}(S_{x_-},S_{x_+},\widetilde{G},j)$ is empty. 
	\end{lem}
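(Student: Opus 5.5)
The plan is to rule out such continuation cylinders by combining a maximum-principle argument in the radial coordinate with an energy/action estimate, working with the explicit fiber Hamiltonians $G_{\mu_\pm}$ and the homotopy $\widetilde G_s(z)=h_s(|z|^2)$. Write $u=(a,v)$ with $a=r^2(u)$ as in the proof of Lemma \ref{lem:fiber_d}. The type II orbit $x_-$ lies near $r^2=\rho+\epsilon$, where $h_{\mu_-}$ is strictly concave. The orbit $x_+$ lies near $r^2=\rho$ in the type III case, where $h_{\mu_+}$ is strictly convex, and at the origin in the type IV case. In both cases $r^2(x_+)<r^2(x_-)$.

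The first step is to apply Lemma \ref{lem:ab}(a), in its continuation-cylinder form, at the negative end. On the region $r^2>2\rho/3$, $j$ is cylindrical with respect to the Liouville coordinate $\mathfrak r=1-r^2$, and $\partial^2_{\mathfrak r}h$ has the sign required there because of the concavity at $x_-$. The lemma then yields two alternatives. Either $u$ is constant equal to $x_-$ on $(-\infty,-1)\times S^1$, or $u$ reaches a point with $\mathfrak r(u)>\mathfrak r(x_-)$, that is, with $r^2(u)<r^2(x_-)$. The first alternative should instead be handled through the condition, built into Lemma \ref{lem:mp}, that the ends of the cylinder lie in the region.

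The more robust route is Lemma \ref{lem:mp} applied on the level $\{r^2=\rho+\epsilon'\}$ for a suitable $\epsilon'$ slightly larger than the location of $x_-$. On the region $r^2\ge \rho+\epsilon$ we have $\partial_s h_s\ge0$, and the second hypothesis $\partial_s(-\mathfrak r h_s'+h_s)\le0$ must also hold. This hypothesis concerns the slopes $\rho\mu/(k\epsilon)$ of the linear part of $h_\mu^{\rm lin}$, which increase with $\mu$, while $h_s$ itself increases. I expect that the homotopy can be chosen so that this holds, or else that the sign works out because $-\mathfrak r h'_s$ decreases as the slope grows.

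With confinement in hand, the key step is an action comparison. A continuation cylinder satisfies $\mathcal A_{H_{\mu_+}}(x_+)-\mathcal A_{H_{\mu_-}}(x_-)\ge E(u)\ge0$ when the cappings are matched by $u$. Because $u$ lies in a fiber, the capping is the fiber capping, so the cylinder has winding number data. By Lemma \ref{lem:poi}, the intersection count with the origin is nonnegative, so the winding of $x_+$ is at least that of $x_-$; for type IV one uses Lemma \ref{lem:iaa}(a) instead. Using \eqref{eq:action_index}, the type II action is approximately $\frac{m}{k}(\rho+\epsilon)+\frac{\mu_-}{k}\rho$ with $m\le -\frac{\rho}{\epsilon}\mu_-$ essentially, which gives $m(\rho+\epsilon)+\mu_-\rho\approx m\epsilon$ plus corrections. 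The type III action is $\frac{m'}{k}\rho$, and the type IV action is $\frac{\mu_+}{k}\rho$ with winding $m'$ forced by the intersection bound. Comparing these with $m'\ge m$, and with the relation between the slope and the winding $m=-k\,\partial_{r^2}h$, I would show that the inequality forces $m'$ into a range incompatible with the allowed winding ranges of \eqref{eq:abbrev}. Alternatively, the Fredholm index $\mu(x_+)-\mu(x_-)$ comes out negative, so that for the fiberwise automatically transverse problem the moduli space is empty.

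I expect the main obstacle to be this final incompatibility, because the action estimates are approximate and the homotopy changes $\mu$. I would first try the index route: via automatic transversality in $\C$, as in Lemma \ref{lem:constconti}, the dimension of $\mathcal M(S_{x_-},S_{x_+},\widetilde G,j)$ equals $\mu(x_+)-\mu(x_-)+1$ plus the Morse–Bott correction. Using \eqref{eq:action_index}, this is negative whenever $m'\le m$, and positivity of intersections already excludes $m'<m$ for a cylinder in $\C\setminus\{0\}$.
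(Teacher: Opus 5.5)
There is a genuine gap. The tools you use to reach a contradiction are the energy/action inequality, the winding constraints from Lemmas \ref{lem:poi} and \ref{lem:iaa}, and the Fredholm index. None of them excludes cylinders that pass through the origin, and they give nothing at all in the type IV case.

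If $x_+$ is the origin, then $\mathcal{A}(x_+)=\frac{\mu_+}{k}\rho$ exceeds the action $\frac{m_-}{k}r^2(x_-)+h_{\mu_-}(r^2(x_-))<\frac{\mu_-}{k}\rho$ of every type II orbit. Lemma \ref{lem:iaa}(a) only yields $m_-\le\lfloor\mu_+\rfloor$, which is automatic since $m_-\le -1$. The index difference $2(\lfloor\mu_+\rfloor-m_-)+1-e$ is positive, so no dimension count helps.

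The same problem arises for type III with $m_+>m_-$, i.e.\ when $u$ hits the origin. The action difference $\frac1k\big(m_+\rho-m_-(\rho+\epsilon)-\mu_-\rho\big)$ can be positive, and the index difference $2(m_+-m_-)-1+e_+-e_-$ is nonnegative. Your index route only treats $m_+\le m_-$.

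The confinement step produces nothing either. Lemma \ref{lem:mp} at $r^2=\rho+\epsilon'$ needs both ends in $\{\mathfrak r\le\mathfrak r_0\}=\{r^2\ge\rho+\epsilon'\}$, but both $x_\pm$ lie at smaller $r^2$. You could instead apply it at a level $r^2=\rho-\delta$ just inside the type III orbits, where $h_s$ is linear in $\mathfrak r$ with increasing slope, so its continuation hypotheses hold. That keeps a type III-ended $u$ away from the origin and forces $m_+=m_-$. Your action comparison then does close the type III case, but this cannot work when $x_+$ is the origin.

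The missing idea is to exploit, rather than exclude, the constant alternative in the dichotomy of Lemma \ref{lem:ab}(a) that you wrote down.

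\begin{itemize}
\item \emph{Triviality at the negative end.} The paper uses the maximum principle and the asymptotic behaviour at the concave type II end (Lemmas \ref{lem:mp} and \ref{lem:ab}) to conclude that $u$ is constant equal to $x_-$ on $(-\infty,-1)\times S^1$.
\item \emph{How the nonconstant alternative is ruled out in the fiber.} The following is my sketch, not spelled out in the paper. In the two-dimensional fiber, $\log\mathfrak r\circ u$ satisfies on the cylindrical region a linear elliptic equation with no zeroth-order term, plus a homotopy term. That term has the right sign when the $r^2$-slopes increase with $s$ on $\{r^2\ge\rho\}$. This gives a maximum principle for $r^2\circ u$ there, confining $u$ to $\{r^2\le r^2(x_-)\}$. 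On that side, concavity at $x_-$ forces the $t$-average of $\log\mathfrak r\circ u$ to be constant near $-\infty$.
\item \emph{Unique continuation.} $u$ then agrees on an open set with the explicit type II$\to$type II continuation cylinder of Lemma \ref{lem:constconti}, suitably rotated, which is also constant in $s$ on $(-\infty,-1)$. By the Carleman similarity principle the two coincide.
\item \emph{Contradiction.} Hence the positive end of $u$ is a type II orbit, contradicting that $x_+$ is of type III or IV.
\end{itemize}

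This argument never needs to know whether $u$ crosses the origin, which is exactly what your action and index bookkeeping cannot control.
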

	\begin{proof}
		By Lemmas \ref{lem:mp} and \ref{lem:ab}, every $u\in\mathcal{M}(S_{x_-},S_{x_+},\widetilde{G},j)$ restricted to $(-\infty,-1)\times S^1$ is constant equal to $\ev_-(u)$. By the Carleman similarity principle, it coincides with the one in Lemma \ref{lem:constconti}, whose positive asymptotic orbit is of type II. This contradiction completes the proof.
	\end{proof}
	
	For 1-periodic orbits $x_-\in \crit f_{x_-}$ of $X_{G_{\mu_-}}$ and $x_+\in \crit f_{x_+}$ of $X_{G_{\mu_+}}$, we write as before 
		$			
			\mathcal{M}(x_-,x_+,\widetilde{G},j)
		$
		 for the moduli space of flows lines with cascades from $ x_-$ to $x_+$. We choose  generic $ f_{x_\pm}$ so that it is cut out transversely.  
			
	\begin{lem}\label{lem:addR2}
		Let $x_-$ be a 1-periodic orbit of $X_{G_{\mu_-}}$ of type III, and let $x_+$ be a 1-periodic orbit of $X_{G_{\mu_+}}$ of type II. Assume that $x_-\in \crit f_{x_-}$ is a maximum point and $x_+\in \crit f_{x_+}$ is a minimum point. Assume also that $x_\pm$ have the same winding number  around the origin. Then, $f_{x_\pm}$ can be chosen so that $\mathcal{M}(x_-,x_+,\widetilde{G},j)$ is empty. 
	\end{lem}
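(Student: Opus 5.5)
The plan is to describe every element of $\mathcal{M}(x_-,x_+,\widetilde G,j)$ explicitly and then pick the Morse functions $f_{x_\pm}$ so that the resulting conditions cannot be met simultaneously. An element is a cascade $(c_0,u_1,c_1,\dots,u_m,c_m)$. Here $u_{m_0}$ is a continuation cylinder. The remaining $u_i$ are Floer cylinders for $G_{\mu_-}$ (before $m_0$) or for $G_{\mu_+}$ (after $m_0$). First I determine the possible asymptotic orbits. Actions increase along the cascade, and $x_\pm$ have the same winding number $m$.

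The orbits of $G_\mu$ with winding $m$ are exactly the circles $S_x$ of type II or III with that winding. By Lemma~\ref{lem:conti_empty}, once a cascade reaches a type II orbit it cannot return to type III or IV. Consider Floer cylinders in $\C$ whose two ends have different winding numbers. Such a cylinder must cross the origin or change winding. Positivity of intersections with $0$ (Lemma~\ref{lem:poi}) together with the index and action constraints rules this out, because the total winding from $x_-$ to $x_+$ is preserved. I expect every intermediate orbit to have winding $m$, so the chain passes only through $S_{x_-}$, possibly $S$-circles of types III/II at the same winding, and $S_{x_+}$.

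I then use the index count. The cascade has expected dimension $\mu(x_+)-\mu(x_-)+m-1$ before quotienting. With $x_-$ a maximum and $x_+$ a minimum, the relative index works out so that $\mathcal{M}$ is rigid only through the configuration $(c_0,u_1,c_1)$ with $u_1$ a single continuation cylinder. That cylinder is either from $S_{x_-}$ (type III) to a type III circle of $G_{\mu_+}$, followed by a $G_{\mu_+}$-cylinder III$\to$II, or it is a $G_{\mu_-}$-cylinder III$\to$II followed by a continuation II$\to$II. The angular coordinate is preserved in all of these: Lemmas~\ref{lem:fiber_d} and~\ref{lem:constconti} give $\theta(\ev_-(u)(0))=\theta(\ev_+(u)(0))$. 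Hence the composite of cylinders defines a map $S_{x_-}\to S_{x_+}$, $\tau\cdot x_-\mapsto\tau\cdot x_+$, identifying both circles with $S^1$ via $\theta$, and by the 2-dimensional family in Lemma~\ref{lem:fiber_d} this map is onto the corresponding points. The gradient flow pieces also matter. $c_0$ must start at the maximum of $f_{x_-}$, and a flow line emanating from the maximum of a perfect Morse function on $S^1$ is constant when taken with negative-time limit at the max, because the maximum is a source only in backward-limit sense. Similarly $c_m$ must flow into the minimum of $f_{x_+}$, which forces $c_m$ constant at the minimum. Therefore $u$ must map the angle of $\max f_{x_-}$ exactly to the angle of $\min f_{x_+}$ under the identification above.

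The choice is then to take $f_{x_\pm}$ so that, under the angle identification $\theta$, $\max f_{x_-}$ and $\min f_{x_+}$ sit at different angles; for instance both are pulled back from one perfect Morse function $f$ on $S^1$ by $\theta$. This also needs a check on the intermediate circles. Along them the flow pieces $c_i$ have length $t_i\ge0$ and could shift the angle. So I would make the functions on intermediate circles agree with $f\circ\theta$ as well. Then any angle shift is monotone along $f$, and a flow starting at $\max f$ remains at $\max f$. The cascade thus stays at angle $\theta(\max f)\neq\theta(\min f)$, which is a contradiction, and $\mathcal{M}$ is empty.

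The main obstacle is the first step: proving that no cascade can leave the winding-$m$ circles or pass through the origin (type IV), and controlling the intermediate circles. I would first try to handle this with action/index bookkeeping in $\C$ together with Lemma~\ref{lem:ab}.
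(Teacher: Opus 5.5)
Your outline has the same structure as the paper's proof. You enumerate the cascades: a single continuation cylinder $\mathrm{III}\to\mathrm{II}$; or a $G_{\mu_-}$-cylinder $\mathrm{III}\to\mathrm{II}$ followed by a continuation $\mathrm{II}\to\mathrm{II}$; or a continuation $\mathrm{III}\to\mathrm{III}$ followed by a $G_{\mu_+}$-cylinder $\mathrm{III}\to\mathrm{II}$. You use that $c_0,c_m$ are constant and that each cylinder preserves the angle $\theta$, and you choose Morse functions whose critical angles cannot match. Pulling back all Morse functions, including those on intermediate circles, from one perfect Morse function on $S^1$ is a clean substitute for the paper's choice of $f_{x_\pm}$ relative to a fixed $f_{x'}$.

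\textbf{The gap.} The gap is at the central step. You claim Lemmas~\ref{lem:fiber_d} and~\ref{lem:constconti} give $\theta(\ev_-(u)(0))=\theta(\ev_+(u)(0))$ in all configurations. Neither covers a single continuation cylinder $u\in\mathcal{M}(S_{x_-},S_{x_+},\widetilde G,j)$ going directly from a type III orbit of $G_{\mu_-}$ to a type II orbit of $G_{\mu_+}$. Lemma~\ref{lem:fiber_d} concerns Floer cylinders of a fixed $G_\mu$, and Lemma~\ref{lem:constconti} concerns continuation cylinders between orbits of the same type. Such a $u$ is not one of the two-step configurations you describe as ``that cylinder''. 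Those are separate elements of the cascade moduli space, and they are rigid as well: with $m$ cylinders the space has dimension $\mu(x_+)-\mu(x_-)+m-1$ before dividing by $\mathbb{R}^{m-1}$, hence dimension $0$ for every $m$. So your claim that $\mathcal{M}$ is rigid only through $(c_0,u_1,c_1)$ is also wrong. The direct case is exactly the one the paper must treat by hand.

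\textbf{The missing idea.} It is a conservation law in polar coordinates $u=(a,v)=(r^2(u),\arg u)$ on $\mathbb{C}\setminus\{0\}$. Since $X_{\widetilde G_s}$ is a multiple of $\partial_\theta$ and $j$ sends it to the radial direction, the angular component of the Floer equation is $\partial_s v+2\pi g(r(u))\,\partial_t a=0$. It has no Hamiltonian term, so it holds verbatim for the $s$-dependent homotopy. Hence $\partial_s\int_0^1 v\,dt=-2\pi\int_0^1\partial_t\tilde g(a)\,dt=0$, where $\tilde g(R)=\int_0^R g(\sqrt{\sigma})\,d\sigma$. Since $v(s,t)\to 2\pi mt+\tau_\pm$ at both ends, this forces $\tau_+=\tau_-$. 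With this in place your argument goes through.

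\textbf{The step you flag as the main obstacle.} It is short:
\begin{itemize}
\item By positivity of intersections with the origin, the winding number is nondecreasing along each cylinder, so every intermediate orbit has winding $m$.
\item A passage through the origin is excluded by Lemma~\ref{lem:iaa}: its two end contributions sum to at least $-\lfloor\mu\rfloor+\lfloor\mu\rfloor+1=1$, which would raise the winding.
\item $\mathrm{II}\to\mathrm{III}$ is excluded by Lemmas~\ref{lem:mp}, \ref{lem:ab} and~\ref{lem:conti_empty}.
\item A Floer cylinder of $G_{\mu_\pm}$ between orbits of the same type and winding has zero energy, hence is trivial.
\end{itemize}

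\textbf{Constancy of $c_0,c_m$.} They are constant because cohomological flow lines follow $+\nabla f$, from lower to higher index. So the maximum is a sink and the minimum a source; your stated reason is not the right one.
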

	\begin{proof}
		The moduli space $\mathcal{M}(S_{x_-},S_{x_+},\widetilde{G},j)$ can be analyzed following  arguments in \cite[Proposition 4.7]{BKK24} as in Lemma \ref{lem:fiber_d}; for any $u\in \mathcal{M}(S_{x_-},S_{x_+},\widetilde{G},j)$, the linearized  operator $D_u$ is surjective and has index 2. Accordingly $\mathcal{M}(x_-,x_+,\widetilde{G},j)$ is zero-dimensional.
		
		We show $\theta(\ev_-(u)(0))=\theta(\ev_+(u)(0))$ for any $u\in \mathcal{M}(S_{x_-},S_{x_+},\widetilde{G},j)$. Since the image of $u$ is contained in $\C\setminus\{0\}$, we write 
		\[
		u=(a,v):=(r^2(u),\arg u):\R\times S^1\to (0,\infty)\times \R/2\pi\Z.
		\]
		We recast the Floer equation into 
		\begin{equation}\label{eq:floer_eq_in_C}
		\begin{cases}
		\displaystyle\partial_s a - \frac{1}{2\pi g(r(u))}\partial_t v - \frac{k\,\partial_{r^2}h(r(u))}{g(r(u))}=0\\[2ex]
		\displaystyle \partial_s v + 2\pi g(r(u)) \partial_t a=0	
		\end{cases}
		\end{equation}
		where $g:(0,\infty)\to(0,\infty)$ is a smooth function given in \eqref{eq:j_fiber}. We set $\tilde{g}(R):=\int_0^Rg(\sqrt{\sigma})d\sigma$. Using the second equation in \eqref{eq:floer_eq_in_C}, we compute 
		\begin{equation}\label{eq:theta_const}
		\partial_s\int_0^1 v\, dt =-2\pi\int _0^1 g(r(u))\partial_ta\,dt=-2\pi\int_0^1 \partial_t \big(\tilde{g}(a)\big)dt =0,
		\end{equation}
		where we regard $v$ as a function to $\R$ satisfying $v(s,t+1)=v(s,t)+2\pi$. 
		Since $u$ converges asymptotically to $x_\pm$, which have the same winding number $m$, we have 
		\[
		\lim_{s\to\pm\infty} v(s,t)=2\pi m t+\tau_\pm  
		\]
		for some $\tau_\pm \in \R$. From \eqref{eq:theta_const}, we conclude $\tau_+=\tau_-$\ and  $\theta(\ev_-(u)(0))=\theta(\ev_+(u)(0))$.

		Let $\mathbf{u}\in \mathcal{M}(x_-,x_+,\widetilde{G},j)$. Then, it has one of the following forms.
		
		\medskip 
		
		\noindent Case 1: $\mathbf{u}=(c_0,u,c_1)$: Since $x_-$ is a maximum point and $x_+$ is a minimum point, $c_0$ and $c_1$ are constant flow lines, and $u\in \mathcal{M}(S_{x_-},S_{x_+},\widetilde{G},j)$ satisfies $\ev_\pm(u)=x_\pm$. By the above discussion, such $u$ does not exist for a generic choice of $f_{x_\pm}$.  
		
		\medskip 
		
		\noindent Case 2: $\mathbf{u}=(c_0,u_1,c_1,u_2,c_2)$, where $u_1\in \mathcal{M}(S_{x_-},S_{x'},{G}_{\mu_-},j)$ for some $1$-periodic orbit $x'$ of $X_{G_{\mu_-}}$ of type II with the same winding number  as $x_\pm$, and $u_2 \in \mathcal{M}(S_{x'},S_{x_+},\widetilde{G},j)$. Again, $c_0$ and $c_2$ are constant. By Lemmas \ref{lem:fiber_d} and \ref{lem:constconti} (or by the above discussion), we have 
		\[
		\theta(x_-)=\theta(\ev_-(u_1)(0))=\theta(\ev_+(u_1)(0)),\qquad  \theta(x_+) = \theta(\ev_+(u_2)(0)) =\theta(\ev_-(u_2)(0)).
		\] 
		Thus, $f_{x_\pm}$ can be chosen so that there is no gradient flow line $c_1$ of a perfect Morse function $f_{x'}$ on $S_{x'}$ connecting $\ev_+(u_1)(0)$ and $\ev_-(u_2)(0)$. 
		
		\medskip
		
		\noindent Case 3: $\mathbf{u}=(c_0,u_1,c_1,u_2,c_2)$, where $u_2\in \mathcal{M}(S_{x'},S_{x_+},{G}_{\mu_+},j) $ for some $1$-periodic orbit $x'$ of $X_{G_{\mu_+}}$ of type II with the same winding number  as $x_\pm$, and $u_1\in \mathcal{M}(S_{x_-},S_{x'},\widetilde{G},j)$. Arguing as in Case 2, we can choose $f_{x_\pm}$ so that this case does not occur.  
		
		\medskip
		
		In Cases 2 and 3, $x'$ has the same winding number as $x_\pm$ since the winding number is nondecreasing along $u_1$ and $u_2$.		There are no further cases with more Floer cylinders. Indeed, if $u$ is a Floer cylinder of $G_{\mu_-}$ or $G_{\mu_+}$ such that its asymptotic orbits have the same winding number and are both of type II or  III, then $u$ is necessarily trivial, i.e.~$\partial_su=0$.  This finishes the proof.
	\end{proof}
  	
  	\subsection{Neck-stretching}\label{sec:neckstr}

		To investigate Floer (continuation) cylinders, we perform neck-stretching along the hypersurface $\partial U_{\rho+2\epsilon}$. For this purpose, we have to turn off the perturbation added to $H_\mu$ (see below \eqref{eq:f_W}) and work with the original Hamiltonian $H_\mu$ given in \eqref{eq:Hamiltonian}, which is constant on $M\setminus U_{\rho}$. We elaborate on the  justification for this choice at the end of this section.

		 We modify a compatible almost complex structure $J$ on $M$ near $\partial U_{\rho+2\epsilon}$ to obtain a sequence $\{J_\eta\}_{\eta\in\R_+}$ of almost complex structures having the following property. We take 
		 \[
		 [x_\pm,\bar{x}_\pm]\in\widetilde{\mathcal P}_{\II}^\crit(H_\mu)\cup \widetilde{\mathcal P}_{\III}^\crit(H_\mu)\cup \widetilde{\mathcal P}_{\IV}^\crit(H_\mu),\qquad \mu([x_+,\bar{x}_+])-\mu([x_-,\bar{x}_-])=1.
		 \] 
		 By ${\mathcal{M}}_m([x_-,\bar{x}_-],[x_+,\bar{x}_+],H_\mu,J_\eta)$, we mean the moduli space of flow lines with Floer cascades whose asymptotic orbits are not type I orbits. Then, for every sufficiently large $\eta>0$, there is a bijective correspondence between ${\mathcal{M}}_m([x_-,\bar{x}_-],[x_+,\bar{x}_+],H_\mu,J_\eta)$ and the moduli space of pairs $(\mathbf{u}_1,\mathbf{u}_2)$, modulo reparametrizations, such that
		\begin{enumerate}[(i)]
			\item there are a finite index set $I=\coprod_{j=1}^m I_j$ and a tuple $\mathbf{u}_2=(c_0,{u}_1,\cdots,{u}_m,c_m)$ of curves as in  Definition \ref{def:cascade}, except that $u_j$ are punctured Floer cylinders on $(\widehat{E},J_{\widehat{E}})$,
			\begin{equation*}
				{u}_j:(\mathbb{R}\times S^1)\setminus\{z_i\}_{i\in I_j}\longrightarrow \widehat{E},
			\end{equation*}
			where $u_j$ converges to a periodic Reeb orbit $\gamma_{i}$ of $R_\alpha$ asymptotically at $z_i$ (we allow $I_j=\emptyset$ and in this case $\partial_su_j\neq 0$);
			\item $\mathbf{u}_1=\{v_i\}_{i\in I}$ consists of  ${J}_{\widehat{W}}$-holomorphic planes $v_i$ of $\widehat{W}$ asymptotic to  $\gamma_i$.
		\end{enumerate}

		Similar correspondence holds for ${\mathcal{M}}([x_-,\bar{x}_-],[x_+,\bar{x}_+],\widetilde{H},{J})$ for $[x_\pm,\bar{x}_\pm]\in\widetilde{\mathcal P}_{\II}^\crit(H_{\mu_\pm})\cup \widetilde{\mathcal P}_{\III}^\crit(H_{\mu_\pm})\cup \widetilde{\mathcal P}_{\IV}^\crit(H_{\mu_\pm})$ with $\mu([x_-,\bar{x}_-])=\mu([x_+,\bar{x}_+])$, where $\widetilde{H}$ is a monotone homotopy between $H_{\mu_-}$ and $H_{\mu_+}$. In this case, $\mathbf{u}_2$ consists of one punctured Floer continuation cylinder $u_{m_0}$ for $(\widetilde{H},\widetilde{J})$, punctured Floer cylinders $u_1,\dots,u_{m_0-1}$ for $(H_{\mu_-},J_-)$, and punctured Floer cylinders $u_{m_0+1},\dots,u_{m}$ for $(H_{\mu_+},J_+)$. No additional levels can arise in $(\mathbf{u}_1,\mathbf{u}_2)$ due to the monotonicity condition and index considerations.

		These correspondences follow from the SFT compactness and gluing theorems \cite{BEHWZ03,Par19}. 
		In the context of symplectic homology for $M\setminus D$, such a  correspondence was essential in defining the so-called split symplectic homology in \cite{DL19a,DL19b}; see  \cite[Proposition 5.1]{DL19b}. Adapting this correspondence, a Floer-Gysin exact sequence for $M\setminus D$ was constructed in \cite{BKK24} using Rabinowitz Floer homology. Furthermore, the possible configurations of $(\mathbf{u}_1,\mathbf{u}_2)$ are extremely constrained in the setting of Symplectic homology and Rabinowitz Floer homology for $M\setminus D$ as established in \cite{DL19a,DL19b,BKK24}. This rigidity remains valid in our setting as we shall see in the subsequent sections, and it is used crucially in the proof of our main results.

	By the following lemma from \cite[Lemma 2.6]{DL19b}, $J_{\widehat{W}}$-holomorphic planes in $\mathbf{u}_1$ naturally correspond to $J$-holomorphic spheres in $M$ with an incidence property.
	
	\begin{lem}\label{lem:remsing}
		A compatible almost complex structure $J_{\widehat{W}}$ on $\widehat{W}$ can be chosen so that there is a diffeomorphism $\chi:\mathrm{int}(W)\to M\setminus D$ satisfying $\chi_*J_{\widehat{W}}=J$. Moreover, under this diffeomorphism, we have a bijection between $J_{\widehat{W}}$-holomorphic planes in $\widehat{W}$ asymptotic to Reeb orbits $\gamma$ of multiplicity $m$ and $J$-holomorphic spheres in $M$ intersecting $D$ at a single point $\pi(\gamma)\in D$ with order of contact $m$.
	\end{lem}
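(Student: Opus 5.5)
The plan is to follow the standard "removal of singularities at infinity" argument. We compactify $\widehat W$ by collapsing the Reeb circles of the prequantization boundary, which recovers $M$ with $D$ as the divisor at infinity.

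\emph{Setting up the identification.} On the cylindrical end $((1,\infty)\times\partial W,\mathfrak r\lambda|_{\partial W})$, set $r^2=1-\mathfrak r$ on $(0,1)$ and extend it by a reparametrization. Concretely, choose a diffeomorphism $\phi:(1,\infty)\to(0,1)$, say $\mathfrak r\mapsto r^2=e^{-(\mathfrak r-1)}$ or similar. Then $(1,\infty)\times\partial W\cong \partial E\times(0,1)$ becomes the punctured disk bundle $E_{\rho'}\setminus D$ near $D$, and gluing in the zero section gives a diffeomorphism $\chi:\mathrm{int}(W)\to M\setminus D$ (indeed $\widehat W\cong M\setminus D$ as smooth manifolds). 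Conversely, choose $J$ on $M$ near $D$ of the split form $J_{\widehat E}$ of \eqref{eq:j_fiber}, with $j(e)Z=-R_\alpha$ in the relevant collar, and define $J_{\widehat W}:=\chi^*J$. On the end, $J$ preserves $\ker\alpha$, sends the radial direction to the Reeb direction, and (after choosing $g$, i.e.\ $\phi$, appropriately so that $\chi_*\partial_{\mathfrak r}$ is a positive multiple of $Z$ matching $jZ=-R_\alpha$) is independent of $\mathfrak r$. Hence $J_{\widehat W}$ is cylindrical at infinity, and compatibility with $d\lambda$ holds on the end because $J$ is compatible there with both forms in the split decomposition. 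This choice of $g$ is the first key step. I would take $j$ to send $\partial_{\mathfrak r}$-direction to $R$ exactly, which forces $g$ to be an explicit function of $r$, and then check that $j=i$ near $r=0$ can still be arranged by modifying $g$ only near $0$. Near $0$, cylindricity is not needed, since that region is at infinity in $\widehat W$.

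\emph{Planes versus spheres.} Let $v:\C\to\widehat W$ be $J_{\widehat W}$-holomorphic and asymptotic to a $m$-fold Reeb orbit $\gamma$ over $p\in D$. Then $\chi\circ v$ is $J$-holomorphic on $\C$, and as $|z|\to\infty$ it converges to $p$, because $\mathfrak r\to\infty$ means $r\to0$. Since the energy $\int v^*\omega$ is finite, removal of singularities extends $\chi\circ v$ to a $J$-holomorphic sphere $\bar v$ with $\bar v(\infty)=p$ and $\bar v^{-1}(D)=\{\infty\}$. The order of contact is read off in the coordinates near $D$, where $J=J_{\widehat E}$ is split. The vertical component $w$ of $\bar v$ near $\infty$, written in the local coordinate $\zeta=1/z$, is $j$-holomorphic with $j=i$ near zero. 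The asymptotic convergence to the $m$-fold orbit means $\arg w$ winds $m$ times, so $w(\zeta)=c\zeta^m+\cdots$, giving contact order $m$ by positivity of intersections (Lemma \ref{lem:poi} type local analysis). Conversely, given a $J$-sphere meeting $D$ only at $p$ with order $m$, restricting to the complement of the intersection point gives a finite-energy plane in $\widehat W$. Its asymptotics, computed in the same split local model via $\zeta\mapsto c\zeta^m$, is the $m$-fold Reeb orbit over $p$. These two constructions are mutually inverse.

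\emph{Expected obstacle.} The main difficulty is the asymptotic analysis near $D$: matching Hofer-type exponential convergence to a Reeb orbit with the holomorphic Taylor expansion of the vertical component. The horizontal part is $J_D$-holomorphic only after projection, and the horizontal and vertical parts couple through the connection. I would handle this by using the split form of $J_{\widehat E}$. It makes $\pi\circ\bar v$ $J_D$-holomorphic and makes the vertical part satisfy a Cauchy--Riemann equation with a zeroth-order term, so the Carleman similarity principle gives $w=\zeta^m\cdot(\text{nonvanishing})$. This argument is exactly \cite[Lemma 2.6]{DL19b}, which I would cite for the details.
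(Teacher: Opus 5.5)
Your overall route is the standard argument behind \cite[Lemma 2.6]{DL19b}, which is all the paper gives for this lemma. The plane--sphere half of your sketch is fine: removal of singularities, then the similarity principle for the vertical component, which obeys a Cauchy--Riemann equation with a zeroth-order connection term.

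The construction of $\chi$ and $J_{\widehat W}$, however, has a step that fails as written. You say that near $r=0$ ``cylindricity is not needed, since that region is at infinity in $\widehat W$''; this is backwards. Under $\chi$, $r\to0$ corresponds to $\mathfrak r\to\infty$, which is exactly where $J_{\widehat W}$ must be cylindrical. So your recipe --- fix a reparametrization such as $r^2=e^{-(\mathfrak r-1)}$, read off the forced $g$, then alter $g$ near $0$ to get $j=i$ --- destroys cylindricity at the end of $\widehat W$. Keeping the forced $g$ does not work either. With the paper's normalization $J_{\widehat W}(\mathfrak r\partial_{\mathfrak r})=-R_\alpha$ you need $\mathfrak r\,\phi'(\mathfrak r)\,g=-1$. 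For $\phi(\mathfrak r)=e^{-(\mathfrak r-1)}$ this gives $g(r)=1/\bigl(r^2(1+\log(1/r^2))\bigr)$, whereas $j=i$ corresponds to $g=1/(4\pi r^2)$. The ratio tends to $0$, so $j$ does not extend continuously over the zero section, and $\chi_*J_{\widehat W}$ is not an almost complex structure on $M$.

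The missing observation is that there is no conflict to resolve. $i$ on $\C^*$ is translation invariant in log-polar coordinates, hence already cylindrical, so you should let $g$ determine $\chi$ rather than the reverse:
\begin{itemize}
\item Take $\chi$ to be the Biran identification $r^2=1-\mathfrak r$ on $\{\mathfrak r\le\mathfrak r_0\}$, with $\mathfrak r_0$ in the collar where $jZ=-R_\alpha$.
\item For $\mathfrak r\ge\mathfrak r_0$, take the radial map $\mathfrak r\mapsto r^2=\phi(\mathfrak r)$ solving $\mathfrak r\,\phi'(\mathfrak r)\,g\bigl(\sqrt{\phi(\mathfrak r)}\bigr)=-1$.
\end{itemize}
Equivalently $\log(\mathfrak r/\mathfrak r_0)=\int_{\phi(\mathfrak r)}^{\phi(\mathfrak r_0)}g\,d(r^2)$. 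So $\chi$ maps the whole end onto the punctured neighbourhood of $D$ exactly because $\int_0 g\,d(r^2)=\infty$. This holds since $g=1/(4\pi r^2)$ where $j=i$, and there $\phi(\mathfrak r)=C\mathfrak r^{-4\pi}$, which decays exponentially in $\log\mathfrak r$.

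The horizontal part is the $\C^*$-invariant lift of $J_D$. Hence $\chi^*J$ is $\mathfrak r$-independent, sends $Z$ to $-R_\alpha$, and is compatible with $d(\mathfrak r\lambda)$ on the end, as you intended. This also fixes your bookkeeping: as written, your map of $(1,\infty)$ onto $r^2\in(0,1)$ overlaps the existing identification $r^2=1-\mathfrak r$ on $\mathfrak r\in(0,1)$.
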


	 Due to our choice of $J_{\widehat{E}}$, every punctured Floer (continuation) cylinder ${u}_j:(\mathbb{R}\times S^1)\setminus\{z_i\}_{i\in I_j}\to \widehat{E}$ in $\mathbf{u}_2$ projects to a $J_D$-holomorphic curve $\pi\circ u_j$ in $D$. Let the asymptotic orbits $x_{j,\pm}:=\ev_\pm(u_j)$ have winding numbers $m_{j,\pm}$. Let $\gamma_1,\dots,\gamma_l$ be asymptotic Reeb orbits at the punctures of $u_j$, with  winding numbers $m_1,\dots,m_l$ around the zero section of $\widehat E$.   Since the asymptotic orbits of $u_j$ project to points in $D$, by the removal of singularity theorem, we can regard $\pi\circ u_j$ as a $J_D$-holomorphic sphere. By gluing the fiber capping disks of the asymptotic orbits, denoted by $\bar{x}_{j,\pm}^\mathrm{fib}$ and $\bar\gamma_i^\mathrm{fib}$, we have $[\bar{x}_{j,-}^\mathrm{fib}\#u_j\# (-\bar{x}_{j,+}^\mathrm{fib})\#\bar{\gamma}_1^\mathrm{fib}\#\dots \# \bar{\gamma}_l^\mathrm{fib}]\in \pi_2(\widehat{E})$, which projects to $[\pi\circ u_j]\in \pi_2(D)$. Here $-\bar{x}_{j,+}^\mathrm{fib}$ denotes $\bar{x}_{j,+}^\mathrm{fib}$ with reversed orientation. Therefore, 
	\begin{equation}\label{eq:winding}
	\begin{split}
	k\omega_D([\pi\circ u_j])&= k\omega([\bar{x}_-^\mathrm{fib}\#u_j\# (-\bar{x}_+^\mathrm{fib})\#\bar{\gamma}_1^\mathrm{fib}\#\dots \# \bar{\gamma}_l^\mathrm{fib}])	\\
	&= [\bar{x}_-^\mathrm{fib}\#u_j\# (-\bar{x}_+^\mathrm{fib})\#\bar{\gamma}_1^\mathrm{fib}\#\dots \# \bar{\gamma}_l^\mathrm{fib}]\cdot D\\
	&= m_{j,-}-m_{j,+}+(m_1+\dots + m_l) + u_j\cdot D.
	\end{split}
	\end{equation}
	
	\begin{definition}\label{def:pearl}
		 For $N\in\N\cup\{0\}$ and $p_\pm\in\crit f_D$, let $\mathcal{N}_N^*(p_-,p_+;\mathbf{A};J_D)$ denote the moduli space of simple chains of pearls connecting $p_-$ and $p_+$, where $\mathbf{A}=(A_1,\dots,A_N)\in(\pi_2(D)\setminus\{0\})^N$ for $N\geq1$. Namely, it consists of a tuple $(z_0,v_1,\dots,v_N,z_N)$ such that 
	\begin{enumerate}[(i)]
			\item $v_i:\mathbb{CP}^1\to D$ are simple $J_D$-holomorphic spheres representing $[v_i]=A_i$, 
			\item the image of $v_i$ is not contained in the image of $v_j$ for any $i\neq j$,
			\item $z_i$ are gradient flow lines of $f_D$ such that
		\begin{itemize}
		\item 	for $1\leq i\leq N-1$,  $z_i(0)=v_{i}(\infty)$ and $z_i(t_i)=v_{i+1}(0)$ for some $t_i>0$; 
		\item $\displaystyle \lim_{s\to -\infty}z_0(s)=p_-$ and  $z_0(t_0)=v_1(0)$ for some $t_0\in\R$;
		\item $\displaystyle \lim_{s\to \infty}z_N(s)=p_+$ and  $z_N(t_N)=v_N(\infty)$ for some $t_N\in\R$,
		\item when $N=0$, meaning the tuple contains only  $z_0$, $\displaystyle\lim_{s\to\pm\infty} z_0(s)=p_\pm$.
		\end{itemize}
		\end{enumerate}
	\end{definition}
	
	 For $N\geq 1$, this moduli space is cut out transversely for generic $J_D$  and has dimension 
	 \[
	 \dim \mathcal{N}_N^*(p_-,p_+;\mathbf{A};J_D)=\ind_{f_D}(p_+)-\ind_{f_D}(p_-)+2c_1^{TD}(\mathbf{A})+N-1,
	 \]
	 where $\displaystyle c_1^{TD}(\mathbf{A}):=\sum_{i=1}^N c_1^{TD}(A_i)$. 
	Since $\C^*$ acts freely on each $v_i$, we have 
	\begin{equation}\label{eq:2N}
	 \dim \mathcal{N}_N^*(p_-,p_+;\mathbf{A};J_D)\geq 2N.	
	\end{equation}
	For $N=0$, $\mathcal{N}_{N=0}^*(p_-,p_+;\mathbf{A}=\emptyset;J_D)=\mathcal{N}(p_-,p_+)$ consists of gradient flow lines of $f_D$ connecting $p_-$ and $p_+$. We retain $J_D$ in the notation for consistency although it is  irrelevant in this case.

	Let $(\mathbf{u}_1,\mathbf{u}_2)$ be the limit arising from either an element of ${\mathcal{M}}_m([x_-,\bar{x}_-],[x_+,\bar{x}_+],H_\mu,J_\eta)$ with $\mu([x_+,\bar{x}_+])-\mu([x_-,\bar{x}_-])=1$ or an element of 
	${\mathcal{M}}_m([x_-,\bar{x}_-],[x_+,\bar{x}_+],\widetilde{H},J_\eta)$ for a monotone homotopy $\widetilde{H}$ with $\mu([x_+,\bar{x}_+])=\mu([x_-,\bar{x}_-])$.
	 We write $p_\pm:=\pi(x_\pm(0))\in \crit f_D$. 
	Let $N\in\N\cup\{0\}$ be the number of punctured Floer (continuation) cylinders in $\mathbf{u}_2$ that project to nonconstant  $J_D$-holomorphic spheres. Equivalently, $m-N$ punctured Floer (continuation) cylinders in $\mathbf{u}_2$ are contained in fibers of $\widehat{E}$. The projection of $\mathbf{u}_2$ to $D$ induces an element in $\mathcal{N}_{N}^*(p_-,p_+;\mathbf{A};J_D)$, where $\mathbf{A}=(A_1,\dots,A_N)$ is a tuple of nonzero homotopy classes $[\pi\circ u_j]$. The index condition and the monotonicity condition imply that the projection indeed satisfies the properties (i) and (ii) in Definition \ref{def:pearl}, see \cite[Proposition 6.2]{DL19a} and \cite[Proposition 4.5]{BKK24}. Furthermore, $J_{\widehat{W}}$-holomorphic planes in $\mathbf{u}_1$ or the corresponding $J$-holomorphic spheres according to Lemma \ref{lem:remsing} are simple.

	\begin{remark}\label{rem:index_0_proj}
  		From \eqref{eq:2N}, we observe that if $\mathcal{N}_N^*(p_-,p_+;\mathbf{A};J_D)$ with $N\geq 1$ is nonempty, then 
  	\begin{equation*}\label{eq:index_base}
  	\ind_{f_D}(p_+)-\ind_{f_D}(p_-)+2c_1^{TD}(\mathbf{A})\geq N+1.  		
  	\end{equation*}  	
  Suppose that $\mathbf{u}_2$ projects to an element in $\mathcal{N}_N^*(p_-,p_+;\mathbf{A};J_D)$ satisfying $\ind_{f_D}(p_+)-\ind_{f_D}(p_-)+2c_1^{TD}(\mathbf{A})=0$. Then, $N=0$, $p_+=p_-$, and the projection is the constant trajectory at $p_+$. Thus, all punctured Floer cylinders in $\mathbf{u}_2$ are contained in the fiber $\widehat{E}_{p_+}$. We observe that if $u_j$ in $\mathbf{u}_2$ neither has punctures nor intersects $D$, then $m_{j,-}=m_{j,+}$ by \eqref{eq:winding}. 
	\end{remark}

  			 Below \eqref{eq:f_W}, we added a $C^2$-small perturbation $\epsilon f_W$ to $H_\mu$ to achieve the Morse property for type I orbits. As described in this section, when analyzing Floer (continuation) cylinders connecting orbits of types II, III, and IV, we may turn off this perturbation and assume $H_\mu$ remains constant on $M\setminus U_{\rho+\epsilon}$. This is possible because there is a sign-preserving bijection between $\overline{\mathcal{M}}([x_-,\bar{x}_-],[x_+,\bar{x}_+],H_\mu,J)$ and the moduli defined with a Hamiltonian sufficiently $C^2$-close to $H_\mu$ due to the openness of the surjectivity condition for the associated Fredholm operators. 
		We incorporate this with Floer continuation cylinders as follows. We fix acceleration data:~$(\{H_i\},\{J_i\})$ and homotopies $\widetilde{H}_i$ between $H_i$ and $H_{i+1}$. Then, we can take a sequence $\epsilon_i \searrow 0$ such that the following holds:
		\[
		\#\overline{\mathcal{M}}([x_-,\bar{x}_-],[x_+,\bar{x}_+],H_i,J_i)=\#\overline{\mathcal{M}}([x_-,\bar{x}_-],[x_+,\bar{x}_+],H_i+\epsilon_if_W,J_i)
		\]
		for all $[x_\pm,\bar{x}_\pm]$ of type II, III, or IV with $\mu([x_+,\bar{x}_+])-\mu([x_-,\bar{x}_-])=1$, and 
		\[
		\#\overline{\mathcal{M}}([x_-,\bar{x}_-],[x_+,\bar{x}_+],\widetilde{H}_i,{J}_i)=\#\overline{\mathcal{M}}([x_-,\bar{x}_-],[x_+,\bar{x}_+],\widetilde{H} _i+(\rho \epsilon_i+(1-\rho)\epsilon_{i+1}) f_W,{J}_i)
		\]
		for all $[x_\pm,\bar{x}_\pm]$ of type II, III, or IV with $\mu([x_+,\bar{x}_+])=\mu([x_-,\bar{x}_-])$, where $\rho:\R\to[0,1]$ is a smooth function such that $\rho'\leq 0$, $\rho(-1)=1$, and $\rho(1)=0$. Here, for distinction, we temporarily write $H_i$ for the unperturbed Hamiltonians and $H_i+\epsilon_i f_W$ for the perturbed ones.
		As defined at the beginning of this section, the moduli space associated with $H_i$ consists of flow lines with Floer cascades whose asymptotic limits do not include type I orbits. Likewise, the moduli spaces defined with $H_i+\epsilon_if_W$ cannot involve any type I orbits as these orbits are strictly of Morse type. The same observation applies to $\widetilde{H}_i$ and its perturbation. 
		
		 If $\{J_i\}$ is a sequence of sufficiently stretched almost complex structures, we also have 
		\[
		\#\overline{\mathcal{M}}([x_-,\bar{x}_-],[x_+,\bar{x}_+],{H}_{i+1},{J}_i)=\#\overline{\mathcal{M}}([x_-,\bar{x}_-],[x_+,\bar{x}_+],{H}_{i+1},{J}_{i+1})
		\]
		by the gluing theorem. Both counts equal the count of the limit configurations $(\mathbf{u}_1,\mathbf{u}_2)$ illustrated above. The specific one-ray we employ is given in \eqref{eq:one-ray}.
		
		In what follows, we again simply denote the perturbed Hamiltonians by $H_i$ (or $H_\mu$), with the convention that the perturbation term is turned off whenever neck-stretching is performed.

	\subsection{Restrictions on Floer continuation trajectories}
	
	\begin{prop}\label{prop:empty}
		Let $\widetilde{H}$ be a monotone homotopy between $H_{\mu_-}$ and $H_{\mu_+}$ as above, where $\mu_\pm$ satisfy 
		\begin{equation*}
			0\ll \mu_- < \mu_+ \le \mu_-+C
		\end{equation*}
		for some constant $C>1$. For $[x_\pm,\bar{x}_\pm]\in\widetilde{\mathcal{P}}^\mathrm{crit}(H_{\mu_\pm})$ with the same index, the moduli space ${\mathcal{M}}([x_-,\bar{x}_-],[x_+,\bar{x}_+],\widetilde{H},J)$ is empty in the following cases.		
		\begin{enumerate}
			\item[(a)] $[x_-,\bar{x}_-] \in \widetilde{\mathcal{P}}_{\I}(H_{\mu_-})$, $[x_+,\bar{x}_+]\in\widetilde{\mathcal{P}}_{\III}^\crit(H_{\mu_+})$;
			\item[(b)] $[x_-,\bar{x}_-] \in \widetilde{\mathcal{P}}_{\I}(H_{\mu_-})$, $[x_+,\bar{x}_+]\in \widetilde{\mathcal{P}}_{\IV}^\crit(H_{\mu_+})$.
		\end{enumerate}
		The same statement holds for the following cases when $J=J_\eta$ is sufficiently stretched.
		\begin{enumerate}
			\item[(c)] $[x_-,\bar{x}_-] \in \widetilde{\mathcal{P}}_{\III}^\crit(H_{\mu_-})$, $[x_+,\bar{x}_+]\in \widetilde{\mathcal{P}}_{\II}^\crit(H_{\mu_+})$;
			\item[(d)] $[x_-,\bar{x}_-] \in \widetilde{\mathcal{P}}_{\II}^\crit(H_{\mu_-})$, $[x_+,\bar{x}_+]\in \widetilde{\mathcal{P}}_{\IV}^\crit(H_{\mu_+})$;
			\item[(e)] $[x_-,\bar{x}_-] \in \widetilde{\mathcal{P}}_{\III}^\crit(H_{\mu_-})$, $[x_+,\bar{x}_+]\in \widetilde{\mathcal{P}}_{\IV}^\crit(H_{\mu_+})$;
			\item[(f)] $[x_-,\bar{x}_-] \in \widetilde{\mathcal{P}}_{\IV}^\crit(H_{\mu_-})$, $[x_+,\bar{x}_+]\in \widetilde{\mathcal{P}}_{\IV}^\crit(H_{\mu_+})$ provided $\mu_+\geq \mu_-+1$.	
		\end{enumerate}
	\end{prop}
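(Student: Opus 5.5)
For each of the six cases the plan is the same: assume the continuation moduli space is nonempty, and derive a contradiction from either the action filtration or, after neck-stretching, a winding-number and intersection count.

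\textbf{Cases (a) and (b).} These use only action and index. Take $[x_-,\bar x_-]=[\mu_-,\I,q,A]$. Then by \eqref{eq:action_index}
\[
\mathcal{A}_{H_{\mu_-}}\approx\omega(A)+\tfrac{\mu_-}{k}\rho,\qquad \mu=2\tau\omega(A)+\ind_{f_W}(q)-n .
\]
A continuation trajectory increases action, because $\partial_s\widetilde H\geq 0$. So the capped target orbit would need action at least $\omega(A)+\frac{\mu_-}{k}\rho$ up to small errors.

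In case (a), equal indices force
\[
2m+2\tau\omega(A')+O(n)=2\tau\omega(A)+O(n),
\]
so $\omega(A')=\omega(A)-\frac{m}{\tau}+O(1)$. The action of the type III orbit is then
\[
\frac{m}{k}\rho+\omega(A')=\omega(A)+m\Big(\frac{\rho}{k}-\frac1\tau\Big)+O(1).
\]
Since $m<\mu_+\leq\mu_-+C$ and $m>-\frac{\rho}{\epsilon}\mu_+$, this should stay below $\omega(A)+\frac{\mu_-}{k}\rho$ when $\mu_-\gg0$. There is a subtle point for negative $m$ when $\rho>\rho_\crit$. If the action comparison alone fails there, I would add a maximum-principle argument: Lemma \ref{lem:mp} applied on the region $r^2\geq\rho+\epsilon$ should trap the cylinder, together with positivity of intersections with $D$ (Lemma \ref{lem:poi}).

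In case (b), the type IV index contains $2\lfloor\mu_+\rfloor$. Equal indices force $\omega(A')\approx\omega(A)-\frac{\mu_+}{\tau}$. The action is then about $\omega(A)-\frac{\mu_+}{\tau}+\frac{\mu_+}{k}\rho$. This is less than $\omega(A)+\frac{\mu_-}{k}\rho$ once $\frac{(\mu_+-\mu_-)\rho}{k}<\frac{\mu_+}{\tau}$, which holds since $\mu_+-\mu_-\leq C$ and $\mu_-\gg0$.

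\textbf{Cases (c)--(f).} These use neck-stretching. Stretch $J=J_\eta$ and pass to the limit configuration $(\mathbf u_1,\mathbf u_2)$. Equal indices, combined with \eqref{eq:winding} and the monotonicity relation $c_1^{TD}=(\tau-k)\omega_D$ on spheres, should force the projected pearl chain to have
\[
\ind_{f_D}(p_+)-\ind_{f_D}(p_-)+2c_1^{TD}(\mathbf A)\leq 0.
\]
Each plane in $\mathbf u_1$ contributes $2c_1^{TM}$ minus its multiplicity term, and this contribution is nonnegative by index positivity. By Remark \ref{rem:index_0_proj}, everything then lies in one fiber and carries no punctures.

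The fiber lemmas finish the cases:
\begin{itemize}
\item In case (c), Lemma \ref{lem:addR2} applies: the index matching pins $x_-$ to the maximum and $x_+$ to the minimum. Intermediate cascades are excluded as in that proof.
\item In case (d), Lemma \ref{lem:conti_empty} applies.
\item In case (e), the target is the constant orbit at the origin. Lemma \ref{lem:iaa}(b) gives intersection at least $\lfloor\mu_+\rfloor+1$ with $D$. Meanwhile \eqref{eq:winding} bounds $u\cdot D$ by $m_--m_+<\mu_-$, so the two are incompatible.
\item In case (f), both ends are in $D$. Lemma \ref{lem:iaa} gives a total intersection of at least $\lfloor\mu_+\rfloor+1-\lfloor\mu_-\rfloor$. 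Because $\mu_+\geq\mu_-+1$, this is at least $2$. The equal-index condition instead forces a total of $0$ or $1$, with the discrepancy absorbed by $c_1(A)$ differences. This gives the contradiction.
\end{itemize}

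\textbf{Main obstacle.} I expect the hardest step to be the index bookkeeping in (c)--(f). It must show that equal CZ index forces the pearl-chain index to be at most zero, with careful accounting of punctures (via Lemma \ref{lem:remsing}) and of intersections with $D$. Only then can Remark \ref{rem:index_0_proj} reduce everything to the fiber computations.
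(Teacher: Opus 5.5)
You handle (b) correctly, and the neck-stretching plan for (c) and (d) is the right one. Cases (a), (e) and (f), however, contain genuine gaps.

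\textbf{Case (a).} You have the delicate regime backwards. Normalize the type I capping to $0$. Equal indices then give $\tau\omega(A)\le n-m$ for the class $A$ of the type III end, so the target action is at most $m\big(\frac{\rho}{k}-\frac{1}{\tau}\big)+\frac{n}{\tau}$.

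For $\rho\ge\rho_\crit$ this bound is harmless. It is $\le\frac n\tau$ when $m<0$, and for $0\le m<\mu_+\le\mu_-+C$ it lies below $\frac{\mu_-}{k}\rho$ once $\mu_-\gg0$.

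For $\rho<\rho_\crit$ and $m<0$, the bound equals $|m|\big(\frac1\tau-\frac\rho k\big)+\frac n\tau$, with $|m|$ ranging up to $\frac{\rho}{\epsilon}\mu_+$. This exceeds $\frac{\mu_-}{k}\rho$, for example whenever $\epsilon<\rho_\crit-\rho$. So your claim that the action ``should stay below'' fails exactly in this regime.

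The missing step is a quantitative use of Lemma~\ref{lem:poi}. Since $A=[u\#(-\bar x_+^{\mathrm{fib}})]$ and $\bar x_+^{\mathrm{fib}}\cdot D=m$, you get $k\omega(A)\ge-m$. Together with $\tau\omega(A)\le n-m$ this gives $\omega(A)\le\frac{n}{\tau-k}$. The action inequality then becomes $(\mu_--n)\rho<k\omega(A)\big(1-\frac{\rho}{\rho_\crit}\big)$, which is impossible for $\mu_-\gg0$. No maximum principle is needed.

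\textbf{Cases (e) and (f).} You apply Lemma~\ref{lem:iaa} with the ends swapped. The bound $\lfloor\mu\rfloor+1$ of part (b) is for a type IV orbit at the \emph{negative} end. A type IV orbit at the positive end only contributes $\ge-\lfloor\mu_+\rfloor$. Hence in (e) you only get $k\omega(A)\ge m_--\lfloor\mu_+\rfloor$, and in (f) the total intersection is $\ge\lfloor\mu_-\rfloor+1-\lfloor\mu_+\rfloor\le 0$, not $\ge2$. Neither is a contradiction.

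What closes these cases is the step you defer as the ``main obstacle''. Splitting $A$ along the neck gives $c_1^{TM}(A)=k\omega(A)+(\tau-k)\omega(\bar{\mathbf u}_1)+(\tau-k)\omega_D([\pi\circ\mathbf u_2])$. The index difference is then the sum of four terms:
\begin{itemize}
\item the pearl index, which is $\ge0$;
\item twice the correctly oriented intersection quantity, e.g.\ $\lfloor\mu_+\rfloor-m_-+k\omega(A)\ge0$ in (e);
\item $2(\tau-k)\omega(\bar{\mathbf u}_1)\ge0$;
\item the Morse--Bott correction.
\end{itemize}
In (e) the last term is $1-e\ge1$. The $+1$ comes from the type IV index $2\lfloor\mu_+\rfloor+1$, and $e\in\{0,-1\}$ comes from the convexity shift at type III orbits.

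In (f) the intersection term $2(\lfloor\mu_+\rfloor-\lfloor\mu_-\rfloor+k\omega(A))$ is $\ge2$. This follows from the correct bound $k\omega(A)\ge\lfloor\mu_-\rfloor+1-\lfloor\mu_+\rfloor$, or from $A=0$ and $\lfloor\mu_+\rfloor>\lfloor\mu_-\rfloor$ if $\mathbf u$ contains no nontrivial cylinder.

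Either way the sum is strictly positive, which contradicts equal indices directly, with no reduction to a fiber.

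\textbf{Cases (c) and (d).} The same identity forces all four terms to vanish. In (c) it is $e_+=e_-$ that pins $x_-$ to $\hat p$ and $x_+$ to $\check p$, and $A=0$ gives equal winding. After that, Lemmas~\ref{lem:addR2} and \ref{lem:conti_empty} (with Lemmas~\ref{lem:mp} and \ref{lem:ab} for the Floer cylinders in the cascade) finish as you indicate.
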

	\begin{proof}
		Let $\mathbf{u}=(c_0,u_1,c_1, u_2, \cdots, u_m, c_m)\in{\mathcal{M}}([x_-,\bar{x}_-],[x_+,\bar{x}_+],\widetilde{H},{J})$.
		By Remark \ref{rmk:zerocap}, there is no loss of generality in assuming the capping of $x_-$ is a fiber disk. 
		\medskip

			\noindent Proof of (a). As before, we write $[x_-,\bar{x}_-]=[\mu_-,\I,q_-,0]$ and $[x_+,\bar{x}_+]=[\mu_+,\III,q_+,m,A]$. By \eqref{eq:action_index}, we have 
			\[
				\mathcal{A}_{H_{\mu_-}}([x_-,\bar{x}_-])\approx\frac{\mu_-}{k}\rho,\qquad 
				\mathcal{A}_{H_{\mu_+}}([x_+,\bar{x}_+])\approx \frac{m}{k}\rho+\omega(A).
			\]
			and
			\[
				\mu([x_-,\bar{x}_-])\in[-n,n],\qquad 
				\mu([x_+,\bar{x}_+])\in[-n+1,n]+2m+2c_1^{TM}(A)
			\]
			From $\mu([x_-,\bar{x}_-])=\mu([x_+,\bar{x}_+])$, we deduce 
			\begin{equation}\label{eq:m-n}
			c_1^{TM}(A)	= \tau \omega(A) \leq n-m.
			\end{equation}
			Using this and $\mathcal{A}_{H_{\mu_-}}([x_-,\bar{x}_-])<\mathcal{A}_{H_{\mu_+}}([x_+,\bar{x}_+])$, we estimate 
			\begin{equation}\label{eq:mnrho}
			 \frac{\mu_-}{k}\rho < \frac{m}{k}\rho + \omega(A) \leq  \frac{m}{k}\rho+\frac{n-m}{\tau}.
			\end{equation}
			This also implies
			\begin{equation}\label{eq:mrho}
			(\mu_+-C)\rho \leq \mu_-\rho\leq m\rho+\rho_\crit(n-m).
			\end{equation}
			Case 1) Suppose $\rho\geq \rho_\crit =\frac{k}{\tau}$. In this case, using $m<\mu_+$ (see \eqref{eq:abbrev}) and \eqref{eq:mrho},
			\[
			\mu_+\rho_\crit =\mu_+\rho- \mu_+(\rho-\rho_\crit )\leq \mu_+\rho- m(\rho-\rho_\crit )\leq C\rho +n\rho_\crit 
			\]
			This yields a contradiction for all sufficiently large $\mu_+$. 
			
			\medskip
			
			\noindent Case 2) Suppose $\rho<\rho_\crit$. By the asymptotic condition of $u$, we have $A=[u\#(-\bar{x}_+^\mathrm{fib})]$, where $-\bar{x}_+^\mathrm{fib}$ denotes the fiber capping disk $\bar{x}_+^\mathrm{fib}$ of ${x}_+$ with reversed orientation. Since $\bar{x}_+^\mathrm{fib}\cdot D=m$ and $u\cdot D\geq 0$ by Lemma \ref{lem:poi}, we have
			\[
			k\omega(A)=k\omega([u\#(-\bar{x}_+^\mathrm{fib})])=D\cdot (u\#(-\bar{x}_+^\mathrm{fib}))\geq -m.
			\] 
			Combining this and \eqref{eq:m-n}, we obtain  $\tau\omega(A)\leq n-m \leq n+k\omega(A)$ and consequently 
			\begin{equation}\label{eq:contra1}
			\omega(A) \leq \frac{n}{\tau-k}.	
			\end{equation}
			On the other hand, by \eqref{eq:mnrho} and \eqref{eq:m-n},
			\[
			\mu_-\rho < m\rho+k\omega(A) \leq (n-\tau\omega(A))\rho +k\omega(A).
			\]
			This yields 
			\[
			(\mu_--n)\rho\leq k\omega(A)\Big(1-\frac{\rho}{\rho_\crit}\Big)			
			\]
			which contradicts \eqref{eq:contra1} for sufficiently large $\mu_->0$.
			
			\medskip
			
			\noindent Proof of (b). 
			Let $[x_-,\bar{x}_-]=[\mu_-,\I,q_-,0]$ and $[x_+,\bar{x}_+]=[\mu_+,\IV,q_+,A]$. By \eqref{eq:action_index}, we have 
			\[
				\mathcal{A}_{H_{\mu_-}}([x_-,\bar{x}_-])\approx\frac{\mu_-}{k}\rho,\qquad 
				\mathcal{A}_{H_{\mu_+}}([x_+,\bar{x}_+])\approx \frac{\mu_+}{k}\rho+\omega(A).
			\]
			and
			\[
				\mu([x_-,\bar{x}_-])\in[-n,n],\qquad 
				\mu([x_+,\bar{x}_+])\in [-n+2,n]+2\lfloor\mu_+\rfloor+2c_1^{TM}(A)
			\]
			From $\mu([x_-,\bar{x}_-])=\mu([x_+,\bar{x}_+])$, we deduce 
			\begin{equation}\label{eq:tauomegaA}
			\tau\omega(A)=c_1^{TM}(A)\leq -\lfloor\mu_+\rfloor+n-1.
			\end{equation}
			Also from $\mathcal{A}_{H_{\mu_-}}([x_-,\bar{x}_-])< \mathcal{A}_{H_{\mu_+}}([x_+,\bar{x}_+])$, we obtain
			\[
			-\frac{C}{k}\rho \leq \frac{\mu_--\mu_+}{k}\rho <\omega(A).
			\]
			This contradicts \eqref{eq:tauomegaA} for sufficiently large $\mu_+>0$.
			
			\medskip

			\noindent Proof of (c). For $[x_-,\bar{x}_-]=[\mu_-,\III,q_-,m_-,0]$ and $[x_+,\bar{x}_+]=[\mu_+,\II,q_+,m_+,A]$, we have
			\begin{equation}\label{eq:iii_index}
			\begin{split}
			\mu([x_-,\bar{x}_-])&=2m_-+\ind_{f_{\partial E}}(q_-)-(n-1)-1\\[.5ex]
				&=2m_-+\ind_{f_D}(\pi(q_-))-(n-1)+e_-\\[.5ex]
				\mu([x_+,\bar{x}_+])&=2m_++2c_1^{TM}(A)+\ind_{f_{\partial E}}(q_+)-(n-1)\\
				&=2m_++2c_1^{TM}(A)+\ind_{f_{D}}(\pi(q_+))-(n-1)+e_+	
			\end{split}
			\end{equation}
			where $e_-\in\{0,-1\}$ and $e_+\in\{0,1\}$. Since, $A=[\bar{x}_-^\mathrm{fib} \# \mathbf{u}\# (-\bar{x}_+^\mathrm{fib})]$, Lemma \ref{lem:poi} yields
			\begin{equation}\label{eq:int}
				k\omega(A)=A\cdot[D]\ge m_--m_+.
			\end{equation}
			Now, we perform neck-stretching as described in Section \ref{sec:neckstr}, so that $\mathbf{u}$ breaks into a pair of curves $\mathbf{u}_1$ in $\widehat{W}$ and $\mathbf{u}_2$ in $\widehat{E}$, with the possibility  that $\mathbf{u}_1=\emptyset$. Let $\bar{\mathbf{u}}_1$ be the set of ${J}$-holomorphic spheres in $M$ corresponding to $\mathbf{u}_1$ as explained in Lemma \ref{lem:remsing}. As described below Definition \ref{def:pearl}, the projection of $\mathbf{u}_2$ to $D$ yields an element in  $\mathcal{N}_N^*(p_-,p_+;[\pi\circ\mathbf{u}_2];J_D)$, where $p_\pm:=\pi(q_\pm)$ and $[\pi\circ\mathbf{u}_2]\in(\pi_2(D)\setminus\{0\})^N$ consists of the homotopy classes represented by $J_D$-holomorphic spheres $\pi\circ u_j$. Recall from Remark \ref{rem:index_0_proj} that 			
			\begin{equation*}
				\ind_{f_D}(p_+)-\ind_{f_D}(p_-)+2c_1^{TD}([\pi\circ\mathbf{u}_2])\geq0.
			\end{equation*}
			Moreover from $A=[\bar{x}_-^\mathrm{fib}\#\mathbf{u}_1\#\mathbf{u}_2\#(-\bar{x}_+^\mathrm{fib})]$, we have 
			\begin{equation}\label{eq:class_A}
				\begin{split}
				\omega(A)
				&=\omega(\bar{x}_-^\mathrm{fib}\#\mathbf{u}_1\#\mathbf{u}_2\#(-\bar{x}_+^\mathrm{fib}))\\
				&=\omega(\bar{\mathbf{u}}_1)+\omega(\bar{x}_-^\mathrm{fib}\#\mathbf{u}_2\#(-\bar{x}_+^\mathrm{fib})\#\bar{\gamma}_1^\mathrm{fib}\#\dots \# \bar{\gamma}_l^\mathrm{fib})\\
				&=\omega(\bar{\mathbf{u}}_1)+\omega_D([\pi\circ\mathbf{u}_2]),
			\end{split}
			\end{equation}	
			where $\gamma_1,\dots,\gamma_l$ are the common asymptotic Reeb orbits of $\mathbf{u}_1$ and $\mathbf{u}_2$. Thus,
			\[
			\begin{split}
				c_1^{TM}(A)&=k\omega(A)+(\tau-k)\omega(A)\\
				&=k\omega(A) +(\tau-k)\omega(\bar{\mathbf{u}}_1)+ (\tau-k)\omega_D([\pi\circ\mathbf{u}_2]).
			\end{split}
			\]			
			Combining these with \eqref{eq:iii_index} and \eqref{eq:int}, we estimate 
			\begin{align*}
				0&=\mu([x_+,\bar{x}_+])-\mu([x_-,\bar{x}_-])\\
				&=\ind_{f_{D}}(p_+)-\ind_{f_D}(p_-)	+2c_1^{TM}(A) 
				+2(m_+-m_-) +(e_+-e_-)\\
				&=\underbrace{(\ind_{f_{D}}(p_+)-\ind_{f_D}(p_-)	+2c_1^{TD}([\pi\circ\mathbf{u}_2]) }_{\geq 0}
				+\underbrace{2(m_+-m_-+k\omega(A))}_{\geq 0} \\
				&\quad +\underbrace{2(\tau-k)\omega(\bar{\mathbf{u}}_1)}_{\geq0} +\underbrace{(e_+-e_-)}_{\geq 0}.
			\end{align*}
			Therefore, all four terms in the last line vanish. Since $\mathbf{u}_1=\emptyset$, all Floer (continuation) cylinders in $\mathbf{u}_2$ have no punctures. By Remark \ref{rem:index_0_proj}, $\mathbf{u}_2$ is entirely contained in the fiber $\widehat{E}_{p_+}$ over $p_+=p_-$.
			Moreover,  $A=[\bar{x}_-^\mathrm{fib}\#\mathbf{u}_2\#(-\bar{x}_+^\mathrm{fib})]=0$ as $\bar{x}_-^\mathrm{fib}\#\mathbf{u}_2\#(-\bar{x}_+^\mathrm{fib})\subset \widehat{E}_{p_+}$, and thus $m_+=m_-$. However, such $\mathbf{u}_2$ does not exist due to Lemma \ref{lem:addR2}.

			\medskip
			
			\noindent Proof of (d). Let $[x_-,\bar{x}_-]=[\mu_-,\II,q_-,m_-,0]$ and $[x_+,\bar{x}_+]=[\mu_+,\IV,q_+,A]$. Then, we have
			\begin{equation*}
				\begin{split}
					\mu([x_-,\bar{x}_-])&=2m_-+\ind_{f_D}(\pi(q_-))-(n-1)+e\\[.5ex]
					\mu([x_+,\bar{x}_+])&=2\lfloor\mu_+\rfloor+1+\ind_{f_D}(q_+)-(n-1) +2c_1^{TM}(A)
				\end{split}
			\end{equation*}
			where $e\in\{0,1\}$. Since, $A=[\bar{x}_-^\mathrm{fib} \# \mathbf{u}]$, Lemmas \ref{lem:poi} and \ref{lem:iaa} yield
			\begin{equation*}
				k\omega(A)=A\cdot[D]\ge m_--\lfloor\mu_+\rfloor.
			\end{equation*}
			We perform neck-stretching and obtain a pair of curves $(\mathbf{u}_1,\mathbf{u}_2)$ corresponding to $\mathbf{u}$. We argue as in the proof of (c) and adopt the same notation. Using the above inequalities, we estimate
			\begin{align*}
				0&=\mu([x_+,\bar{x}_+])-\mu([x_-,\bar{x}_-])\\
				&=\ind_{f_{D}}(q_+)-\ind_{f_D}(\pi(q_-))	+2c_1^{TM}(A) 
				+2(\lfloor\mu_+\rfloor -m_-) +1-e\\
				&=\underbrace{(\ind_{f_{D}}(q_+)-\ind_{f_D}(\pi(q_-))	+2c_1^{TD}([\pi\circ\mathbf{u}_2]) }_{\geq 0}
				+\underbrace{2(\lfloor\mu_+\rfloor-m_-+k\omega(A))}_{\geq 0} \\
				&\quad +\underbrace{2(\tau-k)\omega(\bar{\mathbf{u}}_1)}_{\geq0} +\underbrace{1-e}_{\geq 0}.
			\end{align*}
			Therefore, all four terms in the last line vanish. In particular, $\mathbf{u}_1=\emptyset$, and  $\pi\circ \mathbf{u}_2$ is constant at $q_+=\pi(q_-)$. Thus, $\mathbf{u}_2$ is contained in the fiber $\widehat{E}_{q_+}$. By Lemmas \ref{lem:mp}, \ref{lem:ab}, and \ref{lem:conti_empty}, there is no Floer (continuation) cylinder in $\widehat{E}_{q_+}$ asymptotic to a type II orbit at the negative end and a type III or IV orbit at the positive end. Thus, such a configuration $\mathbf{u}_2$ does not exist.
			
			\medskip
			
			\noindent Proof of (e). 
			 Let $[x_-,\bar{x}_-]=[\mu_-,\III,q_-,m_-,0]$ and $[x_+,\bar{x}_+]=[\mu_+,\IV,q_+,A]$. Then, we have
			 \begin{equation*}
				\begin{split}
					\mu([x_-,\bar{x}_-])&=2m_-+\ind_{f_D}(\pi(q_-))-(n-1)+e\\[.5ex]
					\mu([x_+,\bar{x}_+])&=2\lfloor\mu_+\rfloor+1+\ind_{f_D}(q_+)-(n-1) +2c_1^{TM}(A)
				\end{split}
			\end{equation*}
			where $e\in\{0,-1\}$. Since $A=[\bar{x}_-^\mathrm{fib}\#\mathbf{u}]$, and Lemmas \ref{lem:poi} and \ref{lem:iaa} yield
			\begin{equation*}
					k\omega(A)=A\cdot[D]\ge m_--\lfloor\mu_+\rfloor.
			\end{equation*}
			As above, we perform neck-stretching and obtain $(\mathbf{u}_1,\mathbf{u}_2)$ corresponding to $\mathbf{u}$. 
			The following estimate yields a contradiction.
			\begin{align*}
				0&=\mu([x_+,\bar{x}_+])-\mu([x_-,\bar{x}_-])\\
				&=\ind_{f_{D}}(q_+)-\ind_{f_D}(\pi(q_-))	+2c_1^{TM}(A) 
				+2(\lfloor\mu_+\rfloor -m_-) +1-e\\
				&=\underbrace{(\ind_{f_{D}}(q_+)-\ind_{f_D}(\pi(q_-))	+2c_1^{TD}([\pi\circ\mathbf{u}_2]) }_{\geq 0}
				+\underbrace{2(\lfloor\mu_+\rfloor-m_-+k\omega(A))}_{\geq 0} \\
				&\quad +\underbrace{2(\tau-k)\omega(\bar{\mathbf{u}}_1)}_{\geq0} +\underbrace{1-e}_{\geq 1}.
			\end{align*}
		 			
			\medskip
			
			\noindent Proof of (f).
			Let $[x_-,\bar{x}_-]=[\mu_-,\IV,q_-,0]$ and $[x_+,\bar{x}_+]=[\mu_+,\IV,q_+,A]$. Then, we have
			\begin{equation*}
				\begin{split}
					\mu([x_-,\bar{x}_-])&=2\lfloor\mu_-\rfloor+1+\ind_{f_D}(q_-)-(n-1),\\[.5ex]
					\mu([x_+,\bar{x}_+])&=2\lfloor\mu_+\rfloor+1+\ind_{f_D}(q_+)-(n-1) +2c_1^{TM}(A).
				\end{split}
			\end{equation*}
			We claim that $\lfloor\mu_+\rfloor-\lfloor\mu_-\rfloor+k\omega(A)\geq 1$.  If $\mathbf{u}$ has no nontrivial Floer (continuation) cylinder, then $A=0$, and the claim follows since $\lfloor\mu_+\rfloor-\lfloor\mu_-\rfloor\geq 1$.
			 If $\mathbf{u}$ contains a nontrivial Floer (continuation) cylinder, then, by Lemmas \ref{lem:poi} and \ref{lem:iaa},
			\begin{equation*}
			k\omega(A)=A\cdot[D]=[\mathbf{u}]\cdot[D]\ge \lfloor\mu_-\rfloor+1-\lfloor\mu_+\rfloor.
			\end{equation*}
		As before, let $(\mathbf{u}_1,\mathbf{u}_2)$ denote the limit of $\mathbf{u}$ obtained via neck-stretching. Then, the following estimate yields a contradiction. 
			\begin{align*}
				0&=\mu([x_+,\bar{x}_+])-\mu([x_-,\bar{x}_-])\\
				&=\ind_{f_{D}}(q_+)-\ind_{f_D}(q_-)	+2c_1^{TM}(A) 
				+2(\lfloor\mu_+\rfloor - \lfloor\mu_-\rfloor) \\
				&=\underbrace{(\ind_{f_{D}}(q_+)-\ind_{f_D}(q_-)	+2c_1^{TD}([\pi\circ\mathbf{u}_2]) }_{\geq 0}
				+\underbrace{2(\lfloor\mu_+\rfloor-\lfloor\mu_-\rfloor+k\omega(A))}_{\geq 2} \\
				&\quad +\underbrace{2(\tau-k)\omega(\bar{\mathbf{u}}_1)}_{\geq0}.
			\end{align*}
	\end{proof}

	\begin{prop}\label{prop:res}
		There exist constants $C_1,C_2$ such that the following hold. 
		Let $\widetilde{H}$ be a monotone homotopy between $H_{\mu_-}$ and $H_{\mu_+}$ with $\mu_-<\mu_+$. Assume that there exists $\mathbf{u}\in{\mathcal{M}}([x_-,\bar{x}_-],[x_+,\bar{x}_+],\widetilde{H},J )$ for $[x_\pm,\bar{x}_\pm]\in\widetilde{\mathcal P}^\crit(H_{\mu_\pm})$ with same index.
		\begin{enumerate}[(a)]
			\item For every $[x_-,\bar{x}_-]=[\mu_-,\I,q_-,A_-]$ and $[x_+,\bar{x}_+]=[\mu_+,\II,q_+,m_+,A_+]$, $m_+\ge C_1$ holds;
			\item Assume $J=J_\eta$ is sufficiently stretched. For every $[x_-,\bar{x}_-]=[\mu_-,\II,q_-,m_-,A_-]$ and $[x_+,\bar{x}_+]=[\mu_+,\III,q_+,m_+,A_+]$, $m_-\le -\frac{\rho}{\epsilon}\mu_-+C_2$ holds.
		\end{enumerate}
	\end{prop}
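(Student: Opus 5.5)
For (a) I would use only the index and intersection positivity, with no neck-stretching. For (b) I would combine the neck-stretching index decomposition from the proof of Proposition \ref{prop:empty}(c) with the action inequality. By Remark \ref{rmk:zerocap} we may take $A_-=0$ and write $A:=A_+$.

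\emph{Proof of (a).} Let $\mathbf{u}$ run from $[\mu_-,\I,q_-,0]$ to $[\mu_+,\II,q_+,m_+,A]$, with $m_+<-1$ by \eqref{eq:abbrev}.
\begin{enumerate}[(1)]
\item By \eqref{eq:action_index}, $\mu([x_-,\bar{x}_-])\in[-n,n]$ and $\mu([x_+,\bar x_+])\in 2m_++2c_1^{TM}(A)+[-(n-1),n]$. Equality of indices gives $\tau\omega(A)=c_1^{TM}(A)\le n-m_+$ up to an additive constant depending only on $n$.
\item The capping relation gives $A=[\mathbf{u}\#(-\bar x_+^{\mathrm{fib}})]$. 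Lemma \ref{lem:poi} gives $\mathbf{u}\cdot D\ge 0$, and $\bar x_+^{\mathrm{fib}}\cdot D=m_+$. Hence $k\omega(A)=A\cdot D\ge -m_+$.
\item Eliminating $\omega(A)$ gives $-\tfrac{\tau}{k}m_+\le \tau\omega(A)\le n-m_++\mathrm{const}$, so $-m_+(\tfrac{\tau}{k}-1)\le n+\mathrm{const}$.
\item Since $\tau>k$, this yields $m_+\ge -\frac{k(n+\mathrm{const})}{\tau-k}=:C_1$, independent of $\mu_\pm$.
\end{enumerate}
Lemma \ref{lem:poi} applies to each Floer (continuation) cylinder in the cascade, and the gradient pieces do not change intersection numbers, so step (2) holds for the whole of $\mathbf{u}$.

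\emph{Proof of (b), index step.} Let $\mathbf{u}$ run from $[\mu_-,\II,q_-,m_-,0]$ to $[\mu_+,\III,q_+,m_+,A]$, and neck-stretch as in Section \ref{sec:neckstr} to obtain $(\mathbf{u}_1,\mathbf{u}_2)$. Following \eqref{eq:iii_index}, \eqref{eq:int} and \eqref{eq:class_A} with the roles of types II and III swapped, the index equality becomes
\[
0=\big(\ind_{f_D}(p_+)-\ind_{f_D}(p_-)+2c_1^{TD}([\pi\circ\mathbf{u}_2])\big)+2(m_+-m_-+k\omega(A))+2(\tau-k)\omega(\bar{\mathbf u}_1)+(e_+-e_-),
\]
where now $e_-\in\{0,1\}$ and $e_+\in\{0,-1\}$. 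The first three terms are nonnegative by Remark \ref{rem:index_0_proj}, Lemma \ref{lem:poi} and positivity of energy of holomorphic spheres. Since $e_+-e_-\ge -2$, each of the three terms is at most $2$. Consequently:
\begin{itemize}
\item $\omega(\bar{\mathbf u}_1)$ is bounded above.
\item $c_1^{TD}=(\tau-k)\omega_D$ on $\pi_2(D)$ by adjunction, and the $f_D$-index difference is bounded. Hence $\omega_D([\pi\circ\mathbf u_2])\ge 0$ is bounded above.
\end{itemize}
By \eqref{eq:class_A}, this gives $0\le\omega(A)\le C$ with $C$ independent of $\mu_\pm$. Together with the bound on the second term, $m_+\le m_-+1-k\omega(A)\le m_-+1$.

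\emph{Proof of (b), action step.} The continuation map increases action. By \eqref{eq:action_index},
\[
\tfrac{m_-}{k}(\rho+\epsilon)+\tfrac{\mu_-}{k}\rho\lesssim \tfrac{m_+}{k}\rho+\omega(A)\le \tfrac{m_-+1}{k}\rho+C .
\]
Cancelling $\tfrac{m_-}{k}\rho$ gives $\epsilon m_-\le -\rho\mu_-+\rho+kC$, that is, $m_-\le-\tfrac{\rho}{\epsilon}\mu_-+C_2$.

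The step I expect to need the most care is bookkeeping rather than a new idea. The ``$\approx$'' in the action formulas (actual radii $r^2(x)$ near $\rho$ or $\rho+\epsilon$, and the small perturbation $\epsilon f_W$) must contribute errors bounded uniformly in $\mu$, even though $|m_-|$ can be of order $\mu/\epsilon$. I would handle this by choosing the smoothing of $h^{\mathrm{lin}}_\mu$ so that type II orbits sit at $r^2\in[\rho+\epsilon-\delta/\mu,\rho+\epsilon]$. Then $\tfrac{m_-}{k}r^2(x_-)$ differs from $\tfrac{m_-}{k}(\rho+\epsilon)$ by a bounded amount, and similarly for type III. The same uniformity is needed when verifying that the cascade and neck-stretching decomposition is compatible with the continuation homotopy.
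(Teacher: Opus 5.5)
Your argument is correct, and part (a) is exactly the paper's. In part (b) you follow the same scheme: the neck-stretching index decomposition, then the action inequality.

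You do skip one step the paper takes. The paper first proves that $2(m_+-m_-+k\omega(A))=0$. It argues that if this term equalled $2$, then $\mathbf u_1=\emptyset$ and $\mathbf u_2$ would lie in a single fiber, running from a type II to a type III orbit, which Lemmas \ref{lem:mp}, \ref{lem:ab} and \ref{lem:conti_empty} exclude. That gives the exact relation $m_+-m_-=-k\omega(A)$, hence $\frac{\rho}{k}(m_+-m_-)+\omega(A)=(1-\rho)\omega(A)\le (1-\rho)\frac{n+1}{\tau-k}$.

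You only use that this term is at most $2$, together with $\omega(A)\ge 0$. This gives $m_+\le m_-+1$ and adds an extra $\rho/\epsilon$ to $C_2$. That is harmless: $C_2$ only has to be independent of $\mu_\pm$, and the paper's constant depends on $\epsilon$ as well. Your route also removes the need for the fiberwise exclusion argument.

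Your remark on making the $\approx$ in the action formulas uniform in $\mu$ addresses something the paper leaves implicit.

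One small point in (a): if the cascade passes through an intermediate type IV orbit, Lemma \ref{lem:poi} alone does not control the asymptotic contributions at $D$. There the nonnegativity of the total intersection comes from Lemma \ref{lem:iaa}, as in the proof of Proposition \ref{prop:empty}(f), not from the gradient pieces.
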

	
	\begin{proof}
		By Remark \ref{rmk:zerocap}, we may assume  $A_-=0$. We write $A_+=A$.
		
		\medskip
		
		\noindent Proof of (a). From
		\begin{align*}
			\mu([x_-,\bar{x}_-])&=\ind_{f_W}(q_-)-n\\[.5ex]
			\mu([x_+,\bar{x}_+])&=2m_++2c_1^{TM}(A)+\ind_{f_{\partial E}}(q_+)-(n-1),
		\end{align*}
		we obtain
		\begin{equation*}
			2\tau\omega(A)=2c_1^{TM}(A)=\ind_{f_W}(q_-)-\ind_{f_{\partial E}}(q_+)-1-2m_+\le 2n-2m_+.
		\end{equation*}
		By $A=[\mathbf{u}\#(-\bar{x}_+^{\text{fib}})]$ and Lemma \ref{lem:poi}, we have $k\omega(A)=A\cdot [D]\ge -m_+$.
		Putting these two inequalities together, we conclude
		$m_+\ge -\frac{k}{\tau-k}n$.

		\medskip
		
		\noindent Proof of (b). We have
		\begin{align*}
			\mu([x_-,\bar{x}_-])&=2m_-+\ind _{f_D}(\pi(q_-))-(n-1)+e_-\\[.5ex]
			\mu([x_+,\bar{x}_+])&=2m_++2c_1^{TM}(A)+\ind _{f_D}(\pi(q_+))-(n-1)+e_+
		\end{align*}
		where $e_-\in\{0,1\}$ and $e_+\in\{0,-1\}$. By $A=[\bar{x}_-^\text{fib}\# \mathbf{u}\# (-\bar{x}_+^{\text{fib}})]$ and Lemma \ref{lem:poi},
		\begin{equation*}
			k\omega(A)=A\cdot[D]\ge m_--m_+.
		\end{equation*}
		We perform neck-stretching as described in Section \ref{sec:neckstr}. Let $(\mathbf{u}_1,\mathbf{u}_2)$ denote the  limit configuration  corresponding to  $\mathbf{u}$. Arguing as before, we have
		\begin{align*}
			0&=\mu([x_+,\bar{x}_+])-\mu([x_-,\bar{x}_-])\\
			&=\ind_{f_{D}}(\pi(q_+))-\ind_{f_D}(\pi(q_-))	+2c_1^{TM}(A_+) 
			+2(m_+ -m_-) +e_+-e_-\\
			&=\underbrace{(\ind_{f_{D}}(\pi(q_+))-\ind_{f_D}(\pi(q_-))	+2c_1^{TD}([\pi\circ\mathbf{u}_2]) }_{:=\mathbf{(a)}\geq 0}
			+\underbrace{2(m_+-m_-+k\omega(A))}_{:=\mathbf{(b)}\geq 0} \\
			&\quad +\underbrace{2(\tau-k)\omega(\bar{\mathbf{u}}_1)}_{:=\mathbf{(c)}\geq0} +\underbrace{e_+-e_-}_{\geq -2}.
		\end{align*}
		Suppose $\mathbf{(b)}=2$. Then, $\mathbf{u}_1=\emptyset$ and  $\mathbf{u}_2$ is contained in the fiber $\widehat{E}_p$ over $p=\pi(q_+)=\pi(q_-)$, see Remark \ref{rem:index_0_proj}. By Lemmas \ref{lem:mp}, \ref{lem:ab}, and \ref{lem:conti_empty}, such a configuration $\mathbf{u}_2$ does not exist.

		Therefore, we have $\mathbf{(b)}=0$. By $\mathbf{(a)} \leq 2$ and $\mathbf{(c)} \leq 2$, we have 
		\begin{equation*}
			\omega(\bar{\mathbf{u}}_1)\le \frac{1}{\tau-k},\qquad 
			\omega_D([\pi\circ \mathbf{u}_2])\le \frac{n}{\tau-k}
		\end{equation*}
		Since the action increases along $\mathbf{u}$, 
		\begin{equation*}
			\frac{\rho+\epsilon}{k}m_- + \frac{\rho}{k}\mu_- \approx  \mathcal{A}_{H_{\mu_-}}([x_-,\bar{x}_-])< \mathcal{A}_{H_{\mu_+}}([x_+,\bar{x}_+]) \approx\frac{\rho}{k}m_++\omega(A).
		\end{equation*}		
		Putting these inequalities together, we obtain
		\begin{align*}
			\frac{\epsilon}{k}\Big(\frac{\rho}{\epsilon}\mu_-+m_-\Big)&\le \frac{\rho}{k}(m_+-m_-)+\omega(A)\\
			&=(1-\rho)\omega(A)\\
			&=(1-\rho)(\omega_D([\pi\circ \mathbf{u}_2])+\omega(\bar{\mathbf{u}}_1))\\
			&\le(1-\rho)\frac{n+1}{\tau-k},		
		\end{align*}
		where the third line follows from \eqref{eq:class_A}. 
		This proves $m_- \leq -\frac{\rho}{\epsilon}\mu_-+ \frac{k(1-\rho)(n+1)}{\epsilon(\tau-k)}$ as claimed.
	\end{proof}
	
	\begin{remark}
		The statement of Proposition \ref{prop:res} is not vacuous. For example, when $(M,D)=(\mathbb{CP}^1,\{\infty\})$, one can see that on $\widehat{M\setminus D}=\C$, there is a Floer continuation cylinder connecting a type II orbit of $H_{\mu_-}$ to a type III orbit of $H_{\mu_+}$ if and only if $m_-= \lceil -\frac{\rho}{\epsilon}\mu_-\rceil$. 
	\end{remark}
	
	\begin{prop}\label{prop:dec}
		Let $\widetilde{H}$ be a monotone homotopy between $H_{\mu_-}$ and $H_{\mu_+}$ with $\mu_-<\mu_+$.  For $[x_\pm,\bar{x}_\pm]\in\widetilde{\mathcal P}^\crit(H_{\mu_\pm})$, suppose that there exists $\mathbf{u}\in{\mathcal{M}}([x_-,\bar{x}_-],[x_+,\bar{x}_+],\widetilde{H},J)$. Then, we have
		\begin{equation*}
			\mathcal{A}_{H_{\mu_+}}([x_+,\bar{x}_+])-\mathcal{A}_{H_{\mu_-}}([x_-,\bar{x}_-])\geq\frac{\rho}{k}(\mu_+-\mu_-)
		\end{equation*}
		if one of the following holds:
		\begin{enumerate}[(a)]
			\item $[x_-,\bar{x}_-] \in \widetilde{\mathcal{P}}_{\I}(H_{\mu_-})$, $[x_+,\bar{x}_+]\in \widetilde{\mathcal{P}}_{\I}(H_{\mu_+})$; 
			\item $[x_-,\bar{x}_-] \in \widetilde{\mathcal{P}}_{\I}(H_{\mu_-})$, $[x_+,\bar{x}_+]\in \widetilde{\mathcal{P}}_{\II}^\crit(H_{\mu_+})$; 
			\item $[x_-,\bar{x}_-] \in \widetilde{\mathcal{P}}_{\II}^\crit(H_{\mu_-})$, $[x_+,\bar{x}_+]\in \widetilde{\mathcal{P}}_{\II}^\crit(H_{\mu_+})$ provided that they have the same index and $J=J_\eta$ is sufficiently stretched. (In fact, we can explicitly determine $\mathbf{u}$ from which the claim also follows, see Proposition \ref{prop:inc}.)
		\end{enumerate}
	\end{prop}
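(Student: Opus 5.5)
The plan is to avoid any energy estimate. In each case I would write the action difference $\mathcal{A}_{H_{\mu_+}}([x_+,\bar x_+])-\mathcal{A}_{H_{\mu_-}}([x_-,\bar x_-])$ explicitly using \eqref{eq:action_index}. The desired lower bound $\frac{\rho}{k}(\mu_+-\mu_-)$ should then drop out of that formula once I show that the remaining terms are nonnegative. As in Propositions \ref{prop:empty} and \ref{prop:res}, Remark \ref{rmk:zerocap} lets me assume $\bar x_-$ is the canonical capping, and I write $A$ for the class of $\bar x_+$.

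\emph{Case (a).} Both orbits are constant, with actions $\omega(A_\pm)+H_{\mu_\pm}(q_\pm)$, where $H_{\mu}\approx\frac{\mu}{k}\rho$ on $M\setminus U_{\rho+\epsilon}$. The difference is therefore $\omega(A)+\frac{\rho}{k}(\mu_+-\mu_-)$, up to the $C^2$-small perturbation $\epsilon f_W$. Since the capping is constant, $A=[\mathbf u]$ is represented by the cascade closed up at the constant ends. By Lemma \ref{lem:poi}, $k\omega(A)=A\cdot[D]=\mathbf u\cdot D\ge 0$, which gives the claim.

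\emph{Case (b).} Here $A=[\mathbf u\#(-\bar x_+^{\mathrm{fib}})]$, so Lemma \ref{lem:poi} gives $k\omega(A)\ge -m_+$. The difference equals $\frac{m_+}{k}r^2(x_+)+\omega(A)+H_{\mu_+}(x_+)-H_{\mu_-}(q_-)$. Using $r^2(x_+)\approx\rho+\epsilon$, this is bounded below by
\[
-\tfrac{m_+}{k}\,(1-\rho-\epsilon)+\tfrac{\rho}{k}(\mu_+-\mu_-),
\]
and the first term is positive because $m_+<-1$ and $\rho+2\epsilon<1$.

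\emph{Case (c).} I would copy the index bookkeeping from the proof of Proposition \ref{prop:res}(b), performing neck-stretching to get $(\mathbf u_1,\mathbf u_2)$. Equality of indices gives
\[
0=\mathbf{(a)}+2\bigl(m_+-m_-+k\omega(A)\bigr)+2(\tau-k)\omega(\bar{\mathbf u}_1)+(e_+-e_-),
\]
with $e_\pm\in\{0,1\}$, all displayed terms nonnegative, and $e_+-e_-\ge-1$. The two middle terms are even, so they vanish. Hence $\mathbf u_1=\emptyset$ and $m_+-m_-=-k\omega(A)$. By \eqref{eq:class_A}, $\omega(A)=\omega_D([\pi\circ\mathbf u_2])\ge0$, since it is the area of $J_D$-holomorphic spheres. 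The action difference is then
\[
\tfrac{\rho+\epsilon}{k}(m_+-m_-)+\omega(A)+\tfrac{\rho}{k}(\mu_+-\mu_-)=(1-\rho-\epsilon)\,\omega(A)+\tfrac{\rho}{k}(\mu_+-\mu_-)\ge\tfrac{\rho}{k}(\mu_+-\mu_-).
\]

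The main obstacle is that \eqref{eq:action_index} holds only up to "$\approx$". The errors come from smoothing the corners of $h_\mu$ and from the perturbation $\epsilon f_W$, and a non-strict inequality cannot absorb even a small error. I would handle this by computing the action of a type II orbit exactly as the intercept $h(r_0^2)+\frac{m}{k}r_0^2$ of the tangent line to $h_\mu$ at $r_0^2$. Strict concavity near $\rho+\epsilon$ should place this intercept on the correct side of the piecewise linear value, or at least within $O(\epsilon)$ of it.

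I would then absorb these errors into the strictly positive slack terms: $\frac{-m_+}{k}(1-\rho-\epsilon)\ge\frac{1-\rho-\epsilon}{k}$ in (b), and $(1-\rho-\epsilon)\omega(A)$ in (c). In (c), the degenerate subcase $\omega(A)=0$ forces $\mathbf u_2$ to lie in a fiber, and there Lemma \ref{lem:constconti} gives the continuation cylinder explicitly, so the comparison can be made directly. In (a), I would shrink the perturbation so that it changes no inequality when $\omega(A)\ge m_M>0$. In the remaining subcase $A=0$, I would use the monotone choice $\partial_s\widetilde H_s\ge0$ to compare $H_{\mu_\pm}$ at critical points of $f_W$ directly.
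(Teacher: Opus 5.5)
Your argument is the paper's proof, case by case: positivity of intersections with $D$ gives $k\omega(A)\ge 0$ in (a) and $k\omega(A)\ge -m_+$ in (b), and in (c) the neck-stretched index count forces $\bar{\mathbf u}_1=\emptyset$ and $k\omega(A)=m_--m_+=k\omega_D([\pi\circ\mathbf u_2])\ge 0$, followed by the same action arithmetic. The paper simply works modulo the $\approx$ in \eqref{eq:action_index} and does not attempt your error-absorption. Note, however, that (c) has no slack to absorb errors into: since $e_+-e_-\ge -1$ forces $\mathbf{(a)}\le 1$, Remark \ref{rem:index_0_proj} gives $N=0$, hence $\omega(A)=0$ and $m_+=m_-$ always (consistent with the explicit cylinder of Proposition \ref{prop:inc}). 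So your ``degenerate subcase'' is in fact the whole case, and an exact non-strict inequality there depends on how the corners of $h_\mu$ are smoothed.
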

	\begin{proof}
		By Remark \ref{rmk:zerocap}, we may assume  $\bar{x}_-=\bar{x}_-^\mathrm{fib}$.

	\medskip

	\noindent Proof of (a). For $[x_-,\bar{x}_-] =[\mu_-,\I,q_-,0]$ and $[x_+,\bar{x}_+] =[\mu_+,\I,q_+,A]$, we recall
			\[
				\mathcal{A}_{H_{\mu_-}}([x_-,\bar{x}_-])\approx \frac{\rho}{k}\mu_-,\qquad 
				\mathcal{A}_{H_{\mu_+}}([x_+,\bar{x}_+])\approx \frac{\rho}{k}\mu_+ +\omega(A).
			\]
			By Lemma \ref{lem:poi}, we have
			\begin{equation*}
				k\omega(A)=A\cdot[D]=[\mathbf{u}]\cdot[D]\ge0,
			\end{equation*}
			and this yields
			\begin{equation*}
				\mathcal{A}_{H_{\mu_+}}([x_+,\bar{x}_+])-\mathcal{A}_{H_{\mu_-}}([x_-,\bar{x}_-])\approx\frac{\rho}{k}(\mu_+-\mu_-) +\omega(A)\ge \frac{\rho}{k}(\mu_+-\mu_-).
			\end{equation*}
			
		\medskip
		
		\noindent Proof of (b).	 For $[x_-,\bar{x}_-] =[\mu_-,\I,q_-,0]$ and $[x_+,\bar{x}_+] =[\mu_+,\II,q_+,m_+,A]$, we recall
			\[
				\mathcal{A}_{H_{\mu_-}}([x_-,\bar{x}_-])\approx \frac{\rho}{k}\mu_-,\qquad 
				\mathcal{A}_{H_{\mu_+}}([x_+,\bar{x}_+])\approx \frac{\rho+\epsilon}{k}m_++\frac{\rho}{k}\mu_+ +\omega(A).
			\]
			Since $[u\#(-\bar{x}_+^\mathrm{fib})]=A$, by Lemma \ref{lem:poi}, we have 
			\[
			k\omega(A)=A\cdot [D]\geq -m_+.
			\]
			Using this, together with the fact that $m_+<0$ (see \eqref{eq:abbrev}), we conclude
			\begin{align*}
				\mathcal{A}_{H_{\mu_+}}([x_+,\bar{x}_+])-\mathcal{A}_{H_{\mu_-}}([x_-,\bar{x}_-])&\approx\frac{\rho}{k}(\mu_+-\mu_-)+\omega(A)+\frac{\rho+\epsilon}{k}m_+\\[.5ex]
				&\ge \frac{\rho}{k}(\mu_+-\mu_-)+\frac{m_+}{k}(\rho+\epsilon-1)\\[.5ex]
				&\ge \frac{\rho}{k}(\mu_+-\mu_-).
			\end{align*}
			
			\medskip
			
			\noindent Proof of (c). For $[x_-,\bar{x}_-]=[\mu_-,\II,q_-,m_-,0]$ and $[x_+,\bar{x}_+]=[\mu_+,\II,q_+,m_+,A]$, we have
			\begin{align*}
				\mathcal{A}_{H_{\mu_-}}([x_-,\bar{x}_-])&\approx \frac{\rho+\epsilon}{k}m_-+\frac{\rho}{k}\mu_-\\[.5ex]
				\mathcal{A}_{H_{\mu_+}}([x_+,\bar{x}_+])&\approx \frac{\rho+\epsilon}{k}m_++\frac{\rho}{k}\mu_+ +\omega(A)
			\end{align*}
			and
			\begin{align*}
				\mu([x_-,\bar{x}_-])&=2m_-+\ind _{f_D}(\pi(q_-))-(n-1)+e_-\\[.5ex]
				\mu([x_+,\bar{x}_+])&= 2m_++2c_1^{TM}(A)+\ind _{f_D}(\pi(q_+))-(n-1)+e_+
			\end{align*}
			where $e_\pm\in\{0,1\}$. In addition, $A=[\bar{x}_-^\mathrm{fib}\#\mathbf{u}\#(-\bar{x}_+^\mathrm{fib})]$ and Lemma \ref{lem:poi} gives
			\begin{equation*}
				k\omega(A)=A\cdot[D]\ge m_--m_+.
			\end{equation*}
			We perform neck-stretching and denote by $(\mathbf{u}_1,\mathbf{u}_2)$ the limit configuration of $\mathbf{u}$  as in Section \ref{sec:neckstr}. Arguing as in the proof of Proposition \ref{prop:empty}.(c), we compute
			\begin{align*}
				0&=\mu([x_+,\bar{x}_+])-\mu([x_-,\bar{x}_-])\\
				&=\ind_{f_{D}}(\pi(q_+))-\ind_{f_D}(\pi(q_-))	+2c_1^{TM}(A) 
				+2(m_+-m_-) +(e_+-e_-)\\
				&=\underbrace{(\ind_{f_{D}}(\pi(q_+))-\ind_{f_D}(\pi(q_-))	+2c_1^{TD} ([\pi\circ\mathbf{u}_2]) }_{\geq 0}
				+\underbrace{2(m_+-m_-+k\omega(A))}_{\geq 0} \\
				&\quad +\underbrace{2(\tau-k)\omega(\bar{\mathbf{u}}_1)}_{\geq0} +\underbrace{(e_+-e_-)}_{\geq -1}.
			\end{align*}
			In particular, $\omega(\bar{\mathbf{u}}_1)=0$ and $ k\omega(A)=m_--m_+$. Thus, $\bar{\mathbf{u}}_1=\emptyset$, and $A$ is represented by the sphere $\bar{x}_-^\mathrm{fib}\#\mathbf{u}_2\#(-\bar{x}_+^\mathrm{fib})$ in $\widehat{E}$. From $\omega=\pi^*\omega_D+\frac{1}{k}d(r^2\alpha)$ on $\widehat{E}$, we deduce
			\[
		   m_--m_+=k\omega(A)=k\omega_D([\pi\circ \mathbf{u}_2])\geq 0.
		   \]
			Using this, we conclude 
			\begin{align*}
				\mathcal{A}_{H_{\mu_+}}([x_+,\bar{x}_+])-\mathcal{A}_{H_{\mu_-}}([x_-,\bar{x}_-])&\approx\frac{\rho}{k}(\mu_+-\mu_-)+\frac{\rho+\epsilon}{k}(m_+-m_-)+\omega(A)\\[.5ex]
				&=\frac{\rho}{k}(\mu_+-\mu_-)+\frac{m_--m_+}{k}(1-\rho-\epsilon)\\[.5ex]
				&\ge \frac{\rho}{k}(\mu_+-\mu_-).
			\end{align*}
	\end{proof}
	
	\subsection{Non-contribution of upper orbits}
	
	Now we specify the acceleration data to be used.
	We take a cofinal family $\mathcal{H}=\{H_i\}_{i\in\N}$, where $H_i=H_{\mu_i}$ with $\mu_i\in\R_+\setminus\Z$ satisfying $\mu_1\gg0$, $\mu_1\notin\mathbb{Q}$, and $\mu_{i+1}=\mu_i+1$. Then, any consecutive pair $\mu_i$ and $\mu_{i+1}$ satisfies the condition in Proposition \ref{prop:empty}. Let $\widetilde{H}_{i}$ be a monotone homotopy between $H_{i}$ and $H_{i+1}$. To apply the results in the previous section, we need to work with a sufficiently stretched almost complex structure. However, it is not clear whether a single stretched $J$ can be used for all $\widetilde{H}_{i}$, $i\in\N$. For this reason, we also increase the stretching parameter as $i\nearrow\infty$. Let $\{J_\eta\}_{\eta\in\R_+}$ be a family of stretched compatible almost complex structures such that the correspondence in Section \ref{sec:neckstr} holds for $(H_{i},J_l)$ and $(\widetilde{H}_{i},J_l)$ for each $i\in\N$ and all $l\geq i$. In particular, $(H_{i},J_l)$ and $(\widetilde{H}_{i},J_l)$ are  regular pairs for all $l\geq i$, see Remark \ref{rmk:transversality}. We denote $\mathcal{J}=\{J_i\}_{i\in\N}$.

	Now we consider the one-ray 	
	\begin{equation}\label{eq:one-ray}
	\CF^*_{(-\infty,b)}(H_{1},J_{1})\stackrel{\Psi_1\circ \jmath_1}{\longrightarrow} \CF^*_{(-\infty,b)}(H_{2},J_{2})\stackrel{\Psi_2\circ \jmath_2}{\longrightarrow}\CF^*_{(-\infty,b)}({H}_{3},J_{3})\longrightarrow \cdots		
	\end{equation}
	where 
	\begin{equation}\label{eq:continuation}
	\Psi_i:\CF^*_{(-\infty,b)}(H_{i},J_{i+1})\longrightarrow\CF^*_{(-\infty,b)}({H}_{i+1},J_{i+1})
	\end{equation}
	is a continuation homomorphism induced by $(\widetilde{H}_{i},J_{i+1})$ and 
	\[
	\jmath_i:\CF^*_{(-\infty,b)}(H_{i},J_{i})\longrightarrow\CF^*_{(-\infty,b)}({H}_{i},J_{i+1})
	\]
	is the identity map, which makes sense since the two chain complexes are identical due to the correspondence in Section \ref{sec:neckstr}.
	
	\begin{lem}\label{lem:id_chainmap}
	The map $\jmath_i$ consists of a composition of continuation homomorphisms.
	\end{lem}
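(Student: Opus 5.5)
The plan is to realize $\jmath_i$ as the continuation map for the constant homotopy of Hamiltonians $H_i$ combined with a homotopy of almost complex structures from $J_i$ to $J_{i+1}$. We choose a path $\{J_\eta\}_{\eta\in[i,i+1]}$ inside the stretching family and subdivide $[i,i+1]$ into finitely many small intervals $i=\eta_0<\eta_1<\dots<\eta_N=i+1$. On each piece we consider the continuation homomorphism $\Phi_j:\CF^*_{(-\infty,b)}(H_i,J_{\eta_{j-1}})\to\CF^*_{(-\infty,b)}(H_i,J_{\eta_j})$ induced by $(H_i,\widetilde J)$, where $\widetilde J$ interpolates between $J_{\eta_{j-1}}$ and $J_{\eta_j}$. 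Because the Hamiltonian homotopy is constant, it is trivially monotone, so $\Phi_j$ preserves the action filtration and is defined on the truncated complexes. The claim then reduces to showing that each $\Phi_j$ is the identity on generators, so that $\jmath_i=\Phi_N\circ\cdots\circ\Phi_1$.

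To see that $\Phi_j$ is the identity, we use the energy identity. For the constant homotopy, a continuation cylinder $u$ from $[x_-,\bar x_-]$ to $[x_+,\bar x_+]$ satisfies $E(u)=\mathcal A_{H_i}([x_+,\bar x_+])-\mathcal A_{H_i}([x_-,\bar x_-])\ge0$. For the pairs appearing in the map, which have equal index, two cases arise.

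\emph{Case of equal action.} The energy is zero, so the cylinder is $s$-independent. The cascade must then reduce to Morse flow lines on a single Morse--Bott component, and equal Morse index forces the constant trajectory. This contributes $+1$, using the standard coherent orientation on trivial solutions.

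\emph{Case of distinct action.} Rigid, equal-index, non-constant continuation cylinders for the constant homotopy occur in one-parameter families along $\widetilde J$. Their count is governed by the parametrized moduli space over $[\eta_{j-1},\eta_j]$. Generically, this space is empty once the subinterval is short enough, because transversality for $(H_i,J_\eta)$ holds for every $\eta\ge i$ (Remark \ref{rmk:transversality} and the choice of the family above). An equal-index rigid solution for the constant homotopy would otherwise converge, as $\eta_j-\eta_{j-1}\to0$, to a Floer cylinder of $(H_i,J_\eta)$ of index $0$ with an $\R$-action. Such a cylinder must be trivial.

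The main obstacle is making this uniform. There are infinitely many generators below the level $b$: type III orbits have unbounded winding, and the cappings range over all of $\pi_2(M)$. We would handle this in two steps. First, using Remark \ref{rmk:zerocap}, we reduce to finitely many orbits up to capping. Second, we bound the winding numbers within each action window using the action formulas \eqref{eq:action_index}, so that only finitely many moduli spaces need to be considered. A single subdivision then works for all of them.

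Alternatively, the chain-level identification could be made via the bijection from Section \ref{sec:neckstr}. That bijection holds for all $J_\eta$ with $\eta\ge i$, so all counts are independent of $\eta$ and determined by the limit configurations $(\mathbf u_1,\mathbf u_2)$. This shows directly that the differentials agree and that a continuation map equal to the identity is consistent with the transversality and rigidity above.
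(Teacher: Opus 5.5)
Your proposal is correct and is essentially the paper's argument. The paper shows that, for each $\eta\in[i,i+1]$, the continuation map (with $H_i$ fixed) from $J_\eta$ to any nearby $J_{\eta'}$ is the identity: a sequence of nontrivial equal-index continuation cascades with $\eta_\nu\to\eta$ would converge to a Floer cascade for the regular pair $(H_i,J_\eta)$ between distinct generators of equal index, which is impossible. It then concludes by compactness of $[i,i+1]$, which is exactly your fine-subdivision step.

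Your energy-based case split, and your reduction to finitely many generator pairs modulo capping, only make explicit what the paper leaves implicit when it fixes $[x_\pm,\bar x_\pm]$ along the contradicting sequence. That reduction uses Remark \ref{rmk:zerocap}, together with two facts: for fixed $H_i$ the winding numbers are already bounded, and equal index plus monotonicity pins down the capping class of $x_+$.
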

	\begin{proof}
		We claim that for each $\eta\in[i,i+1]$, there is a neighborhood $\mathcal{U}\subset [i,i+1]$ of $\eta$ such that, for any $\eta'\in\mathcal{U}$, the continuation homomorphism between $\CF^*_{(-\infty,b)}(H_{i},J_{\eta})$ and $\CF^*_{(-\infty,b)}({H}_{i},J_{\eta'})$ induced by a homotopy between $J_\eta$ and $J_{\eta'}$ is the identity map. Assume for contradiction that there are a sequence $\eta_\nu\to\eta$, $\nu\in\N$ and  a sequence of flow lines with Floer continuation cascades for $(H_i,\widetilde{J}_\nu)$, where $\widetilde{J}_\nu$  is a homotopy between $J_\eta$ and $J_{\eta_\nu}$, connecting $[x_\pm,\bar{x}_\pm]\in \widetilde{\mathcal{P}}^\crit(H_i)$ satisfying
		\begin{equation}\label{eq:contradiction}
		[x_-,\bar{x}_-]\neq [x_+,\bar{x}_+],\qquad \mu([x_-,\bar{x}_-])=\mu([x_+,\bar{x}_+]).			
		\end{equation}
		 However, this sequence of flow lines with Floer continuation cascades converges to a collection of flow lines with Floer cascades for $(H_i,J_\eta)$ connecting $[x_-,\bar{x}_-]$ and $[x_+,\bar{x}_+]$. This contradicts \eqref{eq:contradiction}. Thus, the claim is proved, and the lemma follows since $[i,i+1]$ is compact.
		\end{proof}

	From now on, we use the chain complex 
	\begin{equation*}
		\SC^*_M(\partial U_\rho ,\mathcal{H},\mathcal{J}):=\varprojlim_{b\rightarrow\infty}\varinjlim_{i\rightarrow\infty}\CF^*_{(-\infty,b)}(H_{i},J_{i}),
	\end{equation*}
	where the direct limit is taken via $\Psi_i\circ \jmath_i$. By Remark \ref{rmk:Jchange} and Lemma \ref{lem:id_chainmap}, it is quasi-isomorphic to the chain complex defined with another acceleration data. We denote by $d_\SC$ the induced boundary operator and by $\SH^*_M(\partial U_\rho)$ the resulting homology. We show below that it is enough to consider type III orbits to compute $\SH^*_M(\partial U_\rho)$.

	From the sequence of natural injective maps
	\begin{align*}
		\iota_i:\widetilde{\mathcal{P}}^{\crit}_{\III}(H_{i})&\longrightarrow \widetilde{\mathcal{P}}^{\crit}_{\III}(H_{i+1}) \\[.5ex]
		[\mu_i,\III,q,m,A]&\longmapsto[\mu_{i+1},\III,q,m,A],
	\end{align*}
	we have
	\begin{equation*}
	\begin{split}
		\iota_{\infty,i}:\widetilde{\mathcal{P}}^{\crit}_{\III}(H_{i}) &\longrightarrow \widetilde{\mathcal{P}}^{\crit}_{\III}:=\varinjlim_i\widetilde{\mathcal{P}}^{\crit}_{\III}(H_{i}) \cong \crit f_{\partial E}\times \Z \times \pi_2(M)/\ker \omega	\\	
		[\mu_i,\III,q,m,A]&\longrightarrow [q,m,A].
	\end{split}
	\end{equation*}
	We define the action of $[q,m,A]\in \widetilde{\mathcal{P}}^{\crit}_{\III}$ by
	\begin{equation*}
		\mathcal{A}([ q,m,A]):=\frac{m}{k}\rho+\omega(A).
	\end{equation*}
	It can be thought of as the limit of the action of type III orbits in \eqref{eq:action_index}. 
	
		\begin{theorem}\label{thm:equal}
		Elements of $\SC^*_M(\partial U_\rho ,\mathcal{H},\mathcal{J})$ have the form 
		\[
		\sum_{j=1}^{\infty}a_j \xi_j,\qquad a_j\in \mathbb{Z},\;\; \xi_j=[q_j,m_j,A_j]\in \widetilde{\mathcal{P}}^{\crit}_{\III}, \;\;\lim_{j\rightarrow\infty}\mathcal{A}(\xi_j)=\infty.
		\]
		Moreover, for any $\xi_\pm\in\widetilde{\mathcal{P}}_{\III}^\crit$, there exist $i\in\N$ and $\xi_{i,\pm}\in \widetilde{\mathcal{P}}_{\III}^\crit(H_i)$ such that 
		\begin{equation*}
			\xi_\pm =\iota_{\infty,i}(\xi_{i,\pm}),\qquad \langle d_{\SC}\xi_+,\xi_-\rangle=\langle d_{(H_i,J_i)} \xi_{i,+}, \xi_{i,-}\rangle,
		\end{equation*}
		where $d_{(H_i,J_i)}$ is the boundary operator of $\CF^*(H_{i},J_{i})$.
	\end{theorem}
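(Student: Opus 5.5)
The plan is to show that, in the iterated limit $\varprojlim_b\varinjlim_i$, generators of types I, II and IV disappear. Only type III generators survive, and the continuation maps act on them by the inclusions $\iota_i$.

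\textbf{Step 1: action bounds for types I, II, IV.} By \eqref{eq:action_index}, a type I or IV generator of $H_i$ with capping class $A$ has action $\approx \frac{\rho}{k}\mu_i+\omega(A)$. A type II generator has action $\approx\frac{\rho+\epsilon}{k}m+\frac{\rho}{k}\mu_i+\omega(A)$.

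Fix $b$ and look at $\Phi_{\infty,i}$ applied to such a generator. For types I and II, I would use Proposition \ref{prop:empty}(a),(b),(d) and Proposition \ref{prop:res}(a),(b). These show that the continuation images either stay among types I, II, IV, or reach type III only through a type II orbit with $m_-\le -\frac{\rho}{\epsilon}\mu_-+C_2$.

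Proposition \ref{prop:dec}(a)--(c) then shows that every $\Psi$ step between types I and II raises the action by at least $\frac{\rho}{k}(\mu_{i+1}-\mu_i)=\frac{\rho}{k}$. For type IV, Proposition \ref{prop:empty}(f) (using $\mu_{i+1}=\mu_i+1$) and (b),(d),(e) show that continuation cylinders into type IV can only originate from type IV of strictly larger index. This is impossible at equal index, so the type IV component of $\Psi_i$ vanishes except where it comes from type I or II.

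Iterating, the image of any non-type-III generator under $\Phi_{l,i}$ either has action tending to $\infty$ as $l\to\infty$, and so is killed by the truncation $(-\infty,b)$, or passes into type III. To make the second case precise, I would observe that a type II orbit that feeds into type III has $m_-\approx-\frac{\rho}{\epsilon}\mu$. Its action is then $\approx -\frac{\epsilon}{k}\frac{\rho}{\epsilon}\mu+\frac{\rho}{k}\mu+O(1)$, which is bounded. Such contributions still have to be shown to be eventually zero or absorbed. I would treat this as part of the main obstacle below.

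\textbf{Step 2: type III to type III continuation.} I would show that $\Psi_i$ maps $[\mu_i,\III,q,m,A]$ to $[\mu_{i+1},\III,q,m,A]$ plus terms of strictly larger index difference (none, at equal index). The method is neck-stretching as in the proofs of Proposition \ref{prop:empty}(c). The index equation forces $\mathbf{u}_1=\emptyset$, $\mathbf{u}_2$ to lie in a fiber, and $m_+=m_-$. Lemma \ref{lem:constconti} then gives exactly one $S^1$-family, which after the Morse--Bott perturbation counts as $\pm1$ on the diagonal $q_-=q_+$.

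With this, the direct limit over $i$ on the type III part is identified with $\bigoplus\Z\langle\widetilde{\mathcal P}^\crit_\III\rangle$ truncated by action. The inverse limit over $b$ then produces formal sums $\sum a_j\xi_j$ with $\mathcal{A}(\xi_j)\to\infty$, as in \eqref{eq:eltsofsc}. This is where the description of elements comes from.

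\textbf{Step 3: the differential.} Given $\xi_\pm$, choose $i$ so large that both lift to $H_i$. By Step 2 the continuation maps commute with $\iota$, so $\langle d_\SC\xi_+,\xi_-\rangle$ stabilizes to $\langle d_{(H_i,J_i)}\xi_{i,+},\xi_{i,-}\rangle$. I also need $d$ restricted to type III to have no components through other types that matter in the limit. Differentials into type II or IV are discarded, since those generators die by Step 1. Differentials from type III into type III equal the Floer count for $H_i$, which is independent of $i$ for $i\gg0$ by the stretched correspondence.

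\textbf{Main obstacle.} The hard part is Step 1: showing rigorously that type II generators whose continuation images enter type III do not survive, i.e. that the subcomplex generated by types I, II, IV becomes acyclic or negligible in $\varprojlim_b\varinjlim_i$. My first approach would be a filtration argument. The type III span is a quotient complex, because I expect no differential from III to II or IV at the relevant indices. I would then show that the complementary part has action going to infinity under $\Phi_{l,i}$, using Proposition \ref{prop:res}(b). The threshold $m_-\le-\frac{\rho}{\epsilon}\mu_-+C_2$ should mean that such orbits reach any fixed action window for only finitely many $i$. If this fails, I would instead try to show that these cross terms form a nilpotent, action-increasing perturbation, which does not change the limit complex.
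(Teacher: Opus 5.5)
\textbf{Main gap: the type II orbits that feed type III.} Your proposal stops at the decisive step. You correctly flag that type $\II$ orbits with winding $m\le-\frac{\rho}{\epsilon}\mu_i+C_2$ can feed type $\III$, but you leave this unresolved, and the targets you set are off. These $\II\to\III$ contributions are genuine and must be kept, so showing they ``do not survive'' is the wrong goal. Your quotient-complex claim also fails: a quotient complex would need no arrows \emph{into} $\III$, and arrows $\III\to\II$ exist anyway (the fiber cylinders of Lemma \ref{lem:fiber_d}).

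The paper instead proves Corollary \ref{cor:corresp}: every element of $\varinjlim_i\CF^*_{(-\infty,b)}(H_i,J_i)$ equals $\Phi_{\infty,l}$ of a pure type $\III$ chain for all large $l$. The idea you are missing is to split $\II=\II_-\sqcup\II_+$ at the threshold of Proposition \ref{prop:res}(b), with $i\ge i_0$ chosen so that $C_1>-\frac{\rho}{\epsilon}\mu_{i_0}+C_2$. This gives a closure property:
\begin{itemize}
\item $\I$ maps only to $\I,\II_+$, by Propositions \ref{prop:empty}(a),(b) and \ref{prop:res}(a).
\item $\II_+$ maps only to $\I,\II_+$, by Propositions \ref{prop:res}(b), \ref{prop:empty}(d), and the type $\II$ part of Proposition \ref{prop:inc}, which preserves $m$.
\item $\III$ never maps to $\II$, by Proposition \ref{prop:empty}(c).
\item Nothing maps to $\IV$ at all, by Proposition \ref{prop:empty}(b),(d),(e),(f). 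This excludes arrows into $\IV$ from every type, not only from $\IV$ as you wrote.
\end{itemize}
So after one step only $\II_-$ feeds $\II_-$. By Proposition \ref{prop:dec}(c), the action of the $\II_-$ part rises by at least $\rho/k$ per step, so it leaves $(-\infty,b)$ after finitely many steps. Whatever it has already deposited in type $\III$ simply becomes part of the representative. The leftover $\I$ and $\II_+$ part dies by Corollary \ref{cor:12van}. Injectivity of type $\III$ generators into the limit follows because a pure type $\III$ chain never acquires type $\III$ terms beyond $\iota_{l,i}(\beta^{\III})$; all its other descendants are of type $\I$ or $\II_+$.

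\textbf{Step 2: a missed case.} The index identity for $\III\to\III$ at equal index does not by itself force $\mathbf{u}_2$ into one fiber. Since $e_\pm\in\{0,-1\}$, the case $(\mathbf a)=1$, $(e_-,e_+)=(0,-1)$ survives and must be excluded separately, as in Case 2 of Proposition \ref{prop:inc}. There one concatenates the two gradient segments across the fiber cylinder, using $\theta(\ev_-(u)(0))=\theta(\ev_+(u)(0))$ from Lemma \ref{lem:constconti}. This produces a nonconstant $f_{\partial E}$-flow line between critical points of equal index, a contradiction.

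\textbf{Step 3: discarding type $\II$ and $\IV$ components.} You drop these components of $d_{(H_i,J_i)}\xi_{i,+}$ because ``those generators die''. That is false for $\II_-$, and $\IV\to\III$ is not excluded either. What you actually need is that such components do not occur once $i\gg0$. This follows from the same bookkeeping at index difference one:
\begin{itemize}
\item A $\III\to\II$ trajectory has $\mathbf{u}_1=\emptyset$, $N=0$ and $\omega(A)=0$. Hence it preserves $m$ and lands in $\II_+$.
\item A $\III\to\IV$ trajectory forces $m=\lfloor\mu_i\rfloor$, which is impossible for fixed $\xi_+$ and large $i$.
\end{itemize}
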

	\begin{proof}
		The theorem follows from \eqref{eq:eltsofsc}, Proposition \ref{prop:inc}, and Corollary \ref{cor:corresp} below. 
	\end{proof}

	\begin{prop}\label{prop:inc}
		The continuation maps $\Psi_{i}$ in \eqref{eq:continuation} restricted to type III orbits coincide with the injective maps $\iota_i$. More precisely, for any $[x,\bar{x}]\in\widetilde{\mathcal{P}}_{\III}^\crit(H_i)$, $\Psi_i([x,\bar{x}])-\iota_i([x,\bar{x}])$ does not contain type III orbits. An analogous statement holds also for type II orbits, replacing type III by type II in the definition of $\iota_i$.
	\end{prop}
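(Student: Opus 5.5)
The plan is to analyze the continuation maps $\Psi_i$, induced by $(\widetilde H_i,J_{i+1})$, by the same index/intersection bookkeeping used in Propositions \ref{prop:empty}--\ref{prop:dec}. We determine all index-zero configurations $\mathbf{u}$ from a type III generator $[x_-,\bar x_-]=[\mu_i,\III,q_-,m_-,0]$ of $H_i$ to a type III generator $[x_+,\bar x_+]=[\mu_{i+1},\III,q_+,m_+,A]$ of $H_{i+1}$. By Remark \ref{rmk:zerocap} we may take $A_-=0$. The gradings in \eqref{eq:action_index} give
\[
\mu([x_\pm,\bar x_\pm])=2m_\pm+2c_1^{TM}(A_\pm)+\ind_{f_D}(\pi(q_\pm))-(n-1)+e_\pm,\qquad e_\pm\in\{0,-1\}.
\]
Lemma \ref{lem:poi} gives $k\omega(A)\ge m_--m_+$. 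After neck-stretching (Section \ref{sec:neckstr}), we decompose as in the proof of Proposition \ref{prop:empty}.(c):
\[
0=\underbrace{\big(\ind_{f_D}(p_+)-\ind_{f_D}(p_-)+2c_1^{TD}([\pi\circ\mathbf u_2])\big)}_{\ge0}+\underbrace{2(m_+-m_-+k\omega(A))}_{\ge0}+\underbrace{2(\tau-k)\omega(\bar{\mathbf u}_1)}_{\ge0}+(e_+-e_-).
\]
Here $e_+-e_-\ge -1$. The first three terms are even or nonnegative integers, and the second and third are even. Hence the second and third terms vanish, so $\mathbf u_1=\emptyset$ and $m_+-m_-+k\omega(A)=0$. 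The first term is then $0$ or $1$.

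If the first term is $0$, Remark \ref{rem:index_0_proj} forces $\pi\circ\mathbf u_2$ to be constant at $p_+=p_-$, so $\mathbf u_2$ lies in a single fiber. Then $A=0$ and $m_+=m_-$, with $e_+=e_-$, so $q_\pm$ are both hat points or both check points over the same $p$. The configuration then reduces to a Floer continuation cylinder in $(\mathbb C,j)$ for $\widetilde G$ between two type III orbits of equal winding number, possibly with fiber Floer cylinders for $G_{\mu_\pm}$ attached. Those extra fiber cylinders are trivial, by the final remark in the proof of Lemma \ref{lem:addR2}. By Lemma \ref{lem:constconti}, the continuation moduli space is a single $S^1$-family $\{e^{i\theta}u\}$ satisfying $\theta(\ev_-(u)(0))=\theta(\ev_+(u)(0))$, and it is automatically transverse. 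Since we use the same Morse function $f_{\partial E}$ on the corresponding components for $H_i$ and $H_{i+1}$, the cascade count from $q_-$ to $q_+$ is the Morse continuation count for the identity, i.e.\ $\delta_{q_-q_+}$. The sign is $+1$ by a coherent-orientation comparison with the trivial cylinder. This produces exactly the term $\iota_i([x_-,\bar x_-])$.

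If the first term is $1$, then $e_+-e_-=-1$ and $\ind_{f_D}(p_+)-\ind_{f_D}(p_-)+2c_1^{TD}([\pi\circ\mathbf u_2])=1$. By \eqref{eq:2N}, when $N\ge1$ this quantity is at least $N+1\ge 2$, so $N=0$. Then $\pi\circ\mathbf u_2$ is a single $f_D$ gradient line from $p_-$ to $p_+$ with index difference $1$, and all Floer pieces lie in fibers. Each fiber piece has winding preserved and is either the $S^1$-family above or trivial. The configuration is therefore a Morse cascade on $\partial E$ from a hat point to a check point over a different critical point of $f_D$, and this is an index $-1$ configuration for the continuation. I expect this case to be the main obstacle: the integer count has to be made to vanish, not merely the parity. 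I would handle it by noting that such $\mathbf u$ come in a one-parameter family because of the free $s$-translation of the non-continuation gradient piece. Alternatively, I would choose the homotopy $\widetilde J$ to be $s$-independent on $\partial E$ directions, so that the continuation reduces to the identity continuation of the Morse complex of $f_{\partial E}$, which has no index-lowering terms.

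For type II the same analysis applies with $e_\pm\in\{0,1\}$, using Lemma \ref{lem:constconti} and Lemma \ref{lem:conti_empty} to rule out type II to type III and type IV pieces inside fibers. This shows that the type II component of $\Psi_i$ restricted to type II generators is $\iota_i$.
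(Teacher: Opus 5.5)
\textbf{Gap in the second case.} Your set-up agrees with the paper: the index decomposition forces $\mathbf u_1=\emptyset$ and $m_+-m_-+k\omega(A)=0$, and you split according to whether the base term is $0$ or $1$. Your first case is essentially the paper's argument. One small point there: before invoking the triviality remark from Lemma \ref{lem:addR2}, you should rule out intermediate type II and type IV asymptotics. Winding numbers are nondecreasing by positivity of zeros, so every piece avoids the zero section. By Lemmas \ref{lem:mp}, \ref{lem:ab}, \ref{lem:conti_empty}, a type II orbit can never be followed by a type III one.

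The genuine gap is the second case. You treat it as a configuration whose signed count must be made to cancel, but the mechanisms you propose do not work:
\begin{itemize}
\item There is no free $s$-translation giving one-parameter families. In a continuation cascade $(c_0,u,c_1)$, the half-infinite pieces $c_0,c_1$ are determined by $\ev_\pm(u)$; reparametrizing them is not a degree of freedom. Also $u$ is not translation invariant, since $\widetilde H$ depends on $s$. A positive-dimensional family inside a regular zero-dimensional moduli space would be a contradiction, not a cancellation.
\item Your alternative, choosing $\widetilde J$ $s$-independent so that everything ``reduces to the identity continuation of the Morse complex of $f_{\partial E}$'', assumes exactly what must be proved. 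Note also that $\Psi_i$ already uses the fixed $J_{i+1}$.
\end{itemize}

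\textbf{Missing idea: the second case is empty.} The tool you already used in the first case proves it. You have reduced to $\mathbf u_2=(c_0,u,c_1)$, with $u$ a fiber continuation cylinder between type III orbits lying over the point where $\pi\circ c_0$ and $\pi\circ c_1$ meet.
\begin{itemize}
\item Lemma \ref{lem:constconti} gives $\theta(\ev_-(u)(0))=\theta(\ev_+(u)(0))$. Identifying $\ev_-(u)(0)$ with $\ev_+(u)(0)$, you can concatenate $c_0$ and $c_1$ into a single honest gradient trajectory $c_1\#c_0$ of $f_{\partial E}$ from $q_-=\hat p_-$ to $q_+=\check p_+$.
\item This trajectory is nonconstant, because its projection joins $p_-\neq p_+$.
\item Yet $\ind_{f_{\partial E}}(\check p_+)-\ind_{f_{\partial E}}(\hat p_-)=\ind_{f_D}(p_+)-\ind_{f_D}(p_-)-1=0$, which the Morse--Smale condition forbids.
\end{itemize}
So no such $\mathbf u$ exists, and no orientation or cancellation analysis is needed. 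The same argument with $e_\pm\in\{0,1\}$, where the bad case is again $\hat p_-\to\check p_+$, handles type II orbits.
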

	\begin{proof}
		Let $\mu_-=\mu_i$ and $\mu_+=\mu_{i+1}$, and let $\mathbf{u}\in{\mathcal{M}}([x_-,\bar{x}_-],[x_+,\bar{x}_+],\widetilde{H}_{i},J_{i+1})$ for $[x_\pm,\bar{x}_\pm] \in \widetilde{\mathcal{P}}_{\III}^\crit(H_{\mu_\pm})$ with the same index. We write $[x_-,\bar{x}_-]=[\mu_-,\III,q_-,m_-,0]$ and $[x_+,\bar{x}_+]=[\mu_+,\III,q_+,m_+,A]$. Adapting the proof of Proposition \ref{prop:dec}.(c) to type III orbits,
		we have
		\begin{align*}
				0&=\mu([x_+,\bar{x}_+])-\mu([x_-,\bar{x}_-])\\
				&=\ind_{f_{D}}(p_+)-\ind_{f_D}(p_-)	+2c_1^{TM}(A) 
				+2(m_+-m_-) +(e_+-e_-)\\
				&=\underbrace{(\ind_{f_{D}}(p_+)-\ind_{f_D}(p_-)	+2c_1^{TD}([\pi\circ\mathbf{u}_2]) }_{:=(\mathbf{a})\geq 0}
				+\underbrace{2(m_+-m_-+k\omega(A))}_{:=(\mathbf{b})\geq 0} \\
				&\quad +\underbrace{2(\tau-k)\omega(\bar{\mathbf{u}}_1)}_{:=(\mathbf{c})\geq0} +\underbrace{(e_+-e_-)}_{:=(\mathbf{d})\geq -1}.
			\end{align*}
		where $p_\pm:=\pi(q_\pm)$ and  $e_\pm \in \{0,-1\}$. 
		Therefore, we have $(\mathbf{b})=(\mathbf{c})=0$, and ${\mathbf{u}}_1=\emptyset$.  We write  $A=[\bar{x}_-^\mathrm{fib}\#\mathbf{u}_2\#(-\bar{x}_+^\mathrm{fib})]\in\pi_2(\widehat E)$.

		\medskip
		
		\noindent Case 1) $(\mathbf{a})=0$ and $e_+=e_-$. 
		
		\medskip
			By Remark \ref{rem:index_0_proj}, $p:=p_-=p_+$ and $\mathbf{u}_2$ is contained in the fiber $\widehat{E}_{p}$. Therefore, $A=0$ and $m_+=m_-$. We write $\mathbf{u}_2=(c_0,u_1,\dots ,u_N,c_N)$, where each $u_i$ is a Floer cylinder, except for one, which is a Floer continuation cylinder. The winding number does not decrease along $\mathbf{u}_2$ by positivity of zeroes. Thus, every $u_j$ preserves the winding number and is contained in $\widehat{E}_p\setminus\{0\}$. 
			We also observe that $\ev_+(u_j)$ for any $j$ cannot be of type II since otherwise, by Lemmas \ref{lem:mp}, \ref{lem:ab}, and \ref{lem:conti_empty}, $\ev_+(u_{j+1})$ is again of type II, which contradicts $\ev_+(u_N)\in{\mathcal{P}}_{\III}(H_{\mu_+})$. 
			Moreover, $\ev_+(u_j)$ is not of type IV for any $j$. Otherwise $u_j=u_N$ and $\ev_+(u_N)$ has type IV because the constant orbit at the origin is isolated in $\widehat{E}_{p}$. Thus, the asymptotic orbits of $u_j$ are of type III for every $j$. Moreover, if $u_j$ were a Floer cylinder, then it is trivial since the asymptotic orbits have the same winding number. Therefore, we conclude $\mathbf{u}_2=(c_0,u,c_1)$, where $u$ is a Floer continuation cylinder. By Lemma \ref{lem:constconti}, $c_0$ and $c_1$ are constant curves, $[x_+,\bar{x}_+]=\iota_i([x_-,\bar{x}_-])$, and such $u$ uniquely exists, which is positively oriented as shown in \cite[Lemma 4.8]{BKK24}. 
			 
			\medskip
			
		\noindent Case 2) $(\mathbf{a})=1$ and $(e_-,e_+)=(0,-1)$.
		
		\medskip
		
			The projection $\pi\circ \mathbf{u}_2$ gives an element in 
			$\mathcal{N}^*_N(p_-,p_+;[\pi\circ \mathbf{u}_2];J_D)$, where, by Remark \ref{rem:index_0_proj}, $N=0$ and $[\pi\circ \mathbf{u}_2]=0$. Thus, every Floer cylinder in $\mathbf{u}_2$ is contained in a fiber of $\widehat{E}$ and the projection $\pi\circ \mathbf{u}_2$ is a gradient flow line of $f_D$ connecting $p_-$ to $p_+$. We claim that $\mathbf{u}_2$ takes the form $(c_0,u,c_1)$, where $u$ is a Floer continuation cylinder connecting type III orbits. As observed in Case 1, no Floer (continuation) cylinder in $\mathbf{u}_2$ has a type II orbit as an asymptotic orbit. Furthermore, no type IV orbit is involved in $\mathbf{u}_2$. Indeed, otherwise by Lemma \ref{lem:iaa}, the intersection number of (the closure of) $\mathbf{u}_2$ with $D$ is at least $1\geq -\lfloor\mu_\pm\rfloor+(\lfloor\mu_\pm\rfloor+1)$, which contradicts the fact that $(\mathbf{b})=0$ implies that $\mathbf{u}_2\cdot D=0$ due to  $k\omega(A)=m_--m_++\mathbf{u_2}\cdot D$. Since any Floer cylinder in a fiber that connects type III orbits and avoids the zero section is trivial, the claim follows. 
			
			 The gradient flow line of $f_D$ obtained by projecting $\mathbf{u}_2$ equals the concatenation of $\pi\circ c_0$ and $\pi \circ c_1$. Moreover, $u$ lies in the fiber over the concatenation point and does not intersect the zero section.  
			 By Lemma \ref{lem:constconti}, $\theta(\ev_-(u)(0))=\theta(\ev_+(u)(0))$, and therefore, by identifying $\ev_+(u)$ and $\ev_-(u)$, we can  concatenate $c_0$ and $c_1$, which yields a gradient flow line $c_1\#c_0$ of $f_{\partial E}$ from $q_-$ to $q_+$. It is nonconstant since $\pi(c_1\#c_0)$ connects $p_-$ to $p_+$ with $\ind_{f_D}(p_+)-\ind_{f_D}(p_-)=1$. However, 
			\[
			\ind_{f_{\partial E}}(q_+)-\ind_{f_{\partial E}}(q_-)= \ind_{f_D}(p_+)-\ind_{f_D}(p_-)+e_+-e_-=0.
			\]
			This contradiction proves that this case does not occur.
			
			\medskip

		We have established that $\Psi_i([x_-,\bar{x}_-])$ contains a unique  type III orbit equal to $\iota_i([x_-,\bar{x}_-])$. 
	An analogous argument applies to orbits of type II.
	\end{proof}

		We denote by $\widetilde{\mathcal{P}}^\crit_{\II_-}(H_{i})\subset \widetilde{\mathcal{P}}^\crit_{\II}(H_{i})$  the subset consisting of elements with winding number $m$ satisfying the condition $m\le -\frac{\rho}{\epsilon}\mu_i+C_2$ in Proposition \ref{prop:res}. We write $\widetilde{\mathcal{P}}^\crit_{\II_+}(H_{i}):=\widetilde{\mathcal{P}}^\crit_{\II}(H_{i})\setminus\widetilde{\mathcal{P}}^\crit_{\II_-}(H_{i})$. We abbreviate $\Phi_i:=\Psi_i\circ \jmath_i$ and 
		\[
		\Phi_{i_2,i_1}:=\Phi_{i_2-1}\circ\cdots\circ \Phi_{i_1}: \CF^*_{(-\infty,b)}(H_{i_1},J_{i_1})\to \CF^*_{(-\infty,b)}(H_{i_2},J_{i_2}).
		\]
		We also denote the canonical map into the direct limit by
			   \[
    \Phi_{\infty,i_1}:\CF^*_{(-\infty,b)}(H_{i_1},J_{i_1})\longrightarrow \varinjlim_{i\rightarrow\infty}\CF^*_{(-\infty,b)}(H_{i},J_{i}).
   \]

		\begin{cor}\label{cor:12van}
			Let ${i_0}$ be large so that $C_1 > -\frac{\rho}{\epsilon}\mu_{i_0}+C_2$ holds for the constants $C_1,\,C_2$ in Proposition \ref{prop:res}. Assume $\beta_{i_1}\in\CF^*_{(-\infty,b)}(H_{i_1},J_{i_1})$ consists of orbits of type $\I$ and $\II_+$. Then, for every sufficiently large $i_2\geq i_1$, $\Phi_{i_2,i_1}(\beta_{i_1})$ vanishes in $\CF^*_{(-\infty,b)}(H_{i_2},J_{i_2})$. 
		\end{cor}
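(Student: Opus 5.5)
Fix $b$ and work in $\CF^*_{(-\infty,b)}$. The plan is to show that along the one-ray \eqref{eq:one-ray}, chains made of type I and type $\II_+$ orbits stay inside the span of type I and $\II_+$ orbits, while their minimal action grows at least linearly in the number of steps. Since the window is bounded above by $b$, such chains must eventually be truncated to zero.

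\textbf{Step 1: the span of type I and $\II_+$ orbits is preserved.} Let $[x_-,\bar x_-]$ be of type I or $\II_+$ for $H_i$, and let $[x_+,\bar x_+]$ be a generator of $H_{i+1}$ occurring in $\Phi_i([x_-,\bar x_-])$. Continuation maps preserve the grading, and $\jmath_i$ is the identity. We rule out the other possible types of $x_+$ as follows.
\begin{itemize}
\item If $x_-$ is of type I, then $x_+$ cannot be of type III or IV, by Proposition \ref{prop:empty}.(a),(b).
\item If $x_-$ is of type I and $x_+$ is of type II, then Proposition \ref{prop:res}.(a) gives $m_+\ge C_1$. Since $i\ge i_0$ and $\mu_i$ is increasing, $C_1>-\frac{\rho}{\epsilon}\mu_{i+1}+C_2$, so $x_+$ is of type $\II_+$.
\item If $x_-$ is of type $\II_+$, then $x_+$ cannot be of type IV, by Proposition \ref{prop:empty}.(d). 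It cannot be of type III either: Proposition \ref{prop:res}.(b) would force $m_-\le-\frac{\rho}{\epsilon}\mu_i+C_2$, i.e.\ $x_-\in\II_-$.
\item If $x_-$ is of type $\II_+$ and $x_+$ is of type II, then Proposition \ref{prop:inc} (its type II analogue) shows the only type II output is $\iota_i([x_-,\bar x_-])$. It has the same winding number $m$, so it is again of type $\II_+$ because the threshold $-\frac{\rho}{\epsilon}\mu+C_2$ decreases in $\mu$.
\item The remaining case, $x_-$ of type $\II_+$ and $x_+$ of type I, does not arise: continuation cylinders cannot go from type II to type I orbits. We would argue this via Lemma \ref{lem:mp} applied near $\partial U_{\rho+\epsilon}$ (the maximum principle, using concavity there). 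If this is not directly available, we only need the action estimate of Step 2 for this transition as well, which we would check by the same intersection argument as Proposition \ref{prop:dec}.(b).
\end{itemize}

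\textbf{Step 2: each step raises the action by $\rho/k$.} By Proposition \ref{prop:dec}.(a),(b),(c), every surviving transition among types I and $\II_+$ raises the action by at least $\frac{\rho}{k}(\mu_{i+1}-\mu_i)=\frac{\rho}{k}$. Iterating gives
\[
\mathcal{A}_{H_{i_2}}(\Phi_{i_2,i_1}(\beta_{i_1}))\ge \mathcal{A}_{H_{i_1}}(\beta_{i_1})+\frac{\rho}{k}(i_2-i_1).
\]
The right-hand side exceeds $b$ once $i_2$ is large. In $\CF^*_{(-\infty,b)}$ all generators of action $\ge b$ are projected away, so $\Phi_{i_2,i_1}(\beta_{i_1})=0$.

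The main obstacle is Step 1. It requires Propositions \ref{prop:empty}, \ref{prop:res}, and \ref{prop:dec} to hold simultaneously at every step of the ray with a single pair of constants $C_1,C_2$. The paper handles this with the increasing stretching $J_{i+1}$ and the choice $\mu_{i+1}=\mu_i+1$. In addition, the exclusion of type II $\to$ type I transitions (or an action bound for them) has to be verified.
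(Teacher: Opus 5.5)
Your closure analysis in Step 1 is correct and matches the paper: types III, IV and $\II_-$ never occur as targets, and $\II_+\to\II$ happens only through $\iota_i$. The gap is the transition $\II_+\to\I$, which the paper explicitly allows (its proof lists $\II_+\to\I,\II_+$).

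Your maximum-principle exclusion does not go through. For continuation cylinders, Lemma \ref{lem:mp} requires $\partial_s(-\mathfrak r h_s'+h_s)\le 0$ near $\mathfrak r_0$. Between $H_{\mu_i}$ and $H_{\mu_{i+1}}$ this quantity instead increases with $\mu$ near $\partial U_{\rho+\epsilon}$: it equals $\frac{\mu}{k}\rho$ on the plateau and $\frac{\rho\mu}{\epsilon k}(1-\rho)$ on the linear part. Moreover, Lemma \ref{lem:ab} for continuation cylinders only gives $u|_{(-\infty,-1)\times S^1}\equiv x_-$, which says nothing about $s\in[-1,1]$.

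Your fallback fails too. For $[\mu_i,\II,q_-,m_-,0]\to[\mu_{i+1},\I,q_+,A]$, Lemma \ref{lem:poi} only gives $k\omega(A)=m_-+\mathbf u\cdot D\ge m_-$. Hence, up to the small errors in \eqref{eq:action_index},
\[
\mathcal A_{H_{\mu_{i+1}}}([x_+,\bar x_+])-\mathcal A_{H_{\mu_i}}([x_-,\bar x_-])\gtrsim \frac{\rho}{k}-\frac{|m_-|}{k}(1-\rho-\epsilon),
\]
which is less than $\frac{\rho}{k}$ because $m_-<0$. The index relation $2\tau\omega(A)=2m_-+\ind_{f_{\partial E}}(q_-)-\ind_{f_W}(q_+)+1$ does not rescue this. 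It gives $\omega(A)-\frac{m_-}{k}(\rho+\epsilon)\approx |m_-|\big(\frac{\rho+\epsilon}{k}-\frac{1}{\tau}\big)$, which is negative when $\rho+\epsilon<\rho_{\mathrm{crit}}$. So no per-step bound of $\frac{\rho}{k}$ follows from index and intersection data, and the linear growth in your Step 2 is not established. Note that Proposition \ref{prop:dec} has no $\II\to\I$ case either, so the paper's one-line appeal to it tacitly needs the same extra input.

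The fix is to estimate along the whole chain rather than step by step. By Remark \ref{rmk:zerocap}, cap each generator $x$ of $\beta_{i_1}$ by its fiber disk (the constant disk for type I), with winding $m_0\le 0$ ($m_0=0$ for type I). Consider a nonzero coefficient of $\Phi_{i_2,i_1}(x)$ at $y=[\mu_{i_2},\cdot,q,m,A]$, with $m=0$ for type I; by your Step 1, $y$ is of type I or $\II_+$. It comes from a concatenation of configurations $\mathbf u_1,\dots,\mathbf u_N$ with $\bar y\simeq\bar x^{\mathrm{fib}}\#\mathbf u_1\#\cdots\#\mathbf u_N$. The gradient segments sweep loops in type I or II orbit spaces, which avoid $D$. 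Applying Lemma \ref{lem:poi} to each piece therefore gives $m+k\omega(A)=\bar y\cdot D\ge m_0$. Using $\mu_{i+1}=\mu_i+1$ and $m\le 0$,
\[
\mathcal A(y)-\mathcal A(x)\gtrsim\frac{\rho}{k}(i_2-i_1)+\frac{m_0-m}{k}(1-\rho-\epsilon)\ge \frac{\rho}{k}(i_2-i_1)-\frac{|m_0|}{k}.
\]
The loss depends only on the initial generator, and $\beta_{i_1}$ is a finite sum. So for $i_2$ large, every generator in $\Phi_{i_2,i_1}(\beta_{i_1})$ has action $\ge b$ and is truncated away. This handles $\II_+\to\I$, and any number of returns to type I, without a maximum principle or a per-step bound.
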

		\begin{proof}
		By Propositions \ref{prop:empty}, \ref{prop:res}, and \ref{prop:inc}, the continuation maps send type I and $\II_+$ orbits only in the following direction:
		\[
		\I \to \I,\,\II_+,\qquad \II_+ \to \I, \II_+
		\]
			 Hence, by  Proposition \ref{prop:dec}, any orbit appearing in $\Phi_{i_2,i_1}(\beta_{i_1})$ has action greater than $b$.
		\end{proof}

		\begin{cor}\label{cor:corresp}
			For every  element $\beta\in\displaystyle{\varinjlim_{i\rightarrow\infty}}\CF^*_{(-\infty,b)}(H_{i},J_{i})$, there exists $l_0\in \mathbb{N}$ such that, for any $l\geq\l_0$, there exists $\beta_{l}^{\III}\in \CF^*_{(-\infty,b)}(H_{l},J_{l})$ that consists only of type III orbits and satisfies $\Phi_{\infty,l}(\beta_{l}^{\III})=\beta$.
		\end{cor}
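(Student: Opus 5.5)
We prove Corollary \ref{cor:corresp} by representing $\beta$ at a finite stage and then pushing the representative forward until only type III orbits survive. Since $\beta$ lies in the direct limit, there exist $i_1$ and $\beta_{i_1}\in\CF^*_{(-\infty,b)}(H_{i_1},J_{i_1})$ with $\Phi_{\infty,i_1}(\beta_{i_1})=\beta$. Replacing $\beta_{i_1}$ by $\Phi_{i,i_1}(\beta_{i_1})$ for larger $i$, we may assume $i_1\ge i_0$, where $i_0$ is as in Corollary \ref{cor:12van}. Then $C_1>-\frac{\rho}{\epsilon}\mu_{i}+C_2$ for all $i\ge i_1$.

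\textbf{Decomposition.} We split $\beta_{i_1}=\beta^{\I,\II_+}+\beta^{\II_-}+\beta^{\III}+\beta^{\IV}$ according to orbit type. Note that $\CF_{(-\infty,b)}$ is a finitely generated complex, so each summand is a finite sum. We then track how each summand evolves under the continuation maps $\Phi_i$, using the restrictions established earlier.

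\textbf{Type I and $\II_+$.} By Corollary \ref{cor:12van}, $\Phi_{i_2,i_1}(\beta^{\I,\II_+})=0$ for $i_2$ large, so this summand dies.

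\textbf{Type IV.} By Proposition \ref{prop:empty}(b,d,e,f), no orbit of type I, II, III or IV maps to type IV under consecutive continuation maps; case (f) applies because $\mu_{i+1}=\mu_i+1$. Hence type IV components are never created after stage $i_1$. For an existing type IV component $[\mu_i,\IV,q,A]$ we have
\[
\mathcal{A}\approx \omega(A)+\tfrac{\mu_i}{k}\rho.
\]
Its image under $\Phi_i$ has no type IV part and consists of orbits of action larger than its own. I expect to show, using the action/index bookkeeping of Proposition \ref{prop:dec}, that orbits of types I, II and IV have actions drifting up by roughly $\frac{\rho}{k}$ per step. After finitely many steps the image then exceeds $b$, except for the type III components it produces, which we retain.

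\textbf{Type $\II_-$.} Type $\II_-$ orbits map to type III (allowed) and to type II, with Proposition \ref{prop:inc} giving the identity on the II component up to other terms. The key point is that the threshold $-\frac{\rho}{\epsilon}\mu_i+C_2$ decreases as $i$ grows while $m$ stays fixed. So a fixed winding number $m$ that is $\II_-$ at stage $i$ becomes $\II_+$ after finitely many steps. It then falls under Corollary \ref{cor:12van} and dies.

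\textbf{Type III.} By Propositions \ref{prop:empty}(c,e) and \ref{prop:inc}, a type III orbit maps to its $\iota_i$-image plus orbits of type I or II only; types II and IV cannot be reached from III by (c) and (e). The type I and II leftovers are then handled by the previous two steps.

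\textbf{Conclusion and main obstacle.} Iterating, and using that $\CF_{(-\infty,b)}$ has finitely many generators at each stage, we obtain $l_0$ such that $\Phi_{l,i_1}(\beta_{i_1})$ consists only of type III orbits for every $l\ge l_0$. We set $\beta_l^{\III}:=\Phi_{l,i_1}(\beta_{i_1})$, which satisfies $\Phi_{\infty,l}(\beta_l^{\III})=\beta$. The main obstacle is termination: one must show that the cascade of newly created type I and II terms, generated repeatedly from type III and $\II_-$ orbits, does not regenerate indefinitely below action $b$. I would control this by a uniform bound. Every type I or II orbit produced at stage $i$ has winding data such that within a bounded number of steps it becomes $\II_+$ or type I, and each step raises its action by at least $\frac{\rho}{k}$ (Proposition \ref{prop:dec}). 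Meanwhile, the III-part remains fixed by Proposition \ref{prop:inc}, so the only contributions below action $b$ eventually come from type III orbits.
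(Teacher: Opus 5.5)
\textbf{Gap in the final step.} Your treatment of types IV and $\II_-$ is fine. The threshold-crossing argument for $\II_-$ is a valid substitute for the paper's use of Proposition \ref{prop:dec}(c). The problem is the conclusion. You set $\beta_l^{\III}:=\Phi_{l,i_1}(\beta_{i_1})$, so you need the \emph{entire} pushforward to be purely of type III for all large $l$.

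The type III part never disappears: by Proposition \ref{prop:inc} it is carried along by $\iota_l$. None of Propositions \ref{prop:empty}--\ref{prop:dec} forbids $\III\to\I$; only $\I\to\III$, $\III\to\II$ and $\III\to\IV$ are excluded. So a type III orbit maps to its $\iota$-image plus type I orbits, and the stable III part emits fresh type I orbits at \emph{every} stage.

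Your termination mechanism (action rises by at least $\frac{\rho}{k}$ per step) only controls orbits that already exist. Proposition \ref{prop:dec} does not even cover $\III\to\I$ or $\II\to\I$. It gives no bound on the action of these newly created orbits. So, as written, you cannot rule out type I terms below $b$ at every stage $l$.

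A smaller inaccuracy: $\CF^*_{(-\infty,b)}$ is not finitely generated over $\Z$, since each orbit has infinitely many cappings below $b$. It is finitely generated in each degree, and $\beta_{i_1}$ is a finite sum, which is what you actually use.

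\textbf{Two ways to close it.} To keep your stronger claim you need an extra index--action estimate. Suppose $[\mu_l,\III,q,m,0]$ and $[\mu_{l+1},\I,q',A]$ have equal index. Then $2c_1^{TM}(A)=2m+\ind_{f_{\partial E}}(q)-\ind_{f_W}(q')\geq 2m-2n$. Hence the output has action at least roughly $\frac{m-n}{\tau}+\frac{\rho}{k}\mu_{l+1}$, which exceeds $b$ for large $l$. This works once you also show the III part stabilizes to a finite set: new III generators come only from IV and $\II_-$ components, and these disappear.

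The statement, however, does not require $\beta_l^{\III}$ to equal the pushforward, and the paper sidesteps the issue. Once the IV and $\II_-$ components are gone, write $\Phi_{l,i_1}(\beta_{i_1})=\beta_l^{\I}+\beta_l^{\II_+}+\beta_l^{\III}$. Then apply Corollary \ref{cor:12van} \emph{at stage $l$} to $\beta_l^{\I}+\beta_l^{\II_+}$. This gives $\Phi_{\infty,l}(\beta_l^{\I}+\beta_l^{\II_+})=0$, hence $\Phi_{\infty,l}(\beta_l^{\III})=\beta$. The type I orbits that $\beta_l^{\III}$ emits at later stages are irrelevant, because only its class in the direct limit matters.
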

		\begin{proof}
			We take any $i_1\in \mathbb{N}$ and $\beta_{i_1}\in \CF^*_{(-\infty,b)}(H_{i_1},J_{{i_1}})$ such that $\Phi_{\infty,i_1}(\beta_{i_1})=\beta$. For any  $i_2>i_1$, by Proposition \ref{prop:empty}, $\beta_{i_2}:=\Phi_{i_2,i_1}(\beta_{i_1})\in \CF^*_{(-\infty, b)}(H_{i_2},J_{i_2})$ does not have type IV orbits. Also, by Propositions \ref{prop:empty}, \ref{prop:res}, and \ref{prop:inc}, any composition of $\Phi_i$ does not send orbits of type $\I$, $\II_+$, or $\III$ to orbits of type $\II_-$. Therefore, for any $i_3>i_2$, any type $\II_-$ orbit in $\Phi_{i_3,i_2}(\beta_{i_2})$ is the image of a type $\II_-$ orbits in $\beta_{i_2}$. This together with Proposition \ref{prop:dec}.(c) implies that, for any sufficiently large $l\geq i_3$, type $\II_-$ orbits in $\Phi_{l,i_2}(\beta_{i_2})$ have action greater than $b$, and consequently, $\Phi_{l,i_2}(\beta_{i_2})\in \CF^*_{(-\infty, b)}(H_{l},J_{l})$ does not have type $\II_-$ orbits. We write $\Phi_{l,i_2}(\beta_{i_2})=\beta_l^{\I}+\beta_l^{\II_+}+\beta_l^{\III}$, where $\beta_l^{\III}$ consists of type III orbits, and similarly for others. By Corollary \ref{cor:12van}, $\Phi_{\infty,l}(\beta_l^{\I}+\beta_l^{\II_+})=0$.   Hence, $\beta_l^{\III}$ satisfies ${\Phi}_{\infty,l}(\beta_{l}^{\III})=\beta$.
		\end{proof}

	\subsection{Explicit description of the boundary operator}\label{sec:boundary}
	
	To describe the boundary operator of $\SC^*_M(\partial U_\rho,\mathcal{H},\mathcal{J})$, we consider the Morse cochain complex of $f_D$ with coefficients in $\Gamma$,
	\begin{equation}\label{eq:morse_D}
		\CM^*(f_D;\Gamma)=\bigoplus_{p\in \crit f_D} \Gamma\langle p\rangle,
		\qquad
			\Gamma = \begin{cases}
		\Z  & \text{if }\; \omega(\pi_2(M))=0,\\
		\Z[T,T^{-1}] &\text{if }\; \omega(\pi_2(M))\neq0.
	\end{cases}
	\end{equation}
	The degree of $T$ is $2(\tau-k)m_M$, where $m_M$ is the positive generator of $\omega(\pi_2(M))=m_M\Z$. Let $m_D$ denote the nonnegative generator of $\omega_D(\pi_2(D))=m_D\Z$. Recall that $c_1^{TD}=(\tau -k)[\omega_D]_\Z$. Thus, $(\tau-k)m_M$ divides $(\tau-k)m_D$, the minimal Chern number of $D$, and they coincide when $\pi_2(D)\to \pi_2(M)$ is an isomorphism. We write 
	 \[
	 d_D:\CM^*(f_D;\Gamma) \longrightarrow \CM^{*+1}(f_D;\Gamma)
	 \]
	 for the boundary operator of this complex. The cohomology of this chain complex is isomorphic to the quantum cohomology $\QH^*(D;\Gamma)\cong\H^*(D;\Z)\otimes \Gamma$  of $(D,\omega_D)$.
	
	Let $[\omega_D]_\Z\in \H^2(D;\Z)$ be the pullback of $[\omega]_\Z$ under the inclusion $D\subset M$.
	The chain map for the quantum cup product with $k[\omega_D]_\Z$ is defined as follows. 
	 Let $W^s(p)$ and $W^u(p)$ be the stable and unstable manifolds of the gradient flow of $f_D$ at $p\in\crit f_D$, respectively. Let $\Sigma\subset D$ be a closed codimension two submanifold representing $\PD(k[\omega_D]_\Z)$. Then, the $\Gamma$-linear map 
	\[
	\begin{split}
	\delta^{k[\omega_D]_\Z}_1:\CM^*(f_D;\Gamma)&\longrightarrow \CM^{*+2}(f_D;\Gamma)\\
	p_-&\longmapsto \sum_{p_+}   \#(W^u(p_-)\cap W^s(p_+)\cap \Sigma)\,p_+ ,
	\end{split}
	\]
	where the sum ranges over all $p_+\in\crit f_D$ with $\ind_{f_D}(p_+)=\ind_{f_D}(p_-)+2$, induces the classical cup product with $k[\omega_D]_\Z$ on cohomology. To include the quantum contribution, let ${\mathcal{N}}^*(p_-,p_+;A;J_D)$ be the moduli space of simple $J_D$-holomorphic spheres $v:\mathbb{CP}^1\to D$ such that $[v]=A\in\pi_2(D)\setminus\{0\}$, and $v(0)\in W^u(p_-)$ and $v(\infty)\in W^s(p_+)$. There is a free $\C^*$-action on the moduli space, and we denote the quotient space by $\overline{\mathcal{N}}^*(p_-,p_+;A;J_D)$. We consider a $\Gamma$-linear map
	\[
	\begin{split}
	\delta^{k[\omega_D]_\Z}_2:\CM^*(f_D;\Gamma)&\longrightarrow \CM^{*+2}(f_D;\Gamma)\\
	p_-&\longmapsto \sum_{(p_+,A)}k\omega_D(A) \cdot \# \overline{\mathcal{N}}^*(p_-,p_+;A;J_D)\,p_+ T^{\frac{c_1^{TD}(A)}{(\tau-k)m_M}},
	\end{split}
	\]
	where the sum ranges over all $(p_+,A)\in\crit f_D\times (\pi_2(D)\setminus\{0\})$ with $\ind_{f_D}(p_+)+2c_1^{TD}(A)=\ind_{f_D}(p_-)+2$. Note that each element in $\overline{\mathcal{N}}^*(p_-,p_+;A;J_D)$ has intersection number with $\Sigma$ equal to $k\omega_D(A)$. Then 
	\[
	\delta^{k[\omega_D]_\Z}:=\delta^{k[\omega_D]_\Z}_1+\delta^{k[\omega_D]_\Z}_2
	\] 
	is a chain map that defines the quantum cup product with $k[\omega_D]_\Z$ on cohomology. 

	To state the main result of this section, we also need the moduli space $\overline{\mathcal{M}}_1(p,A)$ of equivalence classes of simple $J$-holomorphic spheres $v:\mathbb{C}P^1\rightarrow M$ such that $[v]=A\in\pi_2(M)$ and $v$ intersects $D$ at a single point $v(\infty)=p\in D$ with multiplicity $v\cdot D=k\omega(A)$. Two curves are equivalent if one is a reparametrization of the other by a biholomorphic map on $\mathbb{CP}^1$ fixing $\infty$. 
			The dimension of this moduli space equals $2n+2c_1^{TM}(A)-2k\omega(A)-4-(2n-2)$, which is zero exactly when $(\tau-k)\omega(A)=1$. Since $(\tau-k)m_M$ is an integer, $(\tau-k)\omega(A)=1$ yields $\omega(A)=m_M$. We denote the count of such curves by 
			\begin{equation}\label{eq:N_1}
				N_1(M,D):= 
				\begin{cases}
					0 &  (\tau-k)m_M\geq 2\\[.5ex]
					\displaystyle\sum_{A}\#\overline{\mathcal{M}}_1(p,A) & (\tau-k)m_M=1,
				\end{cases}
			\end{equation}
	where the sum ranges over all $A\in\pi_2(M)$ with $\omega(A)=m_M$. This quantity is independent of the choice of $p$. Note that $N_1(M,D)\neq 0$ only when $\deg T=2$. If $\tau>0$ is an integer, $N_1(M,D)\neq 0$ implies $\tau=k+1$ and $m_M=1$. We refer to \cite{DL19a,DL19b,AK23,BKK24} for a detailed account on the discussion above, including the orientations of the relevant moduli spaces.

	Recall that for each critical point $p$ of $f_D$, there are two corresponding critical points $\hat{p},\check{p}$ of $f_{\partial E}$, see \eqref{eq:hat_check_p}. For each $m\in\Z$, we have two canonical inclusions
	\begin{equation}\label{eq:incl}
			\begin{split}
		\check{\iota}_m:\CM^*(f_D;\Gamma)&\rightarrow \SC^{*-n+2m}_M(\partial U_\rho,\mathcal{H},\mathcal{J}), \\ p T^\ell &\mapsto [ \check{p},m-k \ell m_M,\ell A],
		\\[.5ex]
		\hat{\iota}_m:\CM^*(f_D;\Gamma)&\rightarrow \SC^{*-n+2m+1}_M(\partial U_\rho,\mathcal{H},\mathcal{J}),\\
		p T^\ell &\mapsto [ \hat{p},m-k\ell m_M,\ell A].
	\end{split}
	\end{equation}
	where $A\in \pi_2(M)/\ker \omega$ is the class with $\omega(A)=m_M$.
	
	\begin{remark}
		Let $v:\mathbb{CP}^1\to D\subset M$ satisfy $v(\infty)=p$ and $\omega_D(v)=m_D$. Then $2c_1^{TD}(v)=2(\tau-k)m_D$ equals the degree of $T^{\frac{m_D}{m_M}}$. We lift $v|_{\mathbb{CP}^1\setminus\{\infty\}}$ to $\tilde v:\mathbb{CP}^1\setminus\{\infty\}\to E\setminus D$ so that $\tilde v$ is a capping disk of a periodic orbit $x$ that winds around the fiber circle over $p$ with multiplicity $-k\omega_D(v)=-km_D$. Moreover, $\tilde v$ is homotopic to the gluing of $\bar{x}^\mathrm{fib}$ with a sphere in $M$ homotopic to $v$. This lifting corresponds to the maps in \eqref{eq:incl} with $m=0$ and $\ell=\frac{m_D}{m_M}$. To be precise, $(p,v)$ corresponds to $pT^{\frac{m_D}{m_M}}$ and the lift $[x,\tilde v]$ of $(p,v)$ satisfying $x(0)=\check p$ equals $\check{\iota}_m(pT^\frac{m_D}{m_M})=[\check{p},m-km_D,[v]]$.
	\end{remark}
	
	 By the following theorem, $\check{\iota}_m$ is a chain map, whereas $\hat{\iota}_m$  is not necessarily. 
	 
	\begin{theorem}\label{thm:main}
		The boundary operator $d_{\SC}$ in $\SC^*_M(\partial U_\rho,\mathcal{H},\mathcal{J})$ can be written as
		\begin{equation*}
			d_{\SC}=d_0+d_1+d_2+d_W,
		\end{equation*}
		where 
		\[
		\begin{cases}
			d_0[\check{p},m,0]=0 \qquad\qquad\qquad  & d_0[\hat{p},m,0]=[\check{p},m+1,0]\\[.5ex]
			d_1[\check{p},m,0]=-\check{\iota}_m(d_D(p)) &
				d_1[\hat{p},m,0]=\hat{\iota}_m(d_D(p)) \\[.5ex]
			d_2[\check{p},m,0]=0 &
				d_2[\hat{p},m,0]=\check{\iota}_m(\delta^{k[\omega_D]_\Z}(p))\\[.5ex]
			d_W[\check{p},m,0]=0 &
				d_W[\hat{p},m,0]=kN_1(M,D) [\check{p},m-km_M,A],
		\end{cases}
		\]
		where in the last case $A \in \pi_2(M)/\ker \omega$ is the class satisfying $\omega(A) = m_M$. (The precise definition of this decomposition of $d_\SC$ is explained in the proof.)
	\end{theorem}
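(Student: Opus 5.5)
By Theorem \ref{thm:equal}, computing $\langle d_\SC\xi_+,\xi_-\rangle$ for $\xi_\pm\in\widetilde{\mathcal P}^\crit_{\III}$ reduces to counting index-one flow lines with cascades for a single $(H_i,J_i)$ with $i$ large and $J_i$ sufficiently stretched. Both asymptotic orbits are of type III. By Remark \ref{rmk:zerocap} we may take $\bar x_-=\bar x_-^\mathrm{fib}$. We then use the neck-stretching correspondence of Section \ref{sec:neckstr} to replace each such $\mathbf u$ by a pair $(\mathbf u_1,\mathbf u_2)$.

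Write $[x_-,\bar x_-]=[q_-,m_-,0]$ and $[x_+,\bar x_+]=[q_+,m_+,A]$, with $e_\pm\in\{0,-1\}$. Running the index bookkeeping of Proposition \ref{prop:inc} with total index difference $1$ gives
\[
1=(\mathbf a)+(\mathbf b)+(\mathbf c)+(e_+-e_-),
\]
where $(\mathbf a),(\mathbf b),(\mathbf c)\ge 0$ and $(\mathbf b),(\mathbf c)$ are even. Here $(\mathbf b)=2(m_+-m_-+k\omega(A))=2\,\mathbf u_2\cdot D+2\sum m_i$ by \eqref{eq:winding}, and $(\mathbf c)=2(\tau-k)\omega(\bar{\mathbf u}_1)$. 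The decomposition of $d_\SC$ is \emph{defined} by the following case split, each case giving one term of the theorem.

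\textbf{Case $d_0$:} $(\mathbf a)=(\mathbf b)=(\mathbf c)=0$ and $(e_-,e_+)=(-1,0)$, so $q_-=\hat p$ and $q_+=\check p$. Remark \ref{rem:index_0_proj} places everything in one fiber. Winding is nondecreasing, so $m_+=m_-+1$ requires a single zero of $\mathbf u_2$ at the zero section, with no type IV orbit involved (by Lemma \ref{lem:iaa}). We then count fiberwise Floer cylinders in $\C$ from a type III orbit of winding $m$ to one of winding $m+1$. This is an explicit rotationally symmetric computation in the spirit of \cite[Proposition 4.7]{BKK24}, yielding one cylinder modulo $\R$ with sign $+1$ after fixing $f_x$ as in Lemma \ref{lem:addR2}.

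\textbf{Case $d_1$:} $(\mathbf a)=1$, $(\mathbf b)=(\mathbf c)=0$, $e_+=e_-$. Remark \ref{rem:index_0_proj} gives $N=0$, so $\pi\circ\mathbf u_2$ is a gradient line of $f_D$. As in Case 2 of Proposition \ref{prop:inc}, every fiber cylinder is trivial, so $\mathbf u$ is a gradient line of $f_{\partial E}$ lifting one of $f_D$ between $\check p$'s or between $\hat p$'s. Its count is $\pm\langle d_Dp_-,p_+\rangle$, with the sign $-1$ on the $\check\iota$ component coming from the Morse--Bott orientation convention of \cite[Section 5]{BKK24}.

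\textbf{Case $d_2$:} $(\mathbf a)=2$ or $(\mathbf b)=2$ with $(\mathbf c)=0$ and $(e_-,e_+)=(-1,0)$. If $(\mathbf a)=2$ and $(\mathbf b)=0$, then $\pi\circ\mathbf u_2$ is a single simple sphere of class $B$ with $\ind$ shift $2-2c_1^{TD}(B)$. The lift through $\hat p\to\check p$ multiplies the count $\#\overline{\mathcal N}^*$ by the number of fiber zeros, which is $k\omega_D(B)$ via \eqref{eq:winding}. This reproduces $\delta_2$, with the powers of $T$ matching via the identification in \eqref{eq:incl}. If $(\mathbf b)=2$ and $(\mathbf a)=0$, the Morse line together with the fiber cylinder passing through $\Sigma$-type points reproduces $\delta_1$. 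To see this, either choose $\Sigma$ as the locus where the fiber data is twisted, or use the standard Gysin/Euler class identification for the circle bundle with Euler class $k[\omega_D]_\Z$.

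\textbf{Case $d_W$:} $(\mathbf c)=2$, forcing $(\tau-k)m_M=1$ and $(\mathbf a)=(\mathbf b)=0$. Then $\bar{\mathbf u}_1$ is a single sphere counted by $N_1(M,D)$ via Lemma \ref{lem:remsing}. Its puncture asymptotic to an orbit of multiplicity $km_M$ lies on a fiber cylinder in $\mathbf u_2$, giving winding shift $-km_M$. The factor $k$ arises from the $k$ choices of matching the asymptotic marker on the $k$-fold orbit.

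The cases with $e_+-e_-=1$ and $q_-=\check p$ would all need odd total index and are excluded by parity, which explains why $d_\bullet[\check p,m,0]$ is nonzero only in $d_1$. Finally, the count of $\xi_+$ with general capping $A$ follows by $\Gamma$-linearity, using Remark \ref{rmk:zerocap}.

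I expect the main obstacle to be the multiplicative factors and signs in the $d_2$ and $d_W$ cases. These are the factor $k\omega_D(B)$, the $\Sigma$-count in $\delta_1$, and the factor $k$ in $d_W$. I would try to import these verbatim from the analogous computations in \cite{BKK24,DL19b}, where the same configurations appear for $M\setminus D$. The only new input here is that the fiber Hamiltonian is convex at type III orbits, and this affects only the $e_\pm$ conventions.
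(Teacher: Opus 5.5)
Your overall strategy matches the paper's: reduce to type III generators via Theorem \ref{thm:equal}, neck-stretch, split by the index identity, and import the counts from \cite{BKK24,DL19b}. However, the index bookkeeping behind your case split is wrong, and since the case split is the whole content of the theorem, this is a genuine gap.

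By \eqref{eq:hat_check_p}, $\hat p$ has $e=0$ and $\check p$ has $e=-1$. So a transition $\hat p_-\to\check p_+$ has $e_+-e_-=-1$ and budget $(\mathbf a)+(\mathbf b)+(\mathbf c)=2$. A transition $\check p_-\to\hat p_+$ has $e_+-e_-=+1$ and forces $(\mathbf a)=(\mathbf b)=(\mathbf c)=0$. Your assignments are not even self-consistent: with $(e_-,e_+)=(-1,0)$, your identity leaves no room for $(\mathbf a)=2$ or $(\mathbf b)=2$ in your $d_2$ case.

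\begin{itemize}
\item Your $d_0$ configuration ($A=0$, $m_+=m_-+1$, one crossing of the zero section) actually has $(\mathbf a,\mathbf b,\mathbf c)=(0,2,0)$, not $(0,0,0)$.
\item In fact $(\mathbf b)=2\,\mathbf u_2\cdot D$ exactly. The $\sum m_i$ in your formula cancels against $k\omega(\bar{\mathbf u}_1)$, which you dropped from $\omega(A)$; compare \eqref{eq:class_A}.
\item Consequently your claim that the subcase $(\mathbf b)=2$, $(\mathbf a)=0$ reproduces $\delta_1$ cannot hold. By Remark \ref{rem:index_0_proj}, $(\mathbf a)=0$ forces a constant projection to $D$, so there is no base Morse line at all. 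That configuration is precisely the $d_0$ fiber cylinder.
\end{itemize}

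The correct list is as follows.
\begin{itemize}
\item Same type, $(1,0,0)$: this gives $d_1$.
\item $\hat p\to\check p$ with $(2,0,0)$: this gives $d_2$.
\item $\hat p\to\check p$ with $(0,2,0)$: this gives $d_0$.
\item $\hat p\to\check p$ with $(0,0,2)$: this gives $d_W$.
\item $\check p\to\hat p$ with $(0,0,0)$: this contributes zero.
\end{itemize}

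What your proposal is missing is the actual source of $\delta_1$. It is the subcase $(\mathbf a)=2$, $N=0$. There $\mathbf u_2$ is a bare gradient line of $f_{\partial E}$ from $\hat p_-$ to $\check p_+$, covering an index-two gradient line of $f_D$. One needs the Gysin/Euler-class count \cite[Lemma 5.5]{BKK24} showing that the lifts hitting the prescribed endpoints number $\#(W^u(p_-)\cap W^s(p_+)\cap\Sigma)$. Your $(\mathbf a)=2$ analysis only treats $N=1$.

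Likewise, $d_0[\check p,m,0]=0$ is not a parity exclusion. The residual budget for $\check p_-\to\hat p_+$ is $0$, which is even, and these configurations do exist: they are the two gradient lines from $\check p$ to $\hat p$ in the fiber circle, and they cancel in sign. The correct reason $d_2$ and $d_W$ vanish on $\check p$ is that $(\mathbf a)=2$ or $(\mathbf c)=2$ forces $e_-=0$.

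Finally, in the $\delta_2$ subcase, $(\mathbf b)=0$ means $\mathbf u_2\cdot D=0$, so the lift has no fiber zeros. The factor $k\omega_D(B)$ is instead the winding change $m_--m_+$ of the lift in $E\setminus D$, by \eqref{eq:winding}. Equivalently, it is the number of lifts in the $S^1$-family compatible with the angular positions of $\hat p_-$ and $\check p_+$.
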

	\begin{proof}

		By Theorem \ref{thm:equal}, it suffices to investigate elements 
$\mathbf{u} \in {\mathcal{M}}([x_-,\bar{x}_-],[x_+,\bar{x}_+],{H}_i,J_i)$ 
for $[x_\pm,\bar{x}_\pm] \in \widetilde{\mathcal{P}}_{\mathrm{III}}^{\mathrm{crit}}(H_{i})$. 
Furthermore, as discussed in Section \ref{sec:neckstr}, analyzing $\mathbf{u}$ is equivalent 
to studying its corresponding pair $(\mathbf{u}_1,\mathbf{u}_2)$ obtained via neck stretching. 
 As before, we write $[x_-,\bar{x}_-]=[\mu_i,\III,q_-,m_-,0]$ and $[x_+,\bar{x}_+]=[\mu_i,\III,q_+,m_+,A]$, and deduce
			\begin{align*}
				1&=\mu([x_+,\bar{x}_+])-\mu([x_-,\bar{x}_-])\\
				&=\underbrace{(\ind_{f_{D}}(p_+)-\ind_{f_D}(p_-)	+2c_1^{TD}([\pi\circ\mathbf{u}_2]) }_{:=(\mathbf{a}) \geq 0}
				+\underbrace{2(m_+-m_-+k\omega(A))}_{:=\mathbf{(b)}\geq 0} \\
				&\quad +\underbrace{2(\tau-k)\omega(\bar{\mathbf{u}}_1)}_{:=\mathbf{(c)}\geq0} +\underbrace{(e_+-e_-)}_{:=\mathbf{(d)}\geq -1}.
			\end{align*}
	where $p_\pm:=\pi(q_\pm)\in D$.

			The contribution to the boundary operator given by pairs $(\mathbf{u}_1,\mathbf{u}_2)$ with 
$\mathbf{u}_1 \neq \emptyset$ is denoted by $d_W$. In the case where 
$\mathbf{u}_1 = \emptyset$, we project $\mathbf{u}_2$ to $D$. (This can be thought of as 
the projection of $\mathbf{u}$ to $D$, which is well-defined because $\mathbf{u}_1 = \emptyset$ 
implies that $\mathbf{u}$ is contained in $\mathcal{U}_{\rho+2\epsilon} \subset M$.)
This projection induces an element in $\mathcal{N}_N^*(p_-,p_+;[\pi\circ\mathbf{u}_2];J_D)$, 
and thus the index difference 
$(\mathrm{ind}_{f_D}(p_+) + 2c_1^{TD}([\pi\circ\mathbf{u}_2])) - \mathrm{ind}_{f_D}(p_-)$ 
is nonnegative. We denote by $d_0$, $d_1$, and $d_2$ the contributions 
to $d_{\mathrm{SC}}$ arising from pairs $(\mathbf{u}_1 = \emptyset,\mathbf{u}_2)$ with index difference $0$, $1$, and $2$, respectively.

	We now investigate all possible cases separately. 
		
		\medskip
		
		\noindent Case 1) ($\mathbf{a,b,c,d})=(0,2,0,-1)$; this case corresponds to $d_0[\hat{p},m,0]=[\check{p},m+1,0]$.
		
		\medskip
		
			Since $\mathbf{(c)}=0$,  $\mathbf{u}_1=\emptyset$. Since $\mathbf{(a)}=0$, by Remark \ref{rem:index_0_proj}, we have $N=0$, $[\pi\circ\mathbf{u}_2]=0$ and $p:=p_+=p_-$.
			 Moreover, $\mathbf{u}_2$ is contained in the fiber $\widehat{E}_p$ and $A=[\bar{x}_-^\mathrm{fib}\#\mathbf{u}_2\#(-\bar{x}_+^\mathrm{fib})]=0$. Since $\mathbf{(b)}=2$, $m_+=m_-+1$. 	
			Hence, $\mathbf{u}_2$ has the form $(c_0,u,c_1)$, where $c_0$ is constant at $q_-=\hat{p}$ and $c_1$ is constant at $q_+=\check{p}$ by $\mathbf{(d)}=-1$.

			Note that $u$ is a Floer cylinder contributing to the boundary operator of the Floer complex for $H_i|_{\widehat{E}_{p}}$. From the computation of the Floer homology of $H_i|_{\widehat{E}_{p}}$ (e.g.~vanishing of the Rabinowitz Floer homology of $\C$), we see that the count of such $u$ equals 1. This proves  
			$d_0[\hat{p},m,0]=[\check{p},m+1,0]$.
		
			\medskip

		\noindent Case 2) ($\mathbf{a,b,c,d})=(0,0,0,1)$; this case corresponds to $d_0[\check{p},m,0]=0$.
			
			\medskip
			
			Since $\mathbf{(a)=(c)}=0$, as in Case 1, $\mathbf{u}_1=\emptyset$ and  $\mathbf{u}_2$ is contained in the fiber over $\widehat{E}_{p}$, where $p:=p_+=p_-$. Moreover, $\mathbf{(b)}=0$ and $A=[\bar{x}_-^\mathrm{fib}\#\mathbf{u}_2\#(-\bar{x}_+^\mathrm{fib})]=0$ imply that $m_-=m_+$. Therefore, $\mathcal{A}_{H_i}([x_-,\bar{x}_-])=\mathcal{A}_{H_i}([x_+,\bar{x}_+])$, which implies that $\mathbf{u}_2$ does not have any Floer cylinder. Thus $\mathbf{u}_2$ consists of a single gradient flow line of $f_{\partial E}$ in the fiber $\partial E_{p}\cong S^1$ connecting $\check{p}$ to $\hat{p}$. Since such gradient flow lines appear in pairs with opposite signs, they make a trivial contribution to  $d_\SC$.
						
			\medskip
			
			\noindent Case 3) ($\mathbf{a,b,c,d})=(1,0,0,0)$; this case corresponds to $d_1$.
		
			\medskip
			
			We again have $\mathbf{u}_1=\emptyset$ by $\mathbf{(c)}=0$.
			By Remark \ref{rem:index_0_proj}, $\mathbf{(a)}=1$ implies $N=0$. Thus, every Floer cylinder in $\mathbf{u}_2$ is contained in a fiber of $\widehat{E}$. Moreover, the projection of $\mathbf{u}_2$ to $D$ gives a gradient flow line of $f_D$ connecting $p_-$ to $p_+$. Since $\omega_D(\pi(\bar{x}_-^\mathrm{fib}\#\mathbf{u}_2\#(-\bar{x}_+^\mathrm{fib})))=0$, we have $\omega(\bar{x}_-^\mathrm{fib}\#\mathbf{u}_2\#(-\bar{x}_+^\mathrm{fib}))=0$ due to $\omega=\pi^*\omega_D+\frac{1}{k}d(r^2\alpha)$ on $E$. Thus, $\mathbf{(b)}=0$ yields $m_+=m_-$.
			This again implies that $\mathcal{A}_{H_i}([x_-,\bar{x}_-])=\mathcal{A}_{H_i}([x_+,\bar{x}_+])$, and in turn $\mathbf{u}_2$ contains no Floer cylinder.
			 Thus, $\mathbf{u}_2=(c_0)$ consists of a single gradient flow line $c_0$ of $f_{\partial E}$. The condition $\mathbf{(d)} = 0$ implies that the asymptotic ends of $c_0$ are either $\hat{p}_-$ and $\hat{p}_+$, or $\check{p}_-$ and $\check{p}_+$. By \cite[Proposition 5.1]{BKK24},
			 gradient flow lines of $f_{\partial E}$ connecting $\check{p}_-$ to $\check{p}_+$ (resp.~$\hat{p}_-$ to $\hat{p}_+$) are in sign-preserving (resp.~sign-reversing) bijection with gradient flow lines of $f_D$ from $p_-$ to $p_+$. This establishes that Case 3 corresponds to $d_1$.

			\medskip
			
			\noindent Case 4) ($\mathbf{a,b,c,d})=(2,0,0,-1)$; this case corresponds to $d_2$.
			
			\medskip
			
			We have $\mathbf{u}_1=\emptyset$ by $(\mathbf{c})=0$. By Remark \ref{rem:index_0_proj}, $\mathbf{(a)}=2$ implies that $N$ is either 0 or 1. When $N=0$, arguing as in Case 3, we know that $\mathbf{u}_2$  consists of a single gradient flow line of $f_{\partial E}$. We claim that, when $N=1$, $\mathbf{u}_2$ contains only one Floer cylinder. Since $\mathbf{(b)}=0$, $\mathbf{u}_2$ has zero intersection number with the zero-section $D$. Thus, no Floer cylinder in $\mathbf{u}_2$ admits asymptotic orbits of type IV since otherwis, by Lemmas \ref{lem:poi} and \ref{lem:iaa}, the intersection number of $\mathbf{u}_2$ with $D$ is at least $ -\lfloor\mu_i\rfloor+(\lfloor\mu_i\rfloor+1)=1$. Moreover, $\mathbf{u}_2$ does not have type II orbits as (intermediate) asymptotic orbits since otherwise $\mathbf{u}_2$ contains a Floer cylinder in $\widehat{E}\setminus D$ connecting a type II orbit to a type III orbit, which contradicts  Lemmas \ref{lem:mp} and \ref{lem:ab}. As mentioned above, any Floer cylinder in a fiber that connects type III orbits and does not intersect the zero-section is trivial. This proves the claim. Thus, by $(\mathbf{d})=-1$, we reduce to the following two cases,  according to whether $N=0$ or $N=1$: 
			\begin{enumerate}[(i)]
				\item $\mathbf{u}_2=(c_0)$, where $c_0$ is a gradient flow line of $f_{\partial E}$ connecting $\hat{p}_-$ to $\check{p}_+$ or 
				\item $\mathbf{u}_2=(c_0,u,c_1)$, where $c_0$ and $c_1$ are constant, and  $\ev_-(u)=\hat{p}_-$ and $\ev_+(u)=\check{p}_+$.
			\end{enumerate}  
	 			In the first case, $\mathbf{u}_2$ projects to an element in the moduli space ${\mathcal{N}}(p_-,p_+)$ of gradient flow lines of $f_D$ connecting $p_-$ to $p_+$, whereas in the second case, $\mathbf{u}_2$ projects to an element of ${\mathcal{N}}^*_1(p_-,p_+;A;J_D)$. Conversely, among the lifts of elements in ${\mathcal{N}}(p_-,p_+)$ or ${\mathcal{N}}^*_1(p_-,p_+;A;J_D)$ to $E \setminus D$, only finitely many   satisfy the asymptotic conditions $\hat{p}_-$ and $\check{p}_+$. According to \cite[Lemma 5.5]{BKK24}, the  count of such lifts yields the coefficients of the maps $\delta_1^{k\omega_D}$ and $\delta_2^{k\omega_D}$. Indeed, by Lemma \ref{lem:mp}, $\mathbf{u}_2$ is contained in the region where $J_{\widehat{E}}$ satisfies $J_{\widehat{E}} Z = -R_\alpha = R_{\lambda|_{\partial W}}$ (see \eqref{eq:j_fiber}), allowing us to apply the results from \cite{BKK24}. Consequently, the contribution of $\mathbf{u}_2$ in this case corresponds to $d_2$.

			\medskip
			
			\noindent Case 5) ($\mathbf{a,b,c,d})=(0,0,2,-1)$; this case corresponds to $d_W$.
			
			\medskip
			
			Since $\mathbf{(c)}=2$, $(\tau-k)\omega(\bar{\mathbf{u}}_1)=1$. This yields $\omega(\bar{\mathbf{u}}_1)=m_M$ since $(\tau-k)m_M$ is an integer. In particular, $\bar{\mathbf{u}}_1$ consists of a single $J$-holomorphic sphere. Since $\mathbf{(a)}=0$, $\mathbf{u}_2$ is contained in $\widehat{E}_p$, where $p:=p_+=p_-$. Since $A=[\bar{x}_-^\mathrm{fib}\#\mathbf{u}_1\#\mathbf{u}_2\#(-\bar{x}_+^\mathrm{fib})]$ and $\mathbf{u}_1$ does not intersect $D$, $k\omega(A)=A\cdot[D]=m_--m_++\mathbf{u}_2\cdot D$.  Moreover, $(\mathbf{b})=0$ implies  $\mathbf{u}_2\cdot D=0$. Arguing as in Case 4, we know that $\mathbf{u}_2$ 
			contains exactly  one Floer cylinder that  has a single puncture whose asymptotic orbit coincides with that of $\mathbf{u}_1$.  Thus, $\mathbf{u}_2$ has the form $(c_0,u,c_1)$, where $c_0$ and $c_1$ are constant curves at $\hat{p}_-$ and   $\check{p}_+$ respectively, and $u$ is a Floer cylinder with one puncture contained in $\widehat{E}_{p}\setminus \{0\}$. The asymptotic Reeb orbit at the puncture winds $m_+-m_-$ times around the origin of $\widehat{E}_{p}$, thus has multiplicity $m_--m_+$ (as a periodic orbit of $R_{\lambda|_{\partial W}}$). By \cite[Lemma 7.2]{DL19b}, the number of such punctured Floer cylinders equals $m_--m_+$. 
			
			Therefore, $\bar{\mathbf{u}}_1$ has homotopy class $A$ and intersects $D$ exactly at one point $q$ with order of contact $m_--m_+$, see Lemma \ref{lem:remsing}. Using $\mathbf{(b)}=0$ and $\mathbf{(c)}=2$ again, we deduce that $m_--m_+=k\omega(A)=km_M$. Thus, the  count of such spheres is $N_1(M,D)$, as defined in \eqref{eq:N_1}.		
			This proves that the number of the pairs $(\mathbf{u}_1,\mathbf{u}_2)$ in this case equals $k N_1(M,D)$, as claimed in the formula of $d_W$.

			\medskip

			Note that $d_2[\check{q},m,0]=0$ and $d_W[\check{q},m,0]=0$ follow from the fact that $\mathbf{(a)}=2$ or $\mathbf{(c)}=2$ forces $e_-=0$. The above five cases, together with this observation, complete the proof.
	\end{proof}
	
	\begin{remark}
		If $(\tau-k)\omega(\pi_2(M))\neq\Z$, e.g.~when $\tau-k\geq 2$, then $d_W=0$. 
	\end{remark}

	\section{Non-vanishing criteria}

	Recall from the discussion above Theorem \ref{thm:equal} that an element $[q,m,A] \in \widetilde{\mathcal{P}}^{\crit}_{\III}$ refers to a periodic orbit on $\partial U_\rho$ starting at $q$ with a capping disk $\bar{x}^\mathrm{fib}\#A$, and its index and action are given by	
	\[
	\mu([q,m,A])=2m+2c_1^{TM}(A)+\ind_{f_{\partial E}}(q)-n,\qquad 
	\mathcal{A}([q,m,A]) = \frac{\rho}{k} m+\omega(A).
	\]
	By the monotonicity of $\omega$, we have
	\begin{equation}\label{eq:action_index_III}
			\mathcal {A}([q,m,A])= \Big(1-\frac{\rho \tau}{k}\Big)\omega(A) + \frac{\rho}{2k}(\mu([q,m,A])-\ind_{\partial E}(q)+n).
	\end{equation}
	At the critical radius $\rho_\crit=\frac{k}{\tau}$, we have
	\begin{equation}\label{eq:crit_radius}
			{2m+2c_1^{TM}(A)}=2\tau\mathcal{A}([q,m,A]),
	\end{equation}
	where the left-hand side is the Robbin-Salamon index of $[q,m,A]$, ignoring the contribution from $f_{\partial E}$.	
	By Theorem \ref{thm:equal}, a (homogeneous) element of  $\SC^*_M(\partial U_\rho ,\mathcal{H},\mathcal{J})$ has the form 
		\begin{equation}\label{eq:infinite_sum}
		\sum_{j=1}^{\infty}a_j \xi_j,\qquad a_j\in \mathbb{Z},\;\; \xi_j=[q_j,m_j,A_j]\in \widetilde{\mathcal{P}}^{\crit}_{\III}, \;\;\lim_{j\rightarrow\infty}\mathcal{A}(\xi_j)=\infty.
		\end{equation}
	In view of \eqref{eq:action_index_III}, the allowed type of infinite sum depends on $\rho$.
		For $\rho<\rho_\crit$, we have $\displaystyle \lim_{j\rightarrow\infty}\mathcal{A}(\xi_j)=\infty $ if and only if $\displaystyle\lim_{j\to\infty}\omega(A_j)=\infty$. This in turn implies $\displaystyle\lim_{j\to\infty}m_j=-\infty$ since the degree $\mu$ is fixed.  For $\rho>\rho_\crit$,  $\displaystyle \lim_{j\rightarrow\infty}\mathcal{A}(\xi_j)=\infty $ if and only if $\displaystyle\lim_{j\to\infty}\omega(A_j)=-\infty$. In the critical case $\rho=\rho_\crit$, no infinite sums are allowed.

	To capture this phenomenon, we introduce an extension of the ring $\Gamma$ as follows:
	\begin{equation*}
		\widehat\Gamma_\rho :=\begin{cases}
			\Gamma((S)) & \rho>\rho_{\crit},\\
			\Gamma[S,S^{-1}] & \rho=\rho_{\crit},\\
			 \Gamma((S^{-1})) & \rho<\rho_{\crit},
			\end{cases}
	\end{equation*}
	where the degree of the formal variable $S$ is $2$. We then consider the Morse cochain complex $\CM^*(f_D;\widehat\Gamma_\rho)$ and denote the cohomology of this complex by
	\[
	\QH^*(D;\widehat\Gamma_\rho)\cong \H^*(D;\Z)\otimes_\Z \widehat{\Gamma}_\rho.
	\]
	We equip $\QH^*(D;\widehat\Gamma_\rho)$ with the ring structure obtained by extending the quantum cup product on $\QH^*(D;\Gamma)$.
	To state the main result of this section, we introduce two chain maps. The first one is
	\begin{equation*}
		\delta:= \delta^{k[\omega_D]_\Z} + kN_1(M,D)T +  S :\CM^{*-2}(f_D;\widehat\Gamma_\rho)\rightarrow \CM^{*}(f_D;\widehat\Gamma_\rho)
	\end{equation*}
	where $\delta^{k[\omega_D]_\Z}$ is the chain-level quantum cup product with $k[\omega_D]_\Z$ and $N_1(M,D)\in\Z$ is the count of $J$-holomorphic spheres $v$ in $M$ intersecting $D$ precisely at one point with order of contact $k\omega([v])$, see \eqref{eq:N_1} and above. Here $S$ and $T$ act via the corresponding  module operations. We also consider the chain map 
 	\begin{align*}
		\check{\iota}:\CM^*(f_D;\widehat\Gamma_\rho)&\longrightarrow \SC^{*-n}_M(\partial U_\rho, \mathcal{H}, \mathcal{J})\\
		\beta S^m&\longmapsto\check{\iota}_m(\beta),
	\end{align*}
	where $\beta \in \CM^*(f_D;\Gamma)$ and $\check{\iota}_m$ is defined in \eqref{eq:incl}. By the observation after \eqref{eq:infinite_sum}, the map $\check{\iota}$ is well-defined for all $\rho\in(0,1)$. It is indeed a chain map as $\check{\iota}_m$ is for every $m\in\Z$, see Theorem \ref{thm:main}. One can analogously define a  map $\hat{\iota}$ (not necessarily a chain map) using the maps $\hat{\iota}_m$ in \eqref{eq:incl}. The following theorem proves Theorem \ref{thm:sh}. 
	
	\begin{theorem}\label{thm:gysin}
		For every $\rho\in(0,1)$, we have a short exact sequence
		\[\begin{tikzcd}[column sep=15pt]
			0\ar[r]& \QH^{*-2}(D;\widehat\Gamma_\rho)\ar[r,"\delta^*"] & \QH^*(D;\widehat\Gamma_\rho)\ar[r,"\check{\iota}^*"] & \SH^{*-n}_M(\partial U_\rho)\ar[r] &0.
		\end{tikzcd}\]
	\end{theorem}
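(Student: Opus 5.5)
The plan is to realize $\SC^*_M(\partial U_\rho,\mathcal{H},\mathcal{J})$, via Theorem \ref{thm:equal} and Theorem \ref{thm:main}, as the mapping cone of the chain map $\delta$, up to a shift. Let $C:=\CM^*(f_D;\widehat\Gamma_\rho)$. We define
\[
\Theta: C\oplus C[-1]\longrightarrow \SC^{*-n}_M(\partial U_\rho,\mathcal{H},\mathcal{J}),\qquad (\alpha,\beta)\longmapsto \check\iota(\alpha)+\hat\iota(\beta).
\]
Here $\hat\iota(\beta S^m)=\hat\iota_m(\beta)$, and a sign twist may be needed to match $d_1$ on the $\hat p$-generators.

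\textbf{Step 1: $\Theta$ is an isomorphism of graded modules.} Every generator $[q,m,A]\in\widetilde{\mathcal P}^\crit_{\III}$ is uniquely of the form $\check\iota_{m'}(pT^\ell)$ or $\hat\iota_{m'}(pT^\ell)$, since $A=\ell A_0$ modulo $\ker\omega$ and $m'=m+k\ell m_M$. By the discussion after \eqref{eq:infinite_sum}, the infinite sums allowed in \eqref{eq:infinite_sum} correspond exactly to the completion $\widehat\Gamma_\rho$. To see this, fix the degree. Then $m+c_1^{TM}(A)$ is essentially fixed, so $S$-exponents and $T$-exponents are tied together. For $\rho>\rho_\crit$ we need $\omega(A_j)\to-\infty$, which translates into $S$-exponents going to $+\infty$ because $\deg T=2(\tau-k)m_M>0$ forces $m\to+\infty$. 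The other two regimes are checked the same way. I would verify this bookkeeping once, carefully, including the fact that $\Gamma$ itself involves only finitely many $T$-powers in each degree modulo $S$.

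\textbf{Step 2: transport the differential.} By Theorem \ref{thm:main}, $d_{\SC}$ is $\Gamma$-linear, since it commutes with capping shifts by Remark \ref{rmk:zerocap}. It sends a $\check p$-generator to $-\check\iota(d_D\alpha)$, and a $\hat p$-generator to
\[
\hat\iota(d_D\beta)+\check\iota\big((\delta^{k[\omega_D]_\Z}+kN_1(M,D)T+S)\beta\big).
\]
Here $d_0[\hat p,m,0]=[\check p,m+1,0]$ supplies the $S$-term, and $d_W$ supplies the $T$-term: the shift $m\mapsto m-km_M$ together with $A$ is exactly $\check\iota_m(pT)$. Hence $\Theta^{-1}d_{\SC}\Theta$ is, up to signs, the cone differential
\[
(\alpha,\beta)\mapsto(-d_D\alpha+\delta\beta,\ d_D\beta),
\]
so $\SC$ is isomorphic to $\mathrm{Cone}(\delta)$ after a shift. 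The long exact sequence of the cone then gives
\[
\cdots\to \QH^{*-2}(D;\widehat\Gamma_\rho)\xrightarrow{\delta^*}\QH^*(D;\widehat\Gamma_\rho)\xrightarrow{\check\iota^*}\SH^{*-n}_M(\partial U_\rho)\to \QH^{*-1}(D;\widehat\Gamma_\rho)\xrightarrow{\delta^*}\cdots.
\]

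\textbf{Step 3: injectivity of $\delta^*$, which I expect to be the main obstacle.} Once $\delta^*$ is injective, the long exact sequence collapses to the claimed short exact sequence. On cohomology, $\delta^*$ is quantum multiplication by $e_M+S$, where $e_M$ has degree 2. In each fixed degree the relevant module is finitely generated over $\Z$ in the $\H^*(D)$-direction, so write
\[
\delta^*(x)=Sx+e_M\star x .
\]
For $\rho<\rho_\crit$ and $\rho=\rho_\crit$, look at the highest power of $S$ occurring in $x$; this exists in $\Gamma((S^{-1}))$ and in $\Gamma[S,S^{-1}]$. The term $Sx$ contributes leading coefficient $x_{\max}S^{\max+1}$, which $e_M\star x$ cannot cancel because $e_M$ involves no $S$. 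Hence $x=0$ whenever $\delta^*(x)=0$.

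For $\rho>\rho_\crit$ with $\Gamma((S))$, use the lowest power of $S$ instead. The lowest coefficient of $\delta^*(x)$ is $e_M\star x_{\min}$, which need not vanish, so this needs more care. Write $x=\sum_{j\ge j_0}x_jS^j$ and suppose $(S+e_M)x=0$. Then $e_M x_{j_0}=0$ and $x_{j-1}=-e_Mx_j$ for all $j>j_0$. Each $x_j$ lies in a fixed homogeneous piece: the total degree is fixed, so $\deg x_j$ decreases by 2 as $j$ increases. Since $\QH^*(D;\Gamma)$ is $2(\tau-k)m_M$-periodic through $T$, degrees are unbounded below only via powers of $T$. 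Iterating $x_j=-e_Mx_{j+1}=\dots$, each $x_{j_0}$ is then divisible by arbitrarily high powers $e_M^{\star l}$. Combining this with $e_Mx_{j_0}=0$ and proceeding by induction should force $x=0$, though I am not fully sure of the chain-level finiteness this needs. If that argument stalls, I would instead prove injectivity by showing directly that the formal inverse $S^{-1}\sum_l(-e_MS^{-1})^l$ converges in the appropriate completion, checking the two regimes separately.

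Finally, Step 1 should be organized to avoid orientation subtleties: fix all signs via \cite{BKK24} so that $\check\iota$ is a chain map, as already asserted just before the theorem.
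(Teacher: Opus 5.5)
Your Steps 1 and 2, and the case $\rho\le\rho_\crit$ of Step 3, follow the paper's argument. The complex $\SC^{*-n}_M(\partial U_\rho,\mathcal{H},\mathcal{J})$ is identified with the cone of $\delta$ via $\check\iota$ and $\hat\iota$, and injectivity of $\delta^*=\star(e_M+S)$ is read off from the top $S$-coefficient.

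The gap is the case $\rho>\rho_\crit$. You correctly reach $e_M\star x_{j_0}=0$ and $x_{j_0}=(-e_M)^{\star l}\star x_{j_0+l}$ for all $l$, so $x_{j_0}\in I_\infty:=\bigcap_{l\in\N}(e_M^{\star l})$. You then give no argument that this forces $x_{j_0}=0$, and no induction can supply one: in a general commutative ring an element can be infinitely divisible by $e$ and still be annihilated by $e$. The missing idea is that $\star e_M$ is injective (indeed bijective, as the paper asserts) on $I_\infty$, and this is where finiteness enters. $\QH^*(D;\Gamma)=\H^*(D;\Z)\otimes_\Z\Gamma$ is a finitely generated module over the Noetherian ring $\Gamma$. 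Since $e_M$ has even degree, you may work over the commutative, Noetherian even-degree part, which contains $e_M$. Krull's intersection theorem then gives $I_\infty=e_M\star I_\infty$ and an element $r$ with $(1-r\star e_M)\star I_\infty=0$. Hence $x=r\star e_M\star x$ for every $x\in I_\infty$. So $e_M\star x_{j_0}=0$ gives $x_{j_0}=0$, contradicting the choice of $j_0$ as the lowest nonzero coefficient.

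Your fallback cannot work. The series $S^{-1}\sum_l(-e_MS^{-1})^{l}=\sum_l(-e_M)^{\star l}S^{-l-1}$ has $S$-exponents tending to $-\infty$. It therefore lives in $\Gamma((S^{-1}))$, which is exactly how the paper proves vanishing for $\rho<\rho_\crit$, not in $\Gamma((S))$. More fundamentally, an inverse of $e_M+S$ in $\QH^*(D;\Gamma((S)))$ would make $\delta^*$ surjective and hence force $\SH^*_M(\partial U_\rho)=0$. That is false in general for $\rho>\rho_\crit$: see Lemma \ref{lem:non_stab} combined with Theorem \ref{thm:SH=0}, or Proposition \ref{prop:q_2}. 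So in this regime injectivity must come from the Krull-type argument above, not from inverting $e_M+S$.
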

	\begin{proof}
		We observe that $\check{\iota}$ is injective and the cokernel of $\check{\iota}$ is isomorphic to the image of $\hat{\iota}$. Moreover, by Theorem \ref{thm:main}, $\SC^{*-n}_M(\partial U_\rho,\mathcal{H},\mathcal{J})$ is isomorphic to the cone complex of $\delta$. This yields a short exact sequence of chain complexes
		\[\begin{tikzcd}[column sep=10pt]
			0\ar[r] & \CM^*(f_D;\widehat\Gamma_\rho)\ar[r,"\check{\iota}"] &\SC^{*-n}_M(\partial U_\rho,\mathcal{H},\mathcal{J}) \ar[r] &\CM^{*-1}(f_D;\widehat\Gamma_\rho)\ar[r] &0.
		\end{tikzcd}\]
		and in turn a long exact sequence
		\[\begin{tikzcd}[column sep=15pt]
			\cdots\ar[r]& \QH^{*-2}(D;\widehat\Gamma_\rho)\ar[r,"\delta^*"] & \QH^*(D;\widehat\Gamma_\rho)\ar[r,"\check{\iota}^*"] & \SH^{*-n}_M(\partial U_\rho)\ar[r] &\cdots.
		\end{tikzcd}\]
		It remains to show that $\delta^*$ is injective. We write
		\begin{equation}\label{eq:e_M}
			e_M:=k[\omega_D]_\Z+{k}N_1(M,D) T\in \QH^2(D; \Gamma).
		\end{equation}
		Suppose $\rho\leq \rho_{\crit}$. Then, $\widehat\Gamma_\rho$ is either $\Gamma((S^{-1}))$ or $\Gamma[S,S^{-1}]$. Thus, any nonzero element of $\QH^*(D;\widehat\Gamma_\rho)$ can be written as 
		\begin{equation*}
			\sum_{m\leq m_0} Z_m S^m ,\qquad Z_m\in \QH^*(D;\Gamma),\; Z_{m_0}\neq 0,
		\end{equation*}
		and it is mapped to a nonzero element by $\delta^*=\star(e_M+S)$:
		\begin{equation*}
			\delta^*\left(\sum_{m\leq m_0} Z_m S^m\right)=Z_{m_0} S^{m_0+1}+\sum_{m\leq m_0}(e_M\star Z_m+Z_{m+1}) S^m\neq 0.
		\end{equation*}
		This proves that $\delta^*$ is injective. 
		
		Suppose $\rho>\rho_{\crit}$. Then $\widehat\Gamma_\rho=\Z[T,T^{-1}]((S))$, and any nonzero element of $\QH^*(D;\widehat\Gamma_\rho)$ has the form 
		\begin{equation*}
			\sum_{m\geq m_0} Z_m S^m,\qquad Z_m\in\QH^*(D;\Gamma),\; Z_{m_0}\neq 0.
		\end{equation*}
		Assume that it is in the kernel of $\delta^*$:
		\begin{equation*}
			\delta^*\left(\sum_{m\geq m_0} Z_m S^m\right)=\sum_{m\geq m_0+1}(e_M\star Z_m+Z_{m-1}) S^m+ (e_M \star Z_{m_0}) S^{m_0}=0.
		\end{equation*}
		Then, we have 
		\begin{equation*}
			e_M \star Z_{m_0}=0, \qquad Z_m=-e_M \star Z_{m+1}  \;\;\;\forall m\geq m_0,
		\end{equation*}
		and the latter yields
		\begin{equation*}
			Z_{m_0}=(-e_M)^{\star(m-m_0)}Z_m \;\;\;\forall m\geq m_0.
		\end{equation*}
		Therefore, $Z_{m_0}$ belongs to the ideal $\displaystyle  \bigcap_{i \in \N}(e_M^{\star i})$ of $\QH^*(D;\Gamma)$.
		However, the map 
		\begin{equation*}
			\star e_M:\bigcap_{i \in \N}(e_M^{\star i})\longrightarrow \bigcap_{i \in \N}(e_M^{\star i})
		\end{equation*}
		is an isomorphism, and this contradicts $e_M \star Z_{m_0}=0$. Hence, $\delta^*$ is injective. 
	\end{proof}
	
	\begin{theorem}\label{thm:SH=0}
	We have
		$\SH^*_M(\partial U_\rho)=0$ if and only if one of the following cases.
		\begin{enumerate}[(a)]
			\item $\rho <\rho_\crit$,
			\item $\rho=\rho_\crit$ and the class $e_M$ is nilpotent,
			\item $\rho>\rho_\crit$ and the descending sequence of ideals
			\begin{equation}\label{ideal}
				\QH^*(D;\Gamma)\supseteq (e_M)\supseteq (e_M^{\star 2}) \supseteq (e_M^{\star 3})\supseteq \cdots
			\end{equation}
			stabilizes, where the class $e_M$ is defined in \eqref{eq:e_M}.
		\end{enumerate}
	\end{theorem}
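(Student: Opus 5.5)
By Theorem \ref{thm:gysin}, $\SH^{*-n}_M(\partial U_\rho)$ is the cokernel of the injective map $\delta^*=\star(e_M+S)$ on $\QH^*(D;\widehat\Gamma_\rho)$. So $\SH^*_M(\partial U_\rho)=0$ exactly when $e_M+S$ is surjective on $\QH^*(D;\widehat\Gamma_\rho)$. That happens precisely when $e_M+S$ is a unit there, since $1$ lies in the image iff $e_M+S$ has an inverse, using commutativity of the even-degree part. I will therefore decide invertibility of $e_M+S$ in each of the three ranges of $\rho$.

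\emph{Case $\rho<\rho_\crit$.} Here $\widehat\Gamma_\rho=\Gamma((S^{-1}))$, so I write $e_M+S=S(1+e_MS^{-1})$. The geometric series $\sum_{j\ge0}(-e_M)^{\star j}S^{-1-j}$ converges in $\QH^*(D;\Gamma)((S^{-1}))$, because each coefficient lies in the finitely generated module $\QH^*(D;\Gamma)$ and powers of $S^{-1}$ go to zero. This series is the inverse of $e_M+S$, so it is a unit and $\SH^*_M(\partial U_\rho)=0$ always.

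\emph{Case $\rho=\rho_\crit$.} Now $\widehat\Gamma_\rho=\Gamma[S,S^{-1}]$. If $e_M^{\star N}=0$, the finite sum $\sum_{j<N}(-e_M)^{\star j}S^{-1-j}$ is an inverse. Conversely, suppose $Y=\sum_{m_1\le m\le m_0}Y_mS^m$ satisfies $(e_M+S)Y=1$. Comparing coefficients gives $e_M Y_{m_0}=0$ and $Y_{m-1}=-e_MY_m$ plus a correction in degree $S^0$. Solving downward from the top, I get $Y_{-1-j}=(-e_M)^{\star j}$ for $j\ge0$. Since $Y$ has only finitely many nonzero terms, $e_M$ must be nilpotent. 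I will double-check the boundary equations at both ends, but this part is straightforward.

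\emph{Case $\rho>\rho_\crit$.} Here $\widehat\Gamma_\rho=\Gamma((S))$ and I write $e_M+S$ with $e_M$ as the constant term. An inverse $Y=\sum_{m\ge m_1}Y_mS^m$ must satisfy $e_MY_{m_1}=\delta_{m_1,0}$-type equations and $e_MY_m+Y_{m-1}=[m=0]$. I expect this to be the main obstacle: I need to relate solvability of these equations to stabilization of the chain of ideals $(e_M^{\star l})$.

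If the chain stabilizes at $I=(e_M^{\star N})=(e_M^{\star N+1})$, I will use Fitting-type reasoning. The module $Q=\QH^*(D;\Gamma)$ is a finitely generated graded module over the commutative ring $\Gamma$, so I expect $Q=I\oplus\ker(e_M^{\star N})$, with $\star e_M$ bijective on $I$ and nilpotent on the kernel. This matches the isomorphism already used in the proof of Theorem \ref{thm:gysin}. I will construct the decomposition via the idempotent $\epsilon$ obtained from $e_M^{\star N}=e_M^{\star 2N}\star a$ and check it carefully. On the summand $(1-\epsilon)Q$, $e_M$ is nilpotent and the finite geometric series inverts $e_M+S$. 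On $\epsilon Q$, $e_M$ is a unit $u$, and $(u+S)^{-1}=\sum_j (-1)^j u^{-1-j}S^j$ converges in $((S))$. Hence $e_M+S$ is a unit.

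Conversely, suppose $Y$ inverts $e_M+S$. For $m<0$, solving the equations forces $Y_{m-1}=-e_MY_m$, and the expansion is bounded below. Chasing the equations should give $1=e_M^{\star l}\star(\pm Y_{l-1})+(\text{terms from negative }m)$, and I expect this to collapse to $e_M^{\star l}\in(e_M^{\star l+1})$ for some $l$, which gives stabilization. Concretely, from $e_MY_0+Y_{-1}=1$ and $Y_{-j}=0$ for large $j$, I plan to derive that $e_M^{\star l}\star e_M Y_l$-type identities express $e_M^{\star l}$ as a multiple of $e_M^{\star l+1}$. Once $e_M^{\star l}=e_M^{\star l+1}b$, the chain stabilizes from $l$ on.
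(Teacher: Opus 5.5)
Your reduction (vanishing holds exactly when $e_M+S$ is a unit in $\QH^*(D;\widehat\Gamma_\rho)$) and your cases (a) and (b) are the paper's argument. In (b) the boundary equations are mislabeled. With $Y=\sum_{m_1\le m\le m_0}Y_mS^m$, the top coefficient forces $m_0=-1$ and $Y_{-1}=1_D$. The equation $e_M\star Y_{m_1}=0$ sits at the bottom index, not the top. With that corrected, your conclusion $Y_{-1-j}=(-e_M)^{\star j}$, and hence nilpotency, is right.

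\emph{Forward implication in (c).} For ``stabilization $\Rightarrow$ vanishing'' you take a different route from the paper.

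The paper stays inside the ideal $(e_M^{\star m_0})$. It uses $\mathcal{E}=(\star e_M)^{-1}$ there and writes down an explicit series $\sum_{m\le m_0}Z_mS^{-m}$ that $\delta^*$ sends to $1_D$.

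You instead build the idempotent $\epsilon=e_M^{\star N}\star a$ from $e_M^{\star N}=e_M^{\star 2N}\star a$. Choose $a$ homogeneous of degree $-2N$, so that $\epsilon$ is even and hence central. This splits $\QH^*(D;\Gamma)=\epsilon\QH^*(D;\Gamma)\times(1-\epsilon)\QH^*(D;\Gamma)$, with $e_M$ a unit on the first factor and $e_M^{\star N}=0$ on the second. You then invert $e_M+S$ factorwise by the two geometric series from (a) and (b).

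This is correct. Your route is more structural: it reduces (c) to the two earlier inversions and shows along the way that $\star e_M$ is bijective on $(e_M^{\star N})$. The paper's route needs no idempotent and gives the preimage of $1_D$ in closed form. Its formula in fact only uses that $\star e_M$ maps $(e_M^{\star m_0})$ onto itself.

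\emph{Converse in (c).} This is only sketched in your proposal, but the relations you list close it in one line, exactly as in the paper.

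Let $Y=\sum_{m\ge m_1}Y_mS^m$ with $Y_{m_1}\neq0$ invert $e_M+S$. The $S^0$-equation $e_M\star Y_0+Y_{-1}=1_D$ forces $m_1\le 0$, and $m_1=0$ means $e_M$ is a unit. If $m_1<0$, set $l=-m_1$.
\begin{itemize}
\item The recursion gives $Y_{m_1}=(-e_M)^{\star(l-1)}\star Y_{-1}$.
\item The lowest equation $e_M\star Y_{m_1}=0$ then gives $e_M^{\star l}\star Y_{-1}=0$.
\item Multiplying the $S^0$-equation by $e_M^{\star l}$ yields $e_M^{\star l}=e_M^{\star(l+1)}\star Y_0$, so $(e_M^{\star l})=(e_M^{\star(l+1)})$.
\end{itemize}
The positive-index relations $Y_0=(-e_M)^{\star j}\star Y_j$ that your sketch leans on are not needed.
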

	\begin{proof}
		By Theorem \ref{thm:gysin}, $\SH^*_M(\partial U_\rho)=0$ if and only if the map $\delta^*$ is surjective. It is also equivalent to the condition that the unit element  $1_D$ of $\QH^0(D;\widehat\Gamma_\rho)$ is contained in the image of $\delta^*$, which equals the ideal $(e_M+S)$.
		\medskip
		
		\noindent Case (a). If $\rho < \rho_\crit$, we always have $1_D\in \mathrm{im\,}\delta^*$ since
			\begin{equation*}
				\delta^*\left(\sum_{m\ge 1} (-e_M)^{\star (m-1)} S^{(-m)}\right)=\sum_{m\ge 1} (-e_M)^{\star (m-1)} S^{(-m+1)}-\sum_{m\ge 1} (-e_M)^{\star m} S^{(-m)} =1_D.
			\end{equation*}
			\medskip
			
			\noindent Case (b).
			 Let $\rho=\rho_\crit$. Suppose that $e_M$ is nilpotent, i.e.~$e_M^{\star m_0}=0$ for some $m_0\in\N$. Then, we have
			\begin{equation*}
				\delta^*\left(\sum_{m=1}^{m_0} (-e_M)^{\star (m-1)} S^{(-m)}\right)=1_D.
			\end{equation*}

			Conversely, suppose that there is an element $\displaystyle \sum_{m=m_0}^{m_1} Z_m S^{(-m)}\in \QH^{-2}(D;\widehat\Gamma_\rho)$ with $Z_{m_0}\neq 0$ that is mapped to $1_D$ by $\delta^*$. 
			This implies
			\begin{equation*}
				Z_{m_0} S^{(-m_0+1)}+\sum_{m=m_0}^{m_1-1}(e_M\star Z_m+Z_{m+1}) S^{(-m)}+ (e_M\star Z_{m_1}) S^{(-m_1)}=1_D,
			\end{equation*}
			and, therefore, we have
			\begin{equation*}
				Z_{m_0}=1_D, \quad m_0=1, \quad Z_{m_1}=(-e_M)^{\star (m_1-1)}, \quad e_M\star Z_{m_1}=0.
			\end{equation*}
			This proves that $e_M$ is nilpotent.
			
			\medskip
			
			\noindent Case (c). Let $\rho>\rho_\crit$. 
			Suppose the sequence (\ref{ideal}) stabilizes, which means 
			\begin{equation*}
				(e_M^{\star m_0})=(e_M^{\star (m_0+1)})=\bigcap_{i\in\N}(e_M^{\star i})
			\end{equation*}
			for some $m_0\in\N$. Then, we have an isomorphism
			\begin{equation*}
				\star e_M:(e_M^{\star m_0})\longrightarrow (e_M^{\star m_0}),
			\end{equation*}
			and we denote its inverse by
			\begin{equation*}
				\mathcal{E}:=(\star e_M)^{-1}:(e_M^{\star m_0})\longrightarrow (e_M^{\star m_0}).
			\end{equation*}
			For
			\begin{equation*}
				Z_m:=\begin{cases}
					(-1)^m\mathcal{E}^{m_0-m+1}(e_M^{\star m_0}) & m\le0\\[.5ex]
					(-1)^m\mathcal{E}^{m_0-m+1}(e_M^{\star m_0})+(-e_M)^{\star (m-1)} & 1\le m \le m_0,
				\end{cases}
			\end{equation*}
			we have
			\begin{equation*}
			\begin{cases}
				e_M\star Z_m+Z_{m+1}=0 \;\text{ for }\; 0\neq m \leq  m_0-1\\[.5ex]
			e_M\star Z_0+Z_1=e_M\star \mathcal{E}^{m_0+1}(e_M^{\star m_0})-\mathcal{E}^{m_0}(e_M^{\star m_0})+1_D=1_D  \\[.5ex]
			e_M\star Z_{m_0}=(-1)^{m_0}e_M\star (\mathcal{E}(e_M^{\star m_0})-e_M^{\star(m_0-1)})=0,
			\end{cases}
			\end{equation*}
			and consequently,		\begin{equation}\label{eq:=1_D}
				\delta^*\left(\sum_{m\le m_0} Z_m S^{(-m)}\right)=\sum_{m\leq m_0-1}(e_M\star Z_{m}+Z_{m+1})S^{(-m)} + (e_M\star Z_{m_0})S^{(-m_0)}= 1_D.
			\end{equation}
			
		Conversely, suppose that there exists an element $\displaystyle\sum_{m\le m_0} Z_m S^{(-m)} \in \QH^*(D;\widehat\Gamma_\rho)$ with $Z_{m_0}\neq 0$ satisfying \eqref{eq:=1_D}.
			Then, $m_0 \in\N\cup\{0\}$. If $m_0=0$, then 
			\[
			e_M\star Z_0=1_D,\qquad e_M\star Z_m+Z_{m+1}=0 \;\;\forall m\leq -1.
			\] 
			The latter condition implies 
			$\displaystyle
				Z_0\in \bigcap_{i \in \N}(e_M^{\star i})
			$,
			and thus by the first condition 
			\[
			1_D = e_M\star Z_0\in \bigcap_{i \in \N}(e_M^{\star i}).
			\]
			
			If $m_0\geq1$, then we have instead
			\begin{equation*}
				\quad e_M\star Z_0+Z_1=1_D,\quad e_M\star Z_{m_0}=0,\quad
				e_M\star Z_m+Z_{m+1}=0 \;\; 0\neq\forall  m \leq  m_0-1.
			\end{equation*}
		 	The last condition again implies 
			$
				\displaystyle Z_0\in \bigcap_{i \in \N}(e_M^{\star i})
			$.
			The first and the last conditions yield
			\begin{equation*}
				Z_{m_0}=(-e_M)^{\star (m_0-1)}\star Z_1=(-e_M)^{m_0-1}\star (1_D-e_M\star Z_0).
			\end{equation*}
			Multiplying both sides by $e_M$ and using $e_M\star Z_{m_0}=0$, we obtain
			\begin{equation*}
				e_M^{\star m_0}=e_M^{\star (m_0+1)}\star Z_0\in \bigcap_{i \in \N}(e_M^{\star i}).
			\end{equation*}
			Therefore,
			$
			\displaystyle	(e_M^{\star m_0})\subset\bigcap_{ i \in \N}(e_M^{\star i}).
			$
			This proves that the sequence (\ref{ideal}) stabilizes.
	\end{proof}

	\begin{lem}\label{lem:non_stab}
		Assume that $e_M=k([\omega_D]_\Z+N_1(M,D)T)$ is not nilpotent in $\QH^*_\mathrm{free}(D;\Gamma)=\H^*(D;\Z)/\mathrm{torsion}\otimes_\Z\Gamma$ and not a primitive class, i.e.~$e_M=a\beta$ for some $a\geq 2$ and $\beta\in\QH^*_\mathrm{free}(D;\Gamma)$ (e.g.~when $k\geq 2$). Then, the sequence of ideals in \eqref{ideal} never stabilizes.
	\end{lem}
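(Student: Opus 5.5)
Suppose, for contradiction, that the sequence \eqref{ideal} stabilizes, i.e.~$(e_M^{\star m_0})=(e_M^{\star (m_0+1)})$ for some $m_0$. Then there is $Z\in\QH^*(D;\Gamma)$ with
\[
e_M^{\star m_0}=e_M^{\star(m_0+1)}\star Z .
\]
Pass to the free quotient $\QH^*_\mathrm{free}(D;\Gamma)=\H^*(D;\Z)/\mathrm{torsion}\otimes_\Z\Gamma$. The quantum product descends to this quotient: torsion classes form an ideal, because the quantum product is $\Gamma$-bilinear and torsion is preserved under bilinear maps over a torsion-free ring. So the identity still holds there. Now write $e_M=a\beta$ with $a\geq 2$. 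The identity becomes
\[
a^{m_0}\beta^{\star m_0}=a^{m_0+1}\beta^{\star(m_0+1)}\star Z ,
\]
and $\QH^*_\mathrm{free}$ is a free $\Z$-module (a free $\Gamma$-module of finite rank), so we may cancel $a^{m_0}$ to get
\[
\beta^{\star m_0}=a\,\beta^{\star(m_0+1)}\star Z .
\]

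Iterate this identity by substituting it into itself:
\[
\beta^{\star m_0}=a\beta\star Z\star\beta^{\star m_0}=a^2(\beta\star Z)^{\star 2}\star\beta^{\star m_0}=\cdots=a^j(\beta\star Z)^{\star j}\star\beta^{\star m_0}
\]
for all $j$. Hence every coefficient of $\beta^{\star m_0}$, in a $\Z$-basis of $\QH^*_\mathrm{free}(D;\Gamma)$ of the form (basis of $\H^*_\mathrm{free}$)$\times T^\ell$, is divisible by $a^j$ for every $j$. This step uses that multiplication by elements of the ring keeps integer coefficients. Therefore $\beta^{\star m_0}=0$ in the free part, so $e_M^{\star m_0}=a^{m_0}\beta^{\star m_0}=0$. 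This contradicts the assumption that $e_M$ is not nilpotent in $\QH^*_\mathrm{free}(D;\Gamma)$.

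The main point needing care is the infinite divisibility argument. $\QH^*_\mathrm{free}(D;\Gamma)$ is infinitely generated over $\Z$, but a single element has only finitely many nonzero coefficients, each an integer, and an integer divisible by every power of $a\geq 2$ is zero; so the argument goes through. One also checks that the grading is respected, so that only finitely many $T$-powers occur, though this is not strictly needed. For the example $k\geq 2$ with $N_1(M,D)=0$, we take $a=k$ and $\beta=[\omega_D]_\Z$. When $N_1\neq 0$ we still have $e_M=k([\omega_D]_\Z+N_1T)$, so again $a=k$.
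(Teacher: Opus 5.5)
Your proof is correct and rests on the same idea as the paper's. In the torsion-free quotient $\QH^*_\mathrm{free}(D;\Gamma)$, the power $e_M^{\star j}=a^j\beta^{\star j}$ is divisible by $a^j$, whereas a nonzero element of a free $\Z$-module is divisible by only finitely many powers of $a\geq 2$. The paper uses this directly: for each $i$ it takes the largest $l\geq i$ with $(e_M^{\star i})\subset(a^l 1_D)$ and observes $(e_M^{\star(l+1)})\subset(a^{l+1}1_D)$, so the ideals differ. You instead argue by contradiction, iterating the stabilization identity $e_M^{\star m_0}=e_M^{\star(m_0+1)}\star Z$ to force $e_M^{\star m_0}=0$ in the free part, which is the same argument in contrapositive form.
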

	\begin{proof}
		By the assumption, for any $i\in \N$, there exists $l\geq i$ such that 
		\[
		(e_M^{\star i})\subset (a^l 1_D),\qquad (e_M^{\star i})\not\subset(a^{l+1}1_D).
		\]
		However, $(e_M^{\star(l+1)})\subset(a^{l+1}1_D)$, and hence $(e_M^{\star i})\neq(e_M^{\star (l+1)})$. Thus, the image of the sequence \eqref{ideal}  in $\QH^*_\mathrm{free}(D;\Gamma)$ does not stabilize. This proves that the sequence \eqref{ideal} itself does not stabilize.
	\end{proof}

	\section{Some computations}\label{sec:computation}
	
	Now, we calculate $\SH_M^*(\partial U_\rho)$ for some examples.
	\begin{prop}\label{prop:cp_n}
		Let $(M,\omega)=(\mathbb{CP}^n,\omega_\mathrm{FS})$ and $D=\mathbb{CP}^{n-1}$ for $n\geq1$. Then, we have
		\begin{equation*}
			\SH_M^{*-n}(\partial U_\rho)\cong\begin{cases}
				0 & \rho\neq\rho_\crit\\
				\mathbb{Z}[\beta,\beta^{-1}] & \rho=\rho_\crit.
			\end{cases}
		\end{equation*}
		where the formal variable $\beta$ has degree $2$. 
	\end{prop}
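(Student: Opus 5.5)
The plan is to read everything off from Theorem \ref{thm:sh} (equivalently Theorem \ref{thm:gysin} and Theorem \ref{thm:SH=0}), so the work reduces to identifying $\QH^*(D;\Gamma)$ and the class $e_M$ for the pair $(\mathbb{CP}^n,\mathbb{CP}^{n-1})$.

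\emph{Setup and ring computation.} Here $[\omega]_\Z$ is the hyperplane class $h$, $k=1$, $\tau=n+1$, and $m_M=1$. Hence $\rho_\crit=1/(n+1)$ and $\deg T=2(\tau-k)m_M=2n$. For $n\geq 2$ the classical presentation gives
\[
\QH^*(\mathbb{CP}^{n-1};\Gamma)\cong \Z[h,T,T^{-1}]/(h^n-T).
\]
I will confirm that the minimal Chern number of $\mathbb{CP}^{n-1}$ is $n$, which matches $\deg T$. Since $(\tau-k)m_M=n\geq 2$, we have $N_1(M,D)=0$ by \eqref{eq:N_1}, so $e_M=h$.

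\emph{Case $n=1$.} Here $D$ is a point and $\deg T=2$, so $[\omega_D]_\Z=0$ and $e_M=N_1(M,D)T$. I expect to check that $N_1=\pm1$: the only curve is the degree-one sphere $\mathbb{CP}^1$ itself, meeting the point once. Granting this, $e_M$ is a unit in both cases, because $h^n=T$ is invertible when $n\geq2$.

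\emph{The three regimes.} Since $e_M$ is a unit, it is not nilpotent, and the ideal sequence \eqref{eq:ideal} is constant, equal to $\QH^*(D;\Gamma)$, so it stabilizes. Theorem \ref{thm:SH=0} then gives $\SH^*_M(\partial U_\rho)=0$ for $\rho<\rho_\crit$ by (a) and for $\rho>\rho_\crit$ by (c). At $\rho=\rho_\crit$, Theorem \ref{thm:sh} gives, for $n\geq2$,
\[
\SH^{*-n}_M(\partial U_{\rho_\crit})\cong \Z[h,T^{\pm1},S^{\pm1}]/(h^n-T,\;h+S).
\]
Eliminating $h=-S$ and then $T=(-S)^n$ leaves $\Z[S,S^{-1}]$ with $\deg S=2$. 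So we may set $\beta=S$ (or $-S$). For $n=1$ the quotient is $\Z[T^{\pm1},S^{\pm1}]/(N_1T+S)\cong\Z[S^{\pm1}]$, for the same reason. Throughout, I will check the degree shift: $\SH^{*-n}$ corresponds to $\QH^{*}$, so in this normalization the answer lives in even degrees.

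\emph{Main obstacle.} The only nonformal inputs are the quantum relation $h^n=T$ with the paper's grading of $T$, and the value $N_1(\mathbb{CP}^1,\mathrm{pt})=\pm1$. I would verify the latter through Lemma \ref{lem:remsing}: it amounts to counting degree-one spheres through a fixed point with contact order $1$, which gives exactly one curve, with sign fixed by the orientation conventions of \cite{DL19b}.
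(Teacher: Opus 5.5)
Your proposal is correct and follows the paper's own proof. The paper also notes that $\QH^*(D;\Gamma)\cong\Z[\beta,\beta^{-1}]$, which is your $\Z[h,T^{\pm1}]/(h^n-T)$. It then uses invertibility of $e_M$ with Corollary \ref{cor:sh=0} for $\rho\neq\rho_\crit$, and computes $\QH^*(D;\widehat\Gamma_{\rho_\crit})/(e_M+S)\cong\Z[S,S^{-1}]$ via Theorem \ref{thm:sh}. For $n=1$ the paper simply takes $e_M=T$; your leftover sign ambiguity $N_1=\pm1$ is harmless, since only the invertibility of $e_M$ is used.
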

	\begin{proof}
		For $n\ge 2$, we have $\QH^*(D;\Gamma)\cong\mathbb{Z}[\beta,\beta^{-1}]$. Since $e_M=[\omega_\mathrm{FS}]=\pm\beta$ is invertible, by Corollary \ref{cor:sh=0}, $\SH_M^*(\partial U_\rho)=0$ for $\rho\neq \rho_\crit$.
		For $\rho=\rho_\crit$, by Theorem \ref{thm:sh},
		\[
			\SH_M^{*-n}(\partial U_{\rho_\crit})\cong\mathbb{Z}[\beta,\beta^{-1}, S, S^{-1}]/(\beta+S)
			\cong\mathbb{Z}[S,S^{-1}].
		\]
		For $n=1$, when $D$ is a point, we have $e_M=T$ and $\QH^*(D;\Gamma)\cong\Gamma=\mathbb{Z}[T,T^{-1}]$. The rest of the computation is the same as the case $n\ge 2$.
	\end{proof}
	
		We fix some notation. For a prime number $p$, let $\Z_p$ denote the ring of $p$-adic integers, and let $\Q_p$ denote the field of $p$-adic numbers. We also need the ring of restricted Laurent series over $\Z_p$:
	\[
	\Z_p\langle\beta,\beta^{-1}\rangle:=\left\{ \sum_{i\in\Z} a_i\beta^i\mid a_i\in\Z_p,\, a_j\stackrel{\text{$p$-adic}}{\longrightarrow}0 \text{ as } j\to\pm\infty\right\}
	\]
	and $\Q_p\langle\beta,\beta^{-1}\rangle$ is defined analogously.
	
	\begin{prop}\label{prop:q_2}
		Let $(M,\omega)=(\mathbb{CP}^1\times\mathbb{CP}^1,\omega_\mathrm{FS}\oplus\omega_\mathrm{FS})$, and $D\subset \mathbb{CP}^1\times\mathbb{CP}^1$ be the diagonal. Then, we have
		\begin{equation*}
			\SH_M^{*-2}(\partial U_\rho)\cong\begin{cases}
				0 &\rho<\rho_\crit\\
				\mathbb{Z}[\beta,\beta^{-1},\frac{1}{2}] &\rho=\rho_\crit\\
				\mathbb{Q}_2\langle\beta,\beta^{-1}\rangle &\rho>\rho_\crit
			\end{cases}
		\end{equation*}
		where $\beta$ has degree 2.
	\end{prop}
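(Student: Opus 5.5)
By Theorem \ref{thm:sh}, $\SH^{*-2}_M(\partial U_\rho)\cong \QH^*(D;\widehat\Gamma_\rho)/(e_M+S)$, so the plan is to identify the data $(\tau,k,m_M,N_1(M,D))$ for the diagonal, write down $\QH^*(D;\Gamma)$ and $e_M$ explicitly, and then compute this quotient ring in each of the three regimes. Corollary \ref{cor:sh=0} already handles the subcritical case $\rho<\rho_\crit$, giving $0$.

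\textbf{Step 1: the data.} We have $c_1^{TM}=2[\omega]$, so $\tau=2$. The diagonal has class $(1,1)$, Poincar\'e dual to $[\omega]_\Z$, so $k=1$. Also $m_M=1$, hence $\deg T=2(\tau-k)m_M=2$ and $\rho_\crit=1/2$. Since $(\tau-k)m_M=1$, $N_1(M,D)$ may be nonzero. It counts spheres with $\omega(A)=1$ meeting $D$ at a fixed point $p$ with contact order $1$. These are the two rulings $\{x\}\times\mathbb{CP}^1$ and $\mathbb{CP}^1\times\{x\}$ through $p$, so I expect $N_1=2$ up to an orientation sign, which I would fix using \cite{DL19b,BKK24}; the sign only changes $T\mapsto -T$ below.

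\textbf{Step 2: $\QH^*(D;\Gamma)$ and $e_M$.} Here $D\cong\mathbb{CP}^1$ and $[\omega_D]_\Z=2h$, with $h$ the point class. A line in $D$ has $c_1^{TD}=2$, so it carries $T^{2}$, and $\QH^*(D;\Gamma)=\Z[T^{\pm1}][h]/(h^2-T^2)$. Hence
\[
e_M=2(h+T).
\]
Put $u=h+T$. Then $u(u-2T)=0$, i.e.\ $u^2=2Tu$. Set $\beta:=T$, which has degree $2$.

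\textbf{Step 3: critical radius.} Here $\widehat\Gamma_\rho=\Gamma[S^{\pm1}]$, and the quotient is $A[S^{\pm1}]/(S+2u)$ with $A=\Z[T^{\pm1}][h]/(h^2-T^2)$. This is $A[(2u)^{-1}]$. Inverting $u$ in $u(u-2T)=0$ forces $u=2T$, i.e.\ $h=T$, so $A[u^{-1}]=\Z[T^{\pm1}]$. Then $2u=4T$, and inverting it inverts $2$. The result is $\Z[\beta^{\pm1},\tfrac12]$. I would verify this isomorphism explicitly by writing down inverse ring maps.

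\textbf{Step 4: supercritical radius.} Now the quotient is $A((S))/(S+2u)$. I would identify this with the completion $\widehat A$ of $A$ along the ideal $(e_M)$, followed by inverting $e_M$. The map is $S\mapsto -e_M$; the Laurent tails in $S$ correspond to $e_M$-adic series.

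After inverting $e_M$, the idempotent-type relation $u(u-2T)=0$ again kills the $u=0$ branch and forces $h=T$. Then $e_M=4T$, where $T$ is a unit. So the $(e_M)$-adic completion becomes the $2$-adic completion of $\Z[T^{\pm1}]$, namely $\Z_2\langle\beta,\beta^{-1}\rangle$. Inverting $4T$ then gives $\Q_2\langle\beta,\beta^{-1}\rangle$.

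\textbf{Main obstacle.} Step 4 is the part I expect to be delicate. One must check that quotienting the Laurent series ring really equals "complete, then localize", and that the completion commutes with restricting to the $h=T$ branch. The concern is that $e_M^n=2^{2n-1}T^{n-1}u$, so the $(e_M)$-adic and $2$-adic topologies must be compared on $A$, not just on the branch.

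I would first compute directly. Given $\sum_{m\ge m_0} Z_mS^m$, use $h^2=T^2$ to write each $Z_m=a_m+b_mh$ with $a_m,b_m\in\Z[T^{\pm1}]$. Then reduce modulo $S+2(h+T)$ coefficientwise, and show the resulting map to $\Q_2\langle\beta^{\pm1}\rangle$, sending $h\mapsto\beta$ and $S\mapsto -4\beta$, is well defined, surjective, and has kernel exactly $(S+2u)$. The convergence condition $a_j\to0$ $2$-adically is exactly what the $S$-adic tails produce.
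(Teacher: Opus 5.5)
Your proposal is correct and follows essentially the paper's proof. It uses the same data ($k=1$, $N_1(M,D)=2$, $e_M=2(h+T)$), the same observation that inverting $S=-e_M$ forces the branch $h=T$ (the paper's $\sigma=1$), and the same $2$-adic completion followed by inverting $4\beta$. The only difference is that the paper imposes $\sigma=1$ already in the Laurent series ring and then computes $\Z[\beta^{\pm1}][[S]]/(4\beta+S)$ as $\varprojlim\Z[\beta^{\pm1}]/(4^n)$, which sidesteps the completion-versus-branch concern you raise.

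One small slip in Step 3: $A[u^{-1}]$ is $\Z[T^{\pm1},\tfrac12]$ rather than $\Z[T^{\pm1}]$, since $u=2T$ there. Your final answer is unaffected.
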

	\begin{proof}
		In this case, $k=1$ and $[\omega_D]_\Z=2[\beta]$. First, we have
		\begin{equation*}
			\QH^*(D;\Gamma)=\mathbb{Z}[\beta, T, T^{-1}]/(\beta^2-T^2)\cong \mathbb{Z}[\beta,\beta^{-1},\sigma]/(\sigma^2-1)
		\end{equation*}
		where $\deg\beta=\deg T=2$, $\beta^{-1}=\beta T^{-2}$, and $\sigma:=T\beta^{-1}$. For each of the homology classes $[\mathbb{CP}^1\times\{\mathrm{pt}\}]$ and $[\{\mathrm{pt}\}\times\mathbb{CP}^1]]$, there is a unique complex sphere intersecting the diagonal $D$ at a given point in this homology class. Thus, $N_1(M,D)=2$, and 
		\begin{equation*}
			e_M=2\beta+2T=2\beta(1+\sigma).
		\end{equation*}
		For $\rho=\rho_\crit$, by Theorem \ref{thm:sh}, we obtain
		\begin{align*}
			\SH_M^{*-2}(\partial U_{\rho_\crit})&\cong \mathbb{Z}[\beta,\beta^{-1},\sigma,S,S^{-1}]/(\sigma^2-1,2\beta(1+\sigma)+S)\\
			&\cong \mathbb{Z}[\beta,\beta^{-1},\sigma,\tfrac{1}{2(1+\sigma)}]/(\sigma^2-1)\\
			&\cong \mathbb{Z}[\beta,\beta^{-1},\sigma,\tfrac{1}{2},\tfrac{1}{1+\sigma}]/(\sigma-1)\\
			&\cong\mathbb{Z}[\beta,\beta^{-1},\tfrac{1}{2}],
		\end{align*}
		where the third isomorphism follows from the fact that both $2$ and $(1+\sigma)$ are invertible as $2(1+\sigma)$ is invertible.  
		Similarly, for $\rho> \rho_\crit$, using  Theorem \ref{thm:sh}, we compute
		\begin{equation}\label{eq:Q2}
			\begin{split}
			\SH_M^{*-2}(\partial U_\rho)&\cong \mathbb{Z}[\beta,\beta^{-1},\sigma]((S))/(\sigma^2-1,\,2\beta(1+\sigma)+S)\\[.5ex]
			&\cong\mathbb{Z}[\beta,\beta^{-1}]((S))/(4\beta+S)\\[.5ex]
			&\cong\frac{\mathbb{Z}[\beta,\beta^{-1}][[S]]}{(4\beta+S)}[S^{-1}] \\[.5ex]
			&\cong \Z_2\langle\beta,\beta^{-1}\rangle[(4\beta)^{-1}]\\
			&\cong \Z_2\langle\beta,\beta^{-1}\rangle[\tfrac{1}{4}]\\
			&\cong \Q_2\langle\beta,\beta^{-1}\rangle.
			\end{split}
		\end{equation}
		As before, the invertibility of $2\beta(1+\sigma)=-S$ implies that $1+\sigma$ is invertible. This together with $\sigma^2-1=0$ implies $\sigma=1$. This is how we derive the second isomorphism. To obtain the fourth isomorphism, we use 
		 $\displaystyle\Z[\beta,\beta^{-1}][[S]]\cong \varprojlim_{n\to\infty}\Z[\beta,\beta^{-1}][S]/(S^n)$ and derive
		\[
		\frac{\Z[\beta,\beta^{-1}][[S]]}{(4\beta+S)}\cong \varprojlim_{n\to\infty}\frac{\Z[\beta,\beta^{-1}][S]}{(S^n,4\beta+S)} \cong \varprojlim_{n\to\infty}\frac{\Z[\beta,\beta^{-1}]}{(4^n)}\cong \varprojlim_{n\to\infty}\frac{\Z[\beta,\beta^{-1}]}{(2^n)}\cong \Z_2\langle\beta,\beta^{-1}\rangle.
		\]
		The last one follows from $\Z_2[\frac{1}{4}]\cong \Z_2[\frac{1}{2}]\cong \Q_2$. 
	\end{proof}
	
	\begin{prop}\label{prop:quadric}
		Let $(M,\omega)=(\mathbb{CP}^n,\omega_\mathrm{FS})$ and $D=Q^{n-1}$ be a quadric. Then, we have
		\begin{align*}
			&\text{For $n=2$: }\; \SH_M^{*-2}(\partial U_\rho) \cong \begin{cases}
				0 & \rho<\rho_\crit \\
				\mathbb{Z}[\beta,\beta^{-1},\frac{1}{6}]\times\mathbb{Z}[\beta,\beta^{-1},\frac{1}{2}] &\rho=\rho_\crit\\
				\mathbb{Q}_2\langle\beta,\beta^{-1}\rangle\times \mathbb{Q}_2\langle\beta,\beta^{-1}\rangle \times \mathbb{Q}_3\langle\beta,\beta^{-1}\rangle &\rho>\rho_\crit
				\end{cases} 
				\\[1ex]
				&\text{For $n=3$: }\; \SH_M^{*-3}(\partial U_\rho) \cong 
				\begin{cases}
					0 &\rho<\rho_\crit \\
					\mathbb{Z}[\beta,\beta^{-1},\frac{1}{2}] & \rho=\rho_\crit\\
					\mathbb{Q}_2\langle\beta,\beta^{-1}\rangle & \rho>\rho_\crit
				\end{cases} 
		\end{align*}	
		where $\beta$ has degree 2. 
	\end{prop}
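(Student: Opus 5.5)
The plan is to apply Theorem \ref{thm:sh} directly, exactly as in Propositions \ref{prop:cp_n} and \ref{prop:q_2}. For each $n$ the steps are the same: determine $k,\tau,m_M$; write down $\QH^*(D;\Gamma)$ explicitly; compute $N_1(M,D)$ and hence $e_M$; then evaluate $\QH^*(D;\widehat\Gamma_\rho)/(e_M+S)$ in the three regimes. The subcritical case $\rho<\rho_\crit$ needs no computation: it vanishes by Corollary \ref{cor:sh=0}(a).

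\emph{Case $n=2$.} Here $D=Q^1\cong\mathbb{CP}^1$ is a conic, so $k=2$, $\tau=3$, $m_M=1$, and $\deg T=2$. Let $h\in\H^2(D)$ be the point class. Then $[\omega_D]_\Z=2h$, and a line of $D$ has $\omega$-area $2$. The quantum relation is therefore
\[
\QH^*(D;\Gamma)=\Z[h,T,T^{-1}]/(h^2-T^2).
\]
Since $(\tau-k)m_M=1$, $N_1(M,D)$ counts lines meeting the conic at a given point with contact order $2$. This is the unique tangent line, so $N_1=1$; I still need to confirm that its sign is $+1$. This gives $e_M=4h+2T$. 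Setting $\sigma=Th^{-1}$ with $\sigma^2=1$, we get $e_M=2h(2+\sigma)$.

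At $\rho=\rho_\crit$ the quotient by $e_M+S$ inverts $2(2+\sigma)$. Because $2$ becomes invertible, $\Z[\tfrac12][\sigma]/(\sigma^2-1)$ splits as a product of its $\sigma=1$ and $\sigma=-1$ factors. On the $\sigma=1$ factor, $2+\sigma=3$, which yields $\Z[\beta^{\pm},\tfrac16]$. On the $\sigma=-1$ factor, $2+\sigma=1$, which yields $\Z[\beta^{\pm},\tfrac12]$.

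For $\rho>\rho_\crit$ I follow \eqref{eq:Q2}. First pass to $\Z[\beta^{\pm}][[S]]/(e_M+S)$, then invert $S$. After inverting $S$ the element $2$ is invertible, so the same splitting applies. The $\sigma=-1$ factor becomes $\varprojlim \Z[\beta^{\pm}]/(2^n)$ with $4$ inverted, i.e.\ $\Q_2\langle\beta,\beta^{-1}\rangle$. The $\sigma=1$ factor becomes $\varprojlim\Z[\beta^\pm]/(6^n)\cong \Z_2\langle\beta^\pm\rangle\times\Z_3\langle\beta^\pm\rangle$ by the Chinese remainder theorem; inverting $6$ gives $\Q_2\langle\beta^\pm\rangle\times\Q_3\langle\beta^\pm\rangle$.

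The main obstacle is justifying that the splitting commutes with the completion. One must check that $\sigma^2-1=0$ together with the invertibility of $e_M$ really decomposes the $S$-adically completed module before inverting $S$. I would handle this by working modulo $S^n$: in $\Z[\beta^\pm][\sigma][S]/(S^n,e_M+S)$ the element $2$ is nilpotent-or-unit compatibly, so I would apply the idempotents $(1\pm\sigma)/2$ only after inverting $2$, and then compare the resulting inverse limits.

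\emph{Case $n=3$.} Here $D=Q^2\cong\mathbb{CP}^1\times\mathbb{CP}^1$, with $k=2$, $\tau=4$, $\deg T=4$, and $N_1=0$ since $\tau-k=2$. Writing $a,b$ for the two ruling classes, $[\omega_D]_\Z=a+b$ and
\[
\QH^*(D;\Gamma)=\Z[a,b,T^\pm]/(a^2-T,\,b^2-T),
\]
so $e_M=2(a+b)$. Put $\tau'=abT^{-1}$, so that $\tau'^2=1$ and $(a+b)^2=2T(1+\tau')$. Then $(a+b)(a-b)=0$, so once $a+b$ is inverted we get $a=b$. Hence in the critical case the quotient is $\Z[a^\pm,\tfrac12]$ with $\beta=a$. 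In the supercritical case $e_M=4a$, and the argument of \eqref{eq:Q2} gives $\Q_2\langle\beta,\beta^{-1}\rangle$.
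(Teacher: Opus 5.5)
Your proposal is correct and follows the paper's proof essentially step by step. It uses the same presentations $\QH^*(D;\Gamma)\cong\Z[\beta^{\pm1},\sigma]/(\sigma^2-1)$ and the same classes $e_M=2\beta(2+\sigma)$ and $e_M=2(a+b)$. Your relation $(a+b)(a-b)=0$ is exactly the paper's ``$1+\sigma$ invertible and $\sigma^2=1$ force $\sigma=1$'' argument with $\sigma=ba^{-1}$. You also use the same Chinese-remainder splitting once $2$ is a unit and the same reduction to \eqref{eq:Q2}.

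The completion issue you flag does not actually arise. As in the paper, apply the splitting directly in $\QH^*(D;\Gamma)((S))/(e_M+S)$, where $S$, and hence $2$, is already a unit. Then use $\Z[\beta^{\pm1}]((S))/(c\beta+S)=\bigl(\Z[\beta^{\pm1}][[S]]/(c\beta+S)\bigr)[S^{-1}]$ on each factor.

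Two minor points. The sign of $N_1$ you wanted to confirm is immaterial, because $\sigma\mapsto-\sigma$ (i.e.\ $T\mapsto -T$) is a ring automorphism carrying $2\beta(2-\sigma)$ to $2\beta(2+\sigma)$. On the $\sigma=-1$ factor you should invert $2$ rather than $4$, though this gives the same ring.
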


	\begin{proof}
		For $n=2$, we have $D=Q^1\cong \mathbb{CP}^1$ and thus
		\begin{equation*}
			\QH^*(D,\Gamma)=\mathbb{Z}[\beta,\beta^{-1}, T, T^{-1}]/(\beta^2-T^2)\cong \mathbb{Z}[\beta,\beta^{-1},\sigma]/(\sigma^2-1)
		\end{equation*}		
		where $\deg\beta=\deg T=2$, and $\sigma:=T\beta^{-1}$. We also have $k=2$ and $[\omega_D]_\Z=2\beta$. Since there is a unique complex line in $\mathbb{CP}^2$ intersecting $Q^1$ at a prescribed point with contact order 2 (i.e. being tangent to $Q^1$), we have $N_1(M,D)=1$. Thus, we have
		\begin{equation*}
			e_M=4\beta+2T=2\beta(2+\sigma).
		\end{equation*}
		For $\rho=\rho_\crit$, we compute
		\begin{align*}
			\SH_M^{*-2}(\partial U_\rho)&\cong\mathbb{Z}[\beta,\beta^{-1},\sigma,S,S^{-1}]/(\sigma^2-1,2\beta(2+\sigma)+S)\\
			&\cong\mathbb{Z}[\beta,\beta^{-1},\sigma,\tfrac{1}{2\beta(2+\sigma)}]/(\sigma^2-1)\\
			&\cong\mathbb{Z}[\beta,\beta^{-1},\sigma,\tfrac{1}{2},\tfrac{1}{2+\sigma}]/(\sigma^2-1)\\
			&\cong\mathbb{Z}[\beta,\beta^{-1},\tfrac{1}{6}]\times\mathbb{Z}[\beta,\beta^{-1},\tfrac{1}{2}].
		\end{align*}
		The second isomorphism follows from the fact that the invertibility of $2\beta(2+\sigma)$ implies that both $2$ and $2+\sigma$ are invertible. To see the fourth line, we observe 
		\[
		\Z[\sigma,\tfrac{1}{2},\tfrac{1}{2+\sigma}]/(\sigma^2-1)\cong \Z[\tfrac{1}{2},\tfrac{1}{2+\sigma}]/(\sigma-1)\times \Z[\tfrac{1}{2},\tfrac{1}{2+\sigma}]/(\sigma+1)\cong \Z[\tfrac{1}{6}]\times \Z[\tfrac{1}{2}].
		\]
		where the first isomorphism follows from the Chinese remainder theorem since the ideals $(\sigma+1)$ and $(\sigma-1)$ are comaximal, i.e.~$1\in (\sigma+1)+(\sigma-1)$.
		
		For $\rho>\rho_\crit$, we compute
		\begin{align*}
			\SH_M^{*-2}(\partial U_\rho)&\cong \Z[\beta,\beta^{-1},\sigma]((S))/(\sigma^2-1,2\beta(2+\sigma)+S)\\
			&\cong \frac{\Z[\beta,\beta^{-1}]((S))}{(6\beta+S)} \times 
			\frac{\mathbb{Z}[\beta,\beta^{-1}]((S))}{(2\beta+S)}\\
			&\cong \frac{\Z[\beta,\beta^{-1}][[S]]}{(6\beta+S)}[S^{-1}]\times
			\frac{\Z[\beta,\beta^{-1}][[S]]}{(2\beta+S)}[S^{-1}]\\
			&\cong (\Z_2\langle\beta,\beta^{-1}\rangle \times\Z_3\langle\beta,\beta^{-1}\rangle)[(6\beta)^{-1}]\times\Z_2\langle\beta,\beta^{-1}\rangle[(2\beta)^{-1}]\\
			&\cong (\Z_2\langle\beta,\beta^{-1}\rangle\times \Z_3\langle\beta,\beta^{-1}\rangle)[\tfrac{1}{6}]\times \Z_2\langle\beta,\beta^{-1}\rangle[\tfrac{1}{2}]\\
			&\cong\Q_2\langle\beta,\beta^{-1}\rangle \times \Q_3\langle\beta,\beta^{-1}\rangle\times\Q_2\langle\beta,\beta^{-1}\rangle.
		\end{align*}
		The second isomorphism follows from the Chinese remainder theorem as before. The fourth line is derived as in \eqref{eq:Q2} and below, using the fact that  $\Z[\beta,\beta^{-1}]/{(6^n)}\cong {\Z[\beta,\beta^{-1}]}/{(2^n)}\times
		{\Z[\beta,\beta^{-1}]}/{(3^n)}$ which also follows from the Chinese remainder theorem.
		\medskip

		For $n=3$, we have $D\cong \mathbb{CP}^1\times \mathbb{CP}^1$ and
		\begin{equation*}
			\QH^*(D;\Gamma)\cong\mathbb{Z}[\beta_1,\beta_2,T,T^{-1}]/(\beta_1^2-T,\beta_2^2-T)\cong\mathbb{Z}[\beta,\beta^{-1},\sigma]/(\sigma^2-1)
		\end{equation*}
		where $\deg\beta_1=\deg\beta_2=2$ and $\deg T=4$. The second isomorphism is given by $\beta_1\mapsto \beta$ and $\sigma=\beta_2\beta_1^{-1}$, where $\beta_1^{-1}=\beta_1T^{-1}$. In this case,
		\[
		e_M=2(\beta_1+\beta_2)=2\beta(1+\sigma).
		\]
		The rest of the computation follows as in the proof of Proposition \ref{prop:q_2} as the formula is identical to the one derived there.
	\end{proof}
	The case where $n\geq 4$ in Proposition \ref{prop:quadric} can be computed analogously. This will be included in the first author's thesis.

	\bibliographystyle{amsalpha}
	\bibliography{./ref.bib}

\end{document}